\pgfplotsset{compat=1.18}
\titlespacing*{\section}{0pt}{8pt plus 2pt minus 2pt}{6pt}
\titlespacing*{\subsection}{0pt}{6pt plus 2pt minus 2pt}{4pt}
\titlespacing*{\subsubsection}{0pt}{4pt plus 1pt minus 1pt}{3pt}
\theoremstyle{definition}
\newtheorem{assumption}{Assumption}[section]
\newtheorem{definition}[assumption]{Definition}
\newtheorem{remark}[assumption]{Remark}
\theoremstyle{plain}
\newtheorem{proposition}[assumption]{Proposition}
\newtheorem{lemma}[assumption]{Lemma}
\newtheorem{theorem}[assumption]{Theorem}
\newtheorem{corollary}[assumption]{Corollary}
\newcommand{\R}{\mathbb{R}}
\newcommand{\pr}{\mathbb{P}}
\newcommand{\E}{\mathbb{E}}
\newcommand{\N}{\mathbb{N}}
\newcommand{\C}{\mathbb{C}}
\newcommand{\D}{\mathbb{D}}
\newcommand{\qc}{\textrm{QC}}
\newcommand{\cF}{{\cal F}}
\newcommand{\cB}{{\cal B}}
\newcommand{\cH}{{\cal H}}
\newcommand{\cV}{{\cal V}}
\newcommand{\cA}{{\cal A}}
\newcommand{\cP}{{\cal P}}
\newcommand{\cI}{{\cal I}}
\newcommand{\cL}{{\cal L}}
\newcommand{\tb}{\tilde{b}}
\newcommand{\be}{\bm{e}}
\newcommand{\bZero}{\bm{0}}
\newcommand{\bOne}{\bm{1}}
\newcommand{\mbOne}{\mathbbm{1}}
\newcommand{\Z}{\mathbb{Z}}
\newcommand{\veps}{\varepsilon}
\newcommand{\tr}{\text{Tr}}
\newcommand{\Hess}{\text{Hessian}}
\newcommand{\grad}{\nabla_x}
\newcommand{\vertiii}[1]{{\left\vert\kern-0.25ex\left\vert\kern-0.25ex\left\vert #1 
    \right\vert\kern-0.25ex\right\vert\kern-0.25ex\right\vert}}
\title{Multilevel Picard scheme for solving high-dimensional drift control problems with state constraints}
\author{Yuan Zhong}
\date{}
\begin{document}
\onehalfspacing
\maketitle

\begin{abstract}
Motivated by applications to the dynamic control of queueing networks, 
we develop a simulation-based scheme, the so-called multilevel Picard (MLP) approximation, 
for solving high-dimensional drift control problems whose states 
are constrained to stay within the nonnegative orthant, over a finite time horizon. 
We prove that under suitable conditions, the MLP approximation overcomes 
the curse of dimensionality in the following sense: To approximate 
the value function and its gradient evaluated at a given time and state to  
within a prescribed accuracy $\veps$, the computational complexity 
grows at most polynomially in the problem dimension $d$ and $1/\veps$. 
To illustrate the effectiveness of the scheme, we carry out numerical experiments for 
a class of test problems that are related to the dynamic scheduling problem of parallel server systems in heavy traffic, 
and demonstrate that the scheme is computationally feasible up to dimension at least $20$.
\end{abstract}

\section{Introduction}\label{sec:introduction}
Drift controls are stochastic control problems where the system state can be adjusted at finite rates. 
They arise in a wide range of application domains, including manufacturing \cite{RubinoAta2009, OrmeciMatogluVandeVate2011}, communications \cite{AtaHarrisonShepp2005}, 
service systems \cite{Atar2005, atar2004scheduling, kim2018dynamic}, portfolio management \cite{DaiZhong2010}, and economics \cite{BarIlanMarionPerry2007}. 
In many of these problems, state constraints are a natural and critical feature; 
for example, in service and manufacturing systems, job or customer counts are nonnegative, 
and in portfolio management, the investor's portfolio is commonly required to stay within a solvency region. 

In this paper, we study problems in which states are constrained to stay within the nonnegative orthant 
by a ``reflecting'' mechanism at the boundary 
(henceforth, we refer to these as {\em drift controls with reflections}); See Section~\ref{sec:problem_description} for details.
Our primary motivation comes from the problem of dynamically controlling queueing networks in heavy traffic; 
see e.g., \cite{AtaHarrisonSi2024,williams2016stochastic} and the references therein for recent overviews of 
the extensive history of heavy-traffic analysis of queueing networks, 
and its connections to and synergy with drift controls with reflections.

Except in a few special cases, drift control problems (with reflections) are difficult to solve analytically, 
necessitating numerical solutions. 
A standard approach is to consider the associated Hamilton-Jacobi-Bellman (HJB) equations, 
which are partial differential equations (PDEs) that characterize the value function of the control problem.
Classical grid-based solvers for these PDEs, such as finite-element and finite-difference methods \cite{thomas2013numerical,axelsson2001finite}, 
suffer from the curse of dimensionality and quickly become computationally intractable as the problem dimension grows; cf. \cite{Bellman1957,Judd1998}. 
However, high-dimensional instances of these control problems arise in important practical applications \cite{HanJentzenE2018PNAS,Atar2005,AtaHarrisonSi2024}, 
rendering the use of grid-based methods infeasible.
Another popular approach is the Markov chain approximation method (see, e.g., \cite{KushnerDupuis2001}), 
which solves carefully designed controlled Markov chains that approximate the original control problem,
but it also suffers from the curse of dimensionality, since the method requires fine discretization of the state space, similar to grid-based methods.

In this paper, we develop, analyze and implement a multilevel Picard (MLP) method for solving high-dimensional drift control problems with reflections. 
MLP is a non-parametric, simulation-based framework that has been developed recently for solving broad classes of high-dimensional semilinear PDEs 
\cite{HutzenthalerJentzenKruse2019,HutzenthalerETAL2021,NeufeldNguyenWu2025,NeufeldWu2025,giles2019generalised,beck2020overcoming,becker2020numerical,hutzenthaler2019multilevel,hutzenthaler2021multilevel,hutzenthaler2022multilevel,hutzenthaler2022multilevel1,HJKN2020,HJKNW2020,NW2022}
 -- PDEs with nonlinearities only in lower-order terms -- 
and has been shown to provably overcome the curse of dimensionality under suitable conditions, 
in the sense that the computational complexity scales polynomially in the problem dimension and $1/\veps$, 
where $\veps$ is the prescribed accuracy. 
In contrast to existing literature, the presence of state constraints introduces considerable challenges to the analysis and implementation of the MLP method.
In particular, our approach relies heavily on so-called derivative processes of reflected diffusions \cite{LipshutzRamanan2018,LipshutzRamanan2019b}, 
which capture the sensitivity of the state process to its defining parameters, such as changes in the initial state. 
These processes, unlike those of unconstrained diffusions prevalent in prior works, 
feature discontinuous sample paths and a discontinuous dependence on the initial state at the boundary. 
To address these issues, we introduce new analytical and simulation tools and make the following contributions:
\begin{itemize}
  \item[(i)] Establish a Bismut--Elworthy--Li (BEL) formula for reflected diffusions (Theorem \ref{thm:bel}); 
  \item[(ii)] Prove continuous dependence on the initial condition for the derivative processes, in the interior of the state space (Proposition \ref{thm:continuity-DZ});
  \item[(iii)] Characterize the value function and its gradient for drift control problems with reflections as the unique fixed point of a contractive functional operator (Theorem \ref{thm:fixed-point-tildeV});
  \item[(iv)] Develop an MLP estimator for the value function and gradient at a given time--state pair, with computational complexity growing polynomially in the problem dimension and in $1/\veps$, for prescribed error $\veps$ (Section \ref{sec:mlp}); and
  \item[(v)] Carry out numerical experiments that demonstrate computational feasibility and competitive empirical performance of the MLP scheme in dimension up to $20$ (Section \ref{sec:numerical}).
\end{itemize}

Let us also briefly remark that besides MLP, other competitive approaches, 
including several recently developed numerical methods that are deep-learning based, such as the deep backward stochastic differential equation (BSDE) \cite{HanJentzenE2018PNAS} and deep splitting methods \cite{BeckJentzen2019}, 
have shown strong empirical performances in solving high-dimensional PDEs \cite{HanJentzenE2018PNAS,BeckJentzen2019,HurePhamWarin2020,WeinanHanJentzen2021,AtaHarrisonSi2024,ata2025analysis,AtaKasikaralar2023}. 
Different from the MLP scheme developed in this paper, these methods are parametric and often require extensive engineering and hyperparameter tuning in order to work well. 

\subsection{Notation}\label{ssec:notation}
We summarize the notation used throughout the paper.
Let $\Z$ denote the set of integers, $\N$ the set of positive integers, 
$\R$ the set of real numbers, $\R_+$ the set of nonnegative real numbers, 
and $\R_{++}$ the set of positive real numbers. 
For two real numbers $\alpha_1$ and $\alpha_2$, we use $\alpha_1 \vee \alpha_2$ to denote the maximum of $\alpha_1$ and $\alpha_2$.
For $d \in \N$, $\R^d$ denotes the $d$-dimensional Euclidean space, 
$\R_+^d := \{x = (x_i)_{i=1}^d \in \R^d: x_i \geq 0 ~~\forall i\}$ denotes the nonnegative orthant, 
and $\R_{++}^d := \{x = (x_i)_{i=1}^d \in \R^d: x_i > 0 ~~\forall i\}$ denotes the positive orthant. 
For a vector $x \in \R^d$, we use $x^{\top}$ to denote its transpose, 
and $\|x\|_1$, $\|x\|_2$ and $\|x\|_\infty$ to denote its $\ell_1$ norm, Euclidean norm and $\ell_\infty$ norm, respectively. 
For vectors $x, y \in \R^d$, we use both $x^\top y$ and $\langle x, y \rangle$ to denote the inner product of $x$ and $y$. 
The $d$-dimensional vector of all ones is denoted by $\bOne_d$, and when the context is clear, we drop the subscript and use the notation $\bOne$. 
The $j$th standard unit vector in $\R^d$ is denoted by $e_j$, with all components being zero but the $j$th component being one, $j=1, \cdots, d$, 
For a matrix $A \in \R^{d\times K}$, $d, K \in \N$, we use $\tr(A)$ to denote the trace of $A$, 
$A^{\top}$ to denote its transpose, and, with a slight abuse of notation,
$\|A\|_2$ and $\|A\|_\infty$ to denote the operator norms induced by the Euclidean norm $\|\cdot\|_2$ 
and by the $\ell_\infty$-norm $\|\cdot\|_\infty$ on $\R^K$, respectively 
(see e.g., Definition 5.6.1 in Chapter 5 of \cite{HornJohnson2012}).
The indicator function is denoted by $\mbOne\{\cdot\}$. 
A function $f: \R_+ \to \R^d$ is called a $d$-dimensional path, with the interpretation that $f(t)$ is the state of path $f$ at time $t$. 
The space of c\`adl\`ag (or RCLL, which stands for right-continuous with left-limits) paths is denoted by $\D(\R^d)$, 
and the space of continuous paths is denoted by $\C(\R^d)$. 
We use $\C_+(\R_+^d)$ to denote the set of paths $f \in \C(\R^d)$ with $f(0) \in \R_+^d$. 
We also use $\D_{\text{lim}}(\R_+^d)$ to denote the set of paths $f$ that have finite left limits at all $t>0$ 
and finite right limits at all $t\geq 0$, and $\D_{l,r}(\R_+^d) \subset \D_{\text{lim}}(\R_+^d)$ 
to denote the set of paths $f \in \D_{\text{lim}}(\R_+^d)$ that are either left-continuous or right-continuous at all $t>0$. 
For $f \in \D_{\text{lim}}(\R_+^d)$ and $T > 0$, we define the sup norm of $f$ on $[0,T]$ by
\begin{equation}\label{eq:sup-norm}
  \|f\|_T := \sup_{t\in [0,T]} \|f(t)\|_\infty.
\end{equation}
For a domain $U \subset \R^d$, $C^k(U)$ denotes the space of continuous functions $u : U \to \R$ that are $\ell$-times continuously differentiable. 
We also use the notation $C^{1,2}([0,T]\times U)$ to denote the space of continuous functions $u : [0,T] \times U \to \R$ that are continuously differentiable in $t$ and twice continuously differentiable in $x\in U$ 
($T = \infty$ is allowed here), 
and $C^{1,1}(\R_+^d \times \cA)$ to denote the space of continuous functions $u : \R_+^d \times \cA \to \R$ that are continuously differentiable in $x\in \R_+^d$ and $a\in \cA$, 
for a domain $\cA \subset \R^K$ for some $K \in \N$. 
For a real-valued function $V: \R_+ \times \R_+^d \to \R$, if exist, the state gradient and Hessian of $V$ with respect to $x\in \R_+^d$ are $D_x V$ and $D_{xx} V$, respectively. 
For a $\R^d$-valued function $\tb: \R^{d_1} \to \R^{d_2}$, we use $D\tb(x)$ to denote the Jacobian matrix of $\tb$ at $x$, 
i.e., 
\[
D\tb(x) = \left( \frac{\partial \tb_i(x)}{\partial x_j} \right)_{i=1,\cdots,d_2, j=1,\cdots,d_1}.
\]

\subsection{Solution Approach Overview}\label{ssec:overview}
We present an overview of our solution approach, highlighting key challenges and technical contributions, 
without going into all technical details. 
We start with an informal description of the control problem. Let $T>0$ be fixed. 
The state $Z(\cdot)$ evolves according to the following $d$-dimensional diffusion process with reflections:
\begin{equation}\label{eq:state}
Z(t) = Z(0) + \int_0^t b\bigl(Z(s), a(s)\bigr)\,ds + \sigma B(t) + RY(t), \quad t \in [0,T],
\end{equation}
with $Z(0)$ being the initial state, $\sigma B(\cdot)$ a $d$-dimensional Brownian motion with zero drift and covariance matrix $\sigma \sigma^{\top}$, 
$R$ the reflection matrix, and $Y(\cdot)$ the Skorokhod regulator that keeps $Z(\cdot)$ in the nonnegative orthant; 
see Sections~\ref{sec:problem_description} and \ref{ssec:skorokhod} for details.
Here, $a(\cdot)$ is the control process taking values in a compact action set $\cA$, 
which can adjust the drift coefficient $b(\cdot, a(\cdot))$, and in turn, the state process $Z(\cdot)$.
For $(t,x) \in [0,T] \times \R_+^d$, we wish to compute the value function $V(t,x)$, defined as the infimum of 
$J(t,x,a)$ over all non-anticipating control processes $a(\cdot)$, 
where $J(t,x,a)$ is the total expected discounted cost over the time horizon $[t,T]$ starting from $Z(t)=x$, under control $a(\cdot)$:
\begin{equation}\label{eq:cost}
J(x,a) := \mathbb{E} \left[ \int_t^T e^{-\beta (s-t)}\, c\bigl(Z(s), a(s)\bigr)\,ds + \int_t^T e^{-\beta (s-t)}\, \kappa^{\top} dY(s)
+ e^{-\beta (T-t)}\, \xi(Z(T)) \Big| Z(t)=x \right].
\end{equation}
Here, $\beta$ is the discount rate, and the total cost consists of three components: the running cost $c$, 
which depends on both the state and control; the penalty costs $\kappa_j$, incurred whenever the $j$th component of the state 
hits zero; and the terminal cost $\xi$. 

{\em A priori}, it is unclear whether the value function $V(t,x)$ is differentiable in $x \in \mathbb{R}^d_+$. 
In the sequel we will show that under suitable conditions, this is the case, but assuming sufficient smoothness, 
central to the development of a MLP scheme is a fixed-point formulation of the (state) gradient of the value function $D_x V$ 
(see Eqs. \eqref{eq:fixed-point} and \eqref{eq:F}), which we now formally derive. 

First, the following formal parabolic HJB equation for the value function $V$ is standard:
\begin{equation}\label{eq:hjb1}
V_t(t,x) + \frac{1}{2}\mathrm{Tr}\left(\sigma\sigma^{\top} D_{xx}V(t,x)\right) + \cH\left(x, D_x V(t,x)\right) - \beta V(t,x) = 0, 
\quad (t,x) \in [0,T) \times \mathbb{R}^d_+,
\end{equation}
with terminal condition
\begin{equation}\label{eq:hjb2}
V(T,x) = \xi(x), \quad x \in \mathbb{R}^d_+, 
\end{equation}
and oblique derivative boundary conditions 
\begin{equation}\label{eq:hjb3}
\langle R_j, D_x V (t,x) \rangle = -\kappa_j, \quad \text{if } x_j = 0, t<T.
\end{equation}
The {\em Hamiltonian} $\cH$ is given by  
\begin{equation}\label{eq:hamiltonian}
\cH(x,p) := \inf_{a \in \mathcal{A}} \left\{ (b(x,a)^{\top} p + c(x,a) \right\}, \quad (x,p) \in \mathbb{R}^d_+ \times \mathbb{R}^d.
\end{equation}
It is useful to note that given a function $\tb : \R_+^d \to \R^d$, the following equation is formally equivalent to Eq. \eqref{eq:hjb1}:
\begin{equation}\label{eq:hjb1-tilde}
    V_t(t,x) + \frac{1}{2}\mathrm{Tr}\left(\sigma\sigma^{\top} D_{xx}V(t,x)\right) + \tb(x)^{\top} D_x V(t,x) + \tilde\cH\left(x, D_x V(t,x)\right) - \beta V(t,x) = 0, 
\end{equation}
where the Hamiltonian $\tilde\cH$ is given by
\begin{equation}\label{eq:hamiltonian-tilde}
    \tilde \cH(x,p) := \inf_{a \in \mathcal{A}} \left\{ (b(x,a) - \tb(x))^{\top} p + c(x,a) \right\}.
\end{equation}
Second, assuming that $V$ is sufficiently smooth, given $t \in [0,T)$ and $x \in \mathbb{R}^d_+$, we apply Itô's lemma for reflected diffusions (see, e.g., \cite{HarrisonReiman81}) 
to the function $(s,z) \mapsto e^{-\beta (s-t)} V(s, z)$ 
over the reference process $\tilde Z^{t,x}$ to obtain the following Feynman-Kac representation for $V(t,x)$, making use of 
the HJB equation \eqref{eq:hjb1-tilde}, \eqref{eq:hjb2}, and \eqref{eq:hjb3}:
\begin{align}
    V(t,x) = &~\mathbb{E} \left[ \int_t^T e^{-\beta(s-t)} \tilde\cH\left(\tilde Z^{t,x}(s), D_x V(s, \tilde Z^{t,x}(s))\right) ds \right. \nonumber \\
    &~~+ \left.\int_t^T e^{-\beta(s-t)} \kappa^{\top} d \tilde Y^{t,x}(s) + e^{-\beta(T-t)}\xi(\tilde{Z}^{t,x}(T)) \right],
    \label{eq:feynman-kac-tilde}
\end{align}
where the process $\tilde Z^{t,x}$ is a reflected diffusion with drift term $\tb$ and diffusion coefficient $\sigma$ starting with $\tilde Z^{t,x}(t)=x$, 
i.e., it satisfies the following stochastic differential equation (SDE) with reflections:
\begin{equation}\label{eq:Z-t-x}
    \tilde{Z}^{t,x}(s) = x + \int_t^s \tb(\tilde{Z}^{t,x}(r)) dr + \sigma (B(s) - B(t)) + R \tilde{Y}^{t,x}(s), \quad s \in [t,T],
\end{equation}
and $\tilde Y^{t,x}$ is the Skorokhod regulator of the process $\tilde Z^{t,x}$. 

Third, to derive a fixed-point characterization of the value function gradient $D_x V$ from the Feynman-Kac representation \eqref{eq:feynman-kac-tilde}, 
which relates $V$ and $D_x V$, we formally differentiate both sides of Eq. \eqref{eq:feynman-kac-tilde} with respect to $x$ to obtain $D_x V$ on the left-hand side. 
On the right-hand side, instead of applying the chain rule, we apply a Bismut-Elworthy-Li type formula for reflected diffusions (see Section \ref{sec:bel} for details), 
and introduce a Malliavin weight of the form 
\begin{equation}\label{eq:Malliavin-weight}
\frac{1}{s-t} \int_t^s \left[\sigma^{-1} D \tilde{Z}^{t,x}(s)\right]^{\top} dB(s)
\end{equation}
to the right-hand side of Eq. \eqref{eq:feynman-kac-tilde}. 
Here, $D \tilde{Z}^{t,x}$ is the derivative process of $\tilde{Z}^{t,x}$ with respect to $x$, 
which takes values in $\R^{d\times d}$ and captures sensitivity of $\tilde{Z}^{t,x}$ with respect to changes in $x$; 
see Section \ref{sec:tech} for precise definitions and detailed properties.

The end result is the following formal, functional fixed-point equation for $D_x V$:
\begin{equation}\label{eq:fixed-point}
    D_x V(t,x) = F(D_x V)(t,x), \quad (t,x) \in [0,T) \times \mathbb{R}^d_+,
\end{equation}
where the functional map $F$ is formally given by
\begin{align}
   & F(v)(t,x) :=~\mathbb{E} \Bigg[ \int_t^T e^{-\beta(s-t)} \tilde \cH \left( \tilde{Z}^{t,x}(s), v(s, \tilde{Z}^{t,x}(s))\right) \left(\frac{1}{s - t} \int_t^s \left[\sigma^{-1} D \tilde{Z}^{t,x}(r)\right]^{\top} dB(r)\right) ds \nonumber \\
    & ~~~~ + \int_t^T e^{-\beta(s-t)} \kappa^{\top} d\left[D\tilde Y^{t,x}(s)\right] + e^{-\beta(T-t)}\xi(\tilde{Z}^{t,x}(T)) \left( \frac{1}{T - t} \int_t^T \left[\sigma^{-1} D \tilde{Z}^{t,x}(r)\right]^{\top} dB(r) \right) \Bigg].
    \label{eq:F}
\end{align}
Here, $D\tilde Y^{t,x}$ is the derivative process of $\tilde Y^{t,x}$ with respect to $x$; also see Section \ref{sec:tech} for details. 
Note the insertions of Malliavin weights of the form \eqref{eq:Malliavin-weight} when comparing the definition of $F$ in Eq. \eqref{eq:F} 
with the Feynman-Kac representation in Eq. \eqref{eq:feynman-kac-tilde}. 

So far, we have argued that if exists, $D_x V$ should formally satisfy the functional fixed-point equation Eq. \eqref{eq:fixed-point}. 
Using properties of the derivative processes $D \tilde{Z}^{t,x}$, 
we are able to prove that $F$ is a well-defined contraction mapping from an appropriate Banach space of functions into itself; 
see Proposition \ref{prop:F-contraction}. It then follows from Banach's fixed-point theorem that the mapping $F$ has a unique fixed point, 
and that, assuming $D_x V$ exists and coincides with that fixed point, it can be approximated by 
Picard iterations of the form $v^{(n)} = F(v^{(n-1)})$, $n \in \mathbb{N}$. 
Because it is computationally challenging to directly implement the iterative scheme (see, e.g., discussion in Section 2.1 of \cite{hutzenthaler2019multilevel}), 
we rewrite the iteration $v^{(n)} = F(v^{(n-1)})$ as a telescopic sum, 
so that for a given $(t,x) \in [0,T) \times \mathbb{R}^d_+$, we have
\begin{equation}\label{eq:telescopic-sum}
    v^{(n)}(t,x) = F(v^{(0)})(t,x) + \sum_{\ell=1}^{n-1} \left[F(v^{(\ell)}) - F(v^{(\ell-1)})\right](t,x), 
\end{equation}
and approximate the differences $\left[F(v^{(\ell)}) - F(v^{(\ell-1)})\right](t,x)$ 
by a combination of Monte Carlo simulations and further Picard iterations; 
the reason for further iterations is that $v^{(\ell)}$ and $v^{(\ell-1)}$ are not {\em a priori} known, so they themselves need to be approximated 
at random time-state pairs, and so on and so forth. 
In this sense, MLP is a recursive scheme that requires full history of all earlier levels 
when estimating each of the differences $\left[F(v^{(\ell)}) - F(v^{(\ell-1)})\right](t,x)$. 
A key insight from the MLP literature is that as the level $\ell$ increases, because of the contractive property of $F$, 
the difference $\left[F(v^{(\ell)}) - F(v^{(\ell-1)})\right]$ becomes exponentially smaller under appropriate norms, 
so a smaller number of samples suffice to approximate the difference well. 
By carefully choosing geometrically decreasing numbers of samples to use at larger levels, it can be established that 
under appropriate conditions, the total number of samples required to approximate the fixed point of $F$ at a given time-state pair $(t,x)$
to within $\veps$ prescribed error is polynomial in $d$ and $1/\veps$; see Section \ref{sec:mlp} for details. 

To summarize, we have (a) outlined a formal argument deriving the functional fixed-point equation Eq. \eqref{eq:fixed-point} for $D_x V$, 
and (b) described the high-level intuitions behind the MLP scheme for approximating the unique fixed point at a given time-state pair $(t,x)$, 
but an important missing link is to rigorously establish that $D_x V$ is well-defined and is none other than the unique fixed point of $F$.
A standard approach is to use the viscosity solution paradigm \cite{FlemingSoner2006,CrandallIshiiLions1992}, but techniques in existing literature cannot be directly applied, 
because they rely on the fact that unconstrained diffusions have continuous sample paths, and that their derivative processes depend continuously on initial conditions, 
both properties of which do not hold for reflected diffusions. 
To tackle these challenges, we develop two technical tools that are, to the best of our knowledge, new: 
First, a Bismut-Elworthy-Li formula for reflected diffusions, 
and second, continuous dependence of the derivative process on the initial condition, 
in the interior of the state space. 
Using these tools, we are able to prove that the antiderivative of the unique fixed point of $F$ 
is a viscosity solution of the HJB equation Eq. \eqref{eq:hjb1} -- \eqref{eq:hjb3}. 
By standard arguments, it can also be established that the value function $V$ is a viscosity solution, 
and that viscosity solutions are unique, so $V$ must be differentiable and coincides with the antiderivative of the unique fixed point of $F$.
Through these results, we fully justify the use of the MLP scheme to approximate $D_x V$ and $V$.

\subsection{Organization}\label{ssec:organization} 
The rest of the paper is organized as follows. 
Section~\ref{sec:related} reviews related literature on drift controls, MLP methods for high-dimensional PDEs, and reflected diffusions and their derivative processes.
Section~\ref{sec:problem_description} provides a complete problem description, together with several standing assumptions used throughout the paper. 
Section~\ref{sec:tech} collects technical preliminaries on the Skorokhod problem, reflected diffusions and their derivative processes. 
Here we recall fundamental concepts and properties established in prior literature, and develop some new results that are used later for our analysis. 
We also briefly recall the definition of viscosity solutions for parabolic PDEs with oblique derivative boundary conditions at the end of the section. 
We state the first main result of the paper, which provides a fixed-point characterization of the value function gradient, in Section \ref{sec:fixed-point-equation}. 
Then, in Section \ref{sec:mlp}, we develop the multilevel Picard method, and state the sample complexity bounds, our second main result. 
We conduct a set of numerical experiments to demonstrate the effectiveness of the MLP method in Section \ref{sec:numerical}. 
The rest of the paper is devoted to proofs of various results: 
Section~\ref{sec:bel} establishes a Bismut--Elworthy--Li formula for reflected diffusions, and 
Section~\ref{sec:viscosity-solutions} establishes the fixed point of $F$ as a viscosity solution of the HJB equation. 
We conclude the paper in Section \ref{sec:conclusion} with a brief summary and some future directions. 
Further proofs, implementation details of the MLP method, and numerical results are contained in the Appendix. 

\section{Literature Review}\label{sec:related}
The following streams of literature are most relevant to our work: (i) drift control problems; (ii) multilevel Picard approximations for solving PDEs; 
and (iii) reflected diffusions and derivative processes. We also briefly review related literature on dynamic control of queueing networks in heavy traffic 
in our discussion of drift control problems. 

\paragraph{Drift control problems.} Several works consider one-dimensional drift control problems. 
Due to their analytical tractability, it is often possible to derive closed-form solutions that can provide useful insights. 
Motivated by applications to power control in wireless communication, \cite{AtaHarrisonShepp2005} derives optimal policies in closed form 
for a one-dimensional drift control problem on a bounded interval with general control costs but zero cost on states. 
Extensions to the model in \cite{AtaHarrisonShepp2005} are analyzed in \cite{GhoshWeerasinghe2007,GhoshWeerasinghe2010}. 
\cite{OrmeciMatogluVandeVate2011,VandeVate2021} consider models in which fixed costs are incurred to change the drift, and derive optimality of control-band policies, 
with applications to capacity management in a build-to-order environment. 
Drift control problems also see applications to make-to-order manufacturing with cancellations \cite{RubinoAta2009}, 
staffing of volunteer gleaners \cite{AtaLeeSonmez2019}, and foreign reserve holdings \cite{BarIlanMarionPerry2007}. 
Our work focuses on multidimensional settings where closed-form solutions are usually unavailable. 

There is also extensive literature on multi-dimensional drift control problems, with and without state constraints. Motivated by applications to service systems, 
\cite{atar2004scheduling, harrison2004dynamic} analyze a queueing system with multiple customer classes and a single pool of servers that are capable of serving all customer types. 
Both papers study a formal diffusion approximation to the dynamic control problem of the queueing system, in a so-called Halfin-Whitt regime, and derive structural properties 
of the associated HJB equation, such as solution regularity. The diffusion approximation is a multi-dimensional drift control problem with no state constraints, 
i.e., the state space is the entire Euclidean space. In addition, 
\cite{atar2004scheduling} proves asymptotic optimality of scheduling policies that are derived based on solutions to the HJB equation in the diffusion limit, 
and \cite{harrison2004dynamic} carried out numerical experiments in dimension $2$. 
\cite{Atar2005} considers a significant generalization of the drift control problem in \cite{atar2004scheduling, harrison2004dynamic}, 
which arises as the formal diffusion approximation of a queueing system 
with multiple customer classes and multiple server pools, and derive and study properties of the corresponding HJB equation. 
\cite{AtaKasikaralar2023} considers a model that is similar to that considered in \cite{atar2004scheduling, harrison2004dynamic}, and 
develops a simulation-based computational method to solve the associated HJB equation, for problems up to dimension $500$. 

Drift control problems with state constraints arise across domains, notably in queueing theory and mathematical finance. 
As noted in the introduction, portfolio management often imposes a solvency constraint, 
yielding control problems with absorbing boundaries in which the state process is stopped upon hitting the boundary. 
\cite{DaiZhong2010} develops a penalty method for portfolio problems with proportional transaction costs: 
The associated singular control problem, in which system states may be instantaneously displaced, is approximated
by drift control problems, the latter of which are then solved by finite differences; 
numerical results are reported for problems with dimension up to $2$. 
In queueing theory, a primary motivation for our work, job or customer counts are nonnegative, 
so servers must idle when the corresponding buffers are empty. 
In heavy-traffic limits, diffusion approximations of such systems under fixed scheduling policies often lead to reflected diffusions \cite{Reiman1984,ChenYao2001,Williams1998}, 
where a regulator process keeps the state within the feasible region; 
server idling corresponds, roughly speaking, to the boundary local time. 
When dynamic controls are allowed, the corresponding diffusion approximations are often drift controls with reflections (see, e.g., \cite{budhiraja2011ergodic, borkar2004ergodic}), 
or singular controls with reflections (see, e.g., \cite{atar2006singular,PesicWilliams2016,HarrisonVanMieghem1997,Harrison2000,Harrison2006Corr}). 
In the ergodic setting, \cite{borkar2004ergodic} analyzes the viscosity solution properties of the value function, 
while \cite{budhiraja2011ergodic} justifies the diffusion approximation by proving convergence of the value functions in the heavy-traffic limit. 
In another stream of literature (see, e.g., \cite{HarrisonVanMieghem1997,Harrison2000,Harrison2006Corr}), 
Brownian control problems (BCPs) are developed to approximate queueing network control problems in heavy traffic, 
which can often be reduced to a lower-dimensional equivalent workload formulation (EWF). 
These EWFs are typically singular control problems constrained to remain in a convex polyhedral domain with oblique reflection at the boundary. 
Similar to drift controls that arise as approximations to singular controls in portfolio management problems, 
drift controls with reflections can serve as tractable proxies for EWFs. 

Much of prior works on drift controls with reflections focuses on their theoretical properties. 
More recently, \cite{AtaHarrisonSi2024} develop a simulation-based computational method 
to solve high-dimensional drift control problems with reflections up to problem dimension at least $30$, building on the deep BSDE method 
pioneered in \cite{HanJentzenE2018PNAS}. Their problem setting is similar to ours, but differ substantially in the solution approaches. 
First, the method developed in \cite{AtaHarrisonSi2024} is deep-learning based, hence parametric, 
but ours is non-parametric. Second, we focus on sample complexity guarantees for value function and gradient estimates with prescribed accuracy, 
whereas \cite{AtaHarrisonSi2024} follows a ``benchmarking'' approach that focuses on matching optimal performances or beating existing benchmarks. 
Finally, a key methodological difference is that \cite{AtaHarrisonSi2024} relies on a BSDE representation involving both the value function and its gradient, 
while we rely on a functional fixed-point equation that builds on a Feynman-Kac representation and a Bismut-Elworthy-Li type formula. 

As mentioned, drift control problems are closely related to singular control problems. Indeed, many singular control problems can be approximated 
by drift controls \cite{DaiZhong2010,menaldi1989optimal,williams1994regularity}, opening the door for numerical solutions to singular control problems 
via those of the approximating drift controls; see \cite{ata2024singular} for a computational method that can successfully solve high-dimensional singular control problems 
with state constraints, which is based on solving the approximating drift controls. 
In this vein, our method may potentially be used to produce useful benchmarks and/or approximate solutions for related singular control problems with reflections; see, e.g., \cite{atar2006singular,PesicWilliams2016}.

\paragraph{Multilevel Picard approximation.} 
Our work builds on the MLP framework to solve drift control problems with reflections, 
so it also contributes to and is related to the rapidly expanding literature on using multilevel Picard approximations to 
solve high-dimensional PDEs; see \cite{giles2019generalised,beck2020overcoming,becker2020numerical,hutzenthaler2019multilevel,hutzenthaler2021multilevel,hutzenthaler2022multilevel,hutzenthaler2022multilevel1,HJKN2020,HJKNW2020,NW2022}. 
The MLP framework delicately combines several powerful ideas in and of themselves, 
including nonlinear Feynman-Kac representations \cite{PardouxPeng1990,Peng1991,PardouxPeng1992}, Bismut-Elworthy-Li type identities in the case of gradient-dependent nonlinearities \cite{Bismut1984,ElworthyLi1994}, 
Picard iterations for BSDEs (see, e.g., Chapter 7 of \cite{YongZhou1999}), and multilevel Monte Carlo simulations \cite{Giles2008,Giles2015}. 
MLP methods admit rigorous convergence and complexity analysis and, under standard Lipschitz-type assumptions, overcome the curse of dimensionality in that computational cost grows at most polynomially in the dimension and $1/\veps$, 
where $\veps$ is the prescribed accuracy. 
Earlier works focus on semilinear PDEs in which the nonlinearity does not depend on the gradient. 
Our work is more closely related to recent works that address gradient-dependent nonlinearities, 
including \cite{HJK2022,HK2020,NeufeldNguyenWu2025,NeufeldWu2025}. 
Compared to these latter works, a major challenge for us is addressing the discontinuities introduced by the reflection mechanism; 
see, e.g., discussion in Section \ref{sec:introduction}. This requires us to delve deeply into the subject of derivative processes 
for reflected diffusions on polyhedral domains \cite{LipshutzRamanan2018,LipshutzRamanan2019a,LipshutzRamanan2019b}, a topic of substantial interest in its own right, 
derive new results on these processes, and establish key building blocks in the analysis of the MLP scheme using new techniques, 
such as the Bismut-Elworthy-Li type formula for reflected diffusions and properties of a contractive functional operator whose fixed point 
characterizes the value function and its gradient. Through these innovations, we extend the MLP framework to 
include sample complexity guarantees for semilinear HJB equations with oblique derivative boundary conditions, 
and, under appropriate conditions, establishes that MLP overcomes the curse of dimensionality. 
Finally, one related work that addresses reflections on the boundary in the MLP literature is \cite{BoussangeBeckerJentzenKuckuckPellissier2023}. 
Different from our work, they focus on normal reflections on a smooth boundary, and they do not consider gradient-dependent nonlinearities.

\paragraph{Reflected diffusions and derivative processes.} 
The development and analysis of our MLP scheme rely crucially on reflected diffusions and on their sensitivity 
with respect to data that define them. 
Foundational results on reflected diffusions establish existence and/or uniqueness, where the boundary of the state space may be smooth or non-smooth, 
and the directions of reflections may be normal or oblique 
\cite{Skorokhod1961, Skorokhod1962, Costantini1992, DupuisIshii1993, DupuisIshii2008corr, HarrisonReiman1981, LionsSznitman1984, 
Saisho1987, StroockVaradhan1971, Tanaka1979, TaylorWilliams1993, VaradhanWilliams1985}. 
A central object underpinning this theory is the deterministic Skorokhod map, 
whose fundamental properties -- most notably Lipschitz continuity -- have been extensively developed in \cite{DupuisIshii1991, DupuisRamanan1999I, DupuisRamanan1999II, HarrisonReiman81,REF}. 
Sensitivity properties of reflected diffusions have been analyzed more recently in \cite{MandelbaumRamanan2010,LipshutzRamanan2018,LipshutzRamanan2019a,LipshutzRamanan2019b,Andres2009,DeuschelZambotti2005}. 
We focus on reflected diffusions in the nonnegative orthant, with oblique reflections on the boundary \cite{HarrisonReiman1981}, 
and, in particular, notions of derivatives with respect to infinitesimal perturbations to the driving data and the initial condition. 
\cite{MandelbaumRamanan2010} proves the existence of directional derivatives of Skorokhod maps in the nonnegative orthant with oblique reflections, 
and characterize the types of path discontinuities that arise. Because that approach is not the most convenienct for our setting, 
we adopt the axiomatic framework developed in \cite{LipshutzRamanan2018,LipshutzRamanan2019b}, 
which characterize derivative processes as right-continuous regularizations of directional derivatives of reflected diffusions. 
\cite{LipshutzRamanan2018,LipshutzRamanan2019b} further show that if the Skorokhod map satisfies a certain boundary jitter property -- 
which holds in our problem class -- then derivative processes exist and are pathwise unique. 
We use properties of derivative processes established in \cite{LipshutzRamanan2018,LipshutzRamanan2019b}, 
together with new properties proved in this paper, to analyze our MLP scheme and associated stochastic functional fixed-point equation. 
Related developments include \cite{DeuschelZambotti2005}, which studies derivative processes in the orthant with normal reflections, 
and \cite{Andres2009}, which extends \cite{DeuschelZambotti2005} to convex polyhedron with oblique reflections, up to the first hitting time to the non-smooth portion of the boundary.

\section{Problem Description}\label{sec:problem_description}
Fix a probability space $(\Omega, \mathcal{F}, \mathbb{P})$ with sample space $\Omega$, $\sigma$-algebra $\mathcal{F}$,  probability measure $\mathbb{P}$ 
and filtration $\{\mathcal{F}_t : t \in \mathbb{R}_+\}$.  
Suppose that the probability space $(\Omega, \mathcal{F}, \mathbb{P})$ is complete, and the filtration $\{\mathcal{F}_t\}$ satisfies the usual conditions 
in the sense of e.g., Definition~1.2.25 in Karatzas and Shreve (1998);  
that is, $\mathcal{F}_0$ contains all $\mathbb{P}$-null sets in $\mathcal{F}$, and $\{\mathcal{F}_t : t \in \mathbb{R}_+\}$ is right-continuous.
Let $B = \{B(t), t \in \mathbb{R}_+\}$ be a standard $d$-dimensional $\{\cF_t\}$-adapted Brownian motion defined on the probability space $(\Omega, \mathcal{F}, \mathbb{P})$.

Let $T \in (0, \infty)$. Given the problem dimension $d \in \N$, 
we consider a multi-dimensional drift rate stochastic control problem with state space $\mathbb{R}_+^d$ over a finite time horizon $[0, T]$. 
The state $Z(\cdot)$ evolves according to Eq. \eqref{eq:state}: 
\begin{equation*}
Z(t) = Z(0) + \int_0^t b\bigl(Z(s), a(s)\bigr)\,ds + \sigma B(t) + RY(t). 
\end{equation*}
Here, the initial state is $Z(0) \in \mathbb{R}_+^d$, the control process $a(\cdot)$ is a $K$-dimensional $\{\mathcal{F}_t\}$-adapted process 
with action space $\mathcal{A} \subset \mathbb{R}^d$, so that for all $t\geq 0$, $a \in \mathcal{A}$, 
the function $b : \mathbb{R}_+^d \times \mathbb{R}^K \to \mathbb{R}^d$ is the drift coefficient, 
the matrix $\sigma \in \mathbb{R}^{d\times d}$ is the constant diffusion coefficient, 
the matrix $R \in \mathbb{R}^{d\times d}$ is the so-called reflection matrix, 
and $Y = \{Y(t) : t\in \mathbb{R}_+\}$ is a $d$-dimensional $\{\mathcal{F}_t\}$-adapted regulator process that ensures a.s. $Z(t) \in \mathbb{R}_+^d$ for all $t\in \mathbb{R}_+$; 
we describe the process $Y$ in more detail in the sequel.
Here, though we do not explicit spell out the dependence of the problem data ($b$, $\sigma$, $R$, etc) on $d$, 
we are implicitly considering a sequence of drift control problems with reflections, indexed by the problem dimension $d$.
The following conditions are assumed on $b$, $\mathcal{A}$, $\sigma$, $R$ and $Y$, respectively, throughout the paper. 
\begin{assumption}\label{as:Theta}
The action space $\mathcal{A}$ is a compact subset of $\R^K$. Furthermore, 
there exists a constant $C_\mathcal{A} > 0$, independent of the dimension $K$, such that for all $a \in \mathcal{A}$, 
\begin{equation}
\|a\|_\infty \leq C_\mathcal{A}.
\end{equation}
\end{assumption}
\begin{assumption}\label{as:b}
There exists a constant $C_b > 0$, independent of $d$, such that
\begin{equation}
\| b(x, a) \|_\infty \leq C_b, \quad \text{ for all } (x, a) \in \mathbb{R}_+^d \times \mathcal{A}, 
\end{equation}
and for all $x, y$ and $a \in \mathcal{A}$, 
\begin{equation}
\|b(x,a) - b(y, a)\|_\infty \leq C_b \|x-y\|_\infty.
\end{equation}
\end{assumption}

\begin{assumption}\label{as:sigma} 
The covariance matrix $\sigma \sigma^{\top}$ is uniformly elliptic in the following sense.  
There exists a constant $C_\sigma > 0$, 
independent of $d$, such that for all vectors $y \in \mathbb{R}^d$,
\[
 C_\sigma^{-1} \|y\|_2^2 \le y^{\top} \sigma \sigma^{\top} y \le C_\sigma \|y\|_2^2.
\]
In other words, $\|\sigma\|_2 \le C_\sigma$ and $\|\sigma^{-1}\|_2 \le C_\sigma$.
\end{assumption}

\begin{assumption}\label{as:R}
The reflection matrix $R$ takes the following form. 
$R = I-Q^{\top}$, where $I$ is the $d\times d$ identity matrix, 
and $Q = (q_{ij})_{i,j=1}^{d}$ is a nonnegative $d \times d$ matrix with zero diagonal entries and all row sums being no larger than $1$. 
In other words, $q_{ii} = 0$ and $\sum_{j=1}^d q_{ij} \le 1$ for all $i = 1,\dots,d$. 
Furthermore, we assume the following uniform contraction condition on $Q$ (cf. \cite{BlanchetChenSiGlynn2021}): There exists $\delta \in (0, 1)$ 
and $\alpha_0 \in (0, \infty)$, both of which are independent of $d$, such that 
\begin{equation}\label{eq:Q-contraction}
\|\bOne^{\top} Q^n\|_\infty \leq \alpha_0 (1-\delta)^n, \quad n\geq 1.
\end{equation}
\end{assumption}
The state process $Z(\cdot)$ is constrained to stay within the nonnegative orthant $\mathbb{R}_+^d$ via the regulator process $Y$, 
where, for each $j = 1, \cdots, d$, $Y_j = \{Y_j(t) : t\in \R_+\}$ is a non-decreasing process with $Y_j(0) = 0$, and $Y_j$ only increases when $Z_j$ is zero, i.e.,
\( \int_0^T \mathbbm{1}_{\{Z_j(t) > 0\}}\,dY_j(t) = 0 \).
In other words, under a control process $a(\cdot)$, if $Z$ solves the reflected stochastic differential equation \eqref{eq:state}, 
and if we define the process $X$ by
\begin{equation}
X(t) = Z(0) + \int_0^t b\bigl(Z(s), a(s)\bigr)\,ds + \sigma B(t), 
\end{equation} 
then $(Z,Y)$ solves the Skorokhod problem for $X$ (cf. Section \ref{ssec:skorokhod}). 

We are interested in minimizing the total expected discounted cost incurred over the time horizon $[0, T]$. 
More specifically, let $\beta \geq 0$ be the discount rate. 
Let $c : \mathbb{R}_+^d \times \mathcal{A} \to \mathbb{R}_+$ 
be the running cost function, and let 
$\xi : \mathbb{R}_+^d \to \mathbb{R}_+$ be the terminal cost function. 
Furthermore, for each $j = 1, \cdots, d$, whenever the regulator process $Y_j$ needs to increase to keep $Z_j$ nonnegative, a ``pushing'' penalty is incurred at rate $\kappa_j \geq 0$.

Let $t \in [0, T]$.  
Under a control process $a(\cdot)$, the total expected discounted cost-to-go incurred from time $t$ to $T$, 
conditioned on $Z(t) = x$, is
\begin{equation}\label{eq:J(x,t,theta)}
J(t,x,a) := \mathbb{E} \left[ \int_t^T e^{-\beta (s-t)}\, c\bigl(Z(s), a(s)\bigr)\,ds + \int_t^T e^{-\beta (s - t)}\, \kappa^{\top} dY(s)
+ e^{-\beta (T - t)}\, \xi(Z(T)) \Big| Z(t)=x \right].
\end{equation}
Define the value function \( V : [0,T] \times \mathbb{R}_+^d \to \mathbb{R} \) by
\begin{equation}\label{eq:V}
V(t,x) := \inf_{a(\cdot)} J(t,x,a).
\end{equation}
Our goal is to efficiently compute the value function $V(t,x)$ for given $(t,x) \in [0, T]\times \R_+^d$.

To ensure that the cost-to-go and value functions are well-defined, we assume that the cost functions $c$ and $\xi$ satisfy certain growth conditions. 
\begin{assumption}\label{as:cost-poly}
Both the running cost function $c$ and the terminal cost function $\xi$ are continuously differentiable, i.e., $c \in C^{1,1}(\R_+^d \times \cA)$, and $\xi \in C^1(\R_+^d)$.
Furthermore, $c$ and $\xi$ satisfy the following polynomial growth condition: 
There exists positive constants $\alpha_w$ and $\alpha_0 \geq 1$,  
both of which are independent from the problem dimension $d$, 
such that for all $x \in \R_+^d$ and $a \in \cA$, 
\begin{equation}\label{eq:c-xi-poly}
|c(x, a)| \le \alpha_w \left(1+(\log d)^{\alpha_0/2} + \|x\|_\infty^{\alpha_0}\right), \text{ and }
|\xi(x)| \le \alpha_w \left(1+(\log d)^{\alpha_0/2} + \|x\|_\infty^{\alpha_0}\right).
\end{equation}
\end{assumption}
Let $\alpha_w$ and $\alpha_0$ be as in Assumption \ref{as:cost-poly}. 
Define the weight function $w : \R_+^d \to \R$ by
\begin{equation}\label{eq:w}
w(x) = 1+(\log d)^{\alpha_0/2} + \|x\|_\infty^{\alpha_0}.
\end{equation}
Then, under Assumption \ref{as:cost-poly}, for all $(x, a) \in \R_+^d \times \cA$, we have
\begin{equation}\label{eq:c-xi-w}
|c(x, a)|\le \alpha_w w(x), \text{ and } |\xi(x)| \le \alpha_w w(x).
\end{equation}
Let us briefly remark that if the cost functions $c$ and/or $\xi$ do not initially satisfy Assumption \ref{as:cost-poly}, they can often be scaled to satisfy the assumption. 
For example, if $c(x,a) = h^\top x$ is a linear function of $x$, with $\|h\|_\infty$ being uniformly bounded in $d$, we can scale $c(x,a)$ 
by a factor of $1/d$ so it satisfies the assumption. 

\section{Technical Preliminaries}\label{sec:tech}

In this section, we introduce reflected diffusions and their associated derivative processes, 
and recall some useful properties established in prior works \cite{LipshutzRamanan2018,LipshutzRamanan2019a,LipshutzRamanan2019b}. 
To do so, we need to introduce the deterministic Skorokhod problem (Section \ref{ssec:skorokhod}), 
the concept of directional derivatives (Section \ref{ssec:directional-derivatives}) and
the derivative problem (Section \ref{ssec:deriv-prob}). 
We also prove some new results, specifically, Proposition \ref{prop:moment-Z}, Corollary \ref{cor:moment-w}, Lemma \ref{lem:bound-DZ}, 
Proposition \ref{thm:continuity-DZ}, and Corollary \ref{cor:conv-bel}, 
which are useful in later sections. 
For readability, their proofs are deferred to Appendix~\ref{app:tech-proofs}.
Finally, our last subsection here recalls the definition of viscosity solution for the parabolic HJB equation \eqref{eq:hjb1} -- \eqref{eq:hjb3}.

\subsection{Skorokhod Problem}\label{ssec:skorokhod}

\begin{definition}[Skorokhod Problem]\label{def:skorokhod}
Given \( f \in \C_+(\mathbb{R}^d) \) and reflection matrix \( R \), the pair of functions \( (h,g) \in \C(\mathbb{R}_+^d)\times \C(\mathbb{R}_+^d) \) 
solves the {\em Skorokhod problem} (associated with \( R \)) for input \( f \), if the following properties hold:
\begin{itemize}
  \item[(i)] \( h(t) = f(t) + Rg(t) \in \mathbb{R}_+^d \) for all \( t \ge 0 \);
  \item[(ii)] For each \( j = 1, \cdots, d \), \( g_j \) is a non-decreasing function with \( g_j(0) = 0 \);
  \item[(iii)] \( g_j \) increases only when \( h_j \) is zero, i.e.
  \(
  \int_0^{\infty} \mathbbm{1}_{\{h_j(t) > 0\}}\, dg_j(t) = 0, j = 1, \cdots, d.
  \)
\end{itemize}
\end{definition}

The seminal paper \cite{HarrisonReiman1981} establishes that for any \( f \in \C_+(\mathbb{R}^d) \),  
there exists a unique solution \( (h, g) \) to the Skorokhod problem for \( f \). 
Thus, the Skorokhod map $\Gamma :  \C_+(\mathbb{R}^d) \to \C(\mathbb{R}_+^d) $ defined by 
\( \Gamma: f \mapsto h \) is well-defined.
It is also well known that the Skorokhod map \( \Gamma \) is Lipschitz continuous \cite{borkar2004ergodic,HarrisonReiman1981,REF,DupuisIshii1991,DupuisRamanan1999I,DupuisRamanan1999II}. 
Furthermore, by Lemma 3 of \cite{BlanchetChenSiGlynn2021}, under Assumption \ref{as:R}, the Lipschitz constant does not grow with $d$.

\begin{proposition}[Lemma 3 of \cite{BlanchetChenSiGlynn2021}]\label{prop:skorokhod-lipschitz}
There exists a constant \( C_\Gamma > 0 \), which is independent of the problem dimension $d$, 
such that if \( (h_i, g_i) \) is a solution to the Skorokhod problem for \( f_i \in \C_+(\mathbb{R}^d) \), \( i = 1,2 \), then for all \( t \in [0,\infty) \),
\(
\| h_1 - h_2 \|_t + \| g_1 - g_2 \|_t \le C_\Gamma \| f_1 - f_2 \|_t.
\)
\end{proposition}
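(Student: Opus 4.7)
The plan is to bound the regulator difference $\|\Delta g\|_t := \|g_1 - g_2\|_t$ by a dimension-independent multiple of $\|\Delta f\|_t$, and then read off the bound on $\|h_1 - h_2\|_t$ from the identity $\Delta h = \Delta f + R \Delta g$. The second step is essentially free: since $R = I - Q^\top$, the induced $\ell_\infty$ operator norm of $R$ is at most $1 + \|\bOne^\top Q\|_\infty$, and Assumption \ref{as:R} at $n=1$ yields $\|R\|_\infty \le 1 + \alpha_0(1-\delta)$, uniformly in $d$. The whole work therefore goes into $\|\Delta g\|_t$.

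For the componentwise bound on $\Delta g$, I would exploit the coordinate-wise structure of the oblique reflection. Fix a coordinate $j$ and write $h = f + g - Q^\top g$, so that the $j$th component reads $h_j(t) = \tilde f_j(t) + g_j(t)$ with effective input
\[
\tilde f_j(t) := f_j(t) - \sum_{i \ne j} q_{ij}\, g_i(t).
\]
Because the complementarity condition in Definition \ref{def:skorokhod} is already coordinate-wise, $(h_j, g_j)$ is a solution of the one-dimensional Skorokhod problem on $\R_+$ with input $\tilde f_j$. The classical 1-Lipschitz property of the 1D Skorokhod map in sup norm (via the explicit formula $g_j(t) = \sup_{s \le t}(-\tilde f_j(s))^+$) then gives
\[
\|\Delta g_j\|_t \;\le\; \|\Delta \tilde f_j\|_t \;\le\; \|\Delta f_j\|_t + \sum_{i \ne j} q_{ij}\, \|\Delta g_i\|_t.
\]
Denoting by $u(t), v(t) \in \R_+^d$ the vectors with $j$th entries $\|\Delta g_j\|_t$ and $\|\Delta f_j\|_t$, this reads componentwise as $u(t) \le v(t) + Q^\top u(t)$.

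The final step is iteration under the contraction hypothesis. Unrolling $N$ times gives $u(t) \le \sum_{n=0}^{N} (Q^\top)^n v(t) + (Q^\top)^{N+1} u(t)$, and since $\|(Q^\top)^n\|_\infty$ equals the max row sum of $(Q^n)^\top$, namely $\|\bOne^\top Q^n\|_\infty$, \eqref{eq:Q-contraction} delivers the geometric decay $\|(Q^\top)^n\|_\infty \le \alpha_0(1-\delta)^n$ for $n \ge 1$. This justifies sending $N \to \infty$ and sums to a constant independent of $d$:
\[
\|\Delta g\|_t \;\le\; \Bigl(1 + \tfrac{\alpha_0(1-\delta)}{\delta}\Bigr) \|\Delta f\|_t.
\]
Combining with $\|\Delta h\|_t \le \|\Delta f\|_t + \|R\|_\infty \|\Delta g\|_t$ produces the stated constant $C_\Gamma$. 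The main obstacle, more bookkeeping than depth, is the componentwise reduction: one must verify that $(h_j, g_j)$ is genuinely a 1D Skorokhod pair for $\tilde f_j$ (continuity of $\tilde f_j$ and $\tilde f_j(0) = f_j(0) \ge 0$ follow from $g(0) = 0$), and that the nonnegativity of $Q$ in Assumption \ref{as:R} is used to split the triangle inequality on $\sum_i q_{ij}\,\Delta g_i$ without loss. These are precisely the ingredients that turn a naive bound $u \le (I - Q^\top)^{-1} v$, whose operator norm could otherwise scale with $d$, into a uniform geometric estimate.
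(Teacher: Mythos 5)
Your proof is correct, and it is worth noting that the paper itself does not prove this statement at all: it is imported verbatim as Lemma~3 of \cite{BlanchetChenSiGlynn2021}, so there is no in-paper argument to compare against. Your derivation is essentially the standard Harrison--Reiman-style argument that the cited reference relies on: reduce to the one-dimensional Skorokhod map coordinate-by-coordinate (using that the complementarity condition in Definition~\ref{def:skorokhod} is coordinatewise, that $q_{jj}=0$, that $\tilde f_j$ is continuous with $\tilde f_j(0)=f_j(0)\ge 0$ because $g(0)=0$, and the explicit formula $g_j(t)=\sup_{s\le t}(-\tilde f_j(s))^+$ with its $1$-Lipschitz property), obtain the componentwise system $u\le v+Q^\top u$ with $u,v\ge 0$, and iterate using nonnegativity of $Q$. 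The key quantitative step is also right: the induced $\ell_\infty$ operator norm of $(Q^\top)^n$ is the maximal column sum of $Q^n$, i.e.\ exactly $\|\bOne^\top Q^n\|_\infty$, so Assumption~\ref{as:R} gives $\|(Q^\top)^n\|_\infty\le\alpha_0(1-\delta)^n$, the remainder $(Q^\top)^{N+1}u$ vanishes because $u$ is finite (continuity of $g_1,g_2$ on $[0,t]$), and the geometric series yields $\|\Delta g\|_t\le\bigl(1+\alpha_0(1-\delta)/\delta\bigr)\|\Delta f\|_t$; the bound on $\|\Delta h\|_t$ then follows from $\Delta h=\Delta f+R\Delta g$ with $\|R\|_\infty\le 1+\alpha_0(1-\delta)$. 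All constants are dimension-free, as required, so your argument is a legitimate self-contained substitute for the citation.
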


\subsection{Directional Derivatives}\label{ssec:directional-derivatives}

For \( f \in \C_+(\R^d) \), \( \psi \in \C(\R^d) \), and \( \varepsilon > 0 \), define
\begin{equation}\label{eq:eps-dir-deriv}
\nabla_\psi^\varepsilon \Gamma(f) := \frac{\Gamma(f + \varepsilon \psi) - \Gamma(f)}{\varepsilon}.
\end{equation}
Here, it is assumed that $\nabla_\psi^\varepsilon \Gamma(f)$ is only defined for $f$, $\psi$ and \( \varepsilon > 0 \) such that  
\( f + \varepsilon \psi \in \C_+(\R^d) \), i.e., \( f(0) + \varepsilon \psi(0) \in \mathbb{R}_+^d \).

\begin{definition}
Given \( f \in \C_+(\R^d) \) and \( \psi \in \C(\R^d) \),  
the {\em directional derivative} of \( \Gamma \) evaluated at \( f \) in the direction \( \psi \) is a function
\[
\nabla_\psi \Gamma(f): \mathbb{R}_+ \to \mathbb{R}^d,
\]
defined as the pointwise limit
\[
\nabla_\psi \Gamma(f)(t) := \lim_{\varepsilon \to 0} \nabla_\psi^\varepsilon \Gamma(f)(t), \qquad t \geq 0.
\]
\end{definition}

\begin{proposition}[Proposition 2.17 in \cite{LipshutzRamanan2018}]\label{prop:nabla-veps}
Given \( f \in \C_+(\R^d) \) and \( \psi \in \C(\R^d) \) such that  
\( \nabla_\psi \Gamma(f) \) exists, suppose that \(\{\psi_\varepsilon\}_{\varepsilon > 0}\) is a family of functions in \( \C(\R^d) \) such that 
$\psi_\varepsilon \to \psi$ in \( \C(\R^d) \) under the uniform topology, as $\veps \downarrow 0$. Then, 
\begin{equation}
\lim_{\varepsilon \to 0} \nabla^\veps_{\psi_\varepsilon} \Gamma(f)(t) = \nabla_\psi \Gamma(f)(t), \quad t \geq 0. 
\end{equation}
\end{proposition}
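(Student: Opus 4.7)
The statement is a straightforward stability result that follows from two ingredients: the dimension-free Lipschitz continuity of the Skorokhod map (Proposition~\ref{prop:skorokhod-lipschitz}) and the assumed existence of the directional derivative $\nabla_\psi \Gamma(f)$. My plan is to add and subtract $\nabla_\psi^\veps \Gamma(f)(t)$ inside the limit, so that
\begin{equation*}
\nabla_{\psi_\veps}^\veps \Gamma(f)(t) - \nabla_\psi \Gamma(f)(t)
 = \underbrace{\bigl[\nabla_{\psi_\veps}^\veps \Gamma(f)(t) - \nabla_\psi^\veps \Gamma(f)(t)\bigr]}_{\text{(I)}}
 + \underbrace{\bigl[\nabla_\psi^\veps \Gamma(f)(t) - \nabla_\psi \Gamma(f)(t)\bigr]}_{\text{(II)}},
\end{equation*}
and argue that each piece vanishes as $\veps \downarrow 0$.

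\textbf{Handling (I).} Using the definition \eqref{eq:eps-dir-deriv},
\begin{equation*}
\nabla_{\psi_\veps}^\veps \Gamma(f)(t) - \nabla_\psi^\veps \Gamma(f)(t)
 = \frac{\Gamma(f + \veps \psi_\veps)(t) - \Gamma(f + \veps \psi)(t)}{\veps}.
\end{equation*}
Since $(\Gamma(f + \veps\psi_\veps), g_1)$ and $(\Gamma(f+\veps\psi), g_2)$ solve the Skorokhod problem for inputs $f+\veps\psi_\veps$ and $f+\veps\psi$ respectively (for $\veps$ small enough that both inputs lie in $\C_+(\R^d)$, which holds since $\psi_\veps \to \psi$ uniformly and the directional derivatives $\nabla_\psi^\veps\Gamma(f)$ are defined along a sequence of $\veps \downarrow 0$ by hypothesis), Proposition~\ref{prop:skorokhod-lipschitz} gives
\begin{equation*}
\|\Gamma(f + \veps \psi_\veps) - \Gamma(f + \veps \psi)\|_t \leq C_\Gamma \|\veps \psi_\veps - \veps \psi\|_t = C_\Gamma \veps \|\psi_\veps - \psi\|_t.
\end{equation*}
Dividing by $\veps$ yields $\|(\text{I})\|_\infty \leq C_\Gamma \|\psi_\veps - \psi\|_t \to 0$ as $\veps \downarrow 0$, by the assumed uniform convergence of $\psi_\veps$ to $\psi$.

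\textbf{Handling (II) and conclusion.} The second term (II) tends to zero as $\veps \downarrow 0$ by the very definition of $\nabla_\psi \Gamma(f)(t)$ as the pointwise limit of $\nabla_\psi^\veps \Gamma(f)(t)$, which is assumed to exist. Combining the two, the triangle inequality gives $\nabla_{\psi_\veps}^\veps \Gamma(f)(t) \to \nabla_\psi \Gamma(f)(t)$ for each $t \geq 0$, as desired. I do not anticipate a significant obstacle here: the only mild point requiring care is ensuring that $f + \veps \psi_\veps \in \C_+(\R^d)$ for all sufficiently small $\veps$ so that the expression $\nabla_{\psi_\veps}^\veps \Gamma(f)$ is well-defined, but this is part of the standing interpretation of the notation and follows from the hypothesis that the analogous condition holds for $\psi$ along a sequence $\veps \downarrow 0$ together with the uniform convergence $\psi_\veps \to \psi$.
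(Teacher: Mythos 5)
The paper does not prove this statement; it is quoted verbatim as Proposition~2.17 of \cite{LipshutzRamanan2018}, so there is no in-paper proof to compare against. Your argument is nevertheless correct and is the natural one: the triangle-inequality split into the "drift in direction" term~(I) and the "definition of directional derivative" term~(II), with~(I) controlled by the Lipschitz continuity of the Skorokhod map (Proposition~\ref{prop:skorokhod-lipschitz}) after the factor of $\veps$ cancels, and~(II) vanishing by hypothesis. The one point that deserves the care you gave it is well-definedness of $\nabla^\veps_{\psi_\veps}\Gamma(f)$ for small $\veps$, i.e.\ $f(0)+\veps\psi_\veps(0)\in\R_+^d$; your remark that this follows from the standing convention plus $\psi_\veps\to\psi$ uniformly is the right resolution, though it is worth being explicit that the bound $\|\,\cdot\,\|_\infty\le C_\Gamma\|\psi_\veps-\psi\|_t$ you write for (I) is really a bound on the sup over $[0,t]$, which is what you need since the claimed convergence is only pointwise in $t$.
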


\begin{proposition}[Proposition 6.3 in \cite{LipshutzRamanan2018}]\label{prop:nabla-lipschitz}
Given \( f \in \C_+(\R^d) \) and \( \psi_1, \psi_2 \in \C(\R^d) \),  
suppose that both \( \nabla_{\psi_1} \Gamma(f) \) and \( \nabla_{\psi_2} \Gamma(f) \) exist.  
Then for all \( t \in [0,\infty) \),
\begin{equation}
\| \nabla_{\psi_1} \Gamma(f) - \nabla_{\psi_2} \Gamma(f) \|_t \le C_\Gamma \| \psi_1 - \psi_2 \|_t,
\end{equation}
where we recall that $C_\Gamma$ is the Lipschitz constant of the Skorokhod map $\Gamma$ 
from Proposition \ref{prop:skorokhod-lipschitz}.
\end{proposition}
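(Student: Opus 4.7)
The plan is to reduce the claimed Lipschitz estimate on the directional derivatives to the corresponding Lipschitz estimate on the Skorokhod map $\Gamma$ itself (Proposition~\ref{prop:skorokhod-lipschitz}), by working with the finite-difference quotients $\nabla^\varepsilon_{\psi_i}\Gamma(f)$ defined in Eq.~\eqref{eq:eps-dir-deriv} and then passing to the limit $\varepsilon\downarrow 0$.

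First, I would fix $t\in[0,\infty)$ and $\varepsilon>0$ small enough that $f+\varepsilon\psi_1$ and $f+\varepsilon\psi_2$ both lie in $\C_+(\R^d)$; such an $\varepsilon_0>0$ exists because, by hypothesis, the directional derivatives $\nabla_{\psi_i}\Gamma(f)$ exist, which presupposes that $\nabla^\varepsilon_{\psi_i}\Gamma(f)$ is well-defined for all sufficiently small $\varepsilon>0$. For such $\varepsilon$, linearity of the finite-difference operator gives
\[
\nabla^\varepsilon_{\psi_1}\Gamma(f)-\nabla^\varepsilon_{\psi_2}\Gamma(f)=\frac{\Gamma(f+\varepsilon\psi_1)-\Gamma(f+\varepsilon\psi_2)}{\varepsilon}.
\]
Applying the Lipschitz bound for $\Gamma$ (Proposition~\ref{prop:skorokhod-lipschitz}) to the numerator with inputs $f+\varepsilon\psi_1$ and $f+\varepsilon\psi_2$, we get
\[
\bigl\|\Gamma(f+\varepsilon\psi_1)-\Gamma(f+\varepsilon\psi_2)\bigr\|_t\le C_\Gamma\bigl\|\varepsilon\psi_1-\varepsilon\psi_2\bigr\|_t=C_\Gamma\varepsilon\|\psi_1-\psi_2\|_t,
\]
and therefore $\bigl\|\nabla^\varepsilon_{\psi_1}\Gamma(f)-\nabla^\varepsilon_{\psi_2}\Gamma(f)\bigr\|_t\le C_\Gamma\|\psi_1-\psi_2\|_t$ for every $\varepsilon\in(0,\varepsilon_0)$. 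Crucially, the constant on the right is independent of $\varepsilon$.

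Next I would take $\varepsilon\downarrow 0$. By the assumed existence of $\nabla_{\psi_i}\Gamma(f)$, for every $s\in[0,t]$ we have pointwise convergence $\nabla^\varepsilon_{\psi_i}\Gamma(f)(s)\to\nabla_{\psi_i}\Gamma(f)(s)$ as $\varepsilon\downarrow 0$ for each $i=1,2$. Consequently, for each fixed $s\in[0,t]$,
\[
\bigl\|\nabla_{\psi_1}\Gamma(f)(s)-\nabla_{\psi_2}\Gamma(f)(s)\bigr\|_\infty=\lim_{\varepsilon\downarrow 0}\bigl\|\nabla^\varepsilon_{\psi_1}\Gamma(f)(s)-\nabla^\varepsilon_{\psi_2}\Gamma(f)(s)\bigr\|_\infty\le C_\Gamma\|\psi_1-\psi_2\|_t.
\]
Taking the supremum over $s\in[0,t]$ yields the desired estimate $\|\nabla_{\psi_1}\Gamma(f)-\nabla_{\psi_2}\Gamma(f)\|_t\le C_\Gamma\|\psi_1-\psi_2\|_t$.

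The proof is essentially routine once Proposition~\ref{prop:skorokhod-lipschitz} is in hand, so there is no real obstacle; the only mild subtlety is ensuring that the common domain condition $f+\varepsilon\psi_i\in\C_+(\R^d)$ holds for all sufficiently small $\varepsilon>0$, which is automatic from the hypothesis that both directional derivatives exist. Note also that one cannot avoid passing through the finite-difference quotients — attempting to apply a Lipschitz estimate directly to $\nabla_{\psi_i}\Gamma(f)$ would beg the question — but the linearity of $\psi\mapsto\nabla^\varepsilon_\psi\Gamma(f)$ in the numerator's perturbation makes the $\varepsilon$-dependence cancel cleanly.
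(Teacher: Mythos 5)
The paper does not provide its own proof of this statement; it is imported verbatim as Proposition 6.3 of \cite{LipshutzRamanan2018}. Your argument is correct and is in fact the standard one used there: exploit the telescoping identity $\nabla^\varepsilon_{\psi_1}\Gamma(f)-\nabla^\varepsilon_{\psi_2}\Gamma(f)=\varepsilon^{-1}\bigl(\Gamma(f+\varepsilon\psi_1)-\Gamma(f+\varepsilon\psi_2)\bigr)$, apply Proposition~\ref{prop:skorokhod-lipschitz} so the $\varepsilon$ cancels, and pass to the pointwise limit. Your handling of the domain requirement $f+\varepsilon\psi_i\in\C_+(\R^d)$ and the passage from a pointwise bound at each $s\in[0,t]$ to the supremum is also correct.
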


\subsection{The Derivative Problem}\label{ssec:deriv-prob}
Denote the $j$th column vector of the reflection matrix $R$ by $R_j$; this is also a direction of reflection on the face $\{x \in \R_+^d : x_j = 0\}$.
For $x \in \R_+^d$, define the set $\cI(x)$ of indices $j$ for which $x_j=0$ by
\begin{equation}
\cI(x) := \left\{j = 1, 2, \cdots, d : x_j = 0\right\}.
\end{equation}
In particular, if $x_j > 0$ for all $j$, in which case we simply write $x>0$, then $\cI(x) = \emptyset$.
For $x \in \R_+^d$, also define the set $R(x)$ of reflection directions at $x$ by
\begin{equation}
R(x) := \{R_j : j \in \cI(x)\}.
\end{equation}
For $x\in \R_+^d$, if $\cI(x) \neq \emptyset$, i.e., if $x \in \partial \R_+^d$ lies on the boundary of $\R_+^d$, 
define the subspace $H_x$ by
\begin{equation}
H_x := \left\{y \in \R^d : y_j = 0 \text{ for all } j \in \cI(x)\right\}, 
\end{equation}
and the set $G_x$ of allowed directions of perturbation by
\begin{equation}
G_x := \left\{y \in \R^d : y_j\geq 0 \text{ for all } j \in \cI(x) \right\}.
\end{equation}
If $\cI(x) = \emptyset$, i.e., if $x > 0$, set $H_x := \R^d$ and $G_x := \R^d$. As explained in, e.g., \cite{LipshutzRamanan2019b}, 
if $x \in \partial \R_+^d$, then for $y \in H_x$, $\cI(x + \veps y) = \cI(x)$ for sufficiently small $\veps>0$, 
and for $y \in G_x$, $x+\veps y \in \R_+^d$ for sufficiently small $\veps>0$.
\begin{definition}\label{df:deriv-prob}
Let \( f \in \C_+(\R^d) \), and suppose that \( (h,g) \) solves the Skorokhod problem associated with \( R \) for \( f \).  
Let \( \psi \in \D(\mathbb{R}^d) \). Then \( (\phi, \eta) \in \D(\mathbb{R}^d)\times \D(\mathbb{R}^d) \) is a solution to the {\em derivative problem} along \( h \) for \( \psi \) if
$\eta(0) \in \text{span} \left[ R(h(0)) \right]$, and if, for all \( t \ge 0 \), the following conditions hold:
\begin{itemize}
  \item[(i)] \( \phi(t) = \psi(t) + \eta(t) \) for all $t \geq 0$;
  \item[(ii)] \( \phi(t) \in H_{h(t)} \) for all $t \geq 0$; 
  \item[(iii)] for all \( 0 \le s < t \),
  \[
  \eta(t) - \eta(s) \in \text{span} \left[ \bigcup_{r \in (s,t]} R(h(r)) \right].
  \]
\end{itemize}
If there exists a unique solution \( (\phi, \eta) \) to the derivative problem along \( h \) for \( \psi \), we write $\phi = \Lambda_h[\psi]$, 
and refer to \( \Lambda_h \) as the derivative map along \( h \).
\end{definition}

\begin{lemma}[Lemma 5.1 in \cite{LipshutzRamanan2018}]\label{lem:linearity}
Suppose \( (\phi_i, \eta_i) \) solves the derivative problem along \( h \) for \( \psi_i \in \D(\mathbb{R}^d) \), $i = 1, 2$, respectively.  
Then, for all \( \alpha_1, \alpha_2 \in \mathbb{R} \), the pair
\(
(\alpha_1 \phi_1 + \alpha_2 \phi_2,\; \alpha_1 \eta_1 + \alpha_2 \eta_2)
\)
solves the derivative problem along \( h \) for \( \alpha_1 \psi_1 + \alpha_2 \psi_2 \).
\end{lemma}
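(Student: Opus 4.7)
The plan is to verify directly that the pair $(\alpha_1\phi_1 + \alpha_2\phi_2, \alpha_1\eta_1 + \alpha_2\eta_2)$ satisfies each of the defining properties of the derivative problem in Definition~\ref{df:deriv-prob}, namely the initial constraint together with conditions (i)--(iii). The key observation is that each of these conditions is either a linear equation or a membership relation in a linear subspace of $\R^d$, so the claim follows immediately from the linearity of scalar multiplication and addition. There is no real obstacle here; the proof is essentially a routine check, but it is worth recording because linearity of the derivative map $\Lambda_h$ will be used implicitly in several later arguments.

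First I would dispatch the initial condition: since $\eta_i(0) \in \mathrm{span}[R(h(0))]$ for $i=1,2$ and a span is a linear subspace of $\R^d$, we have $\alpha_1\eta_1(0) + \alpha_2\eta_2(0) \in \mathrm{span}[R(h(0))]$. Condition (i) is then immediate by rearrangement:
\[
\alpha_1\phi_1(t) + \alpha_2\phi_2(t) = \alpha_1(\psi_1(t)+\eta_1(t)) + \alpha_2(\psi_2(t)+\eta_2(t)) = (\alpha_1\psi_1+\alpha_2\psi_2)(t) + (\alpha_1\eta_1+\alpha_2\eta_2)(t).
\]

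Next, for condition (ii), I would observe that $H_{h(t)} = \{y \in \R^d : y_j = 0 \text{ for all } j \in \cI(h(t))\}$ is, by its very definition, a linear subspace of $\R^d$, hence closed under linear combinations. Since $\phi_i(t) \in H_{h(t)}$ for $i = 1, 2$ and all $t \geq 0$, it follows that $\alpha_1\phi_1(t) + \alpha_2\phi_2(t) \in H_{h(t)}$.

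Finally, for condition (iii), I would fix $0 \leq s < t$ and note that $\mathrm{span}\bigl[\bigcup_{r\in(s,t]} R(h(r))\bigr]$ is by definition a linear subspace of $\R^d$. Since $\eta_i(t)-\eta_i(s)$ lies in this span for $i=1,2$, so does
\[
\alpha_1(\eta_1(t)-\eta_1(s)) + \alpha_2(\eta_2(t)-\eta_2(s)) = (\alpha_1\eta_1+\alpha_2\eta_2)(t) - (\alpha_1\eta_1+\alpha_2\eta_2)(s).
\]
This completes the verification of all three conditions, so $(\alpha_1\phi_1+\alpha_2\phi_2,\,\alpha_1\eta_1+\alpha_2\eta_2)$ solves the derivative problem along $h$ for $\alpha_1\psi_1 + \alpha_2\psi_2$, as claimed.
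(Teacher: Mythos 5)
Your proof is correct and is the natural (indeed, essentially the only reasonable) argument: the paper cites this result from \cite{LipshutzRamanan2018} without reproducing the proof, and a direct check that each defining condition of Definition~\ref{df:deriv-prob} is preserved under linear combinations — since each condition is either a linear identity or membership in a linear subspace — is exactly how the result is established. Nothing is missing; the verification is complete.
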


\begin{lemma}[Lemma 5.2 in \cite{LipshutzRamanan2018}]\label{lem:shift}
Let \( f \in \C_+(\R^d) \), and suppose that \( (h,g) \) solves the Skorokhod problem for \( f \).  
Let \( \psi \in \D(\mathbb{R}^d) \), and suppose \( (\phi, \eta) \in \D(\mathbb{R}^d)\times \D(\mathbb{R}^d) \) solves the {\em derivative problem} along \( h \) for \( \psi \).
Define $h^s$, $\phi^s$, $\psi^s$, and $\eta^s$ by 
\(
h^s(\cdot) := h(s+ \cdot), \phi^s(\cdot) := \phi(s+\cdot), \psi^s(\cdot) := \phi(s) + \psi(s+\cdot) - \psi(s), \eta^s(\cdot) := \eta(s+\cdot) - \eta(s).
\)
Then, $(\phi^s, \eta^s)$ solves the derivative problem along $h^s$ for $\psi^s$.
\end{lemma}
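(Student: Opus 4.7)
The plan is to verify each of the four defining conditions of the derivative problem (Definition \ref{df:deriv-prob}) for the shifted triple $(\phi^s, \eta^s)$ along $h^s$ with input $\psi^s$. All four checks reduce to straightforward substitution, so the proof is essentially careful bookkeeping; the key design feature to exploit is that subtracting $\eta(s)$ in the definition of $\eta^s$ forces $\eta^s(0) = 0$, and the additive constant $\phi(s) - \psi(s)$ implicitly built into $\psi^s$ then repairs the decomposition identity $\phi^s = \psi^s + \eta^s$. Preservation of càdlàg regularity under time translation is immediate, so $(\phi^s, \eta^s) \in \D(\R^d) \times \D(\R^d)$.

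For the initial condition, I would note that $\eta^s(0) = \eta(s) - \eta(s) = 0$, which lies in $\text{span}[R(h^s(0))]$ trivially. For condition (i), expanding the definitions gives
\[
\psi^s(t) + \eta^s(t) = \phi(s) + [\psi(s+t) - \psi(s)] + [\eta(s+t) - \eta(s)],
\]
and applying the original identities $\psi(s+t) + \eta(s+t) = \phi(s+t)$ and $\psi(s) + \eta(s) = \phi(s)$ collapses the right-hand side to $\phi(s+t) = \phi^s(t)$. Condition (ii) is immediate since $\phi^s(t) = \phi(s+t) \in H_{h(s+t)} = H_{h^s(t)}$ by the hypothesis on $\phi$.

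For condition (iii), given $0 \leq t_1 < t_2$, the offsets $\eta(s)$ cancel, so that
\[
\eta^s(t_2) - \eta^s(t_1) = \eta(s + t_2) - \eta(s + t_1).
\]
Applying the original condition (iii) to the shifted pair $s + t_1 < s + t_2$ places this difference in $\text{span}[\bigcup_{r \in (s + t_1,\, s + t_2]} R(h(r))]$; the change of variable $r' = r - s$ in the indexing set rewrites this as $\text{span}[\bigcup_{r' \in (t_1,\, t_2]} R(h^s(r'))]$, which is exactly the required inclusion.

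I do not anticipate any real obstacle here: the lemma is essentially the assertion that the derivative problem is invariant under time translation, and the particular offsets subtracted in the definitions of $\phi^s$, $\psi^s$, and $\eta^s$ are calibrated precisely so that this invariance holds pointwise on $[0, \infty)$. The only mildly subtle point is the necessity of renormalizing $\eta^s(0) = 0$ rather than inheriting $\eta(s)$ directly: the span constraint on $\eta$ from the original problem only controls increments, so the absolute value $\eta(s)$ need not lie in $\text{span}[R(h(s))]$, and the subtraction is what makes the initial condition for the shifted problem valid.
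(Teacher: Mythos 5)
Your verification is correct and is exactly the direct check one would do; the paper itself states this as Lemma 5.2 of \cite{LipshutzRamanan2018} without giving its own proof, so there is no alternate argument to compare against. Your closing observation — that subtracting $\eta(s)$ is necessary because condition (iii) only controls increments of $\eta$ and hence $\eta(s)$ itself need not lie in $\mathrm{span}[R(h(s))]$ — correctly identifies the one nontrivial design choice in the definitions of the shifted processes.
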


\begin{proposition}[Theorem 5.4 of \cite{LipshutzRamanan2018}]
There exists a constant \( C_\Lambda > 0\), which is independent of the problem dimension $d$, 
such that if \( (h,g) \) solves the Skorokhod problem for \( f \),  
and \( (\phi_i, \eta_i) \) solves the derivative problem along \( h \) for \( \psi_i \in \D(\mathbb{R}^d) \), \( i = 1,2 \),  
then for all \( t \in (0,\infty) \),
\(
\| \phi_1 - \phi_2 \|_t \le C_\Lambda \| \psi_1 - \psi_2 \|_t.
\)
Furthermore, the constant \( C_\Lambda \) does not depend on \( h \in \C(\R_+^d) \).
\end{proposition}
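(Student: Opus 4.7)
The plan is to exploit the linearity of the derivative problem established in Lemma \ref{lem:linearity} to reduce the Lipschitz estimate to the single-instance bound $\|\phi\|_t \le C_\Lambda \|\psi\|_t$ whenever $(\phi,\eta)$ solves the derivative problem along $h$ for $\psi$. Indeed, applying linearity to the pair $(\phi_1 - \phi_2, \eta_1 - \eta_2)$, which solves the derivative problem along $h$ for $\psi_1 - \psi_2$, the single-instance bound immediately yields the claimed estimate. This reduction is attractive because the single-instance bound can be attacked through an approximation argument that passes through the directional-derivative characterization.

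First I would handle the case of continuous inputs $\psi \in \C(\R^d)$ by identifying $\phi$ with (a right-continuous regularization of) the directional derivative $\nabla_\psi \Gamma(f)$, as in the axiomatic framework of \cite{LipshutzRamanan2018,LipshutzRamanan2019b}. Under this identification, Proposition \ref{prop:nabla-lipschitz} applied with $\psi_2 \equiv 0$ (noting that the zero input yields the zero directional derivative) gives $\|\nabla_\psi \Gamma(f)\|_t \le C_\Gamma \|\psi\|_t$, where $C_\Gamma$ is the dimension-independent constant from Proposition \ref{prop:skorokhod-lipschitz}. Since right-continuous regularization never inflates the sup norm, the desired bound holds for continuous $\psi$ with $C_\Lambda = C_\Gamma$ in this restricted setting.

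To extend the bound to general $\psi \in \D(\R^d)$, I would approximate $\psi$ uniformly on $[0,t]$ by a sequence of continuous paths $\psi^{(n)}$ (e.g., piecewise linear interpolation on a mesh that refines near the discontinuity times), obtaining corresponding solutions $(\phi^{(n)}, \eta^{(n)})$ to the derivative problem that satisfy the bound. The final step is to pass to the limit using uniqueness of the derivative problem solution along $h$, combined with Lemma \ref{lem:shift} to splice solutions across the jump times of $\psi$ and $\phi$: at each such time, $\eta$ may jump in directions lying in $\operatorname{span}[R(h(\cdot))]$, but the magnitude of the jump is controlled by the jump size of $\psi$ together with the local oblique-reflection geometry.

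The main obstacle will be obtaining a constant $C_\Lambda$ that is uniform in both the path $h$ and the dimension $d$. Along a single path $h$, the index set $\cI(h(\cdot))$ can change many times and involve many coordinates, so a naive concatenation of per-interval bounds would accumulate constants depending on the number of transitions. The way to sidestep this is to mirror the mechanism behind Proposition \ref{prop:skorokhod-lipschitz}: the uniform contraction property of Assumption \ref{as:R}, $\|\bOne^\top Q^n\|_\infty \le \alpha_0 (1-\delta)^n$, provides a geometric damping of the cumulative effect of successive reflections across different faces, which is precisely what yields a dimension-free $C_\Gamma$ in \cite{BlanchetChenSiGlynn2021}. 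Transferring this geometric decay to the derivative map, via the same linear recursion that governs how $\eta$ is built up from reflection directions, yields the uniform constant $C_\Lambda$ independent of $h$ and $d$.
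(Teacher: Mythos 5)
The paper itself does not prove this proposition: it is cited verbatim as Theorem 5.4 of \cite{LipshutzRamanan2018}, with only a remark that an inspection of that theorem's proof, together with Assumption \ref{as:R}, yields the dimension-independence of $C_\Lambda$. So your proposal is a blind reconstruction rather than a comparison, and it has a genuine gap.

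The fatal step is the approximation of a general $\psi \in \D(\R^d)$ by continuous paths $\psi^{(n)}$ \emph{uniformly} on $[0,t]$. This is impossible whenever $\psi$ has a jump: a uniform limit of continuous functions on a compact interval is continuous, so $\|\psi^{(n)}-\psi\|_t \ge \tfrac12\,\|\psi(s_0)-\psi(s_0-)\|_\infty$ for every $n$ and every jump time $s_0 \le t$. Consequently you cannot transfer the continuous-input bound (via Proposition \ref{prop:nabla-lipschitz}) to c\`adl\`ag $\psi$ by taking a sup-norm limit, and the whole extension collapses. The fallback you sketch — splicing via Lemma \ref{lem:shift} across jump times — runs into exactly the obstacle you yourself flag: a c\`adl\`ag path may have countably many (even dense) jumps, and a per-interval concatenation does not obviously control the constant without a quantitative decay of the jump contribution; that decay is what the actual proof of Theorem 5.4 in \cite{LipshutzRamanan2018} establishes directly through an explicit analysis of the derivative problem (not through any directional-derivative/approximation route), which is also where the structure of $R$ controls the constant. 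In short, the reduction by linearity and the continuous-input case are fine, but the bridge from $\C(\R^d)$ to $\D(\R^d)$ as written does not work, and the quantitative dimension/$h$-independence is asserted rather than derived.

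A smaller point worth being careful about: the claim that ``right-continuous regularization never inflates the sup norm'' requires justification, since $\phi(t) = \nabla_\psi\Gamma(f)(t+)$ can in principle involve values of $\nabla_\psi\Gamma(f)$ at times slightly past $t$; one must pass $\|\psi\|_s \downarrow \|\psi\|_t$ as $s\downarrow t$, which uses continuity (or at least right-continuity) of $\psi$. This is fine in the continuous case but needs to be rechecked in any attempted extension.
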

\begin{remark}
Theorem 5.4 of \cite{LipshutzRamanan2018} does not directly establish growth rate of $C_\Lambda$ 
in the problem dimension $d$, but an inspection of its proof shows that under Assumption \ref{as:R}, $C_\Lambda$ does not grow with $d$. 
\end{remark}

The following proposition is one of the main results established in \cite{LipshutzRamanan2018},  
specialized to the more restrictive setting of this paper.  
It provides a sharp characterization of the relationship between the derivative problem and the notion of directional derivative.

\begin{proposition}[Theorem 3.11 of \cite{LipshutzRamanan2018}]\label{prop:LR2018-main}
Given \( f \in \C_+(\R^d) \), suppose that \( (h,g) \) solves the Skorokhod problem for \( f \).  
Then for all \( \psi \in \C(\R^d) \), \( \nabla_\psi \Gamma(f) \) exists, lies in \( \D_{\ell, r}(\mathbb{R}^d) \),  
and \( \Lambda_h[\psi] \) is equal to the right-continuous regularization of \( \nabla_\psi \Gamma(f) \); i.e.,
\(
\Lambda_h[\psi](t) = \nabla_\psi \Gamma(f)(t+) \text{ for all } t \ge 0.
\)
\end{proposition}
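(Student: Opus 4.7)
The plan is to construct $\nabla_\psi \Gamma(f)$ as the pointwise limit of the rescaled differences $\phi^\veps := \nabla_\psi^\veps \Gamma(f)$ defined in \eqref{eq:eps-dir-deriv}, verify that every subsequential limit satisfies the axioms of the derivative problem (Definition \ref{df:deriv-prob}), and then use uniqueness of solutions together with the Lipschitz estimate from Proposition \ref{prop:nabla-lipschitz} to conclude that $\Lambda_h[\psi]$ coincides with the right-continuous regularization of $\nabla_\psi \Gamma(f)$.

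First, I would write $h^\veps := \Gamma(f + \veps \psi) = (f+\veps \psi) + R g^\veps$ with pushing process $g^\veps$, and set $\eta^\veps := R(g^\veps - g)/\veps$ so that $\phi^\veps = \psi + \eta^\veps$. Proposition \ref{prop:skorokhod-lipschitz} gives the uniform bound $\|\phi^\veps\|_T \leq C_\Gamma \|\psi\|_T$, which provides the compactness needed to extract, for each fixed $t \ge 0$, subsequential limits $\phi^{\veps_n}(t) \to \phi(t)$ and $\eta^{\veps_n}(t) \to \eta(t)$. I would then verify the three conditions of Definition \ref{df:deriv-prob} for any such limit pair: condition (i) $\phi = \psi + \eta$ is immediate; condition (iii) propagates from the Skorokhod complementarity $\int_0^\infty \mathbbm{1}_{\{h_j > 0\}} dg_j = 0$ together with its $\veps$-analogue, which forces increments of $\eta^\veps$ (and hence $\eta$) to lie in the span of reflection directions associated with boundary faces visited by $h$ on the corresponding time interval.

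The core challenge is condition (ii), $\phi(t) \in H_{h(t)}$. The easy half is that for $j \in \cI(h(t))$, $h^\veps_j(t) \ge 0 = h_j(t)$ gives $\phi_j(t) \ge 0$, i.e., $\phi(t) \in G_{h(t)}$. Upgrading this to the equality $\phi_j(t) = 0$ requires a one-sided analysis: when the boundary-regularity of $h$ (the ``boundary jitter'' structure inherited from the polyhedral Skorokhod map under Assumption \ref{as:R}) ensures that $h_j$ revisits zero immediately after $t$, the rescaled slack $(h^\veps_j - h_j)/\veps$ is controlled by the reflection mechanism, forcing equality upon taking the right limit and thereby producing a discontinuity jump at $t$. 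This is precisely the mechanism distinguishing the (possibly two-valued) pointwise limit $\nabla_\psi \Gamma(f)$ from its right-continuous regularization $\Lambda_h[\psi]$; using Lemma \ref{lem:shift} on the time-shifted path $h^s$ shows that left- and right-hand limits at each $t$ individually solve appropriate derivative problems and therefore exist, placing the limit in $\D_{\ell,r}(\R^d)$.

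The main obstacle will be this condition (ii) analysis at times $t$ where $h(t) \in \partial \R_+^d$: at such times $h^\veps$ and $h$ can sit on different boundary faces, so their pushing directions differ, and a careful excursion decomposition of the local behavior of $h$ near the faces indexed by $\cI(h(t))$ is needed to quantify the contribution of $\eta^\veps$ across the boundary touch. Once this is handled, uniqueness of the derivative problem (implicit in the Lipschitz estimate of Proposition \ref{prop:nabla-lipschitz} together with linearity from Lemma \ref{lem:linearity}) pins down the common subsequential limit, so the full family $\phi^\veps(t)$ converges for every $t$; the identification $\Lambda_h[\psi](t) = \nabla_\psi \Gamma(f)(t+)$ then follows by comparing the right-continuous solution of the derivative problem to the right limit of the pointwise convergent sequence.
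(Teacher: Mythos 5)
This statement is not proved in the paper: it is imported verbatim as Theorem 3.11 of \cite{LipshutzRamanan2018} (specialized to the orthant setting), so there is no in-paper argument to compare against; the only honest benchmark is the original, rather long, proof in that reference. Measured against what such a proof must contain, your proposal is a plan rather than a proof, and the plan has genuine gaps. First, extracting a subsequential limit of $\nabla^\veps_\psi\Gamma(f)(t)$ separately for each fixed $t$ (via the uniform bound $\|\phi^\veps\|_T\le C_\Gamma\|\psi\|_T$) does not produce a path to which Definition \ref{df:deriv-prob} can be applied: the subsequence may depend on $t$, conditions (ii) and (iii) are path-level statements, and no uniform or Skorokhod-type compactness is available here because the prelimit paths $\phi^\veps$ are continuous while the limit generically has jumps, so convergence cannot be uniform and the passage to a limiting pair $(\phi,\eta)$ satisfying (iii) (whose right-hand side involves the faces visited by $h$, not by $h^\veps$) is exactly the delicate point, not a routine consequence of complementarity.

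Second, and more fundamentally, the step you flag as ``the core challenge'' — condition (ii), $\phi(t)\in H_{h(t)}$ at boundary times — is the entire content of the cited theorem, and your proposal defers it rather than resolving it. Worse, the plan of verifying (ii) for the pointwise limit is inconsistent with the statement itself: $\nabla_\psi\Gamma(f)$ only lies in $\D_{\ell,r}(\R^d)$, and at a time when $h$ first hits a face the pointwise directional derivative can equal its left limit, which need not lie in $H_{h(t)}$; it is only the right-continuous regularization that solves the derivative problem. Establishing this requires the boundary jitter/excursion analysis of \cite{LipshutzRamanan2018}, which is a substantive hypothesis-verification and multi-page argument, not something ``inherited'' automatically from Assumption \ref{as:R}. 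Finally, the uniqueness you invoke to pin down the limit should come from the Lipschitz property of the derivative map $\Lambda_h$ (Theorem 5.4 of \cite{LipshutzRamanan2018}, quoted in Section \ref{ssec:deriv-prob}) together with linearity (Lemma \ref{lem:linearity}); Proposition \ref{prop:nabla-lipschitz} concerns $\psi\mapsto\nabla_\psi\Gamma(f)$ and presupposes existence of the directional derivatives, so it cannot serve that role, and in any case uniqueness only helps after one has shown that the limits solve the derivative problem — which is the part left open.
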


\subsection{Reflected Diffusions}\label{ssec:ref-dif}

Let $\tilde{b}: \mathbb{R}_+^d \to \mathbb{R}^d$ be a continuously differentiable function that satisfies the following assumption.

\begin{assumption}\label{as:tb}
There exists a constant $C_{\tilde{b}} \in (0, \infty)$ and $\alpha_{\tb} \in (0,1]$, both of which are independent of $d$, such that
$\| D\tb(x) \|_\infty \le C_{\tb}$ and 
$\| \tb(x) \|_\infty \le C_{\tb}$ for all $x \in \mathbb{R}_+^d$, 
and for all $x, y \in \R_+^d$, 
\(
\| D\tb(x) - D\tb(y) \|_\infty \le C_{\tilde{b}} \|x - y\|^{\alpha_{\tb}}_\infty.
\)

\end{assumption}

\begin{definition}[Reflected diffusion]
Given drift coefficient function $\tb(\cdot)$, initial condition $x \in \mathbb{R}_+^d$, reflection matrix $R$, and the Brownian motion $B(\cdot)$, 
the associated reflected diffusion is a $d$-dimensional continuous $\{\mathcal{F}_t\}$-adapted process $\tilde Z^x = \{\tilde Z^x(t) : t \ge 0\}$ such that a.s.,
\( \tilde Z^x(t) \in \mathbb{R}_+^d \) for all $t \ge 0$,
and
\begin{equation}\label{eq:reflected-diffusion}
\tilde Z^x(t) = x + \int_0^t \tb\bigl( \tilde Z^x(s) \bigr)\,ds + \sigma B(t) + R \tilde Y^x(t),
\end{equation}
where $\tilde Y^x = \{\tilde Y^x(t): t \ge 0\}$ is a $d$-dimensional continuous $\{\mathcal{F}_t\}$-adapted process such that a.s., for each $j = 1, \cdots, d$,
$\tilde Y_j^x(0) = 0$, the process $\tilde Y_j^x(\cdot)$ is non-decreasing, and 
\(
\int_0^\infty \mathbbm{1}_{\{\tilde Z_j^x(s) > 0\}}\,d\tilde Y_j^x(s) = 0.
\)
\end{definition}
Denote the ``free'' part of the process $\tilde Z^x$ by $\tilde X^x$. More precisely, 
\begin{equation}\label{eq:tx}
\tilde X^x(t) = x + \int_0^t \tb\bigl( \tilde Z^x(s) \bigr)\,ds + \sigma B(t), \quad t \geq 0, 
\end{equation}
and $\tilde Z^x = \Gamma \left(\tilde X^x\right)$.
The following result is well-known (see, e.g., Theorem 4.3 of \cite{REF}).

\begin{proposition}
Suppose $\tb(\cdot)$, $\sigma$ and $R$ satisfy Assumptions \ref{as:tb}, \ref{as:sigma} and \ref{as:R}. 
Then, for each $x \in \mathbb{R}_+^d$ and $\{\mathcal{F}_t\}$-Brownian motion $B(\cdot)$, there exists an associated reflected diffusion $\tilde Z^x$, and $\tilde Z^x$ is a strong Markov process. Furthermore, if $\bar Z^x$ is another such reflected diffusion, then a.s.\ $\tilde Z^x = \bar Z^x$. In other words, pathwise uniqueness holds.
\end{proposition}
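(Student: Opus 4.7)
The plan is to establish existence and pathwise uniqueness via a standard Picard iteration that exploits the Lipschitz continuity of the Skorokhod map $\Gamma$ (Proposition \ref{prop:skorokhod-lipschitz}) together with the Lipschitz bound on $\tb$ from Assumption \ref{as:tb}, and then to deduce the strong Markov property from pathwise uniqueness by a Yamada--Watanabe style shift-and-restart argument. Because the constants $C_\Gamma$ and $C_{\tb}$ are both dimension-free, no additional bookkeeping is needed to track how quantities depend on $d$.

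For existence I would set up the solution map. Given an $\{\cF_t\}$-adapted continuous process $W$ starting at $x \in \R_+^d$, define $X^W(t) := x + \int_0^t \tb(W(s))\,ds + \sigma B(t)$ and set $\Phi(W) := \Gamma(X^W)$. The map $\Phi$ preserves continuity and adaptedness because $\Gamma$ is a non-anticipative operator and the stochastic integral against $B$ is adapted. For two such candidates $W_1, W_2$, Proposition \ref{prop:skorokhod-lipschitz} combined with Assumption \ref{as:tb} gives the pathwise bound
\[
\|\Phi(W_1) - \Phi(W_2)\|_t \le C_\Gamma \|X^{W_1} - X^{W_2}\|_t \le C_\Gamma C_{\tb} \int_0^t \|W_1 - W_2\|_s\,ds.
\]
Choosing $T_0 > 0$ with $C_\Gamma C_{\tb} T_0 < 1$ makes $\Phi$ a strict contraction on the Banach space of $\{\cF_t\}$-adapted continuous paths on $[0, T_0]$ with initial value $x$, under the uniform norm. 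Banach's fixed-point theorem then yields a unique fixed point $\tilde Z^x$ on $[0, T_0]$; iterating on successive intervals $[kT_0, (k+1)T_0]$ with the help of Lemma \ref{lem:shift} (which makes each restart another instance of the same Skorokhod problem with shifted initial data) extends $\tilde Z^x$ to all of $[0, T]$. The regulator is then recovered as $\tilde Y^x := R^{-1}(\tilde Z^x - \tilde X^x)$ (the Skorokhod solution supplies the requisite nonnegativity, monotonicity and local-time-like support property).

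Pathwise uniqueness follows from the same estimate applied to two solutions $\tilde Z^x, \bar Z^x$ driven by the same Brownian motion: Proposition \ref{prop:skorokhod-lipschitz} and Assumption \ref{as:tb} yield $\|\tilde Z^x - \bar Z^x\|_t \le C_\Gamma C_{\tb} \int_0^t \|\tilde Z^x - \bar Z^x\|_s\,ds$ almost surely for every $t \in [0, T]$, whence Gronwall's inequality gives $\tilde Z^x = \bar Z^x$ up to indistinguishability; uniqueness of the regulator then follows because $R$ has full rank under Assumption \ref{as:R}. For the strong Markov property, I would rely on the time-homogeneous structure of the coefficients: for any $\{\cF_t\}$-stopping time $\tau$, the shifted process $\tilde Z^x(\tau + \cdot)$ solves the reflected SDE driven by the shifted Brownian motion $B(\tau + \cdot) - B(\tau)$ and starting from $\tilde Z^x(\tau)$, by Lemma \ref{lem:shift}. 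By the pathwise uniqueness just established (combined with the measurable dependence of the fixed point on the initial condition and driving noise, which is a by-product of Banach's fixed-point theorem), the conditional law of $\tilde Z^x(\tau + \cdot)$ given $\cF_\tau$ coincides with that of $\tilde Z^{\tilde Z^x(\tau)}$, which is the strong Markov property.

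The main, and rather mild, obstacle is verifying that the Picard iterate stays within the class of $\{\cF_t\}$-adapted continuous paths so that Banach's fixed-point theorem applies inside the correct Banach space; this is routine given adaptedness of stochastic integrals and the non-anticipative nature of $\Gamma$. All other ingredients, in particular the dimension-free Lipschitz bounds, are already packaged in the results cited above, so the proof amounts to assembling them.
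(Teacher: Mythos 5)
Your argument is essentially correct, but it is not the route the paper takes: the paper offers no proof at all and simply cites Theorem~4.3 of \cite{REF}, where existence, pathwise uniqueness and the strong Markov property for reflected diffusions defined through the (Lipschitz) Skorokhod map are established. What you have written is a self-contained reconstruction of that classical argument, and it works here precisely because the diffusion coefficient $\sigma$ is constant: the map $W \mapsto \Gamma\bigl(x + \int_0^\cdot \tb(W(s))\,ds + \sigma B\bigr)$ is a \emph{pathwise} (deterministic, $\omega$-by-$\omega$) Lipschitz contraction by Proposition~\ref{prop:skorokhod-lipschitz} and Assumption~\ref{as:tb}, so no $L^2$/BDG machinery is needed, Gr\"onwall gives pathwise uniqueness, and the solution is a continuous functional of $(x,\sigma B)$, from which the strong Markov property follows by the usual shift-and-restart argument together with the strong Markov property of $B$. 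The trade-off is the obvious one: the citation buys brevity and covers more general settings, while your proof makes the dimension-independence of all constants transparent, which is in the spirit of the rest of the paper.

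Two small blemishes you should repair. First, ``the Banach space of $\{\cF_t\}$-adapted continuous paths under the uniform norm'' is not literally a Banach space of processes with that norm; either run the Banach fixed-point argument pathwise for each fixed $\omega$ on $C([0,T_0];\R^d)$ and then check adaptedness of the limit from adaptedness of the Picard iterates and uniform convergence, or work with the expected sup norm. Second, Lemma~\ref{lem:shift} concerns the \emph{derivative} problem, not the Skorokhod problem, so it is the wrong reference for the restart step; the fact you actually need — that $\bigl(\tilde Z^x(\tau+\cdot),\,\tilde Y^x(\tau+\cdot)-\tilde Y^x(\tau)\bigr)$ solves the Skorokhod problem for the shifted free process — is elementary from Definition~\ref{def:skorokhod} and should be stated (or cited to the Skorokhod-map literature) directly. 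Neither issue affects the validity of the overall scheme.
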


\begin{lemma}[Lemma 4.13 of \cite{LipshutzRamanan2019b}]\label{lem:Z>0}
Suppose Assumption \ref{as:sigma} holds. Then, 
$\pr\left(\tilde Z^x(t) > 0\right) = 1$ for all $t > 0$.
\end{lemma}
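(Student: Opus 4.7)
My plan is a two-step reduction: first eliminate the drift via Girsanov's theorem to pass to a reflected Brownian motion (RBM), and then invoke absolute continuity of the RBM's one-time transition law to show that $\tilde Z^x(t)$ puts no mass on the Lebesgue-null boundary $\partial\R_+^d$.

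\textbf{Step 1 (Girsanov reduction).} Since $\tb$ is bounded by $C_\tb$ (Assumption~\ref{as:tb}) and $\|\sigma^{-1}\|_2 \le C_\sigma$ (Assumption~\ref{as:sigma}), the process $\theta(s):=-\sigma^{-1}\tb(\tilde Z^x(s))$ is uniformly bounded on $[0,T]$, so Novikov's condition holds. Let $\tilde\pr$ be the equivalent measure on $\cF_T$ with Radon--Nikodym derivative $\mathcal{E}\bigl(\int_0^{T}\theta(s)^\top dB(s)\bigr)$. Under $\tilde\pr$, the process $\tilde B(t):=B(t)-\int_0^t\theta(s)\,ds$ is a standard $d$-dimensional Brownian motion, and \eqref{eq:reflected-diffusion} becomes
\[
\tilde Z^x(t) = x + \sigma \tilde B(t) + R\,\tilde Y^x(t),
\]
so under $\tilde\pr$, $(\tilde Z^x,\tilde Y^x)$ is the pathwise-unique solution of the Skorokhod problem for the driftless Brownian input $x+\sigma\tilde B$, i.e., $\tilde Z^x$ is a non-degenerate RBM in $\R_+^d$ with covariance $\sigma\sigma^\top$ and reflection matrix $R$. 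Because $\pr\sim\tilde\pr$, it suffices to establish $\tilde\pr(\tilde Z^x(t)>0)=1$.

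\textbf{Step 2 (No mass on the boundary).} Assumptions~\ref{as:sigma} and \ref{as:R} (in particular the uniform contraction \eqref{eq:Q-contraction}) put us in the classical setting of a non-degenerate semimartingale RBM in the orthant. For such a process, the law of $\tilde Z^x(t)$ at any fixed $t>0$ is absolutely continuous with respect to Lebesgue measure on $\R^d$; since $\partial\R_+^d$ is a finite union of coordinate hyperplanes, and hence Lebesgue-null in $\R^d$, the claim follows.

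\textbf{Main obstacle.} I expect Step~2 to be the main difficulty: a Fubini / occupation-time argument using the non-stickiness of the reflection (i.e., $\int_0^T \mbOne\{\tilde Z_j^x(s)=0\}\,ds=0$ a.s.) readily yields $\tilde\pr(\tilde Z^x(t)\in\partial\R_+^d)=0$ for Lebesgue-a.e.\ $t$, but upgrading this to \emph{every} $t>0$ requires a smoothing input that exploits the ellipticity of $\sigma\sigma^\top$. Two plausible routes are (a) Malliavin calculus for reflected SDEs with elliptic coefficients, yielding a density for $\tilde Z^x(t)$ directly; and (b) a coordinate-wise It\^o--Tanaka argument on a short interval $[t-\delta,t]$ for each $\tilde Z_j^x$, using the Gaussian lower bound on the law of the Brownian increment $\bigl(\sigma\tilde B(t)-\sigma\tilde B(t-\delta)\bigr)_j$ from Assumption~\ref{as:sigma} to rule out an atom at zero, followed by a union bound over $j=1,\ldots,d$. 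Either route, combined with Step~1, completes the proof.
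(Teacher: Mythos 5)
The paper does not actually prove this statement: it is imported verbatim as Lemma~4.13 of \cite{LipshutzRamanan2019b}, so the only question is whether your argument stands on its own — and it does not. Your Step~1 (Girsanov removal of the bounded drift $\tb$) is correct but only cosmetic; the entire content of the lemma is your Step~2, and there you assert precisely what has to be proved. That a nondegenerate \emph{obliquely} reflected Brownian motion in the orthant puts no mass on $\partial\R_+^d$ at \emph{every} fixed $t>0$ is not a citable ``classical'' fact to be waved at; modulo the drift it \emph{is} the lemma, and it is exactly what Lipshutz and Ramanan establish. As you yourself concede, the Fubini/occupation-time argument (zero Lebesgue time on the boundary) only yields the conclusion for a.e.\ $t$, and the upgrade to all $t$ is the real work, which your proposal leaves as two ``plausible routes'' that are never carried out.

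Moreover, route (b) as sketched would not go through: with $R=I-Q^{\top}$ the coordinate process satisfies $\tilde Z_j^x = x_j + \int_0^\cdot \tb_j(\tilde Z^x)\,ds + (\sigma B)_j + \tilde Y_j^x - \sum_{k\neq j} q_{kj}\tilde Y_k^x$, so it is \emph{not} a one-dimensional reflected Brownian motion, and on $[t-\delta,t]$ the regulator increments are adapted functionals of (hence strongly dependent on) the Brownian increment; a Gaussian lower bound on $\bigl(\sigma\tilde B(t)-\sigma\tilde B(t-\delta)\bigr)_j$ therefore does not by itself exclude an atom of $\tilde Z_j^x(t)$ at $0$. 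Route (a) is equally far from off-the-shelf: Malliavin/density theory for obliquely reflected SDEs in a nonsmooth polyhedral domain is delicate precisely because the derivative processes jump at the boundary (cf.\ Proposition~\ref{prop:LR2019b-main}), which is why this paper imports the result rather than proving it. As written, the proposal reduces Lemma~\ref{lem:Z>0} to an unproven claim of the same strength; to close the gap you should either cite Lemma~4.13 of \cite{LipshutzRamanan2019b} directly, as the paper does, or supply a genuine argument for fixed-time boundary nonattainment (for instance the hitting-time/continuity argument used there), not merely name candidate techniques.
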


We establish some moment bounds on $\tilde Z^x$ in the following proposition. 
\begin{proposition}\label{prop:moment-Z}
Let $x \in \mathbb{R}^d_+$. Then for all $t \ge 0$ and $\alpha \geq 1$, there exists a constant $C = C(t, \alpha, C_\sigma, C_{\tb}, C_{\Gamma})$ that only depends on 
$t$, $\alpha$, $C_\sigma$, $C_{\tb}$ and $C_{\Gamma}$, such that
\begin{equation}\label{eq:moment1}
\mathbb{E} \left[ \| \tilde{Z}^x \|_t^\alpha \right] \leq C \left(1 + (\log d)^{\alpha/2} + \|x\|^\alpha_\infty\right).
\end{equation}
Furthermore, for any $x, y\in \R_+^d$, a.s.
\begin{equation}\label{eq:moment2}
\| \tilde{Z}^y - \tilde{Z}^x \|_t \leq C \|y - x\|_\infty.
\end{equation}
\end{proposition}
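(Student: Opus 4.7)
The plan is to leverage the identity $\tilde Z^x = \Gamma(\tilde X^x)$ together with the dimension-free Lipschitz property of the Skorokhod map from Proposition \ref{prop:skorokhod-lipschitz}, so that both parts reduce to estimates on the ``free'' process $\tilde X^x$ defined in \eqref{eq:tx}.

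For inequality \eqref{eq:moment1}, first I would apply the Lipschitz bound to the pair of inputs $\tilde X^x$ and the constant path $f \equiv x$; since $\Gamma(f) = f$, this yields the pathwise estimate
\begin{equation*}
\| \tilde Z^x \|_t \le \|x\|_\infty + C_\Gamma \| \tilde X^x - x \|_t \le \|x\|_\infty + C_\Gamma C_{\tb} t + C_\Gamma \| \sigma B \|_t,
\end{equation*}
using Assumption \ref{as:tb} to bound the drift term uniformly. It then suffices to control $\E[\|\sigma B\|_t^\alpha]$. Each coordinate $(\sigma B(\cdot))_i$ is a one-dimensional Brownian motion with diffusion coefficient bounded by $C_\sigma$ (by Assumption \ref{as:sigma}, the diagonal entries of $\sigma\sigma^\top$ are dominated by its operator norm), so by Doob's maximal inequality applied coordinatewise together with a standard Gaussian maximal inequality across the $d$ coordinates, one obtains
\begin{equation*}
\E\left[\| \sigma B \|_t^\alpha \right] \le C_\alpha \, (C_\sigma t)^{\alpha/2} \left(1 + (\log d)^{\alpha/2}\right).
\end{equation*}
Raising the pathwise bound to the $\alpha$-th power, taking expectations, and using $(a+b+c)^\alpha \le 3^{\alpha-1}(a^\alpha+b^\alpha+c^\alpha)$ then gives \eqref{eq:moment1}.

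For inequality \eqref{eq:moment2}, driving $\tilde Z^x$ and $\tilde Z^y$ by the \emph{same} Brownian motion makes the $\sigma B$ terms cancel in the difference $\tilde X^y - \tilde X^x$, yielding
\begin{equation*}
\| \tilde X^y - \tilde X^x \|_s \le \|y - x\|_\infty + C_{\tb} \int_0^s \| \tilde Z^y - \tilde Z^x \|_r \, dr,
\end{equation*}
where I have used the Lipschitz bound on $\tb$ coming from the uniform bound on $D\tb$ in Assumption \ref{as:tb}. Applying Proposition \ref{prop:skorokhod-lipschitz} once more gives
\begin{equation*}
\| \tilde Z^y - \tilde Z^x \|_s \le C_\Gamma \|y - x\|_\infty + C_\Gamma C_{\tb} \int_0^s \| \tilde Z^y - \tilde Z^x \|_r \, dr,
\end{equation*}
and Gr\"onwall's inequality yields \eqref{eq:moment2} with constant $C_\Gamma e^{C_\Gamma C_{\tb} t}$.

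The only non-routine point in the plan is the $(\log d)^{\alpha/2}$ scaling in the max-of-Gaussians step: one must verify that the relevant coordinate variances of $\sigma B(t)$ are controlled by $C_\sigma$ uniformly in $d$, so that all dimension dependence is confined to the $\sqrt{\log d}$ factor. This follows because Assumption \ref{as:sigma} bounds the spectral norm of $\sigma\sigma^\top$ by $C_\sigma$, which dominates every diagonal entry. Everything else is direct application of the Lipschitz properties already recorded above.
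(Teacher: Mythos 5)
Your overall reduction is the same as the paper's: use the dimension-free Lipschitz property of $\Gamma$ to pass from $\tilde Z^x=\Gamma(\tilde X^x)$ to the free process, isolate the Brownian supremum, and handle \eqref{eq:moment2} by Gr\"onwall after cancelling the common $\sigma B$ term. The one step where you and the paper diverge is how the bound
\[
\mathbb{E}\!\left[\sup_{s\in[0,t]}\|\sigma B(s)\|_\infty^\alpha\right]\ \lesssim\ t^{\alpha/2}\bigl(1+(\log d)^{\alpha/2}\bigr)
\]
is obtained. The paper proves Lemma \ref{lem:sigmaB} via the reflection principle applied directly to the vector process, yielding the tail comparison $\pr(\sup_{s\le t}\|\sigma B(s)\|_\infty\ge c)\le 2\,\pr(\|\sigma B(t)\|_\infty\ge c)$, and then integrates; this reduces everything to the maximum of $d$ (possibly correlated) Gaussians at a single time. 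Your route via Doob is viable, but note two points of imprecision in the phrasing. First, Doob's maximal inequality should be applied not ``coordinatewise'' but to the nonnegative \emph{sub}martingale $s\mapsto\|\sigma B(s)\|_\infty=\max_i|(\sigma B)_i(s)|$; applied per coordinate it gives bounds on $\mathbb{E}[\sup_s|(\sigma B)_i(s)|^\alpha]$ that do not directly recombine into the $(\log d)^{\alpha/2}$ scaling for $\max_i\sup_s$ without an additional union-bound or $L^q$-with-$q\sim\log d$ argument. Second, Doob's $L^p$ maximal inequality requires $p>1$, so the case $\alpha=1$ in the statement needs a separate patch (e.g.\ apply the argument with $\alpha=2$ and use Jensen, or use the $L\log L$ version); the paper's reflection-principle lemma sidesteps this since it works for every $\alpha\ge 1$ directly. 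With those two adjustments the proposal gives the same conclusion with comparable constants, so this is a genuine but minor variation rather than a different proof.
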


The following corollary is a simple but useful consequence of the moment bounds on $\tilde Z^x$ in Proposition \ref{prop:moment-Z}. 
Recall the definition of the weight function $w$ in Eq. \eqref{eq:w}.
\begin{corollary}\label{cor:moment-w}
Let $\alpha \geq 1$, and let $T>0$. Then, there exists a positive constant $C_{\alpha,T}$ that depends only on $\alpha$ and $T$ 
(as well as $C_\sigma$, $C_{\tb}$ and $C_{\Gamma}$) but not on $d$, such that 
for all $t \in [0, T]$, for all $x \in \R_+^d$,
\begin{equation}
\E\left[w^\alpha \left(\tilde Z^x(t)\right)\right] \leq C_{\alpha, T} w^{\alpha}(x).
\end{equation}
\end{corollary}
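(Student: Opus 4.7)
The plan is to bound $w^\alpha(\tilde Z^x(t))$ pointwise in $\omega$ using a convexity-type inequality, then take expectations and invoke the moment bound in Proposition \ref{prop:moment-Z} at exponent $\alpha\alpha_0$, and finally re-bound each resulting term by a constant multiple of $w^\alpha(x)$ using the structure of $w$.

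Concretely, I would first use the elementary inequality $(a_1+a_2+a_3)^\alpha \le 3^{\alpha-1}(a_1^\alpha + a_2^\alpha + a_3^\alpha)$, valid for $a_i\ge0$ and $\alpha\ge1$, to write
\begin{equation*}
w^\alpha\bigl(\tilde Z^x(t)\bigr) \;\le\; 3^{\alpha-1}\Bigl(1 + (\log d)^{\alpha\alpha_0/2} + \|\tilde Z^x(t)\|_\infty^{\alpha\alpha_0}\Bigr).
\end{equation*}
Taking expectations and using $\|\tilde Z^x(t)\|_\infty \le \|\tilde Z^x\|_t$, I would then apply Proposition \ref{prop:moment-Z} with exponent $\alpha\alpha_0\ge 1$, noting that its constant depends only on $t$, $\alpha\alpha_0$, $C_\sigma$, $C_{\tb}$, $C_\Gamma$, and in particular is monotone in $t$, so taking $t=T$ yields a constant $C'=C'(\alpha, T, C_\sigma, C_{\tb}, C_\Gamma)$ with
\begin{equation*}
\E\bigl[\|\tilde Z^x\|_T^{\alpha\alpha_0}\bigr] \;\le\; C'\Bigl(1 + (\log d)^{\alpha\alpha_0/2} + \|x\|_\infty^{\alpha\alpha_0}\Bigr).
\end{equation*}

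Finally, I would observe that since $w(x) = 1 + (\log d)^{\alpha_0/2} + \|x\|_\infty^{\alpha_0}\ge 1$ and $\alpha\ge 1$, the three terms $1$, $(\log d)^{\alpha\alpha_0/2}$, and $\|x\|_\infty^{\alpha\alpha_0}$ are each bounded above by $w^\alpha(x)$ (for the latter two, because $(a+b)^\alpha \ge a^\alpha$ for $a,b\ge 0$). Combining the previous two displays therefore gives
\begin{equation*}
\E\bigl[w^\alpha\bigl(\tilde Z^x(t)\bigr)\bigr] \;\le\; 3^{\alpha-1}(2+C')\bigl(1+(\log d)^{\alpha\alpha_0/2}+\|x\|_\infty^{\alpha\alpha_0}\bigr)\;\le\; C_{\alpha,T}\, w^\alpha(x),
\end{equation*}
with $C_{\alpha,T} := 3\cdot 3^{\alpha-1}(2+C')$, which depends only on $\alpha$, $T$, $C_\sigma$, $C_{\tb}$, $C_\Gamma$ and not on $d$.

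The proof is essentially routine; there is no real obstacle beyond bookkeeping. The only point requiring a bit of care is verifying that no hidden $d$-dependence sneaks into the constant: this relies on (a) the fact that the constant in Proposition \ref{prop:moment-Z} is $d$-free, and (b) the structural presence of the $(\log d)^{\alpha_0/2}$ term in $w(x)$, which lets the $d$-dependent term in the moment bound be absorbed into $w^\alpha(x)$ on the right-hand side.
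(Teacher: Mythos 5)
Your proof is correct and follows essentially the same route as the paper's: expand $w^\alpha(\tilde Z^x(t))$ via the power inequality into three terms, invoke Proposition \ref{prop:moment-Z} at exponent $\alpha\alpha_0$ (uniformizing in $t\le T$), and absorb the resulting polynomial/log terms back into $w^\alpha(x)$. The only cosmetic differences are that you use the sharper constant $3^{\alpha-1}$ in place of the paper's $3^\alpha$ and you make explicit the monotonicity-in-$t$ of the constant from Proposition \ref{prop:moment-Z}.
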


\subsection{Derivative Processes}\label{ssec:deriv-proc}

\begin{definition}[Derivative process]
Let $x \in \mathbb{R}_+^d$, let $B(\cdot)$ be a $\{\mathcal{F}_t\}$-Brownian motion, and let $\tilde Z^x$ be an associated reflected diffusion. 
A \emph{derivative process along $\tilde Z^x$} is an RCLL $\{\mathcal{F}_t\}$-adapted process $D\tilde Z^x = \{ D\tilde Z^x(t): t \ge 0 \}$ taking values in $\mathbb{R}^{d\times d}$ such that a.s., for all $t \geq 0$,
\(
D_j\tilde Z^x(t) \in H_{\tilde Z^x(t)} \text{ for } j=1,\dots,d,
\)
where $D_j\tilde Z^x(t)$ denotes the $j$th column of $D\tilde Z^x(t)$, 
and $D\tilde Z^x$ satisfies
\[
D\tilde Z^x(t) = I + \int_0^t D\tb\bigl( \tilde Z^x(s) \bigr) D\tilde Z^x(s)\,ds + \tilde \eta(t),
\]
where $\tilde \eta = \{ \tilde \eta(t) : t \in \mathbb{R}_+ \}$ is an RCLL $\{\mathcal{F}_t\}$-adapted process taking values in $\mathbb{R}^{d\times d}$ such that a.s.\ $\tilde \eta_j(0) \in \text{span}[R(x)]$, $j=1, 2, \cdots, d$, and for all $0 \le s < t < \infty$,
\[
\tilde \eta_j(t) - \tilde \eta_j(s) \in \text{span} \left[ \bigcup_{r \in (s,t]} R\bigl( \tilde Z^x(r) \bigr) \right].
\]
In addition to $D\tilde Z^x$, we also define a {\em derivative process along $\tilde Y^x$} as the process $R^{-1} \tilde \eta$, which we denote by $D\tilde Y^x$.
\end{definition}
\begin{remark}
Besides the definition of the standard derivative process $D\tilde Z^x$ in \cite{LipshutzRamanan2019b}, 
we also define the derivative process $D\tilde Y^x$, which is a regularized version of the pathwise derivative of 
$\tilde Y^x$ with respect to $x$; see Proposition \ref{prop:LR2019b-main} in the sequel. The process $D\tilde Y^x$ is useful in several ways; 
for example, an important use of $D\tilde Y^x$ is in characterizing the derivative of 
\(
\E\left[\int_0^T e^{-\beta s} \kappa^{\top} d\tilde Y^{x}(s) \right],
\)
the total expected pushing penalty cost, with respect to the initial condition $x$.
\end{remark}

\begin{proposition}[Theorem 3.6 in \cite{LipshutzRamanan2019b}]\label{prop:deriv-unique}
Let $x \in \mathbb{R}_+^d$ and $B(\cdot)$ be as above. Then pathwise uniqueness holds for the derivative process $D\tilde Z^x$ along $\tilde Z^x$, 
and, as an easy consequence, also for the derivative process $D\tilde Y^x$ along $\tilde Y^x$.
\end{proposition}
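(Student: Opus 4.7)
The plan is to invoke Theorem 3.6 of \cite{LipshutzRamanan2019b} for pathwise uniqueness of $D\tilde Z^x$ after verifying that our standing assumptions fall within its hypotheses, and then to deduce pathwise uniqueness of $D\tilde Y^x$ from that of $D\tilde Z^x$ by using invertibility of the reflection matrix $R$. There is essentially no new analytical content to prove beyond the cited theorem; the task is to check hypotheses and to perform one short algebraic deduction.

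For the first step, I would verify three things. (i) Under Assumption~\ref{as:R}, the reflection matrix on $\R_+^d$ is of Harrison--Reiman form $R = I - Q^\top$ with $Q$ satisfying the uniform contraction condition \eqref{eq:Q-contraction}, so the Skorokhod map $\Gamma$ is well-defined and Lipschitz (Proposition~\ref{prop:skorokhod-lipschitz}); this places us in the orthant-with-oblique-reflection setting addressed by \cite{LipshutzRamanan2019b}. (ii) Under Assumption~\ref{as:sigma}, the driving Brownian increment $\sigma(B(t)-B(s))$ is nondegenerate, so the reflected diffusion $\tilde Z^x$ satisfies the boundary jitter property required by \cite{LipshutzRamanan2018,LipshutzRamanan2019b} almost surely, and Lemma~\ref{lem:Z>0} provides the needed instantaneous entry into $\R_{++}^d$ at positive times. (iii) Under Assumption~\ref{as:tb}, the drift $\tb$ is $C^{1,\alpha_{\tb}}$ with bounded Jacobian, so the coefficient $D\tb(\tilde Z^x(\cdot))$ appearing in the derivative SDE is bounded and locally H\"older continuous. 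With these ingredients in place, Theorem~3.6 of \cite{LipshutzRamanan2019b} applies directly and yields pathwise uniqueness of $D\tilde Z^x$.

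For the second step, I would use the defining relation of the derivative process to write
\[
\tilde \eta(t) = D\tilde Z^x(t) - I - \int_0^t D\tb\bigl(\tilde Z^x(s)\bigr)\, D\tilde Z^x(s)\,ds.
\]
Once the sample path of $\tilde Z^x$ is fixed and pathwise uniqueness of $D\tilde Z^x$ has been established, the right-hand side is determined pathwise, and hence so is $\tilde \eta$. Under Assumption~\ref{as:R}, the matrix $R = I - Q^\top$ is invertible via the Neumann series $R^{-1} = \sum_{n \geq 0}(Q^\top)^n$, which converges because \eqref{eq:Q-contraction} implies a geometric bound on the norms of $(Q^\top)^n$. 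Therefore $D\tilde Y^x = R^{-1}\tilde\eta$ is also pathwise unique.

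The main (and only) obstacle is the verification of hypotheses: one needs to make sure the boundary jitter property used in \cite{LipshutzRamanan2019b} genuinely holds under our nondegenerate diffusion and contractive reflection assumptions, and that dimension-independence of constants such as $C_\Lambda$ is preserved in this translation (cf.\ the remark following the derivative-problem Lipschitz bound). The deduction for $D\tilde Y^x$ is routine once $D\tilde Z^x$ is handled, so no further ideas are required.
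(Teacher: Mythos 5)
Your proposal is correct and follows the same route the paper intends: the first assertion is simply the cited Theorem 3.6 of \cite{LipshutzRamanan2019b} (the paper gives no proof of it beyond the citation), and the ``easy consequence'' for $D\tilde Y^x$ is exactly the short algebraic deduction you spell out — once $D\tilde Z^x$ is pathwise determined, the defining relation pins down $\tilde\eta(t) = D\tilde Z^x(t) - I - \int_0^t D\tb(\tilde Z^x(s))D\tilde Z^x(s)\,ds$ pathwise, and since Assumption~\ref{as:R} makes $R=I-Q^\top$ invertible (the contraction bound \eqref{eq:Q-contraction} together with nonnegativity of $Q$ forces $Q^n\to 0$, so the Neumann series converges), $D\tilde Y^x = R^{-1}\tilde\eta$ is pathwise determined as well. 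One small caution: Lemma~\ref{lem:Z>0} is a statement about marginals ($\pr(\tilde Z^x(t)>0)=1$ for each fixed $t>0$), which is not the same as the pathwise boundary jitter property required in \cite{LipshutzRamanan2018,LipshutzRamanan2019b}; the latter is established in those references under nondegenerate diffusion, so the conclusion stands, but citing Lemma~\ref{lem:Z>0} for it is loose.
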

The following proposition is the main result in \textsc{\cite{LipshutzRamanan2019b}}, specialized to the more restrictive setting of this paper.
\begin{proposition}[Theorem 3.13 in \cite{LipshutzRamanan2019b}]\label{prop:LR2019b-main}
Suppose  Assumptions \ref{as:tb}, \ref{as:sigma} and \ref{as:R} are in place.
Let $x \in \mathbb{R}_+^d$. Then, there exists a pathwise unique derivative process $D\tilde Z^x$ along $\tilde Z^x$, and for all $y \in G_x$, a.s.\ the following hold.
\begin{itemize}
\item[(i)] The pathwise derivative of $\tilde Z^x$ in the direction $y$, defined by
\(
\partial_y \tilde Z^x(t) := \lim_{\varepsilon \to 0} \frac{\tilde Z^{x + \varepsilon y}(t) - \tilde Z^x(t)}{\varepsilon}, t\ge 0,
\)
exists. The pathwise derivative of $\tilde Y^x$ in the direction $y$, defined for all $t \ge 0$ by
\(
\partial_y \tilde Y^x(t) := \lim_{\varepsilon \to 0} \frac{\tilde Y^{x + \varepsilon y}(t) - \tilde Y^x(t)}{\varepsilon},
\)
also exists.
\item[(ii)] The pathwise derivatives $\partial_y \tilde Z^x = \{\partial_y \tilde Z^x(t) : t \in \mathbb{R}_+\}$ and 
$\partial_y \tilde Y^x = \{\partial_y \tilde Y^x(t) : t \in \mathbb{R}_+\}$ take values in $\D_{\ell, r}(\mathbb{R}^d)$ and are continuous at times $t > 0$ when $\tilde Z^x(t) > 0$, or when $\tilde Z^x_j = 0$ for more than one $j \in \{1, 2, \cdots, d\}$.

\item[(iii)] The right-continuous regularization of the pathwise derivative $\partial_y \tilde Z^x$ ($\partial_y \tilde Y^x$, respectively) is equal to the derivative process $D\tilde Z^x$ 
($D\tilde Y^x$, respectively) evaluated in the direction $y$, i.e.
\[
\lim_{s \downarrow t} \partial_y \tilde Z^x(s) = D\tilde Z^x(t) y, \ \text{ and } \ \lim_{s \downarrow t} \partial_y \tilde Y^x(s) = D\tilde Y^x(t) y, \qquad t \geq 0. 
\]
\end{itemize}
\end{proposition}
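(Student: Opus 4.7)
The plan is to construct the candidate derivative process as the fixed point of a Picard iteration driven by the derivative map $\Lambda_{\tilde Z^x}$, and then show by a Gronwall-type argument that the difference quotients $(\tilde Z^{x+\veps y} - \tilde Z^x)/\veps$ converge to a pathwise directional derivative whose right-continuous regularization coincides with this fixed point.

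First, I would establish existence and uniqueness of $D\tilde Z^x$ via a contraction. Fixing $y \in G_x$, set $\phi^y_0 \equiv y$ and recursively
$$ \psi^y_{n+1}(t) := y + \int_0^t D\tb\bigl(\tilde Z^x(s)\bigr)\phi^y_n(s)\,ds, \qquad \phi^y_{n+1} := \Lambda_{\tilde Z^x}\bigl[\psi^y_{n+1}\bigr]. $$
Since $\|D\tb\|_\infty \le C_{\tb}$ by Assumption \ref{as:tb} and $\Lambda_{\tilde Z^x}$ is Lipschitz with a dimension-free constant $C_\Lambda$, the map $\phi \mapsto \Lambda_{\tilde Z^x}[y + \int_0^\cdot D\tb(\tilde Z^x(s))\phi(s)\,ds]$ is a contraction under $\|\cdot\|_t$ for $t$ sufficiently small; gluing local fixed points using Lemma \ref{lem:shift} yields a global fixed point $\phi^y \in \D_{\ell,r}(\R^d)$. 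Linearity of the derivative problem (Lemma \ref{lem:linearity}) provides an $\R^{d\times d}$-valued process whose columns are $\phi^{e_j}$, $j=1,\dots,d$. A direct check shows that this process and $\tilde\eta := \phi^y - \psi^y$ jointly solve the defining equation of $D\tilde Z^x$; by pathwise uniqueness (Proposition \ref{prop:deriv-unique}) it is the unique derivative process, and $D\tilde Y^x = R^{-1}\tilde\eta$.

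Second, I would prove part (i). Writing $\tilde Z^x = \Gamma(\tilde X^x)$ for $\tilde X^x$ defined in Eq.~\eqref{eq:tx}, the scaled differences satisfy
$$ \frac{\tilde Z^{x+\veps y} - \tilde Z^x}{\veps} = \nabla^\veps_{g^{\veps,y}}\Gamma(\tilde X^x), \qquad g^{\veps,y}(t) := y + \frac{1}{\veps}\int_0^t \bigl[\tb(\tilde Z^{x+\veps y}(s)) - \tb(\tilde Z^x(s))\bigr]\,ds. $$
Combining Proposition \ref{prop:moment-Z} with Assumption \ref{as:tb} gives $\|g^{\veps,y}\|_T \le C_1 \|y\|_\infty$ uniformly in $\veps$, and equicontinuity of $\{g^{\veps,y}\}_{\veps}$ on $[0,T]$; a Gronwall bootstrap coupled with Proposition \ref{prop:nabla-veps} (and the Lipschitz bound in Proposition \ref{prop:nabla-lipschitz}) shows that any uniform subsequential limit $g^{0,y}$ satisfies the integral equation $g^{0,y}(t) = y + \int_0^t D\tb(\tilde Z^x(s))\,\nabla_{g^{0,y}}\Gamma(\tilde X^x)(s)\,ds$. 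The contraction property from the first step forces $g^{0,y} = \psi^y$, so the full family converges and the pathwise directional derivatives exist with $\partial_y \tilde Z^x = \nabla_{\psi^y}\Gamma(\tilde X^x)$; applying $R^{-1}$ to the regulator increment part of the decomposition gives the analogous statement for $\partial_y \tilde Y^x$.

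Third, parts (ii) and (iii) are read off from Proposition \ref{prop:LR2018-main}. That proposition already tells us $\nabla_{\psi^y}\Gamma(\tilde X^x) \in \D_{\ell,r}(\R^d)$ and that its right-continuous regularization equals $\Lambda_{\tilde Z^x}[\psi^y] = \phi^y = D\tilde Z^x\,y$, giving (iii). For the continuity claims in (ii), at times $t$ with $\tilde Z^x(t) > 0$ small perturbations of $\tilde X^x$ do not trigger new reflections, so $\nabla_{\psi^y}\Gamma(\tilde X^x)$ is continuous there; the more delicate case where several components of $\tilde Z^x$ vanish simultaneously is handled by the boundary jitter property of reflected Brownian motion in the orthant established in \cite{LipshutzRamanan2019a}, which under Assumption \ref{as:sigma} ensures that such simultaneous hits are almost surely followed by instantaneous excursions into the interior, ruling out discontinuities in $\partial_y \tilde Z^x$ at those times.

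The main obstacle is the implicit coupling between $\tb$ and $\tilde Z^x$: one cannot directly apply Proposition \ref{prop:LR2018-main} because the effective perturbation direction $g^{\veps,y}$ depends on the derivative one is trying to compute, which is precisely what forces the Picard/Gronwall bootstrap above. The other technically subtle piece is part (ii) at boundary hitting times, which relies on pathwise (rather than calculus-type) analysis of the Skorokhod map and the boundary jitter structure from \cite{LipshutzRamanan2019a}.
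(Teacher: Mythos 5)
The paper does not actually prove this proposition: it is imported, in specialized form, from Theorem 3.13 of \cite{LipshutzRamanan2019b}, so there is no internal proof to compare against and your proposal amounts to re-deriving the cited result. That said, the route you sketch for parts (i) and (iii) --- a Picard/contraction construction of the fixed point $\phi^y=\Lambda_{\tilde Z^x}[\psi^y]$, writing the difference quotients as $\nabla^\veps_{g^{\veps,y}}\Gamma(\tilde X^x)$, uniform bounds and equicontinuity of $g^{\veps,y}$ from Proposition \ref{prop:moment-Z} and Assumption \ref{as:tb}, identification of subsequential limits with $\psi^y$ by Gr\"onwall, and then Proposition \ref{prop:LR2018-main} to pass to the right-continuous regularization --- is essentially the same machinery that the cited work uses, and that this paper itself deploys for the direction-$\Upsilon$ analogue in Proposition \ref{prop:bel}, Lemma \ref{lem:bel} and Proposition \ref{prop:bel2}. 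With minor repairs it is sound for (i) and (iii); for instance, when you pass to the limit equation you should note that $\nabla_{g^{0,y}}\Gamma(\tilde X^x)$ and $\Lambda_{\tilde Z^x}[g^{0,y}]$ differ only on an at most countable set, so they may be interchanged inside the time integral before invoking uniqueness of the fixed point.

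The genuine gap is part (ii). You dispose of the continuity assertions in two sentences, but they are the technical core of the cited theorem and do not follow from anything available in this paper. Continuity of $\partial_y\tilde Z^x$ at times with $\tilde Z^x(t)>0$ requires a structural result that discontinuities of $\nabla_{\psi}\Gamma(\tilde X^x)$ can occur only at boundary hitting times of $\tilde Z^x$; ``small perturbations do not trigger new reflections'' is not an argument, since the value of the derivative at time $t$ depends on the entire history of pushes, not on the local behaviour of the path at $t$. Continuity at times when two or more components of $\tilde Z^x$ vanish is more delicate still: it rests on the boundary jitter property (established in \cite{LipshutzRamanan2018,LipshutzRamanan2019b}, not in \cite{LipshutzRamanan2019a}, which is the Monte Carlo paper) combined with a careful analysis of the derivative-problem constraints forcing left limit, value and right limit of $\phi^y$ to coincide at such times; your appeal to ``instantaneous excursions into the interior'' names the ingredient but supplies no argument connecting jitter to continuity of the derivative. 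Finally, part (ii) asserts the same regularity for $\partial_y\tilde Y^x$, which you never address beyond applying $R^{-1}$ to get existence in part (i). As written, part (ii) is unproved; the rest is a reasonable reconstruction of the argument the paper outsources to \cite{LipshutzRamanan2019b}.
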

\begin{remark}
(a) In part (iii) of Proposition \ref{prop:LR2019b-main}, both $D\tilde Z^x(t) y$ and $D\tilde Y^x(t) y$ are simply matrix-vector multiplications. 
We also denote these quantities as $D\tilde Z^x(t; y)$ and $D\tilde Y^x(t; y)$, respectively, to emphasize their dependences on $y$, 
the direction in which derivatives are taken. 

(b) Let us note the following simple but useful observation. For all $j = 1, 2, \cdots, d$, the $j$th standard unit vector $e_j \in G_x$ 
for any $x\in \R_+^d$, i.e., $e_j$ is always a valid direction in which pathwise derivatives can be taken. 
Then, 
\(
D\tilde Z^x(t; e_j) = D\tilde Z^x(t)e_j = D_j\tilde Z^x(t), \text{ and } D\tilde Y^x(t; e_j) = D_j \tilde Y^x(t),
\)
where $D_j\tilde Z^x(t)$ ($D_j \tilde Y^x(t)$, respectively) denotes the $j$th column of the matrix $D\tilde Z^x(t)$ ($D\tilde Y^x(t)$, respectively). 
In other words, the $j$th column of $D\tilde Z^x(t)$ ($D\tilde Y^x(t)$, respectively) is the regularized $j$th partial derivative of $\tilde Z^x(t)$ ($\tilde Y^x(t)$, respectively) in $x$.

\end{remark}

\begin{proposition}[Theorem 4 of \cite{LipshutzRamanan2019a}]\label{prop:Lambda-Dconv}
Let $\{f_n\}_{n \in \mathbb{N}}$ be a sequence in $\C_+(\mathbb{R}^d)$ that converges to $f \in \C_+(\mathbb{R}^d)$ as $n \to \infty$.
Let $(h,g)$ solve the Skorokhod problem for $f$, and for each $n \in \mathbb{N}$, let $(h_n,g_n)$ solve the Skorokhod problem for $f_n$.
Let $\psi \in \C(\mathbb{R}^d)$ with $\psi(0) \in H_{h(0)}$ and $\{\psi_n\}_{n \in \mathbb{N}}$ be a sequence in $\C(\mathbb{R}^d)$ such that
$\psi_n \to \psi$ in $\C(\mathbb{R}^d)$ as $n \to \infty$. Then,
\(
\Lambda_{h_n}(\psi_n) \to \Lambda_h(\psi) \ \ \text{in } \ \D(\mathbb{R}^d), \ \text{as } \ n \to \infty.
\)
\end{proposition}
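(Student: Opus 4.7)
My plan is to first reduce to the case of a fixed perturbation $\psi_n \equiv \psi$, then establish convergence of the derivative-process sequence using a combination of explicit $\veps$-perturbation approximants and a compactness--uniqueness argument based on the derivative problem.

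First, the Lipschitz property of $\Lambda_{h_n}$ (the proposition restated just before the statement, from Theorem 5.4 of \cite{LipshutzRamanan2018}) yields, for every $T>0$,
\[
\|\Lambda_{h_n}[\psi_n] - \Lambda_{h_n}[\psi]\|_T \leq C_\Lambda \|\psi_n - \psi\|_T \to 0 \text{ as } n \to \infty,
\]
with $C_\Lambda$ independent of $h_n$. Since uniform convergence on $[0,T]$ implies convergence in $\D(\R^d)$ in the Skorokhod topology, it suffices to prove $\Lambda_{h_n}[\psi] \to \Lambda_h[\psi]$ in $\D(\R^d)$ for fixed $\psi$. Taking $\psi_2 \equiv 0$ in the same Lipschitz inequality (with $\Lambda_{h_n}[0] = 0$, which follows since $(\phi,\eta)\equiv(0,0)$ trivially solves the derivative problem for $\psi\equiv0$) also gives the uniform bound $\|\Lambda_{h_n}[\psi]\|_T \leq C_\Lambda \|\psi\|_T$.

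Second, I would exploit the identification of $\Lambda_h[\psi]$ as the right-continuous regularization of $\nabla_\psi \Gamma(f)$ from Proposition \ref{prop:LR2018-main}, together with the continuous finite-$\veps$ approximants $\phi_n^\veps := \nabla_\psi^\veps \Gamma(f_n)$ and $\phi^\veps := \nabla_\psi^\veps \Gamma(f)$. Two applications of the Lipschitz continuity of $\Gamma$ (Proposition \ref{prop:skorokhod-lipschitz}) yield
\[
\|\phi_n^\veps - \phi^\veps\|_T \leq \frac{2C_\Gamma}{\veps}\, \|f_n - f\|_T,
\]
so for each fixed $\veps>0$ we have $\phi_n^\veps \to \phi^\veps$ uniformly on $[0,T]$. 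Choosing a diagonal sequence $\veps_n \downarrow 0$ slowly enough that $\|f_n - f\|_T/\veps_n \to 0$, and combining with Proposition \ref{prop:nabla-veps} applied to $f$, one obtains $\phi_n^{\veps_n}(t) \to \Lambda_h[\psi](t)$ at every continuity point $t$ of $\Lambda_h[\psi]$, and analogously for the left limits.

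Third, to upgrade pointwise-at-continuity-points convergence to convergence in the Skorokhod topology on $\D(\R^d)$, I would use a compactness--uniqueness strategy. The uniform bound from step one, together with the structural restriction that the pushing component $\eta_n := \Lambda_{h_n}[\psi] - \psi$ can only vary in directions belonging to $\mathrm{span}\bigl[\bigcup_{r \in (s,t]} R(h_n(r))\bigr]$ by Definition \ref{df:deriv-prob}(iii), yields tightness of $\{\Lambda_{h_n}[\psi]\}$ in $\D(\R^d)$. Any subsequential limit $\phi^\ast$ of $\Lambda_{h_n}[\psi]$ must satisfy conditions (i)--(iii) of the derivative problem along $h$ for $\psi$: (i) passes to the limit by continuity; (ii) follows from $\phi_n(t) \in H_{h_n(t)}$ and the uniform convergence $h_n \to h$, using that $H_{h(t)}$ is closed; and (iii) follows from upper semi-continuity of the set-valued reflection map $x \mapsto R(x)$ on $\partial \R_+^d$. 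Pathwise uniqueness for the derivative problem (Proposition \ref{prop:deriv-unique}) then forces $\phi^\ast = \Lambda_h[\psi]$, and a standard subsequence argument yields convergence of the full sequence.

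\emph{Main obstacle.} The hardest step is the tightness of $\{\Lambda_{h_n}[\psi]\}$ in $\D(\R^d)$ and the verification of condition (iii) in the limit: the reflection sets $R(h_n(r))$ change discontinuously in $r$ as $h_n$ hits and leaves the boundary, and different faces can be visited by $h_n$ than by $h$. Controlling this requires a careful analysis of boundary-visit intervals of $h_n$ as $h_n \to h$ uniformly, especially at times when $h(r)$ lies on a face of codimension greater than one or transitions between faces; this is precisely the analysis carried out in \cite{LipshutzRamanan2019a} under the boundary jitter property, which is available in our orthant setting.
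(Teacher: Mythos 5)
This proposition is quoted verbatim from the literature — it is stated as Theorem~4 of \cite{LipshutzRamanan2019a} — and the paper gives no proof of its own. Proposition~\ref{prop:Lambda-Dconv} is not listed among the new technical results (Proposition~\ref{prop:moment-Z}, Corollary~\ref{cor:moment-w}, Lemma~\ref{lem:bound-DZ}, Proposition~\ref{thm:continuity-DZ}, Corollary~\ref{cor:conv-bel}) whose proofs appear in Appendix~\ref{app:tech-proofs}. So there is no in-paper argument to compare your proposal against; the relevant comparison would be with the proof in \cite{LipshutzRamanan2019a} itself, which lies outside this document.

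Taking your proposal on its own terms: the reduction to fixed $\psi$ and the uniform a priori bound via the $\Lambda$-Lipschitz estimate are both correct and are how one would begin. However, the middle step has a real gap: the $\veps$-approximant bound $\|\phi_n^\veps - \phi^\veps\|_T \le \tfrac{2C_\Gamma}{\veps}\|f_n - f\|_T$ combined with a diagonal $\veps_n$ gives you only pointwise convergence at continuity points of $\Lambda_h[\psi]$, and pointwise convergence at continuity points does \emph{not} imply convergence in the $J_1$ Skorokhod topology on $\D(\R^d)$ when the limit has jumps. You acknowledge this in your ``main obstacle'' paragraph, but then you hand back exactly the required estimate — matching jump times and jump sizes of $\Lambda_{h_n}[\psi]$ to those of $\Lambda_h[\psi]$ — to ``the analysis carried out in \cite{LipshutzRamanan2019a}.'' That is where essentially all the content of the theorem lives, so the plan sketches around the core rather than through it. Two smaller points: (a) you invoke Proposition~\ref{prop:deriv-unique} (a stochastic pathwise-uniqueness statement for derivative processes along a diffusion) where what you need is uniqueness of solutions to the deterministic derivative problem, which follows instead from the Lipschitz estimate for $\Lambda$; and (b) the hypothesis $\psi(0) \in H_{h(0)}$ is never used in your argument, yet it is essential — without it, $\Lambda_h[\psi]$ has an initial jump and the conclusion can fail — so any complete proof must explain where that hypothesis enters (it guarantees $\Lambda_h[\psi](0)=\psi(0)$ and prevents a time-zero discontinuity that the approximants $\Lambda_{h_n}[\psi]$ need not reproduce).
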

We derive a simple bound for the derivative process $D \tilde{Z}^x$ in the following lemma. 
\begin{lemma}\label{lem:bound-DZ}
Let $x \in \mathbb{R}^d_+$, and let $j \in \{1, 2, \cdots, d\}$. There exists a positive constant $C$, independent from $x$ and $j$, such that a.s.
\(
\| D_j \tilde{Z}^x \|_t := \sup_{s \in [0,t]} \|D_j \tilde{Z}^x(s)\|_\infty \leq C.
\)
Furthermore, $C = C(t, C_{\Lambda}, C_{\tb})$ depends only on $t$, $C_{\Lambda}$ and $C_{\tb}$, but not on the problem dimension $d$. 
\end{lemma}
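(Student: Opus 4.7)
The plan is to realize the $j$th column $D_j\tilde Z^x$ as the image of a self-referential input under the derivative map $\Lambda_{\tilde Z^x}$, and then combine its Lipschitz estimate (the $C_\Lambda$-result, i.e., Theorem 5.4 of \cite{LipshutzRamanan2018}) with Grönwall's inequality to close an integral bound that is independent of $d$.

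Concretely, the first step is to read off the $j$th column of the defining equation
\[
D\tilde Z^x(t) = I + \int_0^t D\tb(\tilde Z^x(s))\, D\tilde Z^x(s)\,ds + \tilde\eta(t),
\]
and introduce
\[
\psi_j(t) := e_j + \int_0^t D\tb(\tilde Z^x(s))\, D_j\tilde Z^x(s)\,ds, \qquad \eta_j := \tilde\eta_j \text{ (the $j$th column of $\tilde\eta$)}.
\]
I would then verify that $(\phi_j,\eta_j) := (D_j\tilde Z^x, \tilde\eta_j)$ satisfies the three conditions of Definition \ref{df:deriv-prob} along $\tilde Z^x$ for input $\psi_j$: condition (i) follows by direct substitution; condition (ii) is the hypothesis $D_j\tilde Z^x(t)\in H_{\tilde Z^x(t)}$ built into the definition of the derivative process; and condition (iii), together with the initial span constraint, is exactly the pushing property of $\tilde\eta_j$ inherited from $\tilde\eta$. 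By pathwise uniqueness (Proposition \ref{prop:deriv-unique}), this identifies $D_j\tilde Z^x = \Lambda_{\tilde Z^x}[\psi_j]$. Since $(0,0)$ trivially solves the derivative problem for zero input, the Lipschitz estimate applied to $(\psi_j, 0)$ yields $\|D_j\tilde Z^x\|_t \le C_\Lambda \|\psi_j\|_t$ almost surely.

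The second step is routine: from $\|e_j\|_\infty=1$, the submultiplicativity $\|Av\|_\infty\le\|A\|_\infty\|v\|_\infty$, and Assumption \ref{as:tb}, one gets
\[
\|\psi_j\|_t \le 1 + C_{\tb}\int_0^t \|D_j\tilde Z^x\|_s\,ds,
\]
so that
\[
\|D_j\tilde Z^x\|_t \le C_\Lambda + C_\Lambda C_{\tb}\int_0^t \|D_j\tilde Z^x\|_s\,ds.
\]
Since $D\tilde Z^x$ is RCLL, $s\mapsto \|D_j\tilde Z^x\|_s$ is a.s.\ locally bounded on compacts, so Grönwall delivers $\|D_j\tilde Z^x\|_t \le C_\Lambda \exp(C_\Lambda C_{\tb} t)$, with a constant depending only on $t,C_\Lambda,C_{\tb}$ and, crucially, independent of $d$ (inherited from the $d$-independence of $C_\Lambda$ stated in the remark following the Lipschitz estimate, and of $C_{\tb}$ from Assumption \ref{as:tb}). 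The main potentially delicate point is the identification of $(D_j\tilde Z^x,\tilde\eta_j)$ as the solution of the derivative problem for $\psi_j$, i.e., matching the span-and-boundary conditions in Definition \ref{df:deriv-prob} against those built into the definition of $D\tilde Z^x$; once this identification is verified, the remainder is a standard Grönwall argument.
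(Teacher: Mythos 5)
Your proof takes essentially the same approach as the paper: defining $\psi_j(t) = e_j + \int_0^t D\tb(\tilde Z^x(s))D_j\tilde Z^x(s)\,ds$, identifying $D_j\tilde Z^x = \Lambda_{\tilde Z^x}[\psi_j]$, and then combining the Lipschitz bound for $\Lambda$ (against the zero solution) with Grönwall's inequality to obtain $C_\Lambda\exp(C_\Lambda C_{\tb}t)$. The extra care you take in verifying the conditions of Definition~\ref{df:deriv-prob} is a nice touch (the paper merely asserts the identification), though the uniqueness you invoke is really that of solutions to the derivative problem — implicit in $\Lambda_{\tilde Z^x}$ being well-defined via Proposition~\ref{prop:LR2018-main} — rather than pathwise uniqueness of the derivative process per Proposition~\ref{prop:deriv-unique}.
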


The next result establishes the continuity of derivative processes \( D\tilde{Z}^x \) in the initial state \( x \in \mathbb{R}_{++}^d \), 
in an appropriate sense. Its proof, deferred to Appendix \ref{app:tech-proofs}, is an application of Proposition \ref{prop:Lambda-Dconv}.
\begin{proposition}\label{thm:continuity-DZ}
Let \( x \in \mathbb{R}_{++}^d \) and \( (y^{(n)})_{n \in \mathbb{N}} \) be a fixed sequence in \( \mathbb{R}_{++}^d \) with \( y^{(n)} \to x \) as \( n \to \infty \).  
Let \( D\tilde{Z}^x \) be the derivative process along \( \tilde{Z}^x \), and for each \( n \in \mathbb{N} \),  
let \( D\tilde{Z}^{y^{(n)}} \) be the derivative process along \( \tilde{Z}^{y^{(n)}} \).  
Then a.s. for each $j$, 
\(
D_j\tilde{Z}^{y^{(n)}} \to D_j\tilde{Z}^x \ \text{in } \D(\mathbb{R}^d), \text{ as } n \to \infty.
\)
\end{proposition}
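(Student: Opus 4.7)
The plan is to represent $D_j\tilde{Z}^x$ and $D_j\tilde{Z}^{y^{(n)}}$ through the derivative map $\Lambda$, invoke the stability statement Proposition \ref{prop:Lambda-Dconv}, and close the loop using pathwise uniqueness of the derivative process. Fix $T>0$ and work on a probability-one event on which the pathwise Lipschitz bound \eqref{eq:moment2} holds simultaneously for every pair $(x,y^{(n)})$; this is possible because the collection $\{y^{(n)}\}_n$ is countable. From the definition of the derivative process, the $j$th column admits the representation
\[
D_j\tilde{Z}^x = \Lambda_{\tilde{Z}^x}[\psi^x_j], \qquad \psi^x_j(t) := e_j + \int_0^t D\tb\bigl(\tilde{Z}^x(s)\bigr)\, D_j\tilde{Z}^x(s)\, ds,
\]
and analogously $D_j\tilde{Z}^{y^{(n)}} = \Lambda_{\tilde{Z}^{y^{(n)}}}[\psi^n_j]$, with $\psi^n_j$ defined using $\tilde{Z}^{y^{(n)}}$ and $D_j\tilde{Z}^{y^{(n)}}$. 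By Lipschitz continuity of $\tb$ and estimate \eqref{eq:moment2}, $\tilde{Z}^{y^{(n)}} \to \tilde{Z}^x$ uniformly on $[0,T]$. The main obstacle is the self-referential character of the input: $\psi^n_j$ depends on the very process $D_j\tilde{Z}^{y^{(n)}}$ whose convergence we seek, so Proposition \ref{prop:Lambda-Dconv} cannot be applied in a single step.

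To unlock this, I would combine Arzela--Ascoli-type tightness with pathwise uniqueness of the derivative process. Lemma \ref{lem:bound-DZ} provides $\|D_j\tilde{Z}^{y^{(n)}}\|_T \le C$ uniformly in $n$, and Assumption \ref{as:tb} gives $\|D\tb\|_\infty \le C_{\tb}$, so each $\psi^n_j$ satisfies $\psi^n_j(0) = e_j$ and is Lipschitz on $[0,T]$ with a common Lipschitz constant. Arzela--Ascoli then makes $\{\psi^n_j\}_n$ relatively compact in the space of continuous paths on $[0,T]$ under the uniform topology. Given any subsequence, extract a further subsequence $\{n_k\}$ along which $\psi^{n_k}_j \to \psi^*$ uniformly on $[0,T]$ for some continuous $\psi^*$ with $\psi^*(0)=e_j$. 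Since $x\in\R_{++}^d$ we have $\cI(x)=\emptyset$ and $H_x=\R^d$, so $\psi^*(0)\in H_{\tilde{Z}^x(0)}$; Proposition \ref{prop:Lambda-Dconv} therefore yields
\[
D_j\tilde{Z}^{y^{(n_k)}} = \Lambda_{\tilde{Z}^{y^{(n_k)}}}[\psi^{n_k}_j] \longrightarrow \Lambda_{\tilde{Z}^x}[\psi^*] =: \phi^* \quad \text{in } \D(\R^d).
\]

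It remains to identify $\phi^*$. Convergence in $\D(\R^d)$ together with the uniform bound from Lemma \ref{lem:bound-DZ} forces $D_j\tilde{Z}^{y^{(n_k)}}(s) \to \phi^*(s)$ at every continuity point of $\phi^*$; combined with uniform convergence of $\tilde{Z}^{y^{(n_k)}}\to \tilde{Z}^x$ and continuity of $D\tb$, dominated convergence passes to the limit inside the integral defining $\psi^{n_k}_j$, producing $\psi^*(t) = e_j + \int_0^t D\tb(\tilde{Z}^x(s))\,\phi^*(s)\,ds$. Consequently $\phi^* = \Lambda_{\tilde{Z}^x}[\psi^*]$ satisfies the defining fixed-point equation of the derivative process along $\tilde{Z}^x$ in direction $e_j$, so by pathwise uniqueness (Proposition \ref{prop:deriv-unique}), $\phi^* = D_j\tilde{Z}^x$. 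Since every subsequence of $\{D_j\tilde{Z}^{y^{(n)}}\}$ contains a sub-subsequence converging in $\D(\R^d)$ to the same limit $D_j\tilde{Z}^x$, the entire sequence converges to $D_j\tilde{Z}^x$ in $\D(\R^d)$, and the argument is complete.
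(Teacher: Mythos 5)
Your proof is correct, but it takes a genuinely different route from the paper's. The paper runs a direct Gr\"onwall argument: it bounds $\|\Psi^{y^{(n)}} - \Psi^x\|_t$ (the difference of the two $\Lambda$-inputs) by a term $C_n(t)$ that vanishes plus $C_{\tb}C_\Lambda\int_0^t\|\Psi^{y^{(n)}}-\Psi^x\|_s\,ds$, where the key step is to introduce $\Lambda_{\tilde Z^{y^{(n)}}}(\Psi^x)$ as an intermediate so that Proposition~\ref{prop:Lambda-Dconv} can be applied to a \emph{fixed} input $\Psi^x$; Gr\"onwall then forces $\|\Psi^{y^{(n)}}-\Psi^x\|_t\to 0$, and one more application of Proposition~\ref{prop:Lambda-Dconv} finishes. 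You instead use a compactness--identification--uniqueness scheme: Arzel\`a--Ascoli on the uniformly Lipschitz family $\{\psi^n_j\}$, Proposition~\ref{prop:Lambda-Dconv} along the subsequence, passage to the limit in the integral defining $\psi^{n_k}_j$, and then uniqueness of the derivative process to pin down the subsequential limit. Both work. The paper's route is more constructive and implicitly quantitative (Gr\"onwall yields a rate); yours avoids exhibiting a vanishing remainder but trades it for compactness and a uniqueness step. One small point to sharpen in your write-up: the extraction of $\{n_k\}$ is $\omega$-by-$\omega$, so the subsequential limit $\phi^*$ is not a priori an adapted process, and the \emph{stochastic} pathwise uniqueness statement of Proposition~\ref{prop:deriv-unique} does not apply verbatim. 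What you actually need, and what closes the argument cleanly, is the \emph{deterministic} uniqueness of the coupled derivative problem on the fixed sample path $\omega$; this follows directly from the Lipschitz property of $\Lambda$ (Theorem~5.4 of \cite{LipshutzRamanan2018}) and Gr\"onwall, which is the very estimate underlying Proposition~\ref{prop:deriv-unique}. Making that reference precise would make the identification step airtight.
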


\begin{remark}\label{rm:continuity}
Proposition \ref{thm:continuity-DZ} shows the continuity of the derivative process with respect to the initial condition $x$, 
as long as $x$ is in the interior of the nonnegative orthant, i.e., $x \in \R_{++}^d$. 
This is, in some sense, best possible; see also Proposition \ref{prop:Lambda-Dconv} (Theorem 4 in \cite{LipshutzRamanan2019a} 
and the ensuing discussion). Indeed, for a one-dimensional reflected Brownian motion with initial condition $x$, 
let $\tau_x$ be its first hitting time to zero. 
Then, at $x=0$, the derivative process is everywhere zero, and for $x>0$, the derivative process is $1$ before $\tau_x$, 
and zero after. It can be verified that the derivative process at $x>0$ does not converge to that at $x=0$ in the Skorokhod topology. 
\end{remark}
\begin{corollary}\label{cor:conv-bel}
Let $j \in \{1, 2, \cdots, d\}$, and let \( (y^{(n)}) \), $x$, and \( D_j\tilde{Z}^{y^{(n)}} \) and \( D_j\tilde{Z}^x \) be the same as in Proposition \ref{thm:continuity-DZ}. 
In particular, $x \in \R_{++}^d$.
Define the continuous martingales
\( I(y^{(n)}), n \in \mathbb{N}, \) and \( I(x) \), by
\(
I(y^{(n)})(t) = \int_0^t \left[\sigma^{-1} D_j\tilde Z^{y^{(n)}}(s)\right]^{\top} \, dB(s),\ \text{ and } \ I(x)(t) = \int_0^t \left[\sigma^{-1} D_j\tilde Z^x(s)\right]^{\top} \, dB(s).
\)
Then, \( \mathbb{E} \left[ \|I(y^{(n)}) - I(x)\|_t^2 \right] \to 0 \text{ as } n \to \infty \)
for each \( t \in \mathbb{R}_+ \).
\end{corollary}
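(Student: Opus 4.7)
The plan is to reduce the corollary to a standard $L^2$-norm estimate for stochastic integrals, and then apply dominated convergence. The key ingredients are already in place: Proposition \ref{thm:continuity-DZ} provides a.s.\ Skorokhod convergence $D_j \tilde Z^{y^{(n)}} \to D_j \tilde Z^x$ in $\D(\R^d)$, while Lemma \ref{lem:bound-DZ} provides a uniform-in-$n$ a.s.\ bound $\|D_j \tilde Z^{y^{(n)}}\|_t \le C$ with $C = C(t, C_\Lambda, C_{\tb})$. Combined with $\|\sigma^{-1}\|_2 \le C_\sigma$ from Assumption \ref{as:sigma}, these yield both pointwise convergence and an integrable envelope for the integrand of the Itô isometry.

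The first step is to bound the supremum norm by a terminal expectation. Since $I(y^{(n)}) - I(x)$ is a continuous $\{\cF_t\}$-martingale with null initial value, Doob's $L^2$ maximal inequality gives
\begin{equation*}
\E\left[\|I(y^{(n)}) - I(x)\|_t^2\right] \;\le\; 4\, \E\left[\bigl(I(y^{(n)})(t) - I(x)(t)\bigr)^2\right],
\end{equation*}
and the Itô isometry then yields
\begin{equation*}
\E\left[\bigl(I(y^{(n)})(t) - I(x)(t)\bigr)^2\right] \;=\; \E\!\left[\int_0^t \bigl\|\sigma^{-1}\bigl(D_j\tilde Z^{y^{(n)}}(s) - D_j\tilde Z^x(s)\bigr)\bigr\|_2^2 \, ds\right].
\end{equation*}
So it suffices to show the right-hand side tends to zero.

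The second step is to apply dominated convergence on $[0,t] \times \Omega$ (with the product of Lebesgue and $\pr$). The envelope is immediate: by Lemma \ref{lem:bound-DZ} and Assumption \ref{as:sigma}, the integrand is a.s.\ bounded by $4 d\, C_\sigma^2 C^2$, uniformly in $n$ and $s$. For pointwise convergence, I would invoke the standard fact that a.s.\ Skorokhod $J_1$-convergence $D_j \tilde Z^{y^{(n)}} \to D_j \tilde Z^x$ implies $D_j \tilde Z^{y^{(n)}}(s) \to D_j \tilde Z^x(s)$ at every continuity point $s$ of the limit. Since $D_j \tilde Z^x$ is an RCLL path, its discontinuity set is at most countable, hence Lebesgue-null in $s$; thus for a.e.\ $(s,\omega)$ the integrand converges to zero. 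Dominated convergence then closes the argument.

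The main subtle point is precisely the mismatch between Skorokhod convergence and the pointwise evaluation inside the Itô isometry integral: Skorokhod convergence does not give pointwise convergence at jump times of $D_j \tilde Z^x$. The resolution is to observe that the Lebesgue measure of those jump times is zero a.s., so Fubini and dominated convergence can still be applied. Everything else is routine once Proposition \ref{thm:continuity-DZ} and Lemma \ref{lem:bound-DZ} are invoked, so there is no deeper obstacle beyond this observation.
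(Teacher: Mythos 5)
Your proof is correct and takes essentially the same route as the paper: the paper invokes the Burkholder--Davis--Gundy inequality (rather than Doob plus Itô isometry, but with $p=2$ these are the same estimate up to constants) to reduce to $\E[\int_0^t \|\sigma^{-1}(D_j\tilde Z^{y^{(n)}}(s)-D_j\tilde Z^x(s))\|_2^2\,ds]$, and then applies dominated convergence using the Skorokhod convergence from Proposition~\ref{thm:continuity-DZ} and the uniform bound from Lemma~\ref{lem:bound-DZ}. Your explicit remark that Skorokhod $J_1$-convergence gives pointwise convergence only at continuity points of the limit, and that these form a Lebesgue-null set since $D_j\tilde Z^x$ is RCLL, makes precise a step the paper leaves implicit.
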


\subsection{Viscosity Solution}
Recall the parabolic HJB equation with oblique boundary conditions in Eqs. \eqref{eq:hjb1} -- \eqref{eq:hjb3}, reproduced here for convenience:
\begin{equation*}
  V_t(t,x) + \frac{1}{2}\mathrm{Tr}\left(\sigma\sigma^{\top} D_{xx}V(t,x)\right) + \tb(x)^{\top} D_x V(t,x) + \tilde\cH\left(x, D_x V(t,x)\right) - \beta V(t,x) = 0, 
\end{equation*}
where 
\(
\cH(x,p) := \inf_{a \in \mathcal{A}} \left\{ (b(x,a)^{\top} p + c(x,a) \right\} \) for \( (x,p) \in \mathbb{R}^d_+ \times \mathbb{R}^d.
\)
The terminal condition is given by 
\(
V(T,x) = \xi(x), \text{ for all} \ x \in \mathbb{R}^d_+, 
\)
and the oblique boundary conditions are
\(
\langle R_i, D_x V (t,x) \rangle = -\kappa_i, \text{ if } x_i = 0, t<T.
\)
Define $\Phi(t,x,r,q,p,X)$ by
\begin{align}
\Phi(t,x,r,p,q,X) = -q - \frac{1}{2} \mathrm{Tr}(\sigma \sigma^{\top} X) - \cH (x,p) + \beta r.
\end{align}
Define also $\Phi_*(t,x,r,q,p,X)$ by
\begin{align}
\Phi_*(t,x,r,q,p,X) =
\begin{cases}
\Phi(t,x,r,q,p,X), & \text{if } x > 0; \\
\Phi(t,x,r,q,p,X) \wedge \min_{i \in \mathcal{I}(x)} \left\{ - R_i^{\top} p - \kappa_i \right\}, & \text{if } x \in \partial \mathbb{R}_+^d,
\end{cases}
\end{align}
and $\Phi^*(t,x,r,q,p,X)$ by
\begin{align}
\Phi^*(t,x,r,q,p,X) =
\begin{cases}
\Phi(t,x,r,q,p,X), & \text{if } x > 0 \\
\Phi(t,x,r,q,p,X) \vee \max_{i \in \mathcal{I}(x)} \left\{ - R_i^{\top} p - \kappa_i \right\}, & \text{if } x \in \partial \mathbb{R}_+^d.
\end{cases}
\end{align}
\begin{definition}
$u \in C([0,T] \times \mathbb{R}_+^d)$ is a {\em viscosity solution} of the HJB equation \eqref{eq:hjb1} -- \eqref{eq:hjb3}, 
if $u$ satisfies the terminal boundary condition Eq. \eqref{eq:hjb3}, and 
if for all $\varphi \in C^{1,2}((0,T) \times \mathbb{R}_+^d)$ and $(t,x) \in (0,T) \times \mathbb{R}_+^d$ such that $(t,x)$ is a strict maximum of $u - \varphi$,
\begin{equation}\label{eq:supersolution}
\Phi_*(t,x,u(t,x), \varphi_t(t,x), \varphi_x(t,x), \varphi_{xx}(t,x)) \leq 0,
\end{equation}
and for all $\varphi \in C^{1,2}((0,T) \times \mathbb{R}_+^d)$ and
$(t,x) \in (0,T) \times \mathbb{R}_+^d$ such that $(t,x)$ is a strict minimum of $u - \varphi$, 
\begin{equation}\label{eq:subsolution}
\Phi^*(t,x,u(t,x), \varphi_t(t,x), \varphi_x(t,x), \varphi_{xx}(t,x)) \geq 0.
\end{equation}
\end{definition}

\section{Stochastic Fixed-Point Equation}\label{sec:fixed-point-equation}
In this section, we provide a fix-point characterization of the value function gradient.  
To do so, we first rigorously define the functional map $F$, formally introduced in Eq. \eqref{eq:F}, 
and establish several useful properties. 
We then proceed to prove the fixed-point characterization in Theorem \ref{thm:fixed-point-tildeV}, our main result of the section. 

\subsection{Properties of the Functional Map \texorpdfstring{$F$}{F}}\label{sec:fixed-point}
All proofs of the results in this subsection are deferred to Appendix \ref{sec:proofs-fixed-point}.
Let \( T > 0 \), and let \( \mathcal{B} \) be the space of measurable functions 
\( v : [0, T) \times \mathbb{R}^d_+ \to \mathbb{R}^d \) with
\(
\sup_{(t,x) \in [0,T) \times \mathbb{R}^d_+} \frac{(T - t)^{1/2} \|v(t,x)\|_\infty}{w(x)} < \infty,
\)
and \( w : \R^d \to \R_+ \) defined in Eq. \eqref{eq:w}.

\begin{lemma}\label{lem:norm-rho-banach}
Let \( \rho \in \mathbb{R} \). Denote
\begin{equation}\label{eq:norm-rho}
\|v\|_\rho := \sup_{(t,x) \in [0,T) \times \mathbb{R}^d_+} \frac{e^{\rho t}(T - t)^{1/2} \|v(t,x)\|_\infty}{w(x)}.
\end{equation}
Then, for all \( v \in \mathcal{B} \), \( \|v\|_\rho < \infty \). 
Furthermore, \( \|\cdot\|_\rho \) defines a norm on the space \( \mathcal{B}\), and \( (\mathcal{B}, \|\cdot\|_\rho) \) is a Banach space.
\end{lemma}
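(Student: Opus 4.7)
The plan is to first observe that for any value of $\rho$, the factor $e^{\rho t}$ is uniformly bounded above and below by positive constants on $[0,T]$, specifically $e^{-|\rho| T} \leq e^{\rho t} \leq e^{|\rho| T}$. Writing $\|\cdot\|_0$ for the unweighted quantity appearing in the definition of $\mathcal{B}$ (i.e., the case $\rho = 0$), this yields
\[
e^{-|\rho| T} \|v\|_0 \;\leq\; \|v\|_\rho \;\leq\; e^{|\rho| T} \|v\|_0
\]
for every measurable $v : [0,T) \times \R_+^d \to \R^d$. This immediately gives the first assertion: any $v \in \mathcal{B}$ satisfies $\|v\|_0 < \infty$, hence $\|v\|_\rho < \infty$. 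It also reduces the Banach-space verification under $\|\cdot\|_\rho$ to that under $\|\cdot\|_0$, though I will work directly with $\|\cdot\|_\rho$ below.

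Next I would verify the norm axioms. Absolute homogeneity and the triangle inequality follow from standard supremum manipulations applied pointwise in $(t,x)$. For positive definiteness, note that for each $(t,x) \in [0,T) \times \R_+^d$ the prefactor $e^{\rho t}(T-t)^{1/2}/w(x)$ is strictly positive, since $w(x) \geq 1 > 0$ by (\ref{eq:w}) and $t < T$. Therefore $\|v\|_\rho = 0$ forces $\|v(t,x)\|_\infty = 0$ at every $(t,x)$, so $v \equiv 0$.

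For completeness, I would take a $\|\cdot\|_\rho$-Cauchy sequence $\{v_n\} \subset \mathcal{B}$. Fixing $(t,x) \in [0,T) \times \R_+^d$, the weight $e^{\rho t}(T-t)^{1/2}/w(x)$ is a positive finite constant, so $\{v_n(t,x)\}$ is Cauchy in $\R^d$ and admits a pointwise limit $v(t,x)$; the resulting map $v$ is measurable as a pointwise limit of measurable functions. Since Cauchy sequences are bounded, $M := \sup_n \|v_n\|_\rho$ is finite, and passing to the pointwise limit inside the supremum defining $\|v\|_\rho$ yields $\|v\|_\rho \leq M < \infty$, so $v \in \mathcal{B}$. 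To conclude $\|v_n - v\|_\rho \to 0$, fix $\veps > 0$ and pick $N$ with $\|v_m - v_n\|_\rho < \veps$ for all $m, n \geq N$. For each fixed $n \geq N$ and each $(t,x)$, letting $m \to \infty$ in
\[
\frac{e^{\rho t}(T-t)^{1/2} \|v_m(t,x) - v_n(t,x)\|_\infty}{w(x)} < \veps
\]
gives the analogous bound with $v_m(t,x)$ replaced by $v(t,x)$; taking the supremum over $(t,x)$ then produces $\|v - v_n\|_\rho \leq \veps$ for $n \geq N$.

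This lemma is essentially standard functional-analytic bookkeeping, so I do not anticipate any substantive obstacle. The only mildly delicate point is the vanishing of $(T-t)^{1/2}$ as $t \uparrow T$, but this is harmless because $\mathcal{B}$ is defined on the half-open interval $[0, T)$ rather than $[0, T]$, keeping the weighting factor strictly positive throughout the domain and keeping the norm both well-defined and separating.
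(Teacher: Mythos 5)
Your argument is correct and follows essentially the same route as the paper's: establish equivalence of $\|\cdot\|_\rho$ with $\|\cdot\|_0$ via bounded exponential weights, verify the norm axioms, and prove completeness by passing to pointwise limits of Cauchy sequences (the paper reduces to $\rho=0$ first; you work directly with general $\rho$, which is an immaterial stylistic difference). The only cosmetic distinction is your choice of equivalence constants $e^{\pm|\rho|T}$ versus the paper's $\min\{1,e^{\rho T}\}$ and $\max\{1,e^{\rho T}\}$, both of which are valid.
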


Recall the reference process \( \tilde{Z}^{t,x} \) defined in Eq. \eqref{eq:Z-t-x}, i.e., 
\begin{equation*}
  \tilde{Z}^{t,x}(s) = x + \int_t^s \tb(\tilde{Z}^{t,x}(r)) dr + \sigma (B(s) - B(t)) + R \tilde{Y}^{t,x}(s), \quad s \in [t,T],
  \end{equation*}
and the functional map $F$ formally defined in Eq. \eqref{eq:F}, where
for \( (t,x) \in [0,T) \times \mathbb{R}^d_+ \),
\begin{align*}
&~F(v)(t,x) :=~\mathbb{E} \Bigg[ \int_t^T e^{-\beta(s-t)} \tilde \cH \left( \tilde{Z}^{t,x}(s), v(s, \tilde{Z}^{t,x}(s))\right) \left(\frac{1}{s - t} \int_t^s \left[\sigma^{-1} D \tilde{Z}^{t,x}(r)\right]^{\top} dB(r)\right) ds \nonumber \\
& ~~~~ + \int_t^T e^{-\beta(s-t)} \kappa^{\top} d\left[D\tilde Y^{t,x}(s)\right] + e^{-\beta(T-t)}\xi(\tilde{Z}^{t,x}(T)) \left( \frac{1}{T - t} \int_t^T \left[\sigma^{-1} D \tilde{Z}^{t,x}(r)\right]^{\top} dB(r) \right) \Bigg].
\end{align*}
Here, the {\em Hamiltonian} \( \tilde \cH : \R_+^d \times \R^d \to \R \) is defined by Eq. \eqref{eq:hamiltonian-tilde}, i.e., 
for each \( (x,p) \in \mathbb{R}^d_+ \times \mathbb{R}^d \),
\(
\tilde \cH(x,p) := \inf_{a \in \mathcal{A}} \left\{ (b(x,a) - \tb(x))^{\top} p + c(x,a) \right\}.
\)


\begin{lemma}\label{lem:Fv-bounded}
For \( v \in \mathcal{B} \), \( \|F(v)\|_0 < \infty \).
\end{lemma}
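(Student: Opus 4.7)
The plan is to split $F(v)(t,x)$ into its three components -- the Hamiltonian integral, the terminal-cost term, and the pushing-penalty integral against $D\tilde{Y}^{t,x}$ -- and bound each so that the sum, when multiplied by $(T-t)^{1/2}/w(x)$, is uniformly bounded over $(t,x) \in [0,T) \times \mathbb{R}_+^d$. The basic toolkit is Cauchy-Schwarz combined with It\^{o}'s isometry, the pathwise bound on $D\tilde{Z}^{t,x}$ from Lemma \ref{lem:bound-DZ}, and the moment estimate from Corollary \ref{cor:moment-w}.

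For the Hamiltonian and terminal-cost terms, the key ingredient is an $L^2$ bound on the Malliavin weight: It\^o's isometry together with Lemma \ref{lem:bound-DZ} and $\|\sigma^{-1}\|_2 \le C_\sigma$ (Assumption \ref{as:sigma}) yields
\begin{equation*}
\mathbb{E}\left[\left|\frac{1}{s-t}\int_t^s \bigl[\sigma^{-1} D_j\tilde{Z}^{t,x}(r)\bigr]^\top dB(r)\right|^2\right] \le \frac{C}{s-t}, \quad s \in (t,T], \ j=1,\dots,d.
\end{equation*}
The Hamiltonian integrand admits the pointwise bound $|\tilde{\mathcal{H}}(x,p)| \le d(C_b+C_{\tilde{b}})\|p\|_\infty + \alpha_w w(x)$ (from Assumptions \ref{as:b}, \ref{as:tb}, and \ref{as:cost-poly}), so substituting $p = v(s, \tilde{Z}^{t,x}(s))$ and using $v \in \mathcal{B}$ together with Corollary \ref{cor:moment-w} yields $\mathbb{E}[|\tilde{\mathcal{H}}(\tilde{Z}^{t,x}(s), v(s,\tilde{Z}^{t,x}(s)))|^2]^{1/2} \le C w(x)\bigl((T-s)^{-1/2}+1\bigr)$. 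Cauchy-Schwarz followed by the integrable-singularity identities $\int_t^T(T-s)^{-1/2}(s-t)^{-1/2}\,ds = \pi$ and $\int_t^T(s-t)^{-1/2}\,ds = 2(T-t)^{1/2}$ then bound the Hamiltonian integral's $\ell_\infty$ norm by $C w(x)$. The terminal term is handled analogously using polynomial growth of $\xi$ (Assumption \ref{as:cost-poly}) and the Malliavin weight bound at $s=T$, yielding a contribution of order $w(x)(T-t)^{-1/2}$.

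The main obstacle is the pushing-penalty integral: unlike $\tilde{Y}^{t,x}$, the derivative $D\tilde{Y}^{t,x}$ is not monotone, so standard monotonicity-based estimates do not apply. I bypass this by exploiting the defining equation
\begin{equation*}
R D\tilde{Y}^{t,x}(s) = D\tilde{Z}^{t,x}(s) - I - \int_t^s D\tilde{b}(\tilde{Z}^{t,x}(r))D\tilde{Z}^{t,x}(r)\,dr
\end{equation*}
together with the invertibility of $R = I - Q^\top$ via the Neumann series $R^{-1} = \sum_{n\ge 0}(Q^\top)^n$, which converges in operator norm by the geometric contraction in Assumption \ref{as:R}. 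Writing $\kappa^\top d[D\tilde{Y}^{t,x}(s)] = \kappa^\top R^{-1} d[D\tilde{Z}^{t,x}(s)] - \kappa^\top R^{-1} D\tilde{b}(\tilde{Z}^{t,x}(s))D\tilde{Z}^{t,x}(s)\,ds$ and integrating by parts in $s$ expresses $\int_t^T e^{-\beta(s-t)}\kappa^\top d[D\tilde{Y}^{t,x}(s)]$ as a pathwise combination of boundary values and Riemann integrals of $D\tilde{Z}^{t,x}$ and $D\tilde{b}(\tilde{Z}^{t,x})D\tilde{Z}^{t,x}$; all are uniformly bounded in $\omega$, $x$, and $t$ by Lemma \ref{lem:bound-DZ} and Assumption \ref{as:tb}.

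Assembling the three bounds gives $(T-t)^{1/2}\|F(v)(t,x)\|_\infty/w(x) \le C$ for all $(t,x) \in [0,T) \times \mathbb{R}_+^d$, where $C$ depends on $T$, $d$, $\|v\|_0$, the problem data, and the constants from earlier results, but not on $(t,x)$. This yields $\|F(v)\|_0 < \infty$, as claimed.
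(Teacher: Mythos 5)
Your proposal is correct and takes essentially the same route as the paper: split $F(v)$ into the Hamiltonian integral, the terminal-cost term, and the pushing-penalty integral, bound the first two via Cauchy--Schwarz, It\^o isometry, the uniform bound on $D\tilde Z^{t,x}$ from Lemma~\ref{lem:bound-DZ}, and the weighted moment bound of Corollary~\ref{cor:moment-w}, then cancel the $(s-t)^{-1/2}$ and $(T-s)^{-1/2}$ singularities using the Beta-function identity. The one place you go beyond the paper is the pushing-penalty integral: the paper dispatches it in one sentence by asserting it is uniformly bounded (implicitly because $D\tilde Y^{t,x}$ is bounded, which in turn follows from Lemma~\ref{lem:bound-DZ} and the defining relation), whereas you make this explicit by solving the defining equation $RD\tilde Y^{t,x} = D\tilde Z^{t,x} - I - \int D\tb(\tilde Z^{t,x})\,D\tilde Z^{t,x}\,ds$, inverting $R$ via the Neumann series (justified by Assumption~\ref{as:R}), and integrating by parts to reduce to Riemann integrals and boundary values of $D\tilde Z^{t,x}$. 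That is a legitimate fleshing-out of a step the paper elides, not a different argument.
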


Lemmas \ref{lem:Fv-bounded} 
establishes that \( F \) is a well-defined map from  
\( (\mathcal{B}, \|\cdot\|_0) \) into itself.  
By the equivalence of the norms \( \|\cdot\|_\rho \), \( \rho \in \mathbb{R} \),  
\( F \) is also a well-defined map from \( (\mathcal{B}, \|\cdot\|_\rho) \) into itself  
for any \( \rho \in \mathbb{R} \). The next proposition shows that  
\( F \) is contractive for sufficiently large \( \rho \).

\begin{proposition}\label{prop:F-contraction}
For sufficiently large \( \rho \in \mathbb{R} \), the map
\[
F: (\mathcal{B}, \|\cdot\|_\rho) \to (\mathcal{B}, \|\cdot\|_\rho)
\]
where $F$ is defined in Eq. \eqref{eq:F}, is contractive.
\end{proposition}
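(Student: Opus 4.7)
\textbf{Proof plan for Proposition~\ref{prop:F-contraction}.}

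The plan is to show that for the norm $\|\cdot\|_\rho$ defined in Eq.~\eqref{eq:norm-rho}, the Lipschitz constant of $F$ tends to $0$ as $\rho \to \infty$, from which contractivity for sufficiently large $\rho$ follows. First I would observe that the terminal-cost term and the pushing-penalty term in Eq.~\eqref{eq:F} do not depend on $v$ and therefore cancel in the difference $F(v_1)-F(v_2)$. What remains is
\begin{equation*}
\bigl(F(v_1)-F(v_2)\bigr)(t,x) = \mathbb{E}\left[\int_t^T e^{-\beta(s-t)} \Bigl[\tilde{\mathcal{H}}\bigl(\tilde Z^{t,x}(s), v_1(s,\tilde Z^{t,x}(s))\bigr) - \tilde{\mathcal{H}}\bigl(\tilde Z^{t,x}(s), v_2(s,\tilde Z^{t,x}(s))\bigr)\Bigr] M(s)\,ds\right],
\end{equation*}
where $M(s) \in \R^d$ has $j$-th component $M_j(s) = \frac{1}{s-t}\int_t^s [\sigma^{-1} D_j\tilde Z^{t,x}(r)]^\top dB(r)$. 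Bounding $\|F(v_1)-F(v_2)\|_\infty$ componentwise and applying Cauchy--Schwarz inside the time integral reduces the problem to bounding two second moments.

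Second, I would control each factor. The Hamiltonian $\tilde{\mathcal{H}}$ is the infimum of functions affine in $p$ with slopes $b(x,a)-\tb(x)$, so it is Lipschitz in $p$ (under $\|\cdot\|_\infty$) with constant $L = d(C_b + C_{\tb})$ by Assumptions~\ref{as:b} and \ref{as:tb}. Combined with the definition of $\|v_1-v_2\|_\rho$ and Corollary~\ref{cor:moment-w} (with $\alpha=2$), this gives
\begin{equation*}
\mathbb{E}\bigl[|\tilde{\mathcal{H}}(\tilde Z^{t,x}(s),v_1) - \tilde{\mathcal{H}}(\tilde Z^{t,x}(s),v_2)|^2\bigr]^{1/2} \le C L \,\|v_1-v_2\|_\rho \, e^{-\rho s}(T-s)^{-1/2} w(x).
\end{equation*}
For the Malliavin weight, Itô isometry together with Assumption~\ref{as:sigma} and the a.s. bound $\|D_j\tilde Z^{t,x}(r)\|_\infty \le C$ from Lemma~\ref{lem:bound-DZ} yields $\mathbb{E}[M_j(s)^2]^{1/2} \le C\sqrt{d}\,(s-t)^{-1/2}$, where $\sqrt{d}$ arises from the conversion between $\|\cdot\|_\infty$ and $\|\cdot\|_2$.

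Third, I would combine these bounds to obtain, for all $(t,x) \in [0,T)\times \R_+^d$,
\begin{equation*}
\frac{e^{\rho t}(T-t)^{1/2}\|(F(v_1)-F(v_2))(t,x)\|_\infty}{w(x)} \le C_d \,\|v_1-v_2\|_\rho \cdot (T-t)^{1/2} \int_t^T e^{-(\beta+\rho)(s-t)}(T-s)^{-1/2}(s-t)^{-1/2}\,ds,
\end{equation*}
where $C_d$ depends polynomially on $d$ but not on $\rho$. The substitution $u = (s-t)/(T-t)$ turns the time integral times $(T-t)^{1/2}$ into $\int_0^1 e^{-(\beta+\rho)(T-t)u}(1-u)^{-1/2}u^{-1/2}\,du$. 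Splitting at $u=1/2$, bounding the near-$0$ piece by $\sqrt{2\pi/((\beta+\rho)(T-t))}$ via $\int_0^\infty e^{-au}u^{-1/2}du = \sqrt{\pi/a}$, and the near-$1$ piece by a term that decays exponentially in $(\beta+\rho)(T-t)$, I obtain a uniform bound $\gamma(\rho) \le C_d/\sqrt{\beta+\rho}$ on the Lipschitz constant of $F$ in $\|\cdot\|_\rho$. Choosing $\rho$ large enough that $\gamma(\rho) < 1$ yields contractivity.

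The main technical obstacle is the boundary behavior as $t\uparrow T$: the naive Cauchy--Schwarz estimate produces the Beta-type integral $\int_0^1(1-u)^{-1/2}u^{-1/2}du = \pi$, which does not shrink in $\rho$ by itself. The $(T-t)^{1/2}$ factor in the norm \eqref{eq:norm-rho} is precisely what compensates this singularity and enables the decay $O(1/\sqrt{\rho})$ through the substitution argument above; verifying the bound is uniform in $t \in [0,T)$ is the delicate step. Everything else is routine once the moment estimates of Section~\ref{sec:tech} are in hand.
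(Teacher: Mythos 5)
Your proposal is correct and follows essentially the same route as the paper's proof: cancel the $v$-independent terms, apply Cauchy--Schwarz at each time $s$, use the (uniform-in-$x$) Lipschitz property of $\tilde\cH$ in $p$ together with Corollary~\ref{cor:moment-w}, bound the Malliavin weight via It\^o isometry and Lemma~\ref{lem:bound-DZ}, and then show the resulting singular time integral, weighted by $(T-t)^{1/2}$, is $o(1)$ as $\rho\to\infty$. The only difference is the elementary estimate at the end — you substitute $u=(s-t)/(T-t)$ and split at $u=1/2$ to get a bound of order $\rho^{-1/2}$, whereas the paper writes $(T-t)^{1/2}\le (T-s)^{1/2}+(s-t)^{1/2}$ and applies H\"older to get order $\rho^{-1/3}$ — and this is immaterial for the conclusion (just note that the $(T-t)^{1/2}$ prefactor survives the substitution and is what cancels the $(T-t)^{-1/2}$ in your near-$0$ bound, as your final uniform estimate implicitly uses).
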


The following lemma establishes certain continuity-preserving property of the map $F$.
\begin{lemma}\label{lem:Fv-continuous}
For any \( v \in \mathcal{B} \) such that $v$ is continuous on \( [0,T) \times \mathbb{R}_{++}^d \), 
we have that \( F(v) \) is continuous on \( [0,T) \times \mathbb{R}_{++}^d \).
\end{lemma}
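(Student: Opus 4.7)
Fix $(t,x)\in[0,T)\times\R_{++}^d$ and a sequence $(t_n,x_n)\to(t,x)$, with $x_n\in\R_{++}^d$ eventually. The plan is a coupling-plus-dominated-convergence argument: realize all the relevant processes $\tilde Z^{t_n,x_n}$, $D\tilde Z^{t_n,x_n}$, $D\tilde Y^{t_n,x_n}$ and their $(t,x)$-counterparts on one probability space using the same Brownian motion $B$, establish almost-sure convergence of the three integrands appearing in the definition of $F(v)$ in Eq.~\eqref{eq:F}, and then interchange limit and expectation via dominated convergence.

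\textbf{Pathwise convergence of the state and its derivative processes.} First I would use Proposition \ref{prop:skorokhod-lipschitz} together with Assumption \ref{as:tb} and a Gronwall argument to conclude, almost surely, that $\sup_{s\in[t\vee t_n,T]}\|\tilde Z^{t_n,x_n}(s)-\tilde Z^{t,x}(s)\|_\infty\to 0$; the shift in starting times is absorbed by the uniform continuity of Brownian paths on $[0,T]$. Because $x\in\R_{++}^d$, Proposition \ref{thm:continuity-DZ} then yields the Skorohod-topology convergence $D\tilde Z^{t_n,x_n}\to D\tilde Z^{t,x}$ column by column, and Corollary \ref{cor:conv-bel} upgrades this to $L^2$-convergence of the Malliavin weights $(s-t_n)^{-1}\int_{t_n}^s[\sigma^{-1}D\tilde Z^{t_n,x_n}(r)]^\top dB(r)$ toward their $(t,x)$-counterpart, uniformly on $[t\vee t_n,T]$. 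The analogous convergence $D\tilde Y^{t_n,x_n}\to D\tilde Y^{t,x}$ in $\D(\R^d)$ follows from Proposition \ref{prop:Lambda-Dconv} applied direction by direction to the derivative problem.

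\textbf{Dominated convergence for the Hamiltonian and terminal terms.} Almost-sure pointwise convergence of the two integrands in $s\in(t,T)$ follows from the previous step, continuity of $v$ on $[0,T)\times\R_{++}^d$ (applicable because $\tilde Z^{t,x}(s)>0$ a.s.\ for $s>t$ by Lemma \ref{lem:Z>0}), and continuity of $\tilde\cH$ and $\xi$. For the dominating function, It\^o isometry together with Lemma \ref{lem:bound-DZ} gives an $L^2$-bound of order $(s-t)^{-1/2}$ on the Malliavin weight; growth estimates from Assumptions \ref{as:b} and \ref{as:cost-poly} together with $v\in\mathcal{B}$ bound $|\tilde\cH(\tilde Z(s),v(s,\tilde Z(s)))|$ pointwise by a constant times $(T-s)^{-1/2}w(\tilde Z(s))$; and Corollary \ref{cor:moment-w} controls $\E[w^2(\tilde Z(s))]$ uniformly in $n$. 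Cauchy--Schwarz then yields an $n$-uniform integrand whose $s$-integral is finite because $(s-t)^{-1/2}(T-s)^{-1/2}$ is integrable on $[t,T]$. The terminal term is handled similarly (and more simply, as the Malliavin weight at $s=T$ is finite).

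\textbf{The boundary-penalty term, the main obstacle.} I expect the term $\int_t^T e^{-\beta(s-t)}\kappa^\top d[D\tilde Y^{t,x}(s)]$ to be the main difficulty, because $D\tilde Y^{t,x}$ is only RCLL and we are integrating against its signed increments, making direct use of Skorohod-topology convergence awkward. I would sidestep this by integration by parts: since $x\in\R_{++}^d$ forces $\cI(x)=\emptyset$ and hence $\tilde\eta(0)=0$, equivalently $D\tilde Y^{t,x}(t)=0$, one may rewrite
\begin{equation*}
\int_t^T e^{-\beta(s-t)}\kappa^\top d[D\tilde Y^{t,x}(s)]
 = e^{-\beta(T-t)}\kappa^\top D\tilde Y^{t,x}(T) + \beta\int_t^T e^{-\beta(s-t)}\kappa^\top D\tilde Y^{t,x}(s)\,ds.
\end{equation*}
The right-hand side involves only process values of $D\tilde Y^{t_n,x_n}$; Skorohod convergence from the first step yields pointwise a.s.\ convergence in $s$ off an at-most-countable set of jump times, and a uniform $L^p$-bound on $\|D\tilde Y^{t_n,x_n}\|_T$ (obtained from the Lipschitz constant $C_\Lambda$ of the derivative map applied direction by direction, together with $D\tilde Y = R^{-1}\tilde\eta$) furnishes the dominating function. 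Summing the three pieces gives $F(v)(t_n,x_n)\to F(v)(t,x)$, as required.
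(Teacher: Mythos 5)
Your plan is essentially the paper's: couple all processes on one probability space, establish almost-sure convergence of the integrands using Propositions~\ref{prop:skorokhod-lipschitz}, \ref{thm:continuity-DZ}, \ref{prop:Lambda-Dconv} and Corollary~\ref{cor:conv-bel}, and close with dominated convergence via the moment bounds in Proposition~\ref{prop:moment-Z}, Corollary~\ref{cor:moment-w}, and Lemma~\ref{lem:bound-DZ}. The paper also splits the decomposition into three parts (the Hamiltonian integral, the terminal term, and the boundary-penalty term) and verifies each separately, so your structure matches.

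Two points worth flagging. First, your handling of the boundary-penalty term via integration by parts is a clean variation: it handles general $\beta\ge 0$ in one stroke, whereas the paper in fact drops the factor $e^{-\beta(s-t)}$ in this term (equivalently, tacitly treats only $\beta=0$). The price of your route is that you need $D\tilde Y^{t,x}$ to be of finite variation to justify integration by parts, which the paper also takes for granted; given that, your version is slightly tighter. Second, a small imprecision: Proposition~\ref{thm:continuity-DZ} is stated for derivative processes all started at time $0$ with varying initial \emph{states}, not varying initial \emph{times}. As written, you couple $\tilde Z^{t_n,x_n}$ and $\tilde Z^{t,x}$ directly on one Brownian path and appeal to uniform continuity of $B$ to absorb the time shift, but that argument reaches only the state processes, not the derivative processes, to which Proposition~\ref{thm:continuity-DZ} does not directly apply in that coupling. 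The paper fixes this by first invoking the distributional identity $\{(\tilde Z^{t,x}(s), D\tilde Z^{t,x}(s))\}_{s\in[t,T]} \overset{d}{=} \{(\tilde Z^{x}(s-t), D\tilde Z^{x}(s-t))\}_{s\in[t,T]}$, so that all derivative processes become time-$0$-started processes with perturbed initial states, and only then applying Proposition~\ref{thm:continuity-DZ} at the (deterministic) shifted times $T-t_n\to T-t$. You need that reduction, or an explicit extension of Proposition~\ref{thm:continuity-DZ} to shifted initial times, to make the second step of your proof rigorous.
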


\subsection{Fixed-Point Characterization}
\begin{theorem}\label{thm:fixed-point-tildeV}
\begin{enumerate}
\item[(i)] There exists a unique \( \tilde{v} \in \mathcal{B} \) that is the fixed point of the map \( F \), 
i.e., 
\begin{equation}\label{eq:fixed-point-tildev}
  \tilde v = F(\tilde v).
\end{equation}
\end{enumerate}
For the rest of the paper, we use $\tilde v$ to denote the unique fixed point of $F$ in $\mathcal{B}$. Then: 
\begin{enumerate}
\item[(ii)] $\tilde v$ is continuous on $[0, T)\times \R_{++}^d$.
\item[(iii)] Define the function $\tilde V : [0, T)\times \R_+^d \to \R$ by
\begin{align}
  \tilde{V}(t,x) =&~\mathbb{E} \left[ \int_t^T e^{-\beta(s - t)} \tilde \cH(\tilde Z^{t,x}(s), \tilde{v}(s, \tilde Z^{t,x}(s))) ds \right. \nonumber\\
  &~+ \int_t^T e^{-\beta(s - t)} \kappa^{\top} d\tilde Y^{t,x}(s) + e^{-\beta(T - t)} \xi(\tilde Z^{t,x}(T)) \Bigg]. \label{eq:tildeV}
  \end{align}
Then $\tilde V$ is differentiable in $x\in \R_+^d$, with 
\[
\tilde{v}(t,x) = D_x \tilde{V}(t,x), \quad (t,x) \in [0,T) \times \mathbb{R}_+^d.
\]
\item[(iv)] $\tilde V$ is the unique viscosity solution of the HJB equation \eqref{eq:hjb1} -- \eqref{eq:hjb3} 
that satisfies the polynomial growth condition \eqref{eq:c-xi-poly} (with $\alpha_w$ appropriately defined if necessary), 
and $\tilde V = V$, 
where $V$ is the value function defined in Eq. \eqref{eq:V}.
\end{enumerate}
\end{theorem}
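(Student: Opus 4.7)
The plan is to prove the four assertions in the stated order, leveraging the contractive structure of $F$ together with the Bismut--Elworthy--Li (BEL) formula of Theorem~\ref{thm:bel} and the viscosity-solution framework.

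For part (i), I would invoke Banach's fixed-point theorem. By Lemma~\ref{lem:norm-rho-banach}, $(\mathcal{B},\|\cdot\|_\rho)$ is a Banach space for every $\rho\in\mathbb{R}$, and Proposition~\ref{prop:F-contraction} supplies some $\rho^*$ for which $F:(\mathcal{B},\|\cdot\|_{\rho^*})\to(\mathcal{B},\|\cdot\|_{\rho^*})$ is a contraction. Banach's theorem yields a unique fixed point $\tilde v\in\mathcal{B}$; since the norms $\|\cdot\|_\rho$ for different $\rho$ are pairwise equivalent on $\mathcal{B}$, $\tilde v$ does not depend on the choice of $\rho$.

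For part (ii), I would work with the Picard iterates. Start from a continuous $v^{(0)}\in\mathcal{B}$ (e.g.\ $v^{(0)}\equiv 0$) and define $v^{(n)}:=F(v^{(n-1)})$. By Lemma~\ref{lem:Fv-continuous}, each $v^{(n)}$ is continuous on $[0,T)\times\mathbb{R}_{++}^d$, and $v^{(n)}\to\tilde v$ in $\|\cdot\|_{\rho^*}$. On any compact $K\subset[0,T)\times\mathbb{R}_{++}^d$ the weight $e^{\rho^* t}(T-t)^{1/2}/w(x)$ is bounded above and below by positive constants, so convergence in $\|\cdot\|_{\rho^*}$ upgrades to uniform convergence on $K$. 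A uniform limit of continuous functions is continuous, so $\tilde v$ is continuous on $[0,T)\times\mathbb{R}_{++}^d$.

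For part (iii), fix $(t,x)\in[0,T)\times\mathbb{R}_{++}^d$. The three terms in Eq.~\eqref{eq:tildeV} are differentiated in $x$ separately. The terminal term $\mathbb{E}[e^{-\beta(T-t)}\xi(\tilde Z^{t,x}(T))]$ is handled by the BEL formula (Theorem~\ref{thm:bel}), producing the Malliavin-weighted expression matching the terminal piece of $F(\tilde v)$. For the pushing-cost term $\int_t^T e^{-\beta(s-t)}\kappa^\top d\tilde Y^{t,x}(s)$, the pathwise sensitivity of Proposition~\ref{prop:LR2019b-main}(iii) produces $\int_t^T e^{-\beta(s-t)}\kappa^\top d[D\tilde Y^{t,x}(s)]$. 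For the Hamiltonian integral, applying BEL under the time integral yields the Malliavin-weighted integrand appearing in $F(\tilde v)$. To justify differentiation under the expectation and time integral, I would use: continuity of $\tilde v$ on $[0,T)\times\mathbb{R}_{++}^d$ from part (ii); Lemma~\ref{lem:Z>0}, which guarantees $\tilde Z^{t,x}(s)>0$ a.s.\ for $s>t$ so that the integrand $\tilde v(s,\tilde Z^{t,x}(s))$ is a.s.\ a continuity point; the $L^2$-convergence of the Malliavin weights in Corollary~\ref{cor:conv-bel}; the uniform bounds on the derivative process in Lemma~\ref{lem:bound-DZ}; and the moment bounds of Proposition~\ref{prop:moment-Z} and Corollary~\ref{cor:moment-w}, which yield uniform integrability. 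The point $(t,x)$ being in the interior is essential here, since Proposition~\ref{thm:continuity-DZ} (and Remark~\ref{rm:continuity}) fail at the boundary. Summing the three contributions gives $D_x\tilde V(t,x)=F(\tilde v)(t,x)=\tilde v(t,x)$ on $[0,T)\times\mathbb{R}_{++}^d$; the identification on $\partial\mathbb{R}_+^d$ follows by continuous extension together with the oblique boundary condition, which I would verify directly from Proposition~\ref{prop:LR2019b-main}.

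For part (iv), I would invoke the result of Section~\ref{sec:viscosity-solutions} to conclude that $\tilde V$ is a viscosity solution of Eqs.~\eqref{eq:hjb1}--\eqref{eq:hjb3}. By a standard dynamic programming argument, the value function $V$ defined in Eq.~\eqref{eq:V} is also a viscosity solution of the same HJB system, and, by Assumption~\ref{as:cost-poly}, satisfies a polynomial growth bound of the form \eqref{eq:c-xi-poly} (with an appropriate $\alpha_w$). A standard comparison principle for viscosity solutions of parabolic equations with oblique-derivative boundary conditions in the class of polynomially growing functions (cf.~\cite{CrandallIshiiLions1992,FlemingSoner2006}) then forces $\tilde V=V$, completing the proof.

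The main obstacle is part (iii): rigorously differentiating Eq.~\eqref{eq:tildeV} under the expectation and the time integral. The Malliavin weight carries an $(s-t)^{-1}$ singularity as $s\downarrow t$, the Hamiltonian integrand depends on $\tilde v$, which is only continuous on the interior and merely of polynomial growth, and the derivative processes $D\tilde Z^{t,x}$, $D\tilde Y^{t,x}$ have discontinuous sample paths and discontinuous dependence on $x$ at the boundary. Coordinating the BEL formula with these obstructions, while controlling the singularity via the $L^2$ bounds and weighted moment estimates above, is the technical heart of the argument.
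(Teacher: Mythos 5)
Your proposal is correct and follows essentially the same route as the paper: Banach's fixed-point theorem via Proposition \ref{prop:F-contraction} for (i), Picard iterates with Lemma \ref{lem:Fv-continuous} and uniform convergence on compacts for (ii), the Bismut--Elworthy--Li formula together with the derivative processes $D\tilde Z^{t,x}$, $D\tilde Y^{t,x}$ for (iii), and Propositions \ref{prop:viscosity-tildeV}--\ref{prop:viscosity-unique} for (iv). The only small deviation is in (iii), where your interior-plus-continuous-extension step is unnecessary: Theorem \ref{thm:bel} is stated for every $x\in\R_+^d$ and requires only measurability and square-integrability of $\zeta$ (not continuity of $\tilde v$), so the paper applies it directly at all points, boundary included.
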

\begin{proof} Part (i) follows immediately from Proposition \ref{prop:F-contraction} and Banach's fixed point theorem. 
For part (ii), consider the sequence $\{v^{(n)}\}_{n \geq 0}$ of functions in $\mathcal{B}$ defined recursively as follows. 
$v^{(0)} \equiv 0$, and $v^{(n)} = F(v^{(n-1)})$, $n \in \N$. 
Then by Lemma \ref{lem:Fv-continuous}, for all $n \ge 0$, $v^{(n)}$ is continuous on $[0,T)\times \R_{++}^d$. 
Furthermore, $v^{(n)} \to \tilde v$ under norm $\|\cdot\|_\rho$, for some sufficiently large $\rho$, 
which in turn imply that on any compact subset of $[0,T)\times \R_{++}^d$, 
the convergence $v^{(n)} \to \tilde v$ is uniform. Therefore, the limit $\tilde v$ is continuous on $[0,T)\times \R_{++}^d$. 
Part (iii) follows by applying the Bismut-Elworthy-Li formula Eq. \eqref{eq:bel-main} to $\tilde V$, noting that 
$\tilde \cH(x, \tilde{v}(s, x))$ and $\xi(x)$ are measurable functions of polynomial growth. 
Finally, part (iv) follows from the following facts: (a) $\tilde V$ is a viscosity solution 
of the HJB equation \eqref{eq:hjb1} -- \eqref{eq:hjb3} (Proposition \ref{prop:viscosity-tildeV}); 
(b) the value function $V$ defined in Eq. \eqref{eq:V} is also a viscosity solution (Proposition \ref{prop:viscosity-V}); 
and (c) the viscosity solution is unique (Proposition \ref{prop:viscosity-unique}). 
\end{proof}

\section{Multi-Level Picard Scheme}\label{sec:mlp}
In this section, we develop the multilevel Picard (MLP) scheme for our drift control problem with reflections, 
and establish complexity bounds on the scheme under appropriate assumptions to be detailed. 
\subsection{The MLP Scheme}
Given $(t, x) \in [0, T)\times \R_+^d$, we wish to approximate $\tilde v(t,x)$, 
as well as $\tilde V(t,x)$ defined in Eq. \eqref{eq:tildeV}. 
We do so by making use of the fact that $\tilde v$ 
is the unique fixed point to the functional map $F$ introduced in Eq. \eqref{eq:F}, and 
that $\tilde V$ can be computed from $\tilde v$. 
For notational convenience, we introduce the related functional map $\bar F$ 
that relates $\tilde v$ to $(\tilde V, \tilde v)$: 
For $v \in \cB$, define $\bar F(\bar v)$ by
\begin{align}
\bar F(v)(t,x) & := \mathbb{E} \left[ \int_t^T e^{-\beta(s - t)} 
\tilde \cH(\tilde{Z}^{t,x}(s), \tilde{v}(s, \tilde{Z}^{t,x}(s))) 
\left( 1, \frac{1}{s - t} \int_t^s \left[\sigma^{-1} D \tilde{Z}^{t,x}(r)\right]^\top dB(r)  \right) ds \right. \nonumber \\
&\qquad \quad + \left(\int_t^T e^{-\beta(s - t)} \kappa^{\top} d\tilde Y^{t,x}(s), \int_t^T e^{-\beta(s-t)} \kappa^{\top} d\left[D\tilde Y^{t,x}(s)\right]\right) \nonumber \\
&\qquad \quad \left. + e^{-\beta(T - t)} \xi(\tilde{Z}^{t,x}(T)) 
\left( 1, \frac{1}{T - t} \int_t^T \left[\sigma^{-1} D \tilde{Z}^{t,x}(r)\right]^\top dB(r) \right) \right].
\label{eq:barF}
\end{align}
Here, for a vector $y \in \R^d$, $(1, y)$ is the $(d+1)$-dimensional vector that concatenates $1$ with $y$. 
The fact that $F$ is contractive under an appropriate norm
suggests the following natural iterative scheme to approximate $(\tilde V, \tilde v)$. 
Let
\begin{equation}\label{eq:initial}
v^{(0)}_j(t,x) \equiv 0, j=1, 2, \cdots, d, \quad \text{ and } \quad V^{(0)} \equiv 0.
\end{equation}
Recursively define $(V^{(n)}, v^{(n)})$ by
\begin{equation}\label{eq:iterations}
(V^{(n)}, v^{(n)}) := \bar F\left(v^{(n-1)}\right), n\in \N.
\end{equation}
We expect $(V^{(n)}, v^{(n)})$ to approximate $(\tilde V, \tilde v)$ well for large $n$. 
Thus, we now focus on the task of efficiently approximating $(V^{(n)}, v^{(n)})$. 
First, we rewrite the iteration in Eq. \eqref{eq:iterations} as a telescopic sum:
\begin{equation}
(V^{(n)}, v^{(n)}) = \bar F\left(v^{(0)}\right)+\sum_{\ell=1}^{n-1} \left[\bar F\left(v^{(\ell)}\right) -  \bar F\left(v^{(\ell-1)}\right)\right].
\end{equation}
In particular, for each $j = 1, 2, \cdots, d$, 
\begin{equation}
v^{(n)}_j(t,x) = \left[\bar F\left(v^{(0)}\right)\right]_{j+1} (t,x)+\sum_{\ell=1}^{n-1} \left[\bar F\left(v^{(\ell)}\right) -  \bar F\left(v^{(\ell-1)}\right)\right]_{j+1} (t,x),
\end{equation}
and 
\begin{equation}
V^{(n)}(t,x) = \left[\bar F\left(v^{(0)}\right)\right]_1 (t,x)+\sum_{\ell=1}^{n-1} \left[\bar F\left(v^{(\ell)}\right) -  \bar F\left(v^{(\ell-1)}\right)\right]_1 (t,x).
\end{equation}
Thus, to approximate $(V^{(n)}, v^{(n)})(t,x)$, we approximate $\left[\bar F\left(v^{(0)}\right)\right](t,x)$ and 
$\left[\bar F\left(v^{(\ell)}\right) -  \bar F\left(v^{(\ell-1)}\right)\right](t,x)$, for $\ell = 1, 2, \cdots, n-1$, respectively. 
Furthermore, because of the contractive property of $F$, we expect that we can use fewer samples 
to approximate $\left[\bar F\left(v^{(\ell)}\right) - \bar F\left(v^{(\ell-1)}\right)\right](t,x)$, for larger $\ell$. 
In preparation for the description of the multilevel Picard scheme, we provide a closer examination of the terms 
$\left[\bar F\left(v^{(0)}\right)\right]_j (t,x)$ and $\left[\bar F\left(v^{(\ell)}\right) -  \bar F\left(v^{(\ell-1)}\right)\right]_j (t,x)$, $\ell=1,2,\cdots,n-1$, 
$j = 1, 2, \cdots, d+1$.

For notational simplicity, we write 
\begin{equation}\label{eq:bel}
\mathcal{V}(s; t, x) := \frac{1}{\sqrt{s-t}} \int_t^s \left[\sigma^{-1} D\tilde{Z}^{t,x}(r)\right]^{\top} dB(r), 
\end{equation}
and 
\begin{equation}\label{eq:pushing-penalty}
\mathcal{P}(T; t,x) := \int_t^T e^{-\beta (s-t)} \kappa \cdot d\left[D\tilde Y^{t,x}(s)\right].
\end{equation}
Then, by the definition of $\bar F$, we have, for $j = 1, 2, \cdots, d$, 
\begin{eqnarray}
\left[\bar F\left(v^{(0)}\right)\right]_{j+1}(t,x) &=& \E\Bigg[\int_t^T \frac{e^{-\beta(s-t)}}{\sqrt{s-t}} \tilde \cH\left(\tilde{Z}^{t,x}(s), v^{(0)}\right) \mathcal{V}_j(s; t,x) ds \nonumber \\
& & + \mathcal{P}_j(T; t,x) + \frac{e^{-\beta (T-t)}}{\sqrt{T-t}}\xi\left(\tilde{Z}^{t,x}(T)\right) \mathcal{V}_j(T; t,x) \Bigg] \nonumber \\
&=& \E\Bigg[\int_t^T \frac{e^{-\beta(s-t)}}{\sqrt{s-t}} \underbar{\em c}\left(\tilde{Z}^{t,x}(s)\right) \mathcal{V}_j(s; t,x) ds \nonumber \\
& & + \mathcal{P}_j(T; t,x) + \frac{e^{-\beta (T-t)}}{\sqrt{T-t}}\xi\left(\tilde{Z}^{t,x}(T)\right) \mathcal{V}_j(T; t,x)\Bigg], 
\label{eq:phi-v0}
\end{eqnarray}
where 
\begin{equation}
\underbar{\em c}(x) = \inf_{a \in \mathcal{A}} \left\{c(x, a)\right\},
\end{equation}
and for $\ell \in \N$,
\begin{align}\label{eq:phi-vl-phi-vl-1}
& \left[\bar F\left(v^{(\ell)}\right)-\bar F\left(v^{(\ell-1)}\right)\right]_{j+1}(t,x) \nonumber \\
=& \ \E\left[\int_t^T \frac{e^{-\beta(s-t)}}{\sqrt{s-t}} \left[\tilde \cH\left(\tilde{Z}^{t,x}(s), v^{(\ell)}\left(s,\tilde{Z}^{t,x}(s)\right)\right) - \tilde \cH\left(\tilde{Z}^{t,x}(s), v^{(\ell-1)}\left(s,\tilde{Z}^{t,x}(s)\right)\right) \right] \mathcal{V}_j (s; t, x) ds \right].
\end{align}
We also have 
\begin{eqnarray}
\left[\bar F\left(v^{(0)}\right)\right]_1(t,x) &=& \E\Bigg[\int_t^T e^{-\beta(s-t)} \underbar{\em c}\left(\tilde{Z}^{t,x}(s)\right) ds \nonumber \\
& & \int_t^T e^{-\beta (s-t)}\kappa\cdot d\tilde Y^{t,x}(s) + e^{-\beta (T-t)}\xi\left(\tilde{Z}^{t,x}(T)\right) \Bigg], 
\label{eq:phi-v0-1}
\end{eqnarray}
and for $\ell \in \N$,
\begin{align}
& \left[\bar F\left(v^{(\ell)}\right)-\bar F\left(v^{(\ell-1)}\right)\right]_1(t,x) \nonumber \\
=& \ \E\left[\int_t^T e^{-\beta(s-t)} \left[\tilde \cH\left(\tilde{Z}^{t,x}(s), v^{(\ell)}\left(s,\tilde{Z}^{t,x}(s)\right)\right) - \tilde \cH\left(\tilde{Z}^{t,x}(s), v^{(\ell-1)}\left(s,\tilde{Z}^{t,x}(s)\right)\right) \right]  ds \right].
\label{eq:phi-vl-phi-vl-1-1}
\end{align}
We now make use of the identities in Eqs. \eqref{eq:phi-v0}, \eqref{eq:phi-vl-phi-vl-1}, \eqref{eq:phi-v0-1} and \eqref{eq:phi-vl-phi-vl-1-1}
to develop a multi-level Picard scheme 
for approximating $(V^{(n)}, v^{(n)})(t,x)$. 

First, we develop unbiased estimators 
for the expectations in \eqref{eq:phi-v0} and \eqref{eq:phi-vl-phi-vl-1}. To this end, we treat  
integration in time as expectation involving a randomly sampled time point. 
More specifically, consider the integral $\int_t^s e^{-\beta(r-t)} (r-t)^{-1/2} dr$. 
It can be computed that 
\begin{equation}
\int_t^s e^{-\beta(r-t)} (r-t)^{-1/2} dr = \left\{
\begin{array}{ll}
\sqrt{\frac{\pi}{\beta}}\left(1-2\Phi\left(-\sqrt{2\beta(s-t)}\right)\right), & \beta > 0; \\
2\sqrt{s-t}, & \beta = 0.
\end{array}\right.
\end{equation}
where $\Phi(\cdot)$ is the distribution function of the standard normal (with mean $0$ and standard deviation $1$). 
This motivates us to consider sampling a random time point $S \in (t, T)$ 
with the following distribution:
\begin{equation}\label{eq:random-time}
\pr(S < s) = \left\{
\begin{array}{ll}
\frac{1-2\Phi\left(-\sqrt{2\beta(s-t)}\right)}{1-2\Phi\left(-\sqrt{2\beta(T-t)}\right)}, & \beta > 0; \\
\frac{\sqrt{s-t}}{\sqrt{T-t}}, & \beta = 0.
\end{array}\right.
\end{equation}
Also define the normalizing constant $C(\beta, T-t)$ by
\begin{equation}
C(\beta, T-t) = \left\{
\begin{array}{ll}
\sqrt{\frac{\pi}{\beta}}\left(1-2\Phi\left(-\sqrt{2\beta(T-t)}\right)\right), & \beta > 0; \\
2\sqrt{T-t}, & \beta = 0.
\end{array}\right.
\end{equation}
When the context is clear, we often drop the $\beta$ dependence and simply write $C(T-t) = C(\beta, T-t)$.
Then, we can rewrite Eq. \eqref{eq:phi-v0} as 
\begin{align}
& \left[\bar F\left(v^{(0)}\right)\right]_{j+1}(t,x) \nonumber \\
=& \ \E\Bigg[ C(\beta, T-t)\cdot \underbar{\em c}\left(\tilde{Z}^{t,x}(S)\right) \mathcal{V}_j(S; t,x)
+ \mathcal{P}_j(T; t,x) + \frac{e^{-\beta (T-t)}}{\sqrt{T-t}}\xi \left(\tilde{Z}^{t,x}(T)\right)\mathcal{V}_j(T; t,x)\Bigg], 
\label{eq:v0-ex}
\end{align}
Eq. \eqref{eq:phi-vl-phi-vl-1} as 
\begin{align}
& \left[\bar F\left(v^{(\ell)}\right)-\bar F\left(v^{(\ell-1)}\right)\right]_{j+1}(t, x) \nonumber \\
=& \ \E\left[C(\beta, T-t)\left(\tilde \cH\left(\tilde{Z}^{t,x}(S), v^{(\ell)}\left(S,\tilde{Z}^{t,x}(S)\right)\right) - \tilde \cH\left(\tilde{Z}^{t,x}(S), v^{(\ell-1)}\left(S,\tilde{Z}^{t,x}(S)\right)\right) \right) \mathcal{V}_j (S; t, x)\right], \label{eq:vl-vl-1-ex}
\end{align}
Eq. \eqref{eq:phi-v0-1} as 
\begin{align}
& \left[\bar F\left(v^{(0)}\right)\right]_1(t,x) \nonumber \\
=& \ \E\Bigg[ C(\beta, T-t)\cdot \sqrt{S-t}\cdot \underbar{\em c}\left(\tilde{Z}^{t,x}(S)\right) 
+ \int_t^T e^{-\beta (s-t)}\kappa\cdot d\tilde Y^{t,x}(s) + e^{-\beta (T-t)}\xi \left(\tilde{Z}^{t,x}(T)\right)\Bigg], 
\label{eq:v0-ex-1}
\end{align}
and Eq. \eqref{eq:phi-vl-phi-vl-1-1} as
\begin{align}
& \left[\bar F\left(v^{(\ell)}\right)-\bar F\left(v^{(\ell-1)}\right)\right]_1(t, x) \nonumber \\
=& \ \E\left[C(\beta, T-t)\sqrt{S-t}\left(\tilde \cH\left(\tilde{Z}^{t,x}(S), v^{(\ell)}\left(S,\tilde{Z}^{t,x}(S)\right)\right) - \tilde \cH\left(\tilde{Z}^{t,x}(S), v^{(\ell-1)}\left(S,\tilde{Z}^{t,x}(S)\right)\right) \right)\right], \label{eq:vl-vl-1-ex-1}
\end{align}
where $S$ follows the distribution in Eq. \eqref{eq:random-time}, 
independently from the process $\tilde{Z}^{t,x}$. 

Second, in order to obtain estimates for $\bar F\left(v^{(0)}\right)(t,x)$ 
and $\bar F\left(v^{(\ell)}\right)-\bar F\left(v^{(\ell-1)}\right)(t, x)$, $\ell = 1, 2, \cdots, n$, 
we need to be able to efficiently and accurately simulate the following tuple: 
\begin{equation}\label{eq:tuple}
  \left(S, \tilde{Z}^{t,x}(S), \tilde{Z}^{t,x}(T), \int_t^T e^{-\beta (s-t)}\kappa\cdot d\tilde Y^{t,x}(s), \mathcal{P}(T; t,x), \mathcal{V}(S; t,x), \mathcal{V}(T; t,x)\right);
  \end{equation}
see expressions in Eqs. \eqref{eq:v0-ex} -- \eqref{eq:vl-vl-1-ex-1}. 
In addition, we also need to be able to efficiently compute the Hamiltonian $\tilde \cH$ (recall its definition in Eq. \eqref{eq:hamiltonian-tilde}), 
since computing $\tilde \cH(x,p)$ for any given input $(x,p) \in \R_+^d \times \R^d$ requires us to solve an optimization problem\footnote{Note 
that the computation of the gradient-dependent nonlinear term, in our case $\tilde \cH(x,p)$, is often implicitly assumed to only take constant time 
in prior literature. In stochastic control applications, this need not be the case.}. 
These considerations motivate the following definitions. 
\begin{definition}[Polynomial-complexity simulation]\label{df:poly-sim}
A simulation algorithm that generates one sample of the tuple \eqref{eq:tuple}, exactly or approximately, 
has \emph{polynomial computational complexity (in the dimension $d$)}, if there exist positive constants $\alpha_1$ and $\alpha_2$ 
that are independent of $d$, $t$ and $x$, 
such that its expected runtime is no more than $\alpha_1\left(1+d^{\alpha_2}\right)$. 
\end{definition}
\begin{definition}\label{df:poly-opt}
An optimization algorithm for solving $\tilde \cH (x,p)$ given any input $(x,p) \in \R_+^d \times \R^d$ 
has \emph{polynomial computational complexity (in the dimension $d$)}, if there exist positive constants $\alpha_1$ and $\alpha_2$ 
that are independent of $d$, $x$, and $p$, 
such that its expected runtime is no more than $\alpha_1\left(1+d^{\alpha_2}\right)$. 
\end{definition}
Note that Definition \ref{df:poly-sim} does not require an algorithm to be accurate; in other words,  
an algorithm for simulating the tuple \eqref{eq:tuple} may have polynomial complexity and incur large simulation error at the same time. 
In Section \ref{sec:exact-simulation}, for the important special case 
where components of $\tilde Z^{t,x}$ are independent and $\kappa = 0$, 
we describe an exact simulation algorithm with polynomial complexity; in fact, the complexity is linear in $d$. 
In more general settings where Euler–-Maruyama approximations are needed, there is an inherent tradeoff between efficiency, 
as measured by runtime, and accuracy, as measured by simulation error. 
If the number of discretization steps in an Euler–-Maruyama scheme is polynomial in $d$, then the scheme has polynomial complexity. 
We leave the question of whether a polynomial number of discretization steps suffices to obtain 
efficient MLP estimators with any prescribed accuracy as an important future research direction.

 

We are now ready to describe the MLP scheme.

Let $\Theta = \cup_{k=1}^{\infty} \Z^k$ be a parameter space, 
and $n, M \in \N$ be given hyperparameters.
Then define for all $t \in [0, T)$, $x \in \R_+^d$, and $\theta \in \Theta$, 
the following (random) functions recursively:
\begin{equation}\label{eq:mlp-v0}
\left(V^\theta_{0,M}, v^\theta_{0,M}\right)(t,x)\equiv 0;
\end{equation}
and 
\begin{align}
& \left(V^\theta_{n,M}, v^\theta_{n,M}\right)(t,x) \\
=~& \frac{1}{M^n}\sum_{m=1}^{M^n} \Bigg[C(\beta, T-t) \cdot 
\underbar{\em c}\left(\tilde{Z}^{t,x}_{(\theta, 0, m)}\left(S^{(\theta,0,m)}\right)\right) \left(\sqrt{S^{(\theta,0,m)}-t}, \mathcal{V}^{(\theta,0,m)}\left(S^{(\theta, 0, m)}; t,x\right)\right) \label{eq:Fv0-1} \\
& + \left(\int_t^T e^{-\beta (s-t)}\kappa\cdot d\tilde Y^{t,x}_{(\theta, 0, m)}(s), \mathcal{P}^{(\theta, 0, m)}(T; t,x)\right) \\
& + \frac{e^{-\beta (T-t)}}{\sqrt{T-t}}\xi \left(\tilde{Z}^{t,x}_{(\theta, 0, m)}(T)\right)\left(\sqrt{T-t}, \mathcal{V}^{(\theta,0,m)}\left(T; t,x\right)\right) \Bigg] \label{eq:Fv0-2}\\
& + \sum_{\ell=1}^{n-1} \sum_{m=1}^{M^{n-\ell}} 
\frac{C(\beta, T-t)}{M^{n-\ell}} \left(\sqrt{S^{(\theta, \ell, m)}-t}, \mathcal{V}{(\theta,\ell,m)}\left(S^{(\theta, \ell, m)}; t,x\right)\right) \times \label{eq:Fvl-1} \\
& \times \Bigg[\tilde \cH \left(\tilde{Z}^{t,x}_{(\theta, \ell, m)}\left(S^{(\theta,\ell,m)}\right), v^{(\theta, \ell, m)}_{\ell, M}\left(S^{(\theta,\ell,m)}, \tilde{Z}^{t,x}_{(\theta, \ell, m)}\left(S^{(\theta,\ell,m)}\right)\right) \right) \label{eq:Fvl-2}\\
& ~~~- \tilde \cH \left(\tilde{Z}^{t,x}_{(\theta, \ell, m)}\left(S^{(\theta,\ell,m)}\right), v^{(\theta, -\ell, m)}_{\ell-1, M}\left(S^{(\theta,\ell,m)}, \tilde{Z}^{t,x}_{(\theta, \ell, m)}\left(S^{(\theta,\ell,m)}\right)\right) \right)\Bigg]. \label{eq:mlp-rec}
\end{align}
Here, $\left(S^\theta, \tilde{Z}^{t,x}_\theta(S^\theta), \tilde{Z}^{t,x}_\theta(T), \int_t^T e^{-\beta (s-t)}\kappa\cdot d\tilde Y^{t,x}_\theta(s), \mathcal{V}^\theta(S^\theta; t,x), \mathcal{P}^\theta(T; t,x), \mathcal{V}^\theta(T; t,x) \right)$ 
for different $\theta \in \Theta$ 
are i.i.d. copies of the tuple \eqref{eq:tuple}. 
The summation involved in expressions \eqref{eq:Fv0-1} and \eqref{eq:Fv0-2} is a Monte Carlo estimation 
of $\left[F\left(v^{(0)}\right)\right]_j(t,x)$. Similarly, the summation involved in expressions \eqref{eq:Fvl-1} -- \eqref{eq:mlp-rec} 
is a Monte Carlo estimate of the telescopic sum $\sum_{\ell=1}^{n-1} \left[F\left(v^{(\ell)}\right)-F\left(v^{(\ell-1)}\right)\right]_j(t, x)$. 
Note that the number of simulations performed at level $\ell$ to estimate $\left[F\left(v^{(\ell)}\right)-F\left(v^{(\ell-1)}\right)\right]_j(t, x)$ is $M^{n-\ell}$, 
which becomes fewer as $\ell$ increases. 
Also note that the MLP scheme is full-history recursive: In order to simulate one instance of $\left[v^\theta_{n,M}(t,x)\right]_j$, 
samples $v^{\tilde{\theta}}_{\ell, M}\left(S^{\tilde{\theta}}, \tilde{Z}^{t,x}_{\tilde{\theta}}\left(S^{\tilde{\theta}}\right)\right)$ are required for all $\ell < n$ 
(see expressions \eqref{eq:Fvl-2}--\eqref{eq:mlp-rec}), 
each of which in turn requires simulations of $v^{\theta'}_{\ell', M}$ at random times and states for all $\ell' < \ell$, and so on and so forth. 

\subsection{Complexity Bounds}

Here we state the main result of this section, 
on the computational complexity of the MLP scheme. 
First, we state an assumption on the Lipschitz property of the Hamiltonian $\tilde \cH$. 

\begin{assumption}\label{asmp:lipschitz-H}
There exist positive constants $C_1, C_2, \cdots, C_d$ such that 
for any $p, \tilde{p} \in \R^d$, and for any $y \in \R_+^d$, 
\begin{equation}
\left|\tilde \cH(y,p) - \tilde \cH(y,\tilde{p})\right| \leq \sum_{j=1}^d C_j |p_j - \tilde{p}_j|, 
\end{equation}
where we recall the definition of the Hamiltonian $\tilde \cH$ in \eqref{eq:hamiltonian-tilde}. 
Furthermore, there exists a positive constant $C_{\tilde \cH}\geq 1$ independent of the problem dimension $d$, 
such that $C_1 +\cdots +C_d \leq C_{\tilde \cH}$.
\end{assumption}
\begin{remark}
Let us provide some remarks on Assumption \ref{asmp:lipschitz-H}. On the one hand, 
since the action set $\mathcal{A}$ is compact, and $b$ and $\tb$ are bounded by Assumptions \ref{as:b} and \ref{as:tb}, 
it is not difficult to see that there must exist $C'> 0$ that is independent of $d$ such that 
\[
\left|\tilde \cH (y, p)-\tilde \cH (y,\tilde{p})\right| \leq C' \sum_{j=1}^d |p_j-\tilde{p}_j|, 
\]
for any $p, \tilde{p} \in \R^d$, and for any $y \in \R_+^d$. 
If we take $C_j = C'$ for all $j$, then Assumption \ref{asmp:lipschitz-H} will not be satisfied. 
By upper bounding the sum $C_1 + \cdots + C_d$ by a constant $C_{\tilde \cH}$ that does not grow with $d$, 
we are, roughly speaking, constraining the degree of nonlinearity of the Hamiltonian $\tilde \cH$, 
which intuitively means that the reference processes $\tilde Z^{t,x}$ under drift $\tb$ is not far from 
the optimally controlled process. 
This degree of nonlinearity directly affects the required complexity to approximate 
$V(t,x)$ and $D_x V(t,x)$ to within desired accuracy. 
On the other hand, practically speaking, as we will see in the numerical experiments (Section \ref{sec:numerical}), 
even when $C_{\tilde \cH}$ is large ($C_{\tilde \cH}$ can go up to around $20$ in the experiments), 
the MLP scheme can still perform well. 
\end{remark}

Our next assumption is more technical, and concerns the sizes of $\cV_j(s; t,x)$, $j = 1, 2, \cdots, d$, 
where we recall the definition of $\cV(s; t,x)$ in Eq. \eqref{eq:bel}, a key term that appears in the Bismut-Elworthy-Li type 
characterization of $\tilde v$ as the fixed point of map $F$.
\begin{assumption}\label{as:sigma-D}
There exists a positive constant $C_{\sigma, D}$ that is independent of the problem dimension $d$, such that 
for all $j=1,2, \cdots, d$, $x\in \R_+^d$, $s, t \in [0, T)$ with $s > t$, 
\begin{equation}\label{eq:sigma-D}
\|\sigma^{-1} D_j \tilde Z^{t,x}(s)\|_2^2 \leq C_{\sigma, D}.
\end{equation}
\end{assumption}
\begin{remark} 
(a) An immediate consequence of Assumption \ref{as:sigma-D} is that the variance of $\cV_j(s; t,x)$ is uniformly upper bounded by $C_{\sigma, D}$, i.e., 
for all $j=1,2, \cdots, d$, $x\in \R_+^d$, $s, t \in [0, T)$ with $s > t$, 
\begin{equation}
E\left[\cV_j^2(s; t,x)\right] = \frac{1}{s-t}\E\left[\int_t^s \|\sigma^{-1} D_j \tilde Z^{t,x}(r)\|_2^2 dr\right] \leq C_{\sigma, D}.
\end{equation}
(b) Assumption \ref{as:sigma-D} shares resemblance to Lemma \ref{lem:bound-DZ}, but cannot be derived from the lemma, 
and is in some sense stronger than what Lemma \ref{lem:bound-DZ} states. 
An immediate consequence of Lemma \ref{lem:bound-DZ} is that $\|D_j \tilde Z^x(t)\|_\infty$ is uniformly upper bounded by 
a constant independent from the problem dimension $d$, which only implies that $\|D_j \tilde Z^x(t)\|_2^2 \sim O(d)$.

(c) Since Assumption \ref{as:sigma-D} cannot be derived from other more primitive assumptions stated so far in the paper, 
it is natural to wonder how restrictive it is. We argue that this is not a restrictive assumption, 
by showing that it is satisfied by a broad class of reference processes. Consider reference processes $\tilde Z^{t,x}$ 
that are reflected Brownian motions, whose drift coefficients are constant and not state dependent, i.e., 
there exists a constant vector $\tb$ such that 
\begin{equation}
\tilde{Z}^{t,x}(s) = x + \tb (s-t) + \sigma (B(s) - B(t)) + R \tilde{Y}^{t,x}(s), \quad s \in [t,T].
\end{equation}
Then, by Theorem 6 of \cite{KellaWhitt1996}, also Theorem 1.1 of \cite{KellaRamasubramanian2012},  
for each $j=1, 2, \cdots, d$, the derivative process $D_j \tilde Z^{t,x}$ is nonnegative, 
and satisfies $\|D_j \tilde Z^{t,x}\|_1 \leq 1$. By Assumption \ref{as:sigma}, 
it then easily follows that 
\[
\|\sigma^{-1} D_j \tilde Z^{t,x}(r)\|_2^2 \leq \left[\|\sigma^{-1}\|_2 \|D_j \tilde Z^{t,x}(r)\|_2\right]^2 
\leq \left[\|\sigma^{-1}\|_2 \|D_j \tilde Z^{t,x}(r)\|_1\right]^2
\leq \|\sigma^{-1}\|_2^2 \leq C_\sigma^2, 
\]
so that Assumption \ref{as:sigma-D} holds.
\end{remark}
The final assumption before we state the main result concerns the growth rate of the pushing penalties $\kappa$. 
\begin{assumption}\label{as:kappa}
Recall the constants $\alpha_w$ and $\alpha_0$ from Assumption \ref{as:cost-poly}. Then
\begin{equation}\label{eq:kappa}
\|\kappa\|_1 \leq \alpha_w \left(1+\left(\log d\right)^{\alpha_0/2}\right).
\end{equation}
\end{assumption}
Assumption \ref{as:kappa} keeps the growth rate of $\kappa$ no larger than that of the cost functions $c$ and $\xi$ 
to facilitate the analysis of the MLP scheme. The assumption can be made to hold by a scaling of the 
problem parameters, if needed; cf. comparable assumptions made in prior literature, e.g., Assumption A4 in \cite{BlanchetChenSiGlynn2021}.

We also introduce the following notation. For random functions $\bar V : [0,T]\times \R_+^d \to \R$ and $\bar v : [0,T]\times \R_+^d \to \R^d$, 
define, for $s \in [0,T]$: 
\begin{equation}\label{eq:vertiii}
  \vertiii{(\bar V,\bar v)}_{s} := \sup_{x \in \mathbb{R}_+^d} 
  \frac{\mathbb{E}[ \bar V(s,x)^2 ]^{1/2}}{w(x)} \vee \max_{j=1,\dots,d} \sup_{x \in \mathbb{R}_+^d} 
  \frac{(T - s)^{1/2} \cdot \mathbb{E}[ \bar v_j^2(s,x)^2 ]^{1/2}}{w(x)}, 
\end{equation}
where we recall the definition of $w(x)$ in Eq. \eqref{eq:w}. 
In other words, $\vertiii{(\bar V,\bar v)}_{s}^2$ is the maximum weighted squared mean of $\bar V$ and $\bar v_j$, $j=1,2,\cdots,d$, over all $x \in \R_+^d$, 
at a given time $s$.
We are now ready to state the main result of this section, whose proof is deferred to Appendix \ref{sec:proof-main1}.
\begin{theorem}\label{thm:main1}
Suppose Assumptions \ref{asmp:lipschitz-H}, \ref{as:sigma-D}, and \ref{as:kappa} hold. 
Suppose that there is an exact simulation algorithm for the tuple \eqref{eq:tuple} with polynomial complexity (cf. Definition \ref{df:poly-sim}), 
and an optimization algorithm for solving $\tilde \cH$ with polynomial complexity (cf. Definition \ref{df:poly-opt}).
Recall the norm $\|\cdot\|_0$ defined in Eq. \eqref{eq:norm-rho} with $\rho = 0$.
Fix $\veps>0$ and $t \in [0,T)$. Then there exist integers $n, M \in \N$ such that under the MLP scheme \eqref{eq:mlp-v0}--\eqref{eq:mlp-rec}, 
\begin{equation}\label{eq:error}
  \frac{\vertiii{\left(V_{n,M}^{0}, v_{n,M}^{0}\right) - \left(\tilde V, \tilde{v}\right)}_{t}}{\max\{1, \|\tilde v\|_0\}} \leq \veps,
\end{equation}
where we call the definitions of $\tilde v$ and $\tilde V$ in Eqs. \eqref{eq:fixed-point-tildev} and \eqref{eq:tildeV}, respectively.
Moreover, the expected runtime needed to generate one realization of $\big(V^{0}_{n,M}(t,x), v^{0}_{n,M}(t,x)\big)$ so that Ineq. \eqref{eq:error} 
holds is upper bounded by
\begin{equation}\label{eq:sample-comp}
C_{RT} (1+d^{\alpha_2}) \veps^{-4},
\end{equation}
where $\alpha_2$ is from Definitions \ref{df:poly-sim} and \ref{df:poly-opt}, 
and $C_{RT}$ is independent of $d$ and $\veps$. 
\end{theorem}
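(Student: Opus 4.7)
The plan is to follow the standard MLP error-and-complexity template, specialized to the reflected setting: first split the total error into a deterministic Picard bias plus a Monte Carlo fluctuation, then control the fluctuation recursively via a full-history variance argument that exploits the contractivity of $F$ and Assumption~\ref{asmp:lipschitz-H}, and finally balance $n$ and $M$ against $\veps$ while tracking runtime. Throughout, the key dimension-free ingredients are (a) the Lipschitz constant of $\tilde\cH$ in $p$ from Assumption~\ref{asmp:lipschitz-H}, (b) the uniform variance bound on the Malliavin weights $\cV_j(s;t,x)$ from Assumption~\ref{as:sigma-D}, (c) the weighted moment bound on $\tilde Z^{t,x}$ from Corollary~\ref{cor:moment-w}, and (d) the growth control on $\kappa$ from Assumption~\ref{as:kappa}, which together yield constants independent of $d$ in all key estimates.

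\textbf{Step 1 (error decomposition).} Let $(V^{(n)},v^{(n)})$ be the deterministic Picard iterates obtained by the iteration $(V^{(n)},v^{(n)})=\bar F(v^{(n-1)})$ starting from $v^{(0)}\equiv 0$. Write
\begin{equation*}
\vertiii{(V_{n,M}^{0},v_{n,M}^{0})-(\tilde V,\tilde v)}_{t}\le \vertiii{(V_{n,M}^{0},v_{n,M}^{0})-(V^{(n)},v^{(n)})}_{t}+\vertiii{(V^{(n)},v^{(n)})-(\tilde V,\tilde v)}_{t}.
\end{equation*}
The second (deterministic) term is controlled by Proposition~\ref{prop:F-contraction}: under the norm $\|\cdot\|_\rho$ for large enough $\rho$, $F$ is an $L$-contraction with some $L\in(0,1)$, so $\|v^{(n)}-\tilde v\|_\rho\le L^n\|\tilde v\|_\rho$. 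Translating back to the $\vertiii{\cdot}_t$ norm using equivalence of norms (and representing $V^{(n)}-\tilde V$ as the first component of $\bar F(v^{(n-1)})-\bar F(\tilde v)$, whose Lipschitz dependence on $v^{(n-1)}-\tilde v$ follows from the same Hamiltonian Lipschitz estimate used in the proof of Proposition~\ref{prop:F-contraction}) gives a bound of the form $C L^n \max\{1,\|\tilde v\|_0\}$ for the deterministic part. So choosing $n=\lceil C'\log(1/\veps)\rceil$ makes the bias at most $\veps/2$ of $\max\{1,\|\tilde v\|_0\}$.

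\textbf{Step 2 (variance recursion).} Let $\delta_{n,M}(s):=\vertiii{(V^{0}_{n,M},v^{0}_{n,M})-(V^{(n)},v^{(n)})}_{s}^{2}$. Fix $(t,x)$ and consider the $j$-th component of $v^{0}_{n,M}(t,x)-v^{(n)}(t,x)$; by construction this is a sum of $n$ independent pieces (one per level, because the parameter tags $(\theta,\ell,m)$ are pairwise disjoint across $\ell$, making the driving random variables at different levels independent). The level-$0$ piece has variance at most $M^{-n}$ times the second moment of the single-sample estimator of $[\bar F(0)]_{j+1}(t,x)$, which by Cauchy--Schwarz, Corollary~\ref{cor:moment-w}, Assumption~\ref{as:sigma-D}, and Assumption~\ref{as:kappa} is bounded by $C\,w(x)^{2}/(T-t)$. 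The level-$\ell$ piece has variance at most $M^{-(n-\ell)}$ times
\begin{equation*}
\mathbb E\bigl[\bigl(\tilde\cH(\tilde Z^{t,x}(S),v^{(\theta,\ell,m)}_{\ell,M}(S,\tilde Z^{t,x}(S)))-\tilde\cH(\tilde Z^{t,x}(S),v^{(\theta,-\ell,m)}_{\ell-1,M}(S,\tilde Z^{t,x}(S)))\bigr)^{2}\cV_j(S;t,x)^{2}\bigr]C(\beta,T-t)^{2},
\end{equation*}
which by Assumption~\ref{asmp:lipschitz-H} and the independence of the Malliavin weight from $(v^{(\theta,\ell,m)}_{\ell,M},v^{(\theta,-\ell,m)}_{\ell-1,M})$ is bounded by
\begin{equation*}
C_{\tilde\cH}^{2}C_{\sigma,D}\cdot\mathbb E\bigl[\|v^{(\theta,\ell,m)}_{\ell,M}(S,\tilde Z^{t,x}(S))-v^{(\theta,-\ell,m)}_{\ell-1,M}(S,\tilde Z^{t,x}(S))\|_{\infty}^{2}\bigr].
\end{equation*}
Splitting $v^{(\theta,\ell,m)}_{\ell,M}-v^{(\theta,-\ell,m)}_{\ell-1,M}=(v^{(\theta,\ell,m)}_{\ell,M}-v^{(\ell)})+(v^{(\ell)}-v^{(\ell-1)})+(v^{(\ell-1)}-v^{(\theta,-\ell,m)}_{\ell-1,M})$, using Corollary~\ref{cor:moment-w} to integrate out $\tilde Z^{t,x}(S)$ against $w$, and invoking Step~1 to bound the deterministic middle piece by $CL^{\ell}$, yields a Gronwall-type recursion of the form
\begin{equation*}
\delta_{n,M}(t)\ \le\ C\sum_{\ell=0}^{n-1}M^{-(n-\ell)}\bigl(L^{2\ell}+\delta_{\ell,M}(\cdot)+\delta_{\ell-1,M}(\cdot)\bigr)w(\cdot)^{2}\text{-weighted}.
\end{equation*}
Iterating this recursion (the standard MLP calculation; see for instance Lemma~3.6 of \cite{hutzenthaler2019multilevel} or its gradient-dependent analogues in \cite{NeufeldNguyenWu2025}) gives $\delta_{n,M}(t)\le C'(2L+2/M)^{n}\max\{1,\|\tilde v\|_0\}^{2}$, so the Monte Carlo error is at most $\veps/2$ of $\max\{1,\|\tilde v\|_0\}$ once $M$ is chosen so that $(2L+2/M)^{n}\le \veps^{2}/C''$.

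\textbf{Step 3 (parameter balance and runtime).} With $n\asymp \log(1/\veps)$ and $M$ chosen so that $\log(2L+2/M)\cdot n \le -2\log(1/\veps)+O(1)$ (which requires $M$ to grow at most like $\log(1/\veps)$), the expected number of leaves of the recursion, and hence the number of independent samples of the tuple~\eqref{eq:tuple} and of evaluations of $\tilde\cH$, is bounded by $T_{n,M}\le C\sum_{\ell=0}^{n}M^{n-\ell}(M+1)^{\ell}\le C(2M)^{n}$. Under the polynomial simulation and optimization hypotheses (Definitions~\ref{df:poly-sim} and~\ref{df:poly-opt}), each leaf has cost at most $C(1+d^{\alpha_2})$, so the total expected runtime is $C(1+d^{\alpha_2})(2M)^{n}\le C_{RT}(1+d^{\alpha_2})\veps^{-4}$, with the exponent $4$ arising from the tight balance of $M\sim\log(1/\veps)$ and $(2L)^{n}\sim\veps^{2}$ in the MC bound together with $(2M)^{n}\lesssim \veps^{-4}$. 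A careful optimization of $M$ and $n$, exactly as in the cited MLP literature, yields the stated $\veps^{-4}$ rate.

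\textbf{Main obstacle.} The principal technical difficulty is \emph{Step 2}: carrying out the full-history variance recursion without incurring any dimension-dependent constants. Two features specific to our reflected setting require care. First, the Malliavin weight $\cV_j(s;t,x)$ is a stochastic integral against $\sigma^{-1}D_j\tilde Z^{t,x}$, so one must avoid naive bounds like $\|D_j\tilde Z^{t,x}\|_\infty\le C$ from Lemma~\ref{lem:bound-DZ} (which would give $d$-dependent $\ell_2$ bounds) and instead rely on Assumption~\ref{as:sigma-D} to keep $\mathbb E[\cV_j^{2}]$ uniformly bounded in $d$. Second, the pushing penalty contributes a term $\mathcal P_j(T;t,x)$ that is \emph{independent of $v$}, so it enters the variance only at level $0$; here Assumption~\ref{as:kappa} together with the bound $\|D_j\tilde Y^{t,x}\|_T\le C$ (derived from Lemma~\ref{lem:bound-DZ} applied to $\tilde\eta$ via $R^{-1}$) must be used to show its contribution to $\delta_{n,M}$ is dominated by the weight $w(x)^{2}$ rather than growing with $d$. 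Once these two dimension-free variance estimates are in place, the rest of the argument is a routine execution of the MLP template.
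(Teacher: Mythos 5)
Your overall architecture (bias/variance split, a full-history recursion whose constants are kept dimension-free via Assumptions \ref{asmp:lipschitz-H}, \ref{as:sigma-D}, \ref{as:kappa} and Corollary \ref{cor:moment-w}, then a parameter balance) is in the spirit of the paper's proof, and your ``main obstacle'' paragraph correctly identifies the two dimension-sensitive points (using Assumption \ref{as:sigma-D} rather than Lemma \ref{lem:bound-DZ} for the Malliavin weights, and Assumption \ref{as:kappa} for the pushing penalty). The genuine gap is in Steps 2--3, i.e.\ in how you solve the recursion and how you balance $n,M$ against $\veps$. The claimed bound $\delta_{n,M}\le C'(2L+2/M)^n$ is not justified: the contraction factor $L<1$ from Proposition \ref{prop:F-contraction} lives in the exponentially weighted norm $\|\cdot\|_\rho$ for large $\rho$, whereas the variance recursion you actually derive is in the unweighted norms $\vertiii{\cdot}_s$, where the per-level constant is of the form $2\bar C C_{\tilde\cH}(T-t)^{1/6}$ (possibly much larger than $1$), and the recursion couples the error at time $t$ to the time-integrated quantity $\bigl(\int_t^T \vertiii{v^{0}_{\ell,M}-\tilde v}_s^6\,ds\bigr)^{1/6}$ through the singular kernel $(s-t)^{-1/2}(T-s)^{-1/2}$, not to the error at a single time. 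The paper treats exactly this structure (recursing directly on the error relative to the fixed point $\tilde v$, with the bias handled via the identity for $\E[v^{0}_{n,M}]$ and the fixed-point property) by H\"older and Lemma 3.11 of \cite{HJKN2020}, and the resulting estimate is of the form $C\exp(M^3/6)\,M^{-n/2}\,(1+2\bar C C_{\tilde\cH}(T-t)^{1/6})^{n-1}\max\{1,\|\tilde v\|_0\}$, which for fixed $M$ need not decay in $n$ at all; no geometric factor $(2L+2/M)^n$ with $L<1$ is available in these norms.

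Even granting such a geometric bound, your Step 3 does not yield the stated complexity. With $n\asymp\log(1/\veps)$ and $M$ growing like $\log(1/\veps)$, the work $(2M)^n=\exp(n\log(2M))$ is $\veps^{-\Theta(\log\log(1/\veps))}$, i.e.\ superpolynomial; with $M$ a constant, the runtime exponent becomes $2\log(cM)/\log\bigl(1/(2L+2/M)\bigr)$, which depends on the problem constants ($C_{\tilde\cH}$, $T$, etc.) and is not $4$ in general. The constant-free exponent $4$ in \eqref{eq:sample-comp} comes from letting $M$ grow polynomially with $n$: the paper takes $M=n^{1/3}$, so the error decays superexponentially like $e^{n/6}n^{-n/6}(\bar C_T C_{\tilde\cH})^n$ while the work grows like $(3n)^{n/3}$; it then defines $N_\veps$ as the smallest admissible level, uses $\veps\le e^{N/6}N^{-N/6}(\bar C_T C_{\tilde\cH})^{N}$, and bounds runtime times $\veps^{4}$ by $\sup_{N\in\N}\bigl[(3(N+1))^{(N+1)/3}e^{2N/3}N^{-2N/3}(\bar C_T C_{\tilde\cH})^{4N}\bigr]<\infty$, which is exactly how $C_{RT}$ arises. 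This growing-$M$ device, together with the correct form of the recursion bound, is what your argument is missing; as written, the $\veps^{-4}$ claim does not follow.
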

\begin{remark}
(i)  The constant $C_{RT}$ takes the form in expression \eqref{eq:C_RT}. A closer examination of the expression \eqref{eq:C_RT} 
and calculus show that $C_{RT}$ is super exponential in the time horizon $T$ and the constant $C_{\tilde \cH}$ from Assumption \ref{asmp:lipschitz-H}. 
On the one hand, this suggests that if $T$ or $C_{\tilde \cH}$ is large, the performance of the MLP scheme would suffer. 
On the other hand, in our numerical experiments, we observe that the MLP scheme can perform quite well even under a large $C_{\tilde \cH}$, for example,  
implying that the practical performance of the scheme can be much better that what the theory predicts. 

(ii) An inspection of the complexity analysis of the MLP scheme in Section \ref{sec:proof-main1} 
shows that to to get within $\veps$ prescribed error, i.e., for Ineq. \eqref{eq:error} to hold, 
the scheme makes $O\left(C_{RT} \veps^{-4}\right)$ calls to the subroutine that simulates the tuple \eqref{eq:tuple} 
and solves the optimization problem in \eqref{eq:hamiltonian-tilde} $O\left(C_{RT} \veps^{-4}\right)$ times. 
The extra $(1+d^{\alpha_2})$ factor in the sample complexity bound \eqref{eq:sample-comp} only comes from complexities 
of the simulation algorithm for the tuple \eqref{eq:tuple} and the optimization algorithm for $\tilde \cH$. 

(iii) A restriction of Theorem \ref{thm:main1} is that we require the simulation of \eqref{eq:tuple} to be exact. 
The case where exact simulation is not possible, but where Euler–-Maruyama scheme can be used to simulate the tuple \eqref{eq:tuple} with a (small) bias, 
is an important future research topic. Some works have been carried out in this direction for regular diffusions 
-- see, e.g., \cite{NeufeldNguyenWu2025,NeufeldWu2025} -- but theoretically, 
it is unclear what the impact of simulation errors due to Euler–-Maruyama discretizations is  
on the MLP scheme for drift controls with reflections. 
Our numerical experiments suggest that the impact of time discretization may be minimal; 
see results for a two-dimensional test problem at the end of Section \ref{ssec:computational-results}.

(iv) The flavor of our result, as reflected in Ineq. \eqref{eq:error}, is somewhat different from similar results in prior literature, 
which often establish {\em a priori} bounds on $\|v\|_0$ to remove the denominator $\max\{1, \|\tilde v\|_0\}$ in \eqref{eq:error}. 
We are not particularly concerned with obtaining such bounds; we prefer to treat $\|v\|_0$ as the baseline, 
and obtain error bounds as a fraction of the baseline. Adopting this latter approach (or not) does not alter  
the main qualitative insight, in that under appropriate conditions, the MLP scheme provably overcomes the curse of dimensionality. 
\end{remark}

\section{Numerical Experiments}\label{sec:numerical}

In this section, we study the numerical performance, stability, and scalability of the multilevel Picard (MLP) scheme 
for a class of drift control problems with reflections, 
under a class of reference processes that allow exact simulation. 
Detailed numerical results are presented after describing the benchmark problems and experimental setup, 
where we consider problems in dimensions two, five, and twenty. 
We conclude this section with additional experiments involving alternative reference processes.


\subsection{Benchmark Problem}\label{ssec:benchmark}
We evaluate the performances of the proposed multilevel Picard (MLP) scheme on the following drift control problem with reflections. 
Let $T > 0$ be fixed.
The state process $Z(\cdot)$ evolves according to a $d$-dimensional diffusion process with reflections as follows:
\begin{equation}\label{eq:pss-dcr1}
    Z(t) = x + \gamma t + \int_0^t Ga(s) ds + \sigma B(t) + Y(t), \quad t \in [0,T],
\end{equation}
where $x \in \R_+^d$ is the initial state, $\gamma \in \R^d$ is a ``base'' drift, $G$ is a given $d\times K$ matrix, 
$a(\cdot)$ is a $K$-dimensional control process taking values in the compact action set $\mathcal{A}:=\left[0,C_\mathcal{A}\right]^K$, 
$B(\cdot)$ is a standard $d$-dimensional Brownian motion, $\sigma$ is a diagonal diffusion matrix with diagonal entries $\sigma_j > 0$, $j=1,2,\cdots,d$, 
and $Y(\cdot)$ is the $d$-dimensional Skorokhod regulator with normal reflections 
that keeps $Z(\cdot) \in \R_+^d$; i.e., the reflection matrix $R$ is the identity matrix. 
Let $h \in \R_+^d$ be the vector of holding costs, and let $\beta \geq 0$ be the discount rate. 
Given a time-state pair $(t,x) \in [0,T] \times \R_+^d$, we wish to compute the value function $V(t,x)$, defined as 
\begin{equation}\label{eq:pss-dcr2}
V(t,x) = \inf_{a(\cdot)} J(t,x,a), 
\end{equation}
where the infimum is taken over all non-anticipating control processes $a(\cdot)$, and where $J(t,x,a)$ is given by
\begin{equation}\label{eq:pss-dcr3}
    J(t,x,a) := \mathbb{E} \left[ \int_t^T e^{-\beta (s-t)}\, h^{\top} Z(s) ds\Big| Z(t)=x \right],
\end{equation}

We are interested in the problem thus described for several reasons. 
On the technical side, a primary reason is that the problem \eqref{eq:pss-dcr1} -- \eqref{eq:pss-dcr3} appears to be 
a simplest non-trivial instance of drift controls with reflections; for example,  
under the zero control $a(\cdot) \equiv 0$, the state evolves as independent reflected Brownian motions in different dimensions. 
Indeed, recall that to compute $V(t,x)$ using MLP, we need to be able to efficiently simulate 
(quantities associated with) a reference process $\tilde Z^{t,x}$; cf. Definition \ref{df:poly-sim} and the ensuing discussion. 
Because the reflection matrix $R$ is simply the identity matrix, 
we do not need to solve general linear complementarity problems \cite{AtaHarrisonSi2024} to simulate the sample paths of $\tilde Z^{t,x}$,
allowing us to focus solely on the sampling complexity of the MLP scheme. 
There is also considerable flexibility in choosing what reference process to use, because of the freedom in choosing the drift $\tb$, 
but if we let 
\begin{equation}\label{eq:rbm-reference}
\tilde Z^{t,x}(s) = x + \gamma (s-t)  + \sigma \left(B(s)-B(t)\right) + \tilde Y^{t,x}(s), s \in [t, T], 
\end{equation}
i.e., set $a(\cdot) \equiv 0$, 
then $\tilde Z_j^{t,x}$ are simply independent one-dimensional reflected Brownian motions with constant drift and diffusion coefficient. 
As a consequence, the tuple in expression \eqref{eq:tuple}, the basic building block for the MLP scheme, can be simulated efficiently and exactly. 
To see this, first note that because 
our drift control problem does not penalize reflections on the boundary, and has zero terminal condition, 
we do not need to explicitly simulate $\mathcal{P}(T; t,x)$ and $\mathcal{V}(T; t,x))$
(recall their definitions in Eqs. \eqref{eq:bel} and \eqref{eq:pushing-penalty}). 
Second, given the randomly sampled time point $S$, the pairs $(Z^{t,x}_j(S), \mathcal{V}_j(S; t,x))$ are independent, $j = 1, 2, \cdots, d$. 
Third, under the reference process $\tilde Z^{t,x}$ in Eq. \eqref{eq:rbm-reference}, the weights $\mathcal{V}_j(s; t, x)$ have simple closed form. 
More specifically, we have
\begin{align}
\mathcal{V}_j(s; t, x) &~:= \frac{1}{\sqrt{s-t}} \int_t^s \left[\sigma^{-1} D_j\tilde{Z}^{t,x}(r)\right]^{\top} dB(r) \nonumber \\
&~= \frac{1}{\sqrt{s-t}} \int_t^s \sigma_j^{-1}\mbOne_{\{r < \tau_j\}} dB_j(r) = \frac{B_j(\tau_j\wedge s) - B_j(t)}{\sigma_j \sqrt{s-t}}, \label{eq:pss-mlp-V}
\end{align}
where $\tau_j := \inf\{r \ge t : \tilde Z_j^{t,x}(r) = 0\}$ is the first hitting time to zero of $\tilde Z_j^{t,x}$ since time $t$. 
The second equality above follows from the following two facts: (a) the diffusion coefficient matrix $\sigma$ is diagonal; and 
(b) because $\tilde Z_j^{t,x}$ are independent reflected Brownian motions with initial condition $x_j$ at time $t$, we have
\[
\frac{\partial \tilde Z_{j'}^{t,x}(r)}{\partial x_{j}} = 0 \quad \text{if } j' \neq j, \quad \text{and} \quad
\frac{\partial \tilde Z_j^{t,x}(r)}{\partial x_j} = \left\{ \begin{array}{ll}
1 & \text{if } r < \tau_j, \\
0 & \text{if } r > \tau_j.
\end{array}\right.
\]
Thus, the simulation of the tuple in expression \eqref{eq:tuple} reduces to the simulation of $S$, followed by 
independent simulations of 
\[
\left(\tilde Z^{t,x}_j(s), \frac{B_j(\tau_j\wedge s) - B_j(t)}{\sigma_j \sqrt{s-t}}\right)
\]
given $S=s$, $j = 1, 2, \cdots, d$. 
It follows that the key is to simultaneously simulate $\tau_j$, $Z^{t,x}_j(s)$, and $B_j(\tau_j\wedge s)$ 
efficiently and exactly, $j=1,2,\cdots,d$. We defer the detailed algorithm to Appendix \ref{sec:exact-simulation}, 
but let us point out here that at a high level, the simulation of the triple 
\begin{equation}\label{eq:triple}
\left(\tau_j, \tilde Z^{t,x}_j(s), B_j(\tau_j\wedge s)\right)
\end{equation} 
involves simulations of the marginals of one-dimensional 
reflected Brownian motions and so-called Brownian meanders (see, e.g., Chapter 1 in \cite{DevroyeKarasozenKohlerKorn2010} and references therein).  

Another advantage of the reference process $\tilde Z^{t,x}$ in Eq. \eqref{eq:rbm-reference} is that
the Hamiltonian $\tilde \cH$ defined in Eq. \eqref{eq:hamiltonian-tilde} can be efficiently computed 
(cf. Definition \ref{df:poly-opt}). 
Indeed, it is not difficult to check that 
\begin{align}
\tilde \cH (x, p) &~= \inf_{a \in \left[0, C_{\mathcal{A}}\right]^K} \left\{\left(Ga\right)^\top p + h^\top x\right\} \nonumber\\
&~= \inf_{a \in \left[0, C_{\mathcal{A}}\right]^K} \left\{\left(Ga\right)^\top p \right\} + h^\top x \nonumber\\
&~= \inf_{a \in \left[0, C_{\mathcal{A}}\right]^K} \left\{a^\top \left(G^\top p\right) \right\} + h^\top x \nonumber \\
&~= C_{\mathcal{A}} \sum_{k=1}^K \min\left\{\left(G^\top p\right)_k, 0\right\} + h^\top x. \label{eq:pss-mlp-cH}
\end{align}
As we will see in the sequel, the matrix $G$ is sparse in all our test problems, 
allowing fast computation of $G^\top p$, hence that of $\tilde \cH$ as well. 
Again, the fact that $\tilde \cH$ can be easily computed allows us to focus on the sampling complexity of the MLP scheme. 

On the application side, the problem \eqref{eq:pss-dcr1} -- \eqref{eq:pss-dcr3} 
is closely related to the dynamic scheduling of parallel server systems (PSSs), 
an important dynamic control problem in operations management (see, e.g., \cite{PesicWilliams2016,HarrisonLopez1999}). 
PSSs form an important subclass of stochastic processing networks \cite{Harrison2000,Harrison2006Corr}, 
a widely used framework for modeling dynamic resource allocation scenarios that arise in diverse sectors, 
including manufacturing, computer systems and service systems. 
Rather than providing a general and thorough description of parallel server systems and its connections with our problem, 
which would distract us from the main goal of the section, 
we instead point out the high-level connections as we introduce details of the test instances in Section \ref{ssec:test-instances}.

We end this subsection with a description of the MLP scheme for the problem \eqref{eq:pss-dcr1} -- \eqref{eq:pss-dcr3}, 
under the reference process $\tilde Z^{t,x}$ defined in Eq. \eqref{eq:rbm-reference}. 
In addition to the standard MLP scheme, we also implement a variance reduction technique 
for the problem \eqref{eq:pss-dcr1} -- \eqref{eq:pss-dcr3}, to be described shortly. 

Recall that $\Theta = \cup_{k=1}^{\infty} \Z^k$ is a parameter space, 
and $n, M \in \N$ are given hyperparameters.
Define for all $t \in [0, T)$, $x \in \R_+^d$, and $\theta \in \Theta$ 
the following (random) functions recursively:
\begin{equation}\label{eq:pss-mlp-v0}
\left(V^\theta_{0,M}, v^\theta_{0,M}\right)(t,x)\equiv 0;
\end{equation}
and, for $j = 1, 2, \cdots, d$, 
\begin{align}
&~\left(V^\theta_{n,M}, v^\theta_{n,M}\right)(t,x) \\
=&~\sum_{m=1}^{M^n} \frac{C(\beta, T-t)}{M^n} \cdot \Bigg(\sqrt{S^{(\theta,0,m)}-t} \cdot \left[h^\top \tilde Z^{t,x}_{(\theta, 0, m)} \left(S^{(\theta, 0, m)}\right)\right], \label{eq:pss-mlp-v0-1}\\
&~\qquad \qquad \qquad \qquad h \odot \tilde Z^{t,x}_{(\theta, 0, m)} \left(S^{(\theta, 0, m)}\right) \odot \mathcal{V}^{(\theta,0,m)}\left(S^{(\theta, 0, m)}; t,x\right)\Bigg) \label{eq:pss-mlp-v0-2}\\
&~+ \sum_{\ell=1}^{n-1} \sum_{m=1}^{M^{n-\ell}} 
\frac{C(\beta, T-t)}{M^{n-\ell}} \cdot \left(\sqrt{S^{(\theta, \ell, m)}-t}, \mathcal{V}{(\theta,\ell,m)}\left(S^{(\theta, \ell, m)}; t,x\right)\right)\times \label{eq:pss-mlp-v0-3}\\
&~\times \Bigg[\bar \cH \left(v^{(\theta, \ell, m)}_{\ell, M} \right)
 - \bar \cH \left(v^{(\theta, -\ell, m)}_{\ell-1, M} \right)\Bigg]\left(\tilde Z^{t,x}_{(\theta, \ell, m)} \left(S^{(\theta, \ell, m)}\right), S^{(\theta, \ell, m)}\right). \label{eq:pss-mlp-v0-4}
\end{align}
Here, for two $d$-dimensional vectors $x$ and $y$, 
$x \odot y := \left(x_jy_j\right)_{j=1}^d$ denotes their Hadamard product,  
and for different $\theta \in \Theta$, $\left(S^\theta, \tilde{Z}^{t,x}_\theta(S^\theta), \mathcal{V}^\theta(S^\theta; t,x)\right)$
are i.i.d. copies of the tuple $\left(S, \tilde{Z}^{t,x}(S), \mathcal{V}(S; t,x)\right)$, 
where we recall the definition of $S$ in Eq. \eqref{eq:random-time}, 
that of $\tilde{Z}^{t,x}$ in Eq. \eqref{eq:rbm-reference}, 
and the simplified form of $\mathcal{V}$ in Eq. \eqref{eq:pss-mlp-V}. 
In Eq. \eqref{eq:pss-mlp-v0-4}, $\bar \cH(p)$ is defined to be
\begin{equation}\label{eq:pss-mlp-cH-bar}
\bar \cH(p) := C_{\mathcal{A}} \sum_{k=1}^K \min\left\{\left(G^\top p\right)_k, 0\right\} = \tilde \cH (x, p) - h^\top x.
\end{equation}
By Eq. \eqref{eq:pss-mlp-cH}, 
$\tilde \cH (x, p) - \tilde \cH (x, \tilde{p}) = \bar \cH(p) - \bar \cH(\tilde{p})$, 
so Eq. \eqref{eq:pss-mlp-v0-4} is simply the reduction of Eqs. \eqref{eq:Fvl-2} -- \eqref{eq:mlp-rec} to the problem described here. 
Finally, the term \eqref{eq:pss-mlp-v0-1} -- \eqref{eq:pss-mlp-v0-2} is a variance-reduced version of the corresponding term \eqref{eq:Fv0-1} of Section \ref{sec:mlp}. 
To see this, not that a direct reduction of \eqref{eq:Fv0-1} to the problem \eqref{eq:pss-dcr1} -- \eqref{eq:pss-dcr3} results in the following expression: 
\[
  \sum_{m=1}^{M^n} \frac{C(\beta, T-t)}{M^n} \cdot \left[h^\top \tilde Z^{t,x}_{(\theta, 0, m)} \left(S^{(\theta, 0, m)}\right)\right] \cdot \left(\sqrt{S^{(\theta,0,m)}-t}, \mathcal{V}^{(\theta,0,m)}\left(S^{(\theta, 0, m)}; t,x\right)\right),
\]
which is different from \eqref{eq:pss-mlp-v0-1} -- \eqref{eq:pss-mlp-v0-2}. 
From the simplification of $\mathcal{V}$ in Eq. \eqref{eq:pss-mlp-V}, we see that $\mathcal{V}_j$ 
is independent of $\tilde Z_i^{t,x}$, whenever $i \neq j$. 
Furthermore, as an Itô integral, $\E[\mathcal{V}_j(s; t,x)] = 0$ for all $j = 1, 2, \cdots, d$, 
so we have 
$\E\left[h_i \tilde Z_i^{t,x}(s) \mathcal{V}_j(s; t,x)\right] = 0$ whenever $i \neq j$.
Thus, the term \eqref{eq:pss-mlp-v0-2} is an variance-reduced estimator of 
\(
\E\left[\left(h^\top \tilde Z^{t,x}(S) \right) \cdot \mathcal{V}(S; t,x)\right].
\)
Indeed, in the numerical experiments, we observe that the estimator \eqref{eq:pss-mlp-v0-2} is substantially more efficient 
than the vanilla estimator $\left(h^\top \tilde Z^{t,x}(S) \right) \cdot \mathcal{V}(S; t,x)$, allowing faster convergence of the MLP scheme.

\subsection{Test Instances}\label{ssec:test-instances}

We consider three test problems, in dimensions $2$, $5$ and $20$, respectively. 
For a $d$-dimensional test example, $d=2, 5, 20$, we fix the drift vector $\gamma = -\bOne_d$, 
where $\bOne_d$ is the $d$-dimensional vector of all ones, 
fix the diffusion matrix $\sigma = I_d$, where $I_d$ is the $d\times d$ identity matrix, 
and fix $G = (G_{ij})$ to be a $d\times (d-1)$ matrix defined by
\begin{equation}\label{eq:G}
G_{ij} = \left\{\begin{array}{ll}
    0 & \text{ if } i<j \text{ or } i>j+1; \\
    1 & \text{ if } i=j; \\
    -1 & \text{ if } i=j+1,
\end{array}
    \right.
\end{equation}
so that the dimension of the control space is $K = d-1$.
We also fix the time horizon $T = 0.2$, but allow the drift upper bound $C_{\mathcal{A}}$, and the holding cost vector $h$ to vary 
for each test problem. 

Under the matrix $G$ defined in \eqref{eq:G}, we derive a formal interpretation of the optimal policy structure. 
For notational convenience, we use $v(t,x)$ to denote the state gradient of the value function $V$. 
Recall the expression for the Hamiltonian $\tilde \cH$ 
derived in Eq. \eqref{eq:pss-mlp-cH}. We then have 
\begin{align}
  \tilde \cH (x, v) &~= \inf_{a \in \left[0, C_{\mathcal{A}}\right]^{d-1}} \left\{a^\top \left(G^\top v\right) \right\} + h^\top x \nonumber \\
  &~= C_{\mathcal{A}} \sum_{k=1}^{d-1} \min\left\{\left(G^\top v\right)_k, 0\right\} + h^\top x \nonumber \\
  &~= C_{\mathcal{A}} \sum_{k=1}^{d-1} \min\left\{v_k-v_{k+1}, 0\right\} + h^\top x. \label{eq:pss-mlp-v-v}
\end{align}
Eq. \eqref{eq:pss-mlp-v-v} suggests that the following bang-bang type optimal policy structure: 
When $v_k-v_{k+1} > 0$, set $a_k = 0$; otherwise, set $a_k = C_{\mathcal{A}}$, $k=1, 2, \cdots, d-1$. 
Though the optimal policy structure is simple and intuitive, the main difficulty in computing it is that $v$ is unknown and needs to be estimated. 

A $d$-dimensional test problem with $G$ defined in Eq. \eqref{eq:G} and the formal policy structure derived above 
also have natural connections to a PSS 
with $d$ servers and $d$ buffers, in a heavy-traffic regime to be described, where server $i$ can serve buffers $i$ and $i+1$, for $i = 1, 2, \cdots, d-1$, 
and server $d$ can only serve buffer $d$; see Figure \ref{fig:parallel-server-system} for a schematic illustration. 
In the process flexibility literature, the structure is known as an open chain (see, e.g., \cite{WangWangZhang2021}), and 
in the case $d=2$, the system is often called the $N$-network in the queueing network literature \cite{Harrison1998,BellWilliams2001,TezcanDai2010,GhamamiWard2013}. 
The heavy-traffic regime is such that all servers have unit service rate, irrespective of the buffer they are serving, and 
that all arrival rates to the buffers are equal to $1-1/r$, with $r \to \infty$. 
Intuitively speaking, because buffer-$i$ arrival rate is close to the server-$i$ service rate,  
server $i$ will need to serve buffer $i$ most of the time, 
but can occasionally serve buffer $i+1$ at a (diffusion-scale) rate up to $C_{\mathcal{A}}$
(in which case server $i$ should serve buffer $i+1$ at the maximum rate possible, by the optimal policy structure described above), 
at the expense of letting buffer $i$ content 
grow at the same rate, $i=1, 2, \cdots, d-1$. Thus, the key trade-off is to decide whether to let server $i$ serve buffer $i$ or $i+1$, $i=1, 2, \cdots, d-1$, 
in order to balance the contents in the respective buffers. 

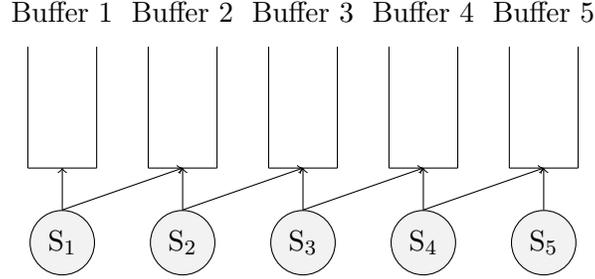
\begin{figure}[t]
  \centering
  \begin{tikzpicture}[x=1.6cm, y=1.0cm]
    \foreach \i in {1,...,5} {
      \node[minimum width=0.9cm, minimum height=1.6cm] (B\i) at (\i,1.8) {};
      \draw (B\i.south west) -- (B\i.south east);
      \draw (B\i.south west) -- (B\i.north west);
      \draw (B\i.south east) -- (B\i.north east);
      \node[anchor=south, yshift=2mm] at (B\i.north) {$\mathrm{Buffer}\ \i$};
    }

    \foreach \i in {1,...,5} {
      \node[draw, circle, minimum size=7mm, fill=gray!10] (S\i) at (\i,0) {$\mathrm{S}_{\i}$};
    }

    \foreach \i in {1,...,4} {
      \pgfmathtruncatemacro{\ipone}{\i+1}
      \draw[->] (S\i.north) -- (B\i.south);
      \draw[->] (S\i.north) -- (B\ipone.south);
    }
    \draw[->] (S5.north) -- (B5.south);

  \end{tikzpicture}
  \caption{Illustration of a $5$-dimensional open-chain system with $G$ defined in \eqref{eq:G}. Server $i$ connects to buffers $i$ and $i+1$ for $i=1,\cdots,4$, and server $5$ connects to buffer $5$ only.}
  \label{fig:parallel-server-system}
\end{figure}

\paragraph{Benchmark problems and control policies.} In general, none of our test problems can be solved analytically. Thus, 
we seek benchmark problems and control policies with competitive performances that are also relatively easy to simulate and estimate, 
against which we compare the performances of the MLP scheme. 
To this end, we consider a so-called equivalent workload formulation (EWF) 
of a formal diffusion approximation 
for the dynamic scheduling of the PSS under the heavy-traffic regime thus described, 
which can be derived using the approach developed in \cite{PesicWilliams2016}; we omit details here.
In the EWF, the state $Z(\cdot)$ evolves according to a $d$-dimensional process with RCLL sample paths as follows\footnote{
  Strictly speaking, in the EWF formulated in \cite{PesicWilliams2016}, the process $Z(\cdot)$ evolves according to 
  \[
    Z(t) = x -\bOne_d t + B(t) + \tilde G U(t) \in \R_+^d, \quad \text{for all } t\geq 0,
  \]
  where $U(\cdot)$ is non-decreasing with $U(0) = 0$, and 
  $\tilde G = [G \ e_d]$ is the $d\times d$ matrix formed by the concatenation of $G$ and the $d$th standard unit vector $e_d = (0,\cdots,0,1)^\top$. 
  However, it is not difficult to show that the representation in Eq. \eqref{eq:pss-singular-control} is equivalent.}: 
\begin{equation}\label{eq:pss-singular-control}
Z(t) = x -\bOne_d t + B(t) + G U(t) + Y(t) \in \R_+^d, \quad \text{for all } t\geq 0,
\end{equation}
where $U(\cdot)$ is a $(d-1)$-dimensional control process with non-decreasing sample paths in each dimension and $U(0) = 0$, 
$Y(\cdot)$ is the Skorokhod regulator that keeps $Z(\cdot)$ in the nonnegative orthant, 
and the objective is to minimize, 
for all $(t,x) \in [0,T] \times \R_+^d$, the expected cost-to-go under $U(\cdot)$ and starting from $Z(t) = x$, 
\begin{equation}\label{eq:pss-singular-control-cost}
  J(t,x,U) := \mathbb{E} \left[ \int_t^T e^{-\beta (s-t)}\, h^{\top} Z(s) ds\Big| Z(t)=x \right],
\end{equation}
over all non-anticipating control process $U(\cdot)$. 

The EWF is a singular control problem, in which system states can be instantaneously displaced; 
recall that the process $U(\cdot)$ is allowed to contain jumps. 
It is different from drift controls, in which system states can only be adjusted at finite rates, 
but it is a useful benchmark for our drift control problem of interest. 
Indeed, its value function lower bounds that of the drift control,  
because given any drift upper bound $C_{\mathcal{A}}$, and any control $a(\cdot)$ taking values in $[0, C_{\mathcal{A}}]^{d-1}$, 
the process $U(t):=\int_0^t a(s) ds$ is a feasible control for the EWF.
Furthermore, it is not difficult to show that as $C_{\mathcal{A}}$ increases to infinity, 
the drift-control value function monotonically decreases to that of the singular control for each time-state pair $(t,x)$; we omit details here.
Thus, if we can solve the EWF, we obtain a (tight) lower bound for the drift control value functions.

Generally speaking, similar to drift controls, singular controls are (arguably more) challenging to solve, 
but sufficient conditions are identified in \cite{PesicWilliams2016} on the system parameters 
under which a simple policy, known as least control, is optimal for the EWF. 
Under least control, the state process $Z(\cdot)$ evolves as follows:
\begin{equation}\label{eq:least-control-1}
Z_1(t) = x_1 - t + B_1(t) + \bar Y_1(t), \quad t\geq 0,
\end{equation}
where $\bar Y_1$ is the one-dimensional Skorokhod regulator of the process $\bar X_1$, with
\begin{equation}\label{eq:least-control-2}
  \bar X_1(t) := x_1 - t + B_1(t), \quad t\geq 0.
\end{equation}
For $i = 2, 3, \cdots, d$, 
recusively define $\bar X_i(\cdot)$, $\bar Y_i(\cdot)$, and $Z_i(\cdot)$ by 
\begin{equation}\label{eq:least-control-3}
  \bar X_i(t) := x_i - t + B_i(t) - \bar Y_{i-1}(t), \quad t\geq 0, 
\end{equation}
$\bar Y_i$ is the Skorokhod regulator of $\bar X_i$, and 
\begin{equation}\label{eq:least-control-4}
  Z_i := \bar X_i + \bar Y_i.
\end{equation}
Exploiting the closed form of the one-dimensional Skorokhod map, and noting that $\bar X_i$ is the difference of a Brownian motion and $\bar Y_{i-1}$, 
the recursive representation \eqref{eq:least-control-1} -- \eqref{eq:least-control-4} of $Z(\cdot)$ allows for easy and efficient simulation 
of its sample paths. In turn, this implies that the performance of the least control policy can be accurately estimated.

The following result is a straightforward implication of Theorem 7.3 in \cite{PesicWilliams2016}, 
and identifies a simple sufficient condition on the holding cost vector $h$ 
under which least control is optimal for the EWF considered here. 
\begin{corollary}\label{cor:pss-singular-control-least-control}
Least control is optimal for the EWF \eqref{eq:pss-singular-control} -- \eqref{eq:pss-singular-control-cost} if 
\begin{equation}\label{eq:least-control-sufficient-condition}
h_1 \geq h_2 \geq \cdots \geq h_d.
\end{equation} 
\end{corollary}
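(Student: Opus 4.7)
The plan is to invoke Theorem 7.3 of \cite{PesicWilliams2016} essentially as a black box, after (a) recasting our EWF in the form considered there, and (b) verifying that the cost hypothesis of that theorem reduces to condition \eqref{eq:least-control-sufficient-condition}. Because the statement is flagged as a ``straightforward implication,'' the proof amounts to a careful bookkeeping exercise rather than any new argument.

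For step (a), using the footnote following \eqref{eq:pss-singular-control}, I would rewrite the EWF with the augmented reflection matrix $\tilde G = [G\;\, e_d]$, so that the full set of available pushing directions consists of the columns
\[
\tilde G_{\cdot, i} = e_i - e_{i+1}, \ i = 1, \ldots, d-1, \qquad \tilde G_{\cdot, d} = e_d.
\]
This lower-bidiagonal, ``nested'' reflection geometry is precisely the structure for which \cite{PesicWilliams2016} construct pathwise-minimal controls by forward substitution. Unwinding their construction for our specific $\tilde G$ recovers exactly the recursion \eqref{eq:least-control-1}--\eqref{eq:least-control-4}, so least control as defined here coincides with their pathwise-minimal control.

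For step (b), the sufficient condition of Theorem 7.3 of \cite{PesicWilliams2016} for the pathwise-minimal control to be optimal is, essentially, that the holding-cost vector $h$ pair nonnegatively with every reflection direction:
\[
h^\top \tilde G_{\cdot, i} = h_i - h_{i+1} \geq 0, \ i = 1, \ldots, d-1, \qquad h^\top \tilde G_{\cdot, d} = h_d \geq 0.
\]
The second inequality is automatic since $h \in \R_+^d$, while the first $d-1$ are equivalent to \eqref{eq:least-control-sufficient-condition}. Applying Theorem 7.3 of \cite{PesicWilliams2016} then yields the optimality of least control, proving the corollary.

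The main obstacle will not be any deep argument but rather notation alignment: verifying that $\tilde G$ satisfies whatever structural hypothesis (e.g., a completely-$\mathcal{S}$ or lower-triangularity condition) \cite{PesicWilliams2016} impose so that Theorem 7.3 applies, and pinning down the precise form of their cost condition so that its equivalence with \eqref{eq:least-control-sufficient-condition} is transparent. Both follow by inspection from the lower-bidiagonal structure of $\tilde G$ with unit diagonal entries, and from the fact that the one-dimensional Skorokhod problems arising along the forward substitution are uniquely solvable; however, they should be spelled out so that the invocation of Theorem 7.3 is unambiguous.
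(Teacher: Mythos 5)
The paper offers no proof of this corollary---it simply asserts it is a ``straightforward implication of Theorem 7.3 in \cite{PesicWilliams2016}'' and omits the reduction---so there is nothing to compare your argument against line-by-line. Your proposal takes exactly the route the paper points to: recast the EWF in the $\tilde G$ form (as the footnote licenses), identify the reflection directions as $e_i - e_{i+1}$, $i=1,\dots,d-1$, and $e_d$, and read off the cost hypothesis of Theorem 7.3 as $h^\top(e_i - e_{i+1}) = h_i - h_{i+1} \ge 0$ together with the automatic $h_d\ge 0$. The sign check is consistent with the intuition that least control should be optimal precisely when any additional pushing would weakly increase the instantaneous holding cost, which is what $h_i\geq h_{i+1}$ encodes for the column $e_i - e_{i+1}$. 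Your closing caveat is the right one: the corollary is a bookkeeping exercise, and the only genuine work is confirming that the lower-bidiagonal $\tilde G$ with unit diagonal meets whatever structural hypotheses (tree / decoupled-workload / pathwise-minimality) Theorem 7.3 imposes, and that the one-dimensional Skorokhod problems in the forward substitution \eqref{eq:least-control-1}--\eqref{eq:least-control-4} reproduce the pathwise-minimal control of \cite{PesicWilliams2016}. Since you flag both checks explicitly rather than waving at them, the proposal is sound and matches the paper's intended derivation.
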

Corollary \ref{cor:pss-singular-control-least-control} allows us to obtain computable lower bounds -- the expected cost incurred under least control -- for the drift-control value functions, 
under the condition \eqref{eq:least-control-sufficient-condition}. Furthermore, 
as $C_{\mathcal{A}}$ increases, the lower bounds should become tighter.
Finally, even when the condition \eqref{eq:least-control-sufficient-condition} is not satisfied, 
the least control performances can still provide meaningful comparisons with the drift-control value functions. 

In summary, we will use the expected cost incurred under least control as a benchmark 
for the drift-control value function approximations obtained under the MLP scheme. 

\subsection{Computational Results}\label{ssec:computational-results}

All experiments were conducted on a MacBook Pro (Apple M4 Max, 64GB RAM) with 16 CPU cores (12 performance and 4 efficiency). 
Since the MLP scheme allows parallelized implementation by dividing the Monte Carlo simulations across different processors, 
we used multithreading to utilize all 12 performance cores. All our codes are written in the Julia programming language 
and can be found at \url{https://github.com/zhongyuanx/Multilevel_Picard}.

\paragraph{Setup and methodology.}
We fix the time horizon at $T=0.2$. For each configuration, we run the MLP estimator 
under the reference process $\tilde Z^{t,x}$ defined in Eq. \eqref{eq:rbm-reference}, 
and across levels $n=1,2,3,4,5$ with 
$(n,M) = (1,196608), (2,768), (3,192), (4,60), (5,48)$ to examine convergence behavior of the MLP scheme, unless stated otherwise. 
Note that the sample size hyperparameters $M$ are chosen to be larger than those typically used in prior literature. 
This has to do with the more computationally intensive nature of our test problem. 
We also use larger $M$ so the MLP scheme imitates closer the behavior of the Picard iterations \eqref{eq:iterations}. 
To quantify estimator variability, we compute each MLP estimator $5$ times 
and report averages with plus/minus the sample standard deviations (which roughly corresponds to a $97\%$ confidence interval) 
as a fraction of the average value in parentheses. 
We use $\tilde D_i$ to denote the averaged MLP estimator of the value function gradient with respect to the $i$-th dimension, $i=1,2, \cdots, d$. 
From now on, with a slight abuse of terminology, we use the term ``MLP estimator'' to refer to the averaged estimator across $5$ MLP runs. 
For benchmarking, we compute the expected cost of the least-control (LC) policy whenever applicable; see Corollary~\ref{cor:pss-singular-control-least-control}. 
When we do so, time is discretized in steps of size $10^{-6}$, and we approximate the expected cost by averaging over $10^4$ sample paths. 
We report the approximate $97\%$ confidence interval for consistency. 
CPU times are reported as wall-clock times with multithreading enabled.

\paragraph{Two-dimensional $N$-network.} 
We carry out more extensive experiments for this low-dimensional test example, 
because we can examine the behavior of the MLP scheme at a more comprehensive set of states, 
and this setting clearly illustrates a number of useful insights about the MLP scheme in general. 
We consider two cost configurations. In the first case, $h_1 = h_2 = 1.0$, 
and the least control policy is optimal by Corollary~\ref{cor:pss-singular-control-least-control}. 
We vary the drift upper bound $C_{\mathcal{A}}$ over the set $\{1.0, 2.0, 5.0\}$. 
Under each drift upper bound, we compute both value function and gradient estimates at time $t=0$ and 
states on the diagonal grid $\{(z,z): z=0.0, 0.1, \dots, 1.0\}$. 
The MLP estimators are computed across levels $n=1,2,3,4,5$. 
We also compute the expected costs under least control at the same time-state pairs for comparison. 
The total time for all simulations and computations thus far described is under $11$ hours, across $33$ combinations 
of state and $C_{\mathcal{A}}$ pairs, and $11$ performance simulations for least control, 
with the majority of the time being used for computing the MLP estimates at level $5$.
\begin{longtable}{@{}r r c c c@{}}
  \caption{Per-level MLP results at $x=(0.4,0.4)$: Entries are value function estimates with confidence intervals as percentages. 
  $h_1 = h_2 = 1.0$.}\label{tab:summary-04}\ \\
  \toprule
  \textbf{Level} & \textbf{$M$} & \textbf{$C_{\mathcal A}=1$} & \textbf{$C_{\mathcal A}=2$} & \textbf{$C_{\mathcal A}=5$} \\
  \midrule
  \endfirsthead
  \toprule
  \textbf{Level} & \textbf{$M$} & \textbf{$C_{\mathcal A}=1$} & \textbf{$C_{\mathcal A}=2$} & \textbf{$C_{\mathcal A}=5$} \\
  \midrule
  \endhead
  1 & 196608 & 0.144095 (\(\pm\) 0.16\%) & 0.144050 (\(\pm\) 0.20\%) & 0.143981 (\(\pm\) 0.17\%) \\
  2 & 768     & 0.141176 (\(\pm\) 0.15\%) & 0.138000 (\(\pm\) 0.12\%) & 0.128984 (\(\pm\) 0.83\%) \\
  3 & 192     & 0.142151 (\(\pm\) 0.22\%) & 0.141356 (\(\pm\) 0.28\%) & 0.140676 (\(\pm\) 1.33\%) \\
  4 & 60      & 0.141834 (\(\pm\) 0.19\%) & 0.139630 (\(\pm\) 0.56\%) & 0.137248 (\(\pm\) 2.43\%) \\
  5 & 48      & 0.141933 (\(\pm\) 0.13\%) & 0.140871 (\(\pm\) 0.22\%) & 0.137891 (\(\pm\) 0.98\%) \\
  \bottomrule
\end{longtable}

\begin{longtable}{@{}r r c c c c@{}}
  \caption{Per-level gradient estimates at $x=(0.4,0.4)$ by $C_{\mathcal A}$. 
  $\tilde D_i$ denotes the gradient estimate of $V$ with respect to the $i$-th dimension, $i=1,2$.
  Entries show $\tilde D_1$ and $\tilde D_2$ with confidence intervals as percentages.
  $h_1 = h_2 = 1.0$.}\label{tab:summary-04-dv}\ \\
  \toprule
  \textbf{Level} & \textbf{$M$} & \textbf{$\tilde D_i$} & \textbf{$C_{\mathcal A}=1$} & \textbf{$C_{\mathcal A}=2$} & \textbf{$C_{\mathcal A}=5$} \\
  \midrule
  \endfirsthead
  \toprule
  \textbf{Level} & \textbf{$M$} & \textbf{$\tilde D_i$} & \textbf{$C_{\mathcal A}=1$} & \textbf{$C_{\mathcal A}=2$} & \textbf{$C_{\mathcal A}=5$} \\
  \midrule
  \endhead
  1 & 196608 & $\tilde D_1$ & 0.146201 (\(\pm\) 0.48\%) & 0.146353 (\(\pm\) 0.88\%) & 0.145878 (\(\pm\) 0.82\%) \\
  &         & $\tilde D_2$ & 0.145658 (\(\pm\) 0.63\%) & 0.145510 (\(\pm\) 0.73\%) & 0.145056 (\(\pm\) 1.28\%) \\
  \midrule
  2 & 768     & $\tilde D_1$ & 0.154633 (\(\pm\) 1.01\%) & 0.165305 (\(\pm\) 0.37\%) & 0.196850 (\(\pm\) 1.96\%) \\
  &         & $\tilde D_2$ & 0.137847 (\(\pm\) 0.64\%) & 0.128804 (\(\pm\) 1.24\%) & 0.104050 (\(\pm\) 3.28\%) \\
  \midrule
  3 & 192     & $\tilde D_1$ & 0.153319 (\(\pm\) 0.59\%) & 0.156788 (\(\pm\) 3.20\%) & 0.166375 (\(\pm\) 8.39\%) \\
  &         & $\tilde D_2$ & 0.141971 (\(\pm\) 1.91\%) & 0.143662 (\(\pm\) 2.82\%) & 0.135054 (\(\pm\) 3.42\%) \\
  \midrule
  4 & 60      & $\tilde D_1$ & 0.154707 (\(\pm\) 0.69\%) & 0.160418 (\(\pm\) 2.44\%) & 0.166113 (\(\pm\) 6.44\%) \\
  &         & $\tilde D_2$ & 0.139907 (\(\pm\) 0.15\%) & 0.133035 (\(\pm\) 3.91\%) & 0.127715 (\(\pm\) 14.06\%) \\
  \midrule
  5 & 48      & $\tilde D_1$ & 0.153580 (\(\pm\) 0.56\%) & 0.158890 (\(\pm\) 3.04\%) & 0.168431 (\(\pm\) 3.55\%) \\
  &         & $\tilde D_2$ & 0.140074 (\(\pm\) 1.10\%) & 0.139301 (\(\pm\) 1.39\%) & 0.129148 (\(\pm\) 5.37\%) \\
  \bottomrule
\end{longtable}

\begin{longtable}{@{}rcccc@{}}
  \caption{Value function estimates at different initial states with $(n,M) = (5,48)$ 
  under different $C_{\mathcal{A}} \in \{1.0, 2.0, 5.0\}$. $h_1 = h_2 = 1.0$. 
  Baseline denotes the expected cost under least control.
  }\label{tab:summary-level5}\ \\
  \toprule
  \textbf{Initial state} & \textbf{$C_{\mathcal A}=1$} & \textbf{$C_{\mathcal A}=2$} & \textbf{$C_{\mathcal A}=5$} & \textbf{Baseline} \\
  \midrule
  \endfirsthead
  \toprule
  \textbf{$x=(u,u)$} & \textbf{$C_{\mathcal A}=1$} & \textbf{$C_{\mathcal A}=2$} & \textbf{$C_{\mathcal A}=5$} & \textbf{Baseline} \\
  \midrule
  \endhead
  $(0.0,0.0)$ & 0.074775 (\(\pm\) 0.11\%) & 0.073355 (\(\pm\) 0.58\%) & 0.070788 (\(\pm\) 6.07\%) & 0.067109 (\(\pm\) 0.47\%) \\
  $(0.1,0.1)$ & 0.079748 (\(\pm\) 0.18\%) & 0.078134 (\(\pm\) 0.55\%) & 0.074847 (\(\pm\) 3.39\%) & 0.072869 (\(\pm\) 0.48\%) \\
  $(0.2,0.2)$ & 0.093623 (\(\pm\) 0.11\%) & 0.091832 (\(\pm\) 0.30\%) & 0.088047 (\(\pm\) 2.25\%) & 0.087328 (\(\pm\) 0.47\%) \\
  $(0.3,0.3)$ & 0.114781 (\(\pm\) 0.07\%) & 0.112981 (\(\pm\) 0.51\%) & 0.111153 (\(\pm\) 3.09\%) & 0.109659 (\(\pm\) 0.44\%) \\
  $(0.4,0.4)$ & 0.141933 (\(\pm\) 0.13\%) & 0.140871 (\(\pm\) 0.22\%) & 0.137891 (\(\pm\) 0.98\%) & 0.137153 (\(\pm\) 0.41\%) \\
  $(0.5,0.5)$ & 0.173203 (\(\pm\) 0.14\%) & 0.171984 (\(\pm\) 0.20\%) & 0.170499 (\(\pm\) 1.23\%) & 0.170906 (\(\pm\) 0.36\%) \\
  $(0.6,0.6)$ & 0.207703 (\(\pm\) 0.06\%) & 0.206914 (\(\pm\) 0.28\%) & 0.204436 (\(\pm\) 1.44\%) & 0.206718 (\(\pm\) 0.32\%) \\
  $(0.7,0.7)$ & 0.244357 (\(\pm\) 0.09\%) & 0.243187 (\(\pm\) 0.29\%) & 0.240461 (\(\pm\) 0.89\%) & 0.244262 (\(\pm\) 0.28\%) \\
  $(0.8,0.8)$ & 0.282219 (\(\pm\) 0.04\%) & 0.281848 (\(\pm\) 0.32\%) & 0.277789 (\(\pm\) 1.65\%) & 0.282588 (\(\pm\) 0.25\%) \\
  $(0.9,0.9)$ & 0.321070 (\(\pm\) 0.03\%) & 0.320558 (\(\pm\) 0.28\%) & 0.316143 (\(\pm\) 0.59\%) & 0.320007 (\(\pm\) 0.22\%) \\
  $(1.0,1.0)$ & 0.360570 (\(\pm\) 0.08\%) & 0.359739 (\(\pm\) 0.17\%) & 0.353010 (\(\pm\) 0.99\%) & 0.360583 (\(\pm\) 0.20\%) \\
  \bottomrule
\end{longtable}

Tables \ref{tab:summary-04}, \ref{tab:summary-04-dv} and \ref{tab:summary-level5} summarize the MLP results for the two-dimensional $N$-network. 
Table \ref{tab:summary-04} provides value function estimates at a particular state $(0.4,0.4)$ 
under different levels and values of $C_{\mathcal{A}}$, but all qualitative observations 
hold for other states as well. 
From Table \ref{tab:summary-04}, we observe that the MLP estimates stabilize within four to five levels, 
across all values of $C_{\mathcal{A}}$, though larger oscillations are observed when $C_{\mathcal{A}} = 5.0$.
Similarly, estimator variability, as measured by standard errors, increases with $C_{\mathcal{A}}$, 
with the variability being notably higher when $C_{\mathcal{A}}=5.0$ compared to lower values of $C_{\mathcal{A}}$, 
especially at higher levels.  
The observation that variability increases with $C_{\mathcal{A}}$ is consistent with 
our theoretical findings; cf. Theorem \ref{thm:main1}. 
Table \ref{tab:summary-04-dv} contains gradient estimates at the same state $(0.4,0.4)$, 
and is also representative of other states. 
Similar to the value function estimates, gradient estimates stabilize within four to five levels 
and become more variable as $C_{\mathcal{A}}$ increases. 
It should be noted that gradient estimates tend to exhibit higher variability than value function estimates, 
which can be explained by the additional Malliavin weights in formulas for gradients. 
Finally, Table \ref{tab:summary-level5} summarizes the value function estimates at level $5$ 
across all diagonal states $(0.0, 0.0), (0.1, 0.1), \cdots$, and $(1.0, 1.0)$, 
under different values of $C_{\mathcal{A}}$. 
We observe that (i) value function estimates decrease and become more variable as $C_{\mathcal{A}}$ increases; 
(ii) value function estimates of drift controls can already be close to the optimal value function of the singular control, 
even when $C_{\mathcal{A}}$ is small ($1.0$ or $2.0$); 
and (iii) it is possible for value function estimates of the drift control problems to be lower than that under least control. 
The reason for observation (i) is intuitively clear: As $C_{\mathcal{A}}$ increases, theory predicts lower value function which approaches 
the baseline, and the MLP scheme becomes more variable with larger $C_{\mathcal{A}}$. 
Observation (ii) suggests that drift controls can effectively approximate singular controls even with low drift upper bounds; 
this phenomenon has also been observed in \cite{DaiZhong2010,ata2024singular}.
Observation (iii) indicates that being a simulation-based scheme, MLP can produce underestimates,
especially when the resulting estimates exhibit a nonnegligible degree of variability.

We now turn to the second cost configuration, $h_1 = 1.0$ and $h_2 = 2.0$. In this case, 
Corollary \ref{cor:pss-singular-control-least-control} no longer holds, 
leaving the question of an optimal policy open. 
Intuitively speaking, in the corresponding queueing setting, 
because buffer $2$ is more expensive, when buffer $2$ content is much larger than buffer $1$ content, 
server $1$ should turn to help serve buffer $2$. This suggests that a switching curve type policy may be (heavy-traffic) optimal. 
A switching curve policy works as follows: There is an increasing function $f: \R_+ \to \R_+$ 
such that at state $z = (z_1, z_2)$, if $f(z_1) < z_2$, then server $1$ should help serve buffer $2$ content, 
and if $f(z_1) > z_2$, server $1$ does not do so.  
To examine this hypothesis, we vary the drift upper bound $C_{\mathcal{A}}$ over the set $\{1.0, 2.0\}$. 
Under each drift upper bound, we compute both value function and gradient estimates at time $t=0$ and 
states in the square grid $\{(z_1,z_2): z_1,z_2 \in \{0.0, 0.1, \dots, 1.0\}\}$. 
The MLP estimators are computed at level $n=4$ for computational efficiency, 
with each MLP estimator taking less than $42$ seconds to compute, for each initial state. 
Similar to the first cost configuration, we also compute the expected costs under least control at the same time-state pairs for comparison. 

Figures \ref{fig:2d-policy} and \ref{fig:2d-V} summarize our findings. 
Figure \ref{fig:2d-policy-C1} plots the policy produced by the gradient estimates when $C_{\mathcal{A}} = 1.0$. 
When $\tilde D_1 - \tilde D_2 > 0$, we plot ``x'', indicating that server $1$ does not help serve buffer $2$ (i.e., drift control should be set to zero), 
and when $\tilde D_1 - \tilde D_2 \le 0$, we plot ``+'', indicating that server $1$ helps serve buffer $2$ (i.e., drift control should be set to $C_{\mathcal{A}}$). 
It is visible from the figure that the corresponding policy is a switching curve policy, which lets server $1$ serve buffer $2$ 
when its content is significantly larger than that in buffer $1$. 
Figure \ref{fig:2d-policy-C2} produces a similar plot when $C_{\mathcal{A}} = 2.0$; we see that the policies produced are similar. 
Finally, Figure \ref{fig:2d-V} plots a comparison between value function estimates across states when $C_{\mathcal{A}} = 2.0$ 
and those under least control. Even in this case with low $C_{\mathcal{A}}$, value function estimates are higher than 
those under least control for a significant fraction of states, with a cost reduction up to $7\%$. 
\begin{figure}[t]
  \centering
  \begin{subfigure}[t]{0.48\textwidth}
    \centering
    \includegraphics[width=0.9\linewidth]{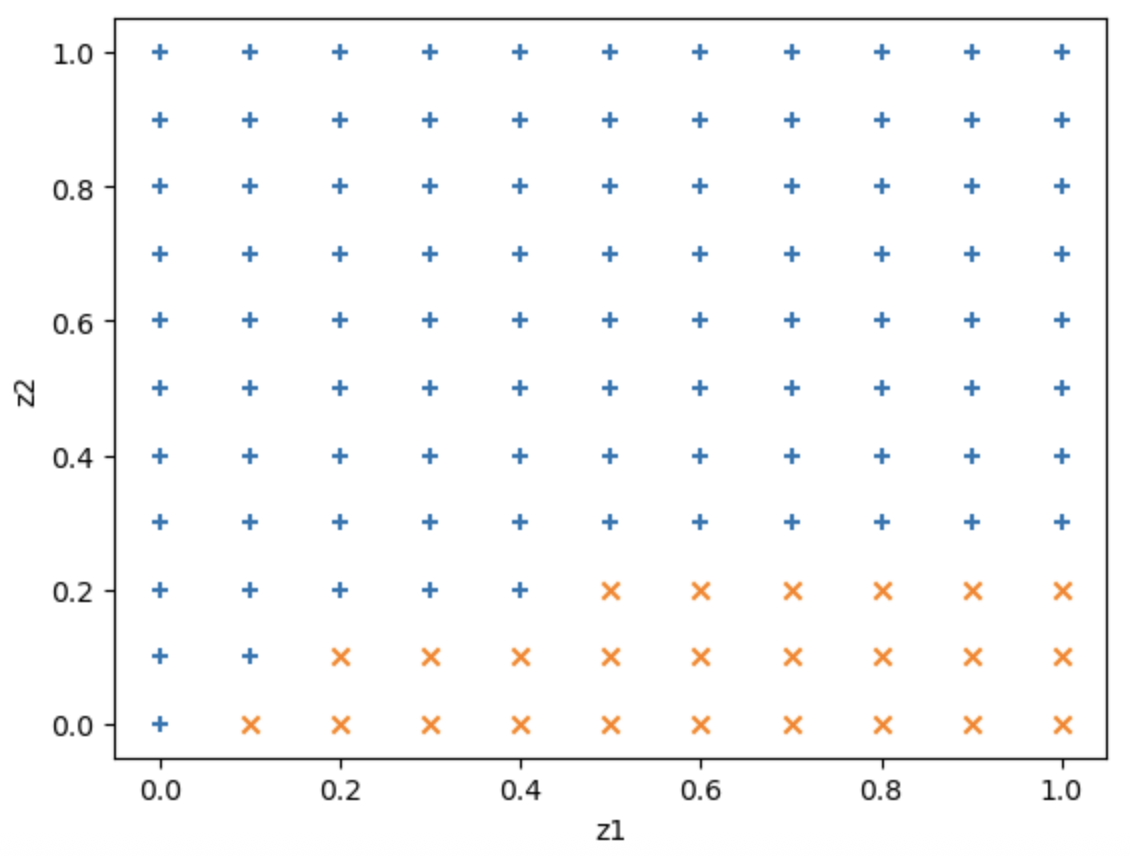}
    \caption{Policy plot under $C_{\mathcal{A}}=1.0$}
    \label{fig:2d-policy-C1}
  \end{subfigure}\hfill
  \begin{subfigure}[t]{0.48\textwidth}
    \centering
    \includegraphics[width=0.9\linewidth]{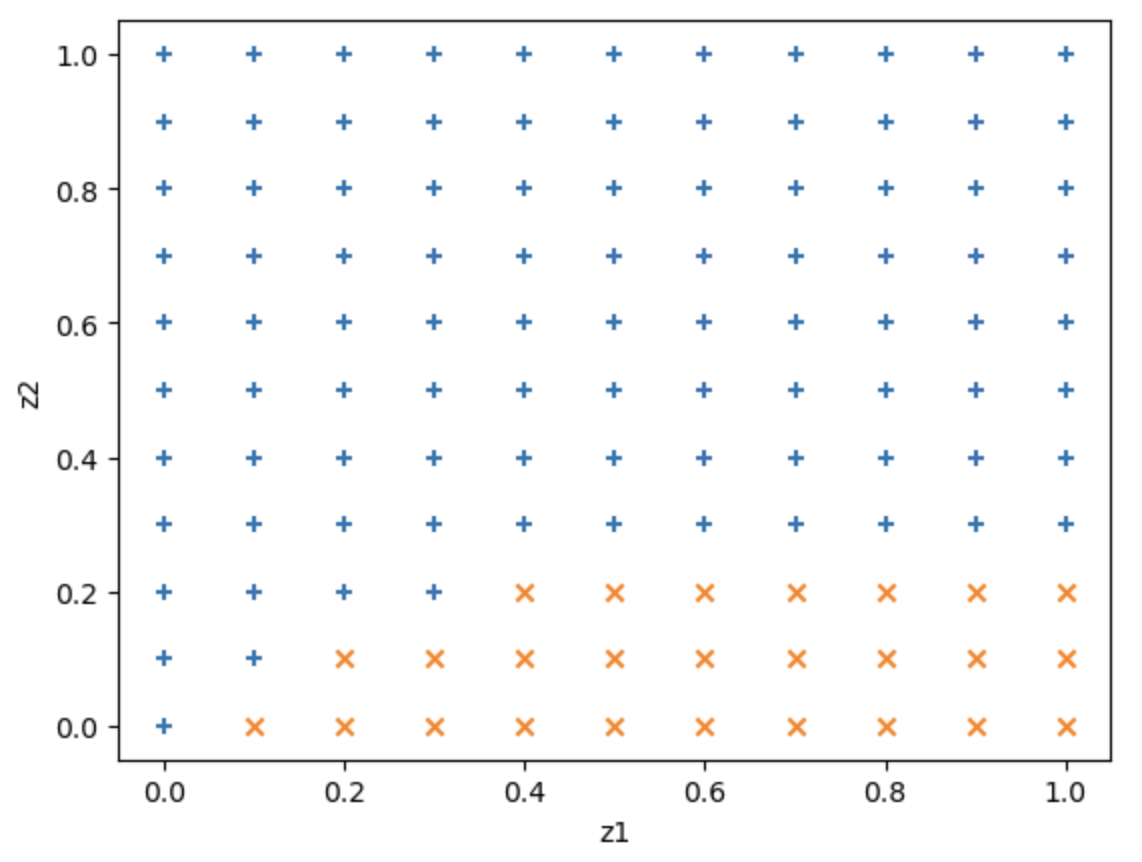}
    \caption{Policy plot under $C_{\mathcal{A}}=2.0$}
    \label{fig:2d-policy-C2}
  \end{subfigure}
  \caption{Policy plots under different $C_{\mathcal{A}}$ when $h_1 = 1.0$ and $h_2 = 2.0$. ``+'' indicates that for the estimated $\tilde D_1$ and $\tilde D_2$, 
  $\tilde D_1- \tilde D_2 \leq 0$, and ``x'' indicates that $\tilde D_1 - \tilde D_2 > 0$. The policy interpretation is that 
  at the ``+'' states, server $1$ helps to serve buffer $2$ at the maximum rate $C_{\mathcal{A}}$ possible, and does not do so at the ``x'' states.}
  \label{fig:2d-policy}
\end{figure}

\begin{figure}[t]
  \centering
  \includegraphics[width=.42\linewidth]{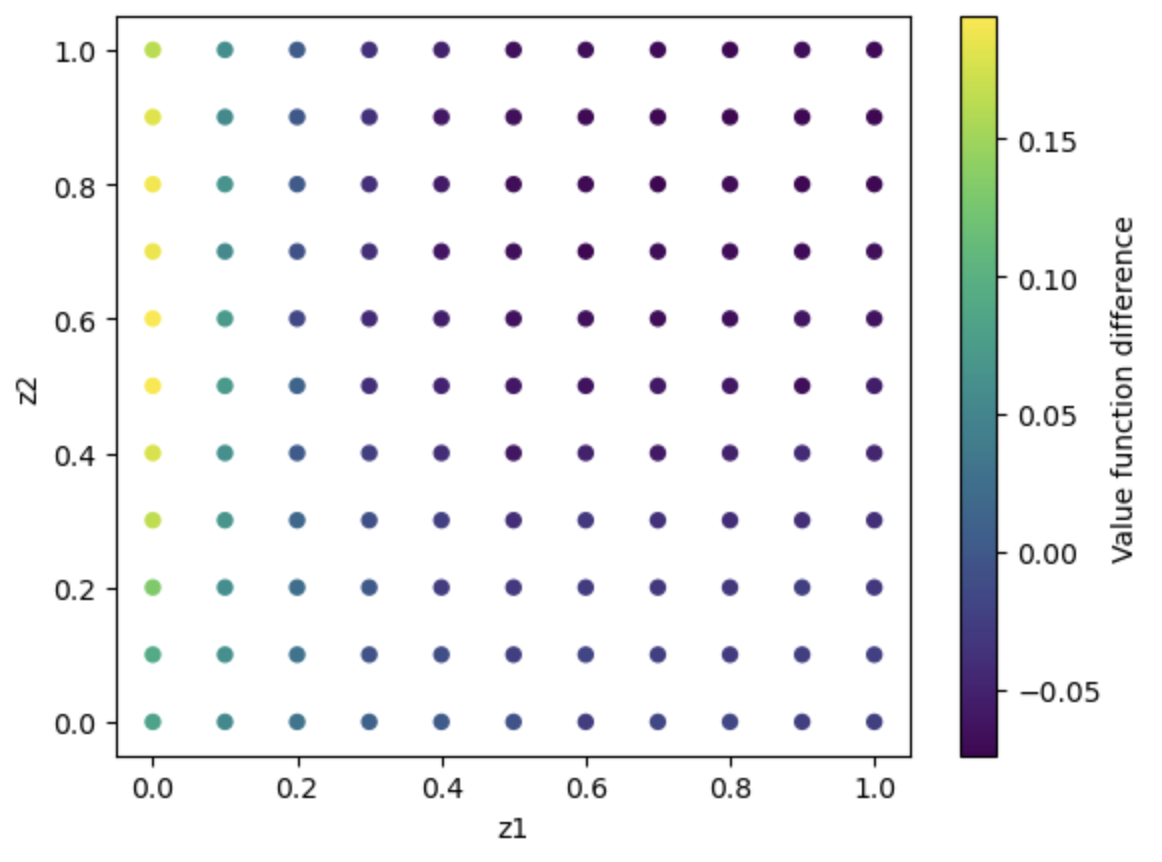}
  \caption{Plot of value function differences between that under $C_{\mathcal{A}}=2.0$ and that under least control, when $h_1 = 1.0$ and $h_2 = 2.0$. 
  Value function estimates for drift control with $C_{\mathcal{A}}=2.0$ are computed at level $n=4$. A negative value 
  indicates that the value function estimate under drift control is lower than that under least control, and vice versa.}
  \label{fig:2d-V}
\end{figure}


\paragraph{Higher-dimensional examples.}
For the $5$-dimensional test example, we take $h$ to be $h=(1.0, 2.0, 3.0, 4.0, 5.0)$. 
In this case, the least control policy is not optimal, so it is of interest to see how much performance gain we can obtain 
by solving the drift control problems optimally using the MLP scheme. 
We fix $C_{\mathcal{A}}=1.0$, and compute the value function and gradient estimates at time $t=0$ and the initial state $x=(1,1,1,1,1)$. 
Table \ref{tab:five-dim-averages} reports the value function and gradient estimates in the first two coordinates at different levels, together with the simulated performance estimate under least control. 
We report the approximate $97\%$ confidence intervals as percentages next to each estimate. 
For all value function estimates, the confidence intervals are within $0.3\%$ of the estimates, and 
but for gradient estimates, the confidence intervals can go up to $\pm 14\%$ of the estimate.
The discrepancy between the confidence intervals for value function and gradient estimates can be explained as follows. 
First, as explained before, because of the extra Malliavin weight used for gradient estimation, 
the gradient estimators tend to exhibit higher variance than the corresponding value function estimators. 
Second, the gradients are typically much smaller than the corresponding value function in absolute magnitude. 
In terms of convergence properties of the algorithm, the MLP estimators converge reliably within four to five levels, and the MLP value function estimate achieves a statistically significant $2.8\%$ improvement relative to the least-control baseline, 
even when the drift upper bound is relatively low. The total computation time across all levels is about $24$ minutes.

\begin{table}[t]
  \centering
  \caption{Estimates by level for the five-dimensional example. ``Est $V$'' is the value function estimate, 
  and $\tilde D_1$ and $\tilde D_2$ are the gradient estimates in the first and second coordinates, respectively.}
  \label{tab:five-dim-averages}
  \begin{tabular}{r r r r r}
    \toprule
    Level & $M$ & Est $V$ & $\tilde D_1$ & $\tilde D_2$ \\
    \midrule
    1 & 196608 & 2.707390 $\pm$ 0.11\% & 0.196734 $\pm$ 1.55\% & 0.398452 $\pm$ 1.76\% \\
    2 & 768 & 2.619617 $\pm$ 0.08\% & 0.194838 $\pm$ 8.40\% & 0.390922 $\pm$ 3.68\% \\
    3 & 192 & 2.628396 $\pm$ 0.03\% & 0.201183 $\pm$ 4.71\% & 0.390435 $\pm$ 3.45\% \\
    4 & 60 & 2.628077 $\pm$ 0.09\% & 0.185118 $\pm$ 14.15\% & 0.405743 $\pm$ 5.21\% \\
    5 & 48 & 2.627142 $\pm$ 0.07\% & 0.202458 $\pm$ 9.25\% & 0.393184 $\pm$ 2.66\% \\
    \midrule
    \multicolumn{2}{c}{Least control} & 2.703674 $\pm$ 0.30\% &  &  \\
    \bottomrule
  \end{tabular}
\end{table}

Finally, we turn to the $20$-dimensional test example. The cost vector is given by 
\[
h = (1.0, 2.0, 3.0, 4.0, 5.0, 1.0, 2.0, 3.0, 4.0, 5.0, 1.0, 2.0, 3.0, 4.0, 5.0, 1.0, 2.0, 3.0, 4.0, 5.0),
\]
and we set $C_{\mathcal{A}}=1.0$. We compute the MLP estimators across levels $1, \cdots, 5$ at time $t=0$ and state $x=\mathbf{1}_{20}$. 
To improve accuracy, the sample size hyperparameters $M$ are increased to $786432, 6144, 384, 240$ and $72$, 
for levels $1, 2, 3, 4$ and $5$, respectively. 
The total computation time for all $5$ levels is less than $9$ hours, 
with the level $4$ computation taking $4$ hours and the level $5$ computation taking less than $5$ hours. 
Table \ref{tab:twenty-dim-averages} summarizes the result.
\begin{table}[t]
  \centering
  \caption{Averages by level for the twenty-dimensional example. ``Est $V$'' is the the value function estimate, 
  and $\tilde D_1$ and $\tilde D_2$ are the gradient estimates in the first and second coordinates, respectively.}
  \label{tab:twenty-dim-averages}
  \begin{tabular}{r r r r r}
    \toprule
    Level & $M$ & Est $V$ & $\tilde D_1$ & $\tilde D_2$ \\
    \midrule
    1 & 786432 & 10.830523 $\pm$ 0.08\% & 0.197911 $\pm$ 0.47\% & 0.397100 $\pm$ 0.90\% \\
    2 & 6144 & 10.508915 $\pm$ 0.02\% & 0.200700 $\pm$ 9.73\% & 0.397782 $\pm$ 6.14\% \\
    3 & 384 & 10.503990 $\pm$ 0.02\% & 0.212257 $\pm$ 10.04\% & 0.408682 $\pm$ 3.49\% \\
    4 & 240 & 10.514296 $\pm$ 0.04\% & 0.198493 $\pm$ 11.83\% & 0.403514 $\pm$ 4.96\% \\
    5 & 72 & 10.492808 $\pm$ 0.05\% & 0.219981 $\pm$ 32.91\% & 0.411891 $\pm$ 14.47\% \\
    \midrule
    \multicolumn{2}{c}{Least control} & 10.787542 $\pm$ 0.33\% &  &  \\
    \bottomrule
  \end{tabular}
\end{table}
All qualitative observations made in the five-dimensional example hold here as well: 
Gradient estimators are typically more variable than the corresponding value function estimators, 
with the value function estimators converge quickly and reliably in four to five levels. 
The MLP value function estimate also achieves a statistically significant improvement 
that is more than $2.5\%$ relative to the least-control baseline. 

Let us now provide some remarks regarding the robustness of the value function estimators under the MLP scheme. 
On the one hand, if we use less sampling for each estimator, by setting the hyperparameter $M$ to be smaller, 
we obtain both value function and gradient estimators that are more variable and with potentially larger bias 
-- this phenomenon is both observed in experiments and predicted by the theory. 
On the other hand, in many of our experiments, we also observed that once the hyperparameter $M$ is sufficiently large, 
even when the gradient estimators are still variable, 
the corresponding value function estimators become relatively stable.
For example, in the twenty-dimensional example, at level $4$, under the smaller hyperparameter $M=120$, not the $M=240$ used in Table \ref{tab:twenty-dim-averages}, 
the value function estimate is $10.5199 \pm 0.08\%$ and statistically indistinguishable from the estimate at $M=240$, 
though the corresponding gradient estimates are much more variable at $\tilde V_1 = 0.111320 \pm 37.34\%$ and $\tilde V_2 = 0.307129 \pm 29.33\%$. 
The discrepancy between the accuracy of the value function estimates and that of the gradient estimates 
has also been observed in prior literature; see, e.g., the example with explicit solution in Section 3.5 of \cite{hutzenthaler2019multilevel_arxiv}, 
where the function $u$ itself can be approximated more accurately than its gradient\footnote{The paper \cite{hutzenthaler2019multilevel_arxiv} is the arXiv version of \cite{hutzenthaler2019multilevel}, 
the former of which contains this additional example.}. 


\paragraph{Alternative reference processes.} We now provide some brief remarks on the use of alternative reference processes.
So far, all our experiments use a reference process that has independent reflected Brownian motions in each dimension; cf. Eq. \eqref{eq:rbm-reference}. 
An attractive feature of this reference process is that exact simulation is available, allowing significant speedup in the implementation of the MLP scheme. 
However, in more general settings, exact simulation is not always possible, in which case we need to resort to Euler--Maruyama time discretization 
for simulating reference processes and their derivatives; see \cite{LipshutzRamanan2019a} for the development and analysis of such a scheme. 
Since the implementation of the MLP scheme under reference processes that require the use of Euler--Maruyama discretization is not the focus 
of this paper, we refer the readers to \cite{LipshutzRamanan2019a}, and omit a detailed description of the corresponding MLP scheme. 
Though computationally more intensive, time discretization is also more flexible in terms of the choice of reference processes; 
in particular, in applying the MLP scheme, if we can use a reference process whose drift is sufficiently close to the optimal one, 
for example, by making use of structural properties of the optimal policy, 
we may achieve faster convergence compared to the use of a more naïve reference process. 

To illustrate the convergence properties under a ``smarter'' reference process, 
we carry out numerical experiments for the two-dimensional example with cost configuration $h_1 = 1.0$ and $h_2 = 2.0$, 
where we conjectured the optimality of a switching-curve type policy. 
To this end, 
we use the following drift function for the reference process: 
\begin{equation}\label{eq:tb-switching-curve}
  \tb(x) = \frac{C_{\mathcal{A}}}{2}\left[\tanh(2x_2 - x_1) + 1\right] \times (1, -1)^{\top}. 
\end{equation}
The reason for using \eqref{eq:tb-switching-curve} is that it can be viewed as a smoothed implementation of the following switching curve policy: 
Set $a = C_{\mathcal{A}}$ if $2x_2 - x_1 > 0$, and $a = 0$ otherwise. The factor $2$ in front of $x_2$ is chosen to 
leverage some crude information about the policy structure we gained from Figure \ref{fig:2d-policy}.
The drift upper bound is set to $C_{\mathcal{A}}=5.0$. 
The number of time discretization steps is set to $50$ -- this means that if we need to simulate the reference process at a random time $S$, 
then the time interval $[t, S]$ is discretized into $50$ subintervals of equal length, with $t$ being the initial time. We compute the MLP estimators across levels $n=1,2,3$ with 
hyperparameters $(n,M) = (1,196608), (2,768), (3,192)$, at time-state pair $(t,x) = (0, (1.0, 1.0))$; the computation takes slight more than one hour to finish.
For comparison, we also compute MLP estimators across levels $n=1, \cdots, 5$ with 
hyperparameters $(n,M) = (1,196608), (2,768), (3,192), (4, 60), (5,48)$ using the reference process in Eq. \eqref{eq:rbm-reference}, at the same time-state pair, 
with the computation taking no more than half an hour in total. 

Tables \ref{tab:switching-curve-averages} and \ref{tab:switching-curve-averages-comparison} summarize the computational results. 
On the one hand, it is evident that the MLP scheme under reference drift \eqref{eq:tb-switching-curve} converges faster and more reliably (in about three iterations)
compared to that under the two-dimensional reflected Brownian motion reference process. 
In particular, variations around the gradient estimates are much smaller in Table \ref{tab:switching-curve-averages} 
compared to those in Table \ref{tab:switching-curve-averages-comparison}. On the other hand, 
the MLP scheme under the alternative switching curve reference process 
takes much longer to compute primarily because of time discretization. 
Another difficulty in implementing a successful MLP scheme using time-discretized reference processes in practice 
is the choice of the reference process; we leave the topic of finding a good reference process as an important future direction. 
\begin{table}[t]
  \centering
  \caption{Estimates by level using the reference drift in \eqref{eq:tb-switching-curve}. 
  ``Est $V$'' is the value function estimate, and $\tilde D_1$ and $\tilde D_2$ 
  are the gradient estimates in the first and second coordinates, respectively. 
  $h_1 = 1.0$, $h_2 = 2.0$, and $C_{\mathcal{A}} = 5.0$. The initial state is $(1.0, 1.0)$.} 
  \label{tab:switching-curve-averages}
  \begin{tabular}{r r r r r}
    \toprule
    Level & $M$ & Est $V$ & $\tilde D_1$ & $\tilde D_2$ \\
    \midrule
    1 & 196608 & 0.483742 $\pm$ 0.13\% & 0.255845 $\pm$ 1.63\% & 0.366238 $\pm$ 0.88\% \\
    2 & 768 & 0.458417 $\pm$ 0.15\% & 0.237146 $\pm$ 4.06\% & 0.371009 $\pm$ 1.45\% \\
    3 & 192 & 0.453238 $\pm$ 0.53\% & 0.222646 $\pm$ 3.95\% & 0.366354 $\pm$ 2.47\% \\
    \bottomrule
  \end{tabular}
\end{table}

\begin{table}[t]
  \centering
  \caption{Estimates by level using the reference process \eqref{eq:rbm-reference}. 
  ``Est $V$'' is the value function estimate, and $\tilde D_1$ and $\tilde D_2$ 
  are the gradient estimates in the first and second coordinates, respectively. 
  $h_1 = 1.0$, $h_2 = 2.0$, and $C_{\mathcal{A}} = 5.0$. The initial state is $(1.0, 1.0)$.} 
  \label{tab:switching-curve-averages-comparison}
  \begin{tabular}{r r r r r}
    \toprule
    Level & $M$ & Est $V$ & $\tilde D_1$ & $\tilde D_2$ \\
    \midrule
    1 & 196608 & 0.541150 $\pm$ 0.16\% & 0.196083 $\pm$ 0.90\% & 0.391997 $\pm$ 0.56\% \\
    2 & 768 & 0.444181 $\pm$ 0.53\% & 0.202141 $\pm$ 25.54\% & 0.371300 $\pm$ 6.81\% \\
    3 & 192 & 0.462128 $\pm$ 1.27\% & 0.180898 $\pm$ 35.63\% & 0.325975 $\pm$ 13.03\% \\
    4 & 60 & 0.450551 $\pm$ 1.10\% & 0.204257 $\pm$ 11.62\% & 0.341118 $\pm$ 24.76\% \\
    5 & 48 & 0.453102 $\pm$ 3.47\% & 0.079234 $\pm$ 209.12\% & 0.313554 $\pm$ 59.63\% \\
    \bottomrule
  \end{tabular}
\end{table}

\section{A Bismut--Elworthy--Li Formula}\label{sec:bel}

\begin{theorem}[Bismut--Elworthy--Li formula]\label{thm:bel}
Recall the definition of the reflected diffusion $\tilde Z^x$ in Eq. \eqref{eq:reflected-diffusion}, 
and let $\zeta : \mathbb{R}^d \to \mathbb{R}$ be a measurable function such that $\E[\zeta^2(\tilde Z^x(t))] < \infty$ for all $t > 0$. Let $x \in \mathbb{R}_+^d$. Then, for any $t > 0$,
\begin{equation}\label{eq:bel-main}
D_x \mathbb{E} \bigl[ \zeta(\tilde Z^x(t)) \bigr]
= \mathbb{E} \left[ \zeta(\tilde Z^x(t)) \cdot \frac{1}{t} \int_0^t \left[\sigma^{-1} D\tilde Z^x(s)\right]^{\top} dB(s) \right].
\end{equation}
\end{theorem}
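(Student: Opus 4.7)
The plan is to first establish \eqref{eq:bel-main} for smooth $\zeta$ via a PDE/martingale argument, and then extend to measurable $\zeta$ by an $L^2$ density argument, leveraging the uniform bounds and continuity properties of the derivative process developed in Section \ref{sec:tech}. The central object is the auxiliary function $u(\tau, z) := \mathbb{E}[\zeta(\tilde Z^z(\tau))]$, whose Markov/flow property combined with It\^o's product rule will produce the Malliavin weight on the right-hand side of \eqref{eq:bel-main}.

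For $\zeta \in C_b^2(\R^d)$, I would first show that $u$ is $C^{1,2}$ on $(0,\infty)\times \R_{++}^d$ and satisfies the homogeneous oblique-derivative initial-boundary value problem
\begin{equation*}
\partial_\tau u = \tfrac{1}{2}\mathrm{Tr}(\sigma\sigma^{\top} D_{zz}u) + \tb^{\top} D_z u, \qquad R_j^{\top} D_z u = 0 \text{ on } \{z_j=0\}, \qquad u(0,\cdot)=\zeta.
\end{equation*}
Fix $x\in \R_+^d$, $t>0$, and a direction $h\in G_x$. Differentiating the Markov identity $u(t,x) = \mathbb{E}[u(t-s, \tilde Z^x(s))]$ in the direction $h$---where the interchange of $D_x$ with $\mathbb{E}$ is justified by Proposition \ref{prop:LR2019b-main} (the pathwise derivative equals $D\tilde Z^x \cdot h$ at continuity points), Proposition \ref{thm:continuity-DZ} (continuity in $x$, applied after using Lemma \ref{lem:Z>0} to reduce to interior points), and the uniform bound in Lemma \ref{lem:bound-DZ}---yields the time-independent flow identity
\begin{equation*}
D_z u(t,x)\, h \;=\; \mathbb{E}\bigl[D_z u(t-s,\tilde Z^x(s))\, D\tilde Z^x(s)\, h\bigr], \quad s\in [0,t).
\end{equation*}
Next, set $M(s):=\int_0^s [\sigma^{-1} D\tilde Z^x(r)h]^{\top}\, dB(r)$, a continuous $L^2$-martingale with $\mathbb{E}[M(s)^2]\le Cs$ by Assumption \ref{as:sigma} and Lemma \ref{lem:bound-DZ}. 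Applying It\^o to $s\mapsto u(t-s, \tilde Z^x(s))$, the $\partial_\tau u$ term cancels the generator by the PDE, and the reflection term $D_z u\cdot R\,d\tilde Y^x(s)$ vanishes because $d\tilde Y^x_j$ is supported on $\{\tilde Z^x_j=0\}$, where $R_j^{\top} D_z u=0$; hence $u(t-s,\tilde Z^x(s))$ is a continuous martingale with differential $D_z u(t-s,\tilde Z^x(s))\,\sigma\, dB(s)$. It\^o's product rule applied to $M(s)\cdot u(t-s, \tilde Z^x(s))$, together with $M(0)=0$ and $u(0,\tilde Z^x(t))=\zeta(\tilde Z^x(t))$, gives after taking expectations
\begin{equation*}
\mathbb{E}\bigl[\zeta(\tilde Z^x(t))\, M(t)\bigr] \;=\; \int_0^t \mathbb{E}\bigl[D_z u(t-s,\tilde Z^x(s))\, D\tilde Z^x(s)\, h\bigr]\, ds \;=\; t\cdot D_z u(t,x)\, h,
\end{equation*}
where the second equality invokes the flow identity. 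Since $h\in G_x$ is arbitrary, dividing by $t$ yields \eqref{eq:bel-main} for smooth $\zeta$.

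To extend to measurable $\zeta$ satisfying $\mathbb{E}[\zeta^2(\tilde Z^x(t))]<\infty$, I would approximate $\zeta$ by $\zeta_n\in C_b^2(\R^d)$ with $\zeta_n\to \zeta$ in $L^2$ with respect to the law of $\tilde Z^x(t)$. Both sides of \eqref{eq:bel-main} depend continuously on $\zeta$ under this convergence: the right side by Cauchy--Schwarz against the uniform bound on $\mathbb{E}[M(t)^2]$, and the left side by passing the limit through a difference quotient in $x$ and invoking Corollary \ref{cor:conv-bel} for the convergence of the associated stochastic integrals. The main obstacle will be the first step---the $C^{1,2}$ regularity of $u$ and the validity of the homogeneous oblique condition at the boundary of $\R_+^d$---because $\R_+^d$ is a polyhedral domain with corners, where standard parabolic regularity theory does not directly apply. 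Here I would leverage Lemma \ref{lem:Z>0} (the process immediately enters $\R_{++}^d$, so the smoothing effect of the reflected semigroup takes over) together with the derivative-process framework of \cite{LipshutzRamanan2018,LipshutzRamanan2019b} and the uniform bound in Lemma \ref{lem:bound-DZ} to justify the probabilistic manipulations directly, bypassing a full-strength interior-up-to-boundary regularity result where possible.
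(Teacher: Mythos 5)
Your PDE/martingale route is genuinely different from the paper's argument, which instead introduces a drift perturbation $\Upsilon(s) = \int_0^s \frac{1}{t}\partial_j\tilde Z^x(r)\,dr$, shows via Propositions~\ref{prop:bel}, \ref{prop:bel2}, \ref{prop:bel3} that the resulting pathwise derivative $D_\Upsilon \tilde Z^x(t)$ coincides a.s.\ with $D_j\tilde Z^x(t)$, and then applies Girsanov's theorem (matching the perturbed drift to a change of measure) and differentiates the Girsanov identity in $\varepsilon$ at $\varepsilon=0$ to produce the Malliavin weight. No smoothness of the semigroup $u(\tau,z)=\E[\zeta(\tilde Z^z(\tau))]$ is ever invoked.

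Your approach has a genuine gap precisely where you flag it: the entire argument rests on $u$ being $C^{1,2}$ \emph{up to the boundary} of $\R_+^d$ and satisfying the homogeneous oblique condition pointwise there, since otherwise Itô's formula cannot be applied along $s\mapsto u(t-s,\tilde Z^x(s))$ (the trajectory spends positive-measure amounts of time arbitrarily close to $\partial\R_+^d$, and $\tilde Y^x$ accumulates there). For uniformly elliptic oblique problems in polyhedral domains with corners this regularity is not a consequence of standard parabolic Schauder theory, and the derivative-process machinery of Section~\ref{sec:tech} does not supply it: Lemma~\ref{lem:Z>0} only says $\tilde Z^x(t)\in\R_{++}^d$ at each fixed $t>0$, it says nothing about the path avoiding the boundary on $[0,t]$, and Propositions~\ref{prop:LR2019b-main} and \ref{thm:continuity-DZ} give pathwise and continuity properties of $D\tilde Z^x$, not second-order differentiability of the semigroup in $z$ near corners. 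Saying you would "bypass a full-strength interior-up-to-boundary regularity result where possible" names the obstacle without resolving it, and in fact the paper's use of Girsanov appears designed exactly to sidestep it: Girsanov requires only the pathwise-derivative identities already established (and the exponential-moment bound on $\int_0^t\|\sigma^{-1}D_j\tilde Z^x(s)\|^2\,ds$, which follows from Lemma~\ref{lem:bound-DZ}), not any regularity of $u$.

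A secondary issue: your flow identity $D_z u(t,x)h = \E[D_z u(t-s,\tilde Z^x(s))D\tilde Z^x(s)h]$ already presupposes differentiability of $u(t-s,\cdot)$ at the random point $\tilde Z^x(s)$ with an interchange of $D_x$ and $\E$; the ingredients you list (Propositions~\ref{prop:LR2019b-main}, \ref{thm:continuity-DZ}, Lemma~\ref{lem:bound-DZ}) establish pathwise differentiability and dominated convergence, but the appeal to Proposition~\ref{thm:continuity-DZ} requires $x\in\R_{++}^d$, whereas the theorem is stated for $x\in\R_+^d$, so the boundary case needs a separate (one-sided derivative) treatment that you do not address. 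The final density extension is fine in outline and mirrors what the paper does.
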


The proof of Theorem \ref{thm:bel} relies crucially on the following proposition. 
\begin{proposition}\label{prop:bel}
Suppose Assumption \ref{as:tb} holds for $\tb$. Let $x \in \mathbb{R}_+^d$, and suppose that a.s.\ $\nabla_\psi \Gamma\left(\tilde X^x\right)$ exists for all $\psi \in \C(\mathbb{R}^d)$ 
(recall the definition of $\tilde X^x$ in Eq. \eqref{eq:tx}), 
and lies in $\D_{\text{lim}}(\mathbb{R}^d)$. Let $\upsilon(\cdot)$ be a $d$-dimensional progressively measurable process such that a.s.\ there exists a constant $C_\upsilon > 0$ with $\|\upsilon(t)\| \le C_\upsilon$ for all $t \geq 0$.

Let $\Upsilon$ be the $d$-dimensional $\{\cF_t\}$-adapted process defined to be the time integral of $\upsilon(\cdot)$, i.e., 
\[
\Upsilon(t) := \int_0^t \upsilon(s)\,ds, \quad t\geq 0.
\]
For each $\veps \geq 0$, let $\tilde Z^x(\cdot; \veps \Upsilon)$ be the unique strong solution of the reflected stochastic differential equation:
\[
\tilde Z^x(t; \veps \Upsilon) = x + \int_0^t \tb\bigl( \tilde Z^x(s; \veps \Upsilon) \bigr)\,ds + \sigma B(t) + R\,\tilde Y^x(t; \veps \Upsilon).
\]
(Note that $\tilde Z^x(\cdot; 0 \cdot \Upsilon) = \tilde Z^x$.)
Then, a.s., the process $\partial_\Upsilon \tilde Z^x$ exists, where
\[
\partial_\Upsilon \tilde Z^x(t) := \lim_{\varepsilon \downarrow 0} \frac{\tilde Z^x(t; \varepsilon \Upsilon) - \tilde Z^x(t)}{\varepsilon}, \qquad t \ge 0,
\]
and is characterized as the unique $\{\mathcal{F}_t\}$-adapted process that satisfies $\partial_\Upsilon \tilde Z^x = \nabla_\Psi \Gamma \bigl(\tilde X^x\bigr)$, 
where the process $\Psi$ satisfies for all $t \ge 0$,
\[
\Psi(t) = \int_0^t D\tb\bigl( \tilde Z^x(s) \bigr) \partial_\Upsilon \tilde Z^x(s)\,ds + \Upsilon(t).
\]
\end{proposition}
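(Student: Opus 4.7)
The plan is to construct the candidate derivative process $\phi$ as the unique fixed point of an operator built from $\nabla_\cdot \Gamma(\tilde X^x)$, and then to show that the difference quotients $\Phi_\veps := \veps^{-1}(\tilde Z^x(\cdot;\veps\Upsilon) - \tilde Z^x)$ converge pointwise to $\phi$ almost surely. First, I would establish an a priori $O(\veps)$ control: from the identity $\tilde X^x(\cdot;\veps\Upsilon) - \tilde X^x = \int_0^\cdot [\tb(\tilde Z^x(s;\veps\Upsilon)) - \tb(\tilde Z^x(s))]\,ds + \veps\Upsilon$, together with Proposition~\ref{prop:skorokhod-lipschitz}, Lipschitz continuity of $\tb$ (Assumption~\ref{as:tb}), and Gronwall's inequality, one obtains $\|\tilde Z^x(\cdot;\veps\Upsilon) - \tilde Z^x\|_T \leq C_T\veps$ a.s.\ for a constant $C_T$ depending only on $T,\,C_\Gamma,\,C_{\tb}$, and $C_\upsilon$. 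In particular, $\Phi_\veps$ is uniformly bounded in $\veps$ on $[0,T]$.

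Next, define the operator $T\phi := \nabla_{\Psi_\phi}\Gamma(\tilde X^x)$ with $\Psi_\phi(t) := \Upsilon(t) + \int_0^t D\tb(\tilde Z^x(s))\phi(s)\,ds$. Proposition~\ref{prop:nabla-lipschitz} and the bound $\|D\tb\|_\infty \leq C_{\tb}$ yield $\|T\phi_1 - T\phi_2\|_t \leq C_\Gamma C_{\tb}\int_0^t \|\phi_1-\phi_2\|_s\,ds$, so a standard Picard iteration (or a weighted sup-norm argument) produces a unique adapted fixed point $\phi \in \D_{\text{lim}}([0,T];\R^d)$. On the other hand, writing $\Phi_\veps = \nabla^\veps_{\Psi_\veps}\Gamma(\tilde X^x)$ with $\Psi_\veps := \veps^{-1}(\tilde X^x(\cdot;\veps\Upsilon) - \tilde X^x)$, a Taylor expansion using the H\"older continuity of $D\tb$ together with the $O(\veps)$ control above shows
\begin{equation*}
\Psi_\veps(t) = \Upsilon(t) + \int_0^t D\tb(\tilde Z^x(s))\Phi_\veps(s)\,ds + r_\veps(t),
\end{equation*}
where $\|r_\veps\|_T = O(\veps^{\alpha_{\tb}})$ almost surely.

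To conclude $\Phi_\veps \to \phi$, I would combine the elementary Lipschitz bound $|\nabla^\veps_{\psi_1}\Gamma(f)(t) - \nabla^\veps_{\psi_2}\Gamma(f)(t)| \leq C_\Gamma\|\psi_1-\psi_2\|_t$ (immediate from Proposition~\ref{prop:skorokhod-lipschitz}) with Proposition~\ref{prop:nabla-veps} applied at the fixed direction $\Psi_\phi$, producing
\begin{equation*}
|\Phi_\veps(t) - \phi(t)| \leq C_\Gamma C_{\tb}\int_0^t |\Phi_\veps(s) - \phi(s)|\,ds + O(\veps^{\alpha_{\tb}}) + \rho^\veps(t),
\end{equation*}
where $\rho^\veps(t) := |\nabla^\veps_{\Psi_\phi}\Gamma(\tilde X^x)(t) - \nabla_{\Psi_\phi}\Gamma(\tilde X^x)(t)|$ tends to zero pointwise in $t$. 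The main obstacle is precisely that Proposition~\ref{prop:nabla-veps} supplies only pointwise-in-$t$ convergence of $\rho^\veps$, not the uniform convergence that a naive Gronwall argument would want; this is resolved by noting the uniform bound $\rho^\veps(t) \leq 2C_\Gamma\|\Psi_\phi\|_T$ and invoking dominated convergence to obtain $\int_0^t \rho^\veps(s)\,ds \to 0$. The integral form of Gronwall's inequality then delivers $\Phi_\veps(t) \to \phi(t)$ a.s.\ for each $t \geq 0$, establishing existence of the pathwise derivative $\partial_\Upsilon \tilde Z^x$ and identifying it with $\phi$. Finally, uniqueness is immediate: any adapted process satisfying the self-referential characterization is a fixed point of $T$, hence equals $\phi$ by the contraction estimate established in the second step.
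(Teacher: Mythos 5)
Your proposal is correct and follows essentially the same route as the paper's Lemma~\ref{lem:bel} combined with the short deduction in the proof of Proposition~\ref{prop:bel}: Picard iteration for existence and uniqueness of the fixed point, the Lipschitz estimate $\|\nabla^\veps_{\psi_1}\Gamma(f)-\nabla^\veps_{\psi_2}\Gamma(f)\|_t \le C_\Gamma\|\psi_1-\psi_2\|_t$, H\"older continuity of $D\tb$ for the Taylor remainder, and a Gr\"onwall argument in which the $\nabla^\veps \to \nabla$ residual is handled by dominated convergence (since it is bounded by $2C_\Gamma\|\Psi\|_T$ and tends to zero pointwise in $t$). The one organizational difference is that you apply Gr\"onwall directly to the pointwise difference $|\Phi_\veps(t)-\phi(t)|$ after first proving an a priori $O(\veps)$ bound on $\|\tilde Z^x(\cdot;\veps\Upsilon)-\tilde Z^x\|_T$ (needed so that $\Phi_\veps$ is bounded and $\|r_\veps\|_T=O(\veps^{\alpha_{\tb}})$), whereas the paper Taylor-expands $\tb$ around $\tilde Z^x+\veps\Phi$ rather than around $\tilde Z^x(\cdot;\veps\Upsilon)$, absorbs the cross term $\tb(\tilde Z^x(\cdot;\veps\Upsilon))-\tb(\tilde Z^x+\veps\Phi)$ into the Gr\"onwall integral via Lipschitz continuity of $\tb$, applies Gr\"onwall to the sup-norm difference $\|\Psi^\veps-\Psi\|_t$, and then reads off the pointwise convergence $\Phi_\veps\to\Phi$ from Proposition~\ref{prop:nabla-veps}. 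Both variants are valid; the paper's bookkeeping avoids the separate a priori difference-quotient bound, while yours makes the Taylor remainder slightly more direct. No gaps.
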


\begin{definition}\label{def:deriv-M}
Let \( x \in \mathbb{R}_+^d \). Let \( B(\cdot), \tilde{Z}^x, \Upsilon(\cdot), \upsilon(\cdot) \) be as in Proposition \ref{prop:bel}.  
A derivative process along \( \tilde{Z}^x \) in the direction \( \Upsilon(\cdot) \) is a \( J \)-dimensional \( \{\mathcal{F}_t\} \)-adapted RCLL process
$D_\Upsilon \tilde{Z}^x = \{ D_\Upsilon \tilde{Z}^x(t) : t \in \mathbb{R}_+ \}$ 
that a.s.\ satisfies, for all \( t \ge 0 \), \( D_\Upsilon \tilde{Z}^x(t) \in H_{\tilde{Z}^x(t)} \), and
\[
D_\Upsilon \tilde{Z}^x(t) = \int_0^t D\tb(\tilde{Z}^x(s)) D_\Upsilon \tilde{Z}^x(s)\,ds + \Upsilon(t) + \eta_\Upsilon(t),
\]
where \( \eta_\Upsilon = \{ \eta_\Upsilon(t) : t \in \mathbb{R}_+ \} \) is a \( J \)-dimensional \( \{\mathcal{F}_t\} \)-adapted RCLL process such that a.s. 
$\eta_\Upsilon(0) = 0$, and for all $0 \le s < t < \infty$,
\[
\eta_\Upsilon(t) - \eta_\Upsilon(s) \in \text{span} \left[ \bigcup_{r \in (s,t]} R(\tilde{Z}^x(r)) \right].
\]
\end{definition}

Similar to Proposition \ref{prop:deriv-unique}, pathwise uniqueness holds for a derivative process along \( \tilde{Z}^x \) in the direction \( \Upsilon \).

\begin{proposition}\label{prop:deriv-M-unique}
Let \( x \in \mathbb{R}_+^d \). Then pathwise uniqueness holds for the derivative process \( D_\Upsilon \tilde{Z}^x \) along \( \tilde{Z}^x \) in the direction \( \Upsilon \).
\end{proposition}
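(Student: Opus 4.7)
The plan is to reduce pathwise uniqueness for $D_\Upsilon \tilde{Z}^x$ to the Lipschitz estimate for the deterministic derivative map along $\tilde Z^x$ (Theorem 5.4 of \cite{LipshutzRamanan2018}, restated as the unnumbered proposition preceding Proposition \ref{prop:LR2018-main}), followed by a Grönwall closure. First I would fix $\omega$ outside a $\pr$-null set on which $\tilde Z^x(\cdot,\omega) \in \C_+(\R^d)$ is the Skorokhod reflection of $\tilde X^x(\cdot,\omega)$ and on which both hypothetical solutions $D^{(1)}_\Upsilon \tilde Z^x$ and $D^{(2)}_\Upsilon \tilde Z^x$, with associated constraint processes $\eta^{(1)}_\Upsilon$ and $\eta^{(2)}_\Upsilon$, are RCLL, hence locally bounded. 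For each $i = 1,2$, define the continuous path
\begin{equation*}
\psi_i(t) := \int_0^t D\tb(\tilde Z^x(s))\, D^{(i)}_\Upsilon \tilde Z^x(s)\,ds + \Upsilon(t), \qquad t \ge 0.
\end{equation*}
By Definition \ref{def:deriv-M}, the pair $(D^{(i)}_\Upsilon \tilde Z^x,\, \eta^{(i)}_\Upsilon)$ solves the derivative problem along $\tilde Z^x$ for the input $\psi_i$ in the sense of Definition \ref{df:deriv-prob}: the identity $D^{(i)}_\Upsilon \tilde Z^x(t) = \psi_i(t) + \eta^{(i)}_\Upsilon(t)$ is a rearrangement of the defining equation, the membership $D^{(i)}_\Upsilon \tilde Z^x(t) \in H_{\tilde Z^x(t)}$ is built in, the span condition on the increments of $\eta^{(i)}_\Upsilon$ matches item (iii) of Definition \ref{df:deriv-prob}, and the initial condition $\eta^{(i)}_\Upsilon(0) = 0$ trivially lies in $\mathrm{span}[R(x)]$.

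Applying Theorem 5.4 of \cite{LipshutzRamanan2018} to the two derivative-problem solutions along the common path $\tilde Z^x$ gives, for every $t > 0$,
\begin{equation*}
\|D^{(1)}_\Upsilon \tilde Z^x - D^{(2)}_\Upsilon \tilde Z^x\|_t \;\le\; C_\Lambda\, \|\psi_1 - \psi_2\|_t.
\end{equation*}
The $\Upsilon$-term cancels in the difference $\psi_1 - \psi_2$, and Assumption \ref{as:tb} provides the uniform bound $\|D\tb(\cdot)\|_\infty \le C_{\tb}$, so
\begin{equation*}
\|\psi_1 - \psi_2\|_t \;\le\; C_{\tb} \int_0^t \|D^{(1)}_\Upsilon \tilde Z^x - D^{(2)}_\Upsilon \tilde Z^x\|_s\,ds.
\end{equation*}
Combining these two bounds yields
\begin{equation*}
\|D^{(1)}_\Upsilon \tilde Z^x - D^{(2)}_\Upsilon \tilde Z^x\|_t \;\le\; C_\Lambda C_{\tb} \int_0^t \|D^{(1)}_\Upsilon \tilde Z^x - D^{(2)}_\Upsilon \tilde Z^x\|_s\,ds,
\end{equation*}
and since the left-hand side is locally bounded in $t$ by RCLL regularity, Grönwall's inequality forces it to vanish on $[0,\infty)$. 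This gives $D^{(1)}_\Upsilon \tilde Z^x \equiv D^{(2)}_\Upsilon \tilde Z^x$ almost surely, and then $\eta^{(1)}_\Upsilon \equiv \eta^{(2)}_\Upsilon$ from the defining equation.

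I expect the main obstacle to be bookkeeping rather than a genuine analytic difficulty: one must verify, on a common full-measure set, that both candidate solutions fit the axioms of the deterministic derivative problem so that Theorem 5.4 of \cite{LipshutzRamanan2018} may be invoked pathwise, and that each $\psi_i$ lies in the appropriate path space. Continuity of $\psi_i$ follows because $\Upsilon(\cdot)$ is a time integral of a bounded process and $D^{(i)}_\Upsilon \tilde Z^x$, being RCLL, is locally bounded and therefore locally integrable against the bounded coefficient $D\tb(\tilde Z^x(\cdot))$. The span, RCLL, and initial-condition requirements transfer directly from Definition \ref{def:deriv-M} to Definition \ref{df:deriv-prob}, so once these identifications are made the Lipschitz--Grönwall chain above closes uniqueness cleanly.
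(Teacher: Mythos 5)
Your proof is correct and is exactly the "standard" Lipschitz-plus-Grönwall argument the paper invokes by pointing to Theorem 3.6 of \cite{LipshutzRamanan2019b} (and which also appears in the uniqueness step of Lemma \ref{lem:bel} in the paper's own appendix). You recast each candidate as a solution of the deterministic derivative problem along $\tilde Z^x$ for an input path built from itself, apply the Lipschitz estimate for the derivative map, and close with Grönwall; this matches the omitted proof in both strategy and detail.
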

We omit the proof of Proposition \ref{prop:deriv-M-unique}, because it is standard 
and follows the same line of reasoning as in the proof of Theorem 3.6 in \cite{LipshutzRamanan2019b}.

We establish the following proposition, stated in a similar fashion to Theorem 3.13 of \cite{LipshutzRamanan2019b}. 
\begin{proposition}\label{prop:bel2}
Let $x \in \R_+^d$, and let \( B(\cdot), \tilde{Z}^x, \Upsilon(\cdot), \upsilon(\cdot) \) be as in Proposition \ref{prop:bel}.
There exists a pathwise unique derivative process \( D_\Upsilon \tilde{Z}^x \) along \( \tilde{Z}^x \) in the direction \( \Upsilon \), and a.s.\ the following hold:
\begin{itemize}
  \item[(i)] The pathwise derivative of \( \tilde{Z}^x \) in the direction \( \Upsilon \), defined for all \( t \ge 0 \) by
  \[
  \partial_\Upsilon \tilde{Z}^x(t) := \lim_{\varepsilon \to 0} \frac{\tilde{Z}^x(t; \varepsilon \Upsilon) - \tilde{Z}^x(t)}{\varepsilon},
  \]
  exists.
  \item[(ii)] The pathwise derivative \( \partial_\Upsilon \tilde{Z}^x \) takes values in \( \D_{\ell, r}(\mathbb{R}^d) \), and is continuous at times \( t > 0 \) 
  when $\tilde Z^x(t) > 0$, or when $\tilde Z^x_j = 0$ for more than one $j \in \{1, 2, \cdots, d\}$.

  \item[(iii)] The right-continuous regularization of the pathwise derivative \( \partial_\Upsilon \tilde{Z}^x \) is equal to the derivative process \( D_\Upsilon \tilde{Z}^x \), i.e.,
  \[
  \lim_{s \downarrow t} \partial_\Upsilon \tilde{Z}^x(s) = D_\Upsilon \tilde{Z}^x(t), \qquad t \ge 0.
  \]
\end{itemize}
\end{proposition}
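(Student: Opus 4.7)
The plan is to combine Proposition \ref{prop:bel}, which supplies the existence of the pathwise derivative $\partial_\Upsilon \tilde Z^x$ together with its representation as a directional derivative of the Skorokhod map, with Proposition \ref{prop:LR2018-main}, which provides the RCLL structure of such directional derivatives. The identification with the object $D_\Upsilon \tilde Z^x$ defined abstractly in Definition~\ref{def:deriv-M} will then be forced by the pathwise uniqueness established in Proposition~\ref{prop:deriv-M-unique}.

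Concretely, part (i) is an immediate restatement of the existence conclusion of Proposition~\ref{prop:bel}, which also yields the key representation
\[
\partial_\Upsilon \tilde Z^x = \nabla_\Psi \Gamma\bigl(\tilde X^x\bigr), \qquad \Psi(t) = \int_0^t D\tb\bigl(\tilde Z^x(s)\bigr)\partial_\Upsilon \tilde Z^x(s)\,ds + \Upsilon(t),
\]
where $\Psi \in \C(\R^d)$ because $\upsilon$ is bounded and $D\tb$ is bounded by Assumption~\ref{as:tb}. Applying Proposition~\ref{prop:LR2018-main} to this representation immediately gives $\partial_\Upsilon \tilde Z^x \in \D_{\ell,r}(\R^d)$ and shows that its right-continuous regularization coincides with $\Lambda_{\tilde Z^x}[\Psi]$. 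To identify this regularization with $D_\Upsilon \tilde Z^x$, I would let $(\phi,\eta)$ denote the unique solution of the derivative problem along $\tilde Z^x$ for input $\Psi$, so that $\phi = \Lambda_{\tilde Z^x}[\Psi]$. Since $\phi$ and $\partial_\Upsilon \tilde Z^x$ differ only on a countable set, one may replace $\partial_\Upsilon \tilde Z^x$ by $\phi$ inside the integral defining $\Psi$ without change, giving
\[
\phi(t) = \int_0^t D\tb\bigl(\tilde Z^x(s)\bigr)\phi(s)\,ds + \Upsilon(t) + \eta(t),
\]
with $\phi(t) \in H_{\tilde Z^x(t)}$ and $\eta$ of the span form required by Definition~\ref{def:deriv-M}. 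Proposition~\ref{prop:deriv-M-unique} then forces $\phi = D_\Upsilon \tilde Z^x$, settling part (iii) and the RCLL portion of part (ii).

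The remaining continuity claim in (ii) is the main obstacle. When $\tilde Z^x(t) > 0$, continuity of $\tilde Z^x$ provides an open neighborhood of $t$ on which $R(\tilde Z^x(\cdot))$ is empty, whence $\eta$ is constant on the neighborhood and $\phi = \Psi + \eta$ inherits continuity from $\Psi$. The hard case is when $\tilde Z^x(t)$ has at least two vanishing components. I plan to adapt the boundary analysis carried out in \cite{LipshutzRamanan2019b} for the initial-state perturbation setting (used to establish the analogous statement in Proposition~\ref{prop:LR2019b-main}(ii)), exploiting the boundary jitter property of the orthant Skorokhod problem together with Assumption~\ref{as:R}. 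The crucial observation making this adaptation work is that the jumps of $\Lambda_h[\psi]$ studied in \cite{LipshutzRamanan2018,LipshutzRamanan2019b} are controlled by the local geometry of $h = \tilde Z^x$ at boundary hits and are insensitive to the specific continuous input $\psi$; since $\Psi$ is continuous in our setting, the cancellation mechanism that rules out discontinuities at multi-face intersection times in the initial-state case carries over verbatim after replacing the constant-in-time perturbation $y$ by the path $\Psi$. Verifying rigorously that the path-dependence of $\Psi$ does not introduce new jump times is the technical heart of the argument and is what I expect to require the most care.
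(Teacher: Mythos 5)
Your route for parts (i) and (iii) is the same as the paper's: existence and the representation $\partial_\Upsilon \tilde Z^x = \nabla_\Psi\Gamma(\tilde X^x)$ come from Proposition~\ref{prop:bel}, the RCLL structure and the identity with $\Lambda_{\tilde Z^x}[\Psi]$ come from Proposition~\ref{prop:LR2018-main}, and the identification $\Lambda_{\tilde Z^x}[\Psi]=D_\Upsilon\tilde Z^x$ is obtained by rewriting the integral equation with the regularized path (they agree off a countable set) and invoking pathwise uniqueness (Proposition~\ref{prop:deriv-M-unique}). One small point you gloss over: Definition~\ref{def:deriv-M} requires $\eta_\Upsilon(0)=0$, whereas the derivative problem only gives $\eta(0)\in\mathrm{span}[R(x)]$; the paper closes this by noting $\Psi(0)=0$, $\Lambda_{\tilde Z^x}[\Psi](0)\in H_x$, and $H_x\cap\mathrm{span}[R(x)]=\{0\}$ (Lemma 8.1 of \cite{LipshutzRamanan2018}), which forces $\eta_\Psi(0)=0$. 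You should add that line, but it is a minor fix.

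The substantive divergence is in part (ii), and there your assessment of where the difficulty lies is off. You leave the continuity at multi-face times as an unexecuted plan to ``adapt the boundary analysis'' of \cite{LipshutzRamanan2019b}, worrying that the path-dependence of $\Psi$ could introduce new jump times. In the paper's proof no such adaptation is performed or needed: the deterministic directional-derivative theory of \cite{LipshutzRamanan2018} invoked through Proposition~\ref{prop:LR2018-main} applies pathwise to the fixed continuous input $f=\tilde X^x(\omega)$ and to an \emph{arbitrary} fixed continuous direction $\psi$, and the regularity of $\nabla_\psi\Gamma(f)$ (membership in $\D_{\ell,r}$ and continuity at times when $\tilde Z^x(t)>0$ or when more than one coordinate vanishes) depends only on the constrained path $\tilde Z^x(\omega)$, not on which continuous $\psi$ is plugged in. Once Proposition~\ref{prop:bel} has delivered $\partial_\Upsilon\tilde Z^x=\nabla_\Psi\Gamma(\tilde X^x)$ with $\Psi(\omega)\in\C(\R^d)$ (continuity of $\Psi$ follows from boundedness of $\upsilon$ and $D\tb$), the continuity statement in (ii) is immediate from that citation; the fact that $\Psi$ was constructed from the solution itself is irrelevant at this stage. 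So the ``technical heart'' you defer is in fact the one step the paper disposes of by citation, and as written your proposal is incomplete only because you have not carried out work that turns out to be unnecessary.
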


In the following result, we provide an alternative characterization of the derivative process \( D_\Upsilon \tilde{Z}^x \). 
To do so, we first introduce the following processes:
For all \( s \in [0,\infty) \), \( D \tilde Z^x (s,t) \) is the derivative process along \( \tilde{Z}^x \) starting at state \( \tilde{Z}^x(s) \) at time \( s \).  
Note that a.s.\ for all \( s \in \mathbb{R}_+ \), \( D {\tilde{Z}^x}(s, \cdot) \) exists and is pathwise unique.

\begin{proposition}\label{prop:bel3}
Almost surely, \( D_\Upsilon \tilde{Z}^x(t) = \int_0^t D{\tilde{Z}^x}(s,t) \upsilon(s)\,ds \), for all \( t \ge 0 \).
\end{proposition}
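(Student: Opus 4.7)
The plan is to show that the process $W := \{W(t) : t \geq 0\}$ defined by $W(t) := \int_0^t D\tilde{Z}^x(s,t)\upsilon(s)\,ds$ satisfies all of the defining properties of the derivative process along $\tilde{Z}^x$ in direction $\Upsilon$ from Definition \ref{def:deriv-M}, and then to invoke pathwise uniqueness (Proposition \ref{prop:deriv-M-unique}) to deduce $W = D_\Upsilon \tilde{Z}^x$ almost surely.

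First I would check that $W$ is well-defined, $\{\mathcal{F}_t\}$-adapted, and RCLL, using the (joint) measurability of $(s,\omega) \mapsto D\tilde{Z}^x(s,t)(\omega)$, the uniform bound $\|\upsilon(s)\| \leq C_\upsilon$, and the sup-norm bound from Lemma \ref{lem:bound-DZ}. The constraint $W(t) \in H_{\tilde{Z}^x(t)}$ a.s. is verified columnwise: each $D_j\tilde{Z}^x(s,t)$ lies in $H_{\tilde{Z}^x(t)}$ by the definition of the derivative process initialized at time $s$, hence $D\tilde{Z}^x(s,t)\upsilon(s) \in H_{\tilde{Z}^x(t)}$, and the integral in $s$ remains in this closed subspace.

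Next, for each $s \in [0,t]$, a shift argument analogous to Lemma \ref{lem:shift} gives
\[
D\tilde{Z}^x(s,t) = I + \int_s^t D\tb(\tilde{Z}^x(r))\,D\tilde{Z}^x(s,r)\,dr + \tilde{\eta}_s(t),
\]
with $\tilde{\eta}_s(s) \in \mathrm{span}[R(\tilde{Z}^x(s))]$ and $\tilde{\eta}_s(t) - \tilde{\eta}_s(r) \in \mathrm{span}\big[\bigcup_{u \in (r,t]} R(\tilde{Z}^x(u))\big]$ for $s \leq r < t$. Multiplying by $\upsilon(s)$, integrating over $s \in [0,t]$, and applying Fubini produces
\[
W(t) = \Upsilon(t) + \int_0^t D\tb(\tilde{Z}^x(r))\,W(r)\,dr + \eta_W(t), \quad \eta_W(t) := \int_0^t \tilde{\eta}_s(t)\upsilon(s)\,ds,
\]
matching the integral equation in Definition \ref{def:deriv-M}. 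To verify that $\eta_W$ has the required reflection structure, I would note $\eta_W(0) = 0$ and, for $0 \leq s < t$, split
\[
\eta_W(t) - \eta_W(s) = \int_0^s \big[\tilde{\eta}_u(t) - \tilde{\eta}_u(s)\big]\upsilon(u)\,du + \int_s^t \big[\tilde{\eta}_u(t) - \tilde{\eta}_u(u)\big]\upsilon(u)\,du,
\]
observing that for each fixed $\omega$ and $u$, both integrands lie in the pathwise closed subspace $\mathrm{span}\big[\bigcup_{r \in (s,t]} R(\tilde{Z}^x(r))\big]$, so the integrals do too. Combined with $\{\mathcal{F}_t\}$-adaptedness and RCLL of $\eta_W$, this shows $W$ satisfies Definition \ref{def:deriv-M}, and pathwise uniqueness closes the argument.

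The main obstacle I anticipate is the measurability and regularity bookkeeping: verifying joint measurability of $(s,\omega) \mapsto D\tilde{Z}^x(s,t)(\omega)$ and $(s,\omega) \mapsto \tilde{\eta}_s(t)(\omega)$ to justify Fubini, and establishing the RCLL property of $W$ and $\eta_W$ via dominated convergence. The span-preservation step is conceptually clean but delicate because the relevant subspace is random; fixing $\omega$ first reduces it to an elementary deterministic closed-subspace argument.
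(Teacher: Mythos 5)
Your proposal is correct and follows the paper's approach exactly: expand $D\tilde Z^x(s,t)$ via its defining integral equation, interchange the order of integration, verify the $H_{\tilde Z^x(t)}$ constraint columnwise and the span condition on $\eta_W$ via the telescoping decomposition, and close with pathwise uniqueness (Proposition \ref{prop:deriv-M-unique}). One minor algebraic slip: the second telescoping integrand should be $\tilde\eta_u(t)$ rather than $\tilde\eta_u(t)-\tilde\eta_u(u)$; the two agree almost surely since Lemma \ref{lem:Z>0} forces $\tilde\eta_u(u)=0$ for a.e.\ $u>0$, which is in fact the same (implicit) observation the paper uses to place $\eta^r(t)$ in the required span.
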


\begin{proof}[Proof of Theorem \ref{thm:bel}]
We first establish the Bismut–Elworthy–Li formula for bounded continuously differentiable functions with bounded gradients.

Let \( \zeta \) be such a function. Let \( x \in \mathbb{R}_+^d \) and \( t \in (0,\infty) \).  
For \( s \in [0,t] \), let \( \upsilon(s) := \frac{1}{t} \partial_j\tilde{Z}^x(s) \), 
where $\partial_j\tilde{Z}^x$ is the pathwise derivative of $\tilde Z^x$ in the direction of $e_j$, the $j$th standard unit vector, 
and let \( \Upsilon(s) := \int_0^s \upsilon(r)\,dr \), \( s \in [0,t] \).

Then, by Proposition \ref{prop:bel3}, a.s.
\[
D_\Upsilon \tilde{Z}^x(t) = \int_0^t D{\tilde{Z}^x}(s,t) \upsilon(s)\,ds = \frac{1}{t} \int_0^t D{\tilde{Z}^x}(s,t) \partial_j\tilde{Z}^x(s)\,ds
\]

For each \( s \in [0,t] \), \( D{\tilde{Z}^x}(s,t) \partial_j\tilde{Z}^x(s) = D_j \tilde{Z}^x(t) \), where $D_j \tilde{Z}^x(t)$ is the $j$th column 
of $D{\tilde{Z}^x}(t)$.
This is because a.s.\ for all \( s \in [0,t] \),
\[
D{\tilde{Z}^x}(s,t) \partial_j \tilde{Z}^x(s) = \partial \tilde{Z}^x(s,t+) \partial_j \tilde{Z}^x(s) = \partial_j \tilde{Z}^x(t+) = D_j \tilde{Z}^x(t).
\]

Thus, $D_\Upsilon \tilde{Z}^x(t) = D_j \tilde{Z}^x(t)$ a.s. As a result, 
\begin{align*}
\left. \frac{d}{d\varepsilon} \mathbb{E} \left[ \zeta\left( \tilde{Z}^x(t; \varepsilon \Upsilon) \right) \right] \right|_{\varepsilon = 0}
& = \mathbb{E} \left[ D\zeta\left( \tilde{Z}^x(t) \right) \partial_\Upsilon \tilde{Z}^x(t) \right] \\
& = \mathbb{E} \left[ D\zeta\left( \tilde{Z}^x(t) \right) D_\Upsilon \tilde{Z}^x(t) \right] \\
& = \mathbb{E} \left[ D\zeta\left( \tilde{Z}^x(t) \right) D_j\tilde{Z}^x(t) \right] \\
& = \mathbb{E} \left[ D\zeta\left( \tilde{Z}^x(t) \right) \partial_j\tilde{Z}^x(t) \right] \\
& = D_j \mathbb{E} \left[ \zeta( \tilde{Z}^x(t) ) \right].
\end{align*}
Here, the first equality follows by the chain rule and the assumption that $D\zeta$ is uniformly bounded, 
the second equality follows from Lemma \ref{lem:Z>0} and Proposition \ref{prop:bel2}, 
the third equality follows from the fact that a.s. $D_\Upsilon \tilde{Z}^x(t) = D_j \tilde{Z}^x(t)$, the fourth equality follows from 
Lemma \ref{lem:Z>0} and Proposition \ref{prop:LR2019b-main}, 
and the fifth equality follows by the chain rule and uniform boundedness of $D\zeta$ again. 

By Girsanov’s theorem:
\begin{align*}
&\mathbb{E} \left[
\zeta(\tilde{Z}^x(t; \varepsilon \Upsilon)) \exp\left\{
- \frac{\veps}{t} \int_0^t \langle \sigma^{-1} \partial_j \tilde{Z}^x(s), dB(s) \rangle
- \frac{\varepsilon^2}{2t^2} \int_0^t \| \sigma^{-1} \partial_j \tilde{Z}^x(s) \|^2 ds
\right\} \right] \\
= & \mathbb{E} \left[
\zeta(\tilde{Z}^x(t; \varepsilon \Upsilon)) \exp\left\{
- \frac{\veps}{t} \int_0^t \langle \sigma^{-1} D_j \tilde{Z}^x(s), dB(s) \rangle
- \frac{\varepsilon^2}{2t^2} \int_0^t \| \sigma^{-1} D_j \tilde{Z}^x(s) \|^2 ds
\right\} \right]\\
= & \mathbb{E} \left[ \zeta( \tilde{Z}^x(t) ) \right].
\end{align*}
Applying \( \left. \frac{d}{d\varepsilon} \right|_{\varepsilon=0} \) to both sides, we get:
\[
\left. \frac{d}{d\varepsilon} \mathbb{E} \left[ \zeta( \tilde{Z}^x(t; \varepsilon \Upsilon) ) \right] \right|_{\varepsilon=0} 
- \mathbb{E} \left[ \zeta( \tilde{Z}^x(t) ) \left( \frac{1}{t} \int_0^t \langle \sigma^{-1} D_j \tilde{Z}^x(s), dB(s) \rangle \right) \right] = 0.
\]
Hence,
\[
D_j \mathbb{E} \left[ \zeta( \tilde{Z}^x(t) ) \right]
= \mathbb{E} \left[ \zeta( \tilde{Z}^x(t) )\cdot \frac{1}{t}\int_0^t \langle \sigma^{-1} D_j \tilde{Z}^x(s), dB(s) \rangle \right].
\]
Because $j$ is arbitrary, we have established the Bismut-Elworthy-Li formula for any bounded continuously differentiable function $\zeta$ with bounded gradients. 
By a standard density argument, the Bismut-Elworthy-Li formula \eqref{eq:bel-main} is true for any bounded continuous function $\zeta$. 
To show that the Bismut-Elworthy-Li formula holds for any measurable function $\zeta$ with $\E[\zeta^2(\tilde Z^x(t))] < \infty$, 
the proof follows exactly the same reasoning as steps 3 and 4 in the proof of Theorem 3.8 in \cite{Banos2018}, for which we omit the details. 
This concludes the proof of the theorem.
\end{proof}

\section{Viscosity Solutions}\label{sec:viscosity-solutions}
Recall that $\tilde{v} \in \mathcal{B}$ is the unique solution of the fixed-point equation $v = F(v)$, 
where $F$ is the functional map defined in Eq. \eqref{eq:F}, 
and $\mathcal{B}$ is the space of measurable functions defined in Section \ref{sec:fixed-point-equation}.
Recall also the definition of the function $\tilde{V}$ defined in Eq. \eqref{eq:tildeV}, i.e.,
\begin{align*}
\tilde{V}(t,x) =&~\mathbb{E} \Bigg[ \int_t^{\tau} e^{-\beta(s - t)} \tilde \cH(\tilde Z^{t,x}(s), \tilde{v}(s, \tilde Z^{t,x}(s))) ds
+ \int_t^{\tau} e^{-\beta(s - t)} \kappa^{\top} d\tilde Y^{t,x}(s) + e^{-\beta(T - t)} \xi(\tilde Z^{t,x}(T)) \Bigg].
\end{align*}
The following results can be established. 
\begin{proposition}\label{prop:viscosity-tildeV}
$\tilde{V}$ is a viscosity solution of the HJB equation \eqref{eq:hjb1} -- \eqref{eq:hjb3} 
and it satisfies the polynomial growth condition \eqref{eq:c-xi-poly}. 
\end{proposition}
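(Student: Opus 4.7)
The plan is to verify polynomial growth by direct estimation, then establish a dynamic-programming identity for $\tilde V$ from the strong Markov property, and combine that with It\^o's formula for reflected diffusions to check the inequalities in the definition of viscosity solution, handling interior and boundary points separately. The terminal condition $\tilde V(T,x)=\xi(x)$ is immediate from Eq.~\eqref{eq:tildeV}.

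For polynomial growth, each of the three terms in Eq.~\eqref{eq:tildeV} can be bounded by a constant multiple of $w(x)$. The terminal term uses $|\xi|\le \alpha_w w$ together with Corollary~\ref{cor:moment-w}. The pushing term uses Assumption~\ref{as:kappa} together with moment bounds on $\tilde Y^{t,x}$ inherited from Proposition~\ref{prop:skorokhod-lipschitz} applied to the decomposition $\tilde Y^{t,x}=R^{-1}(\tilde Z^{t,x}-\tilde X^{t,x})$ and from Proposition~\ref{prop:moment-Z}. The running term uses $\tilde v\in\mathcal B$, which yields $\|\tilde v(s,y)\|_\infty\le \|\tilde v\|_0\, w(y)(T-s)^{-1/2}$, so that $|\tilde\cH(y,\tilde v(s,y))|$ has polynomial growth in $w(y)$ weighted by an integrable $(T-s)^{-1/2}$ singularity on $[t,T]$.

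For the viscosity inequalities, the workhorse is a dynamic-programming (DP) identity: for any $\{\mathcal F_s\}$-stopping time $\tau\in[t,T]$, the strong Markov property of $\tilde Z$ applied to Eq.~\eqref{eq:tildeV} yields
\begin{align*}
\tilde V(t,x) = \mathbb{E}\bigg[&\int_t^{\tau} e^{-\beta(s-t)} \tilde\cH\!\left(\tilde Z^{t,x}(s), \tilde v(s,\tilde Z^{t,x}(s))\right)ds \\
&+ \int_t^{\tau} e^{-\beta(s-t)} \kappa^\top d\tilde Y^{t,x}(s) + e^{-\beta(\tau-t)} \tilde V\!\left(\tau,\tilde Z^{t,x}(\tau)\right)\bigg].
\end{align*}
Fix $\varphi\in C^{1,2}$ and let $(t_0,x_0)\in(0,T)\times\R_{++}^d$ be a strict maximum of $\tilde V-\varphi$. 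I would take $\tau=(t_0+h)\wedge\tau_B$, where $\tau_B$ is the first exit of $\tilde Z^{t_0,x_0}$ from a small closed ball about $x_0$ contained in $\R_{++}^d$, so that $d\tilde Y^{t_0,x_0}\equiv 0$ on $[t_0,\tau]$. Applying It\^o to $e^{-\beta(s-t_0)}\varphi(s,\tilde Z^{t_0,x_0}(s))$, combining with the DP identity and $\tilde V\le\varphi$, and dividing by $\mathbb{E}[\tau-t_0]$ gives a nonnegative expected time-averaged quantity. Letting $h\downarrow 0$ and using continuity of $\tilde v$ on $[0,T)\times\R_{++}^d$ (Theorem~\ref{thm:fixed-point-tildeV}(ii)) together with the identity $\tilde v(t_0,x_0)=D_x\tilde V(t_0,x_0)=\varphi_x(t_0,x_0)$ (part~(iii) combined with the strict max) yields the interior inequality $\Phi(t_0,x_0,\tilde V,\varphi_t,\varphi_x,\varphi_{xx})\le 0$. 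The subsolution inequality at an interior strict minimum is symmetric.

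The main obstacle is the boundary case $x_0\in\partial\R_+^d$. I would argue by contradiction: if $\Phi_*(t_0,x_0,\cdots)>0$, then both $\Phi(t_0,x_0,\cdots)>0$ and $R_i^\top\varphi_x(t_0,x_0)+\kappa_i<0$ for every $i\in\mathcal{I}(x_0)$; by continuity of $\varphi_x$ these hold with a uniform gap $\delta>0$ on a spacetime neighborhood $U$ of $(t_0,x_0)$. Combining It\^o with the DP identity on $[t_0,t_0+h]$ then gives
\[
0\le \mathbb{E}\bigg[\int_{t_0}^{t_0+h} e^{-\beta(s-t_0)}[\cdots]\,ds + \sum_{i=1}^d \int_{t_0}^{t_0+h} e^{-\beta(s-t_0)}(R_i^\top\varphi_x+\kappa_i)\,d\tilde Y_i^{t_0,x_0}\bigg].
\]
The time integral is $O(h)$ by the same estimate used for polynomial growth. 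For $i\notin\mathcal{I}(x_0)$, $\tilde Y_i^{t_0,x_0}$ does not increase on $[t_0,t_0+h]$ for small $h$ (since $\tilde Z_i^{t_0,x_0}(t_0)>0$ and sample paths are continuous); for $i\in\mathcal{I}(x_0)$, the integrand is $\le-\delta$ where the sample path stays in $U$, so the boundary sum is at most $-\delta\sum_{i\in\mathcal{I}(x_0)}\mathbb{E}[\tilde Y_i^{t_0,x_0}(t_0+h)]$ plus an error from the event $\{\tilde Z\text{ exits }U\}$ that vanishes faster than $\sqrt{h}$. Standard local-time lower bounds for reflected diffusions starting at the boundary give $\mathbb{E}[\tilde Y_i^{t_0,x_0}(t_0+h)]\ge c\sqrt{h}$ for $i\in\mathcal{I}(x_0)$, so the right-hand side becomes $O(h)-c'\sqrt h<0$ for small $h$, a contradiction. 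The subsolution inequality at the boundary is symmetric. The hardest quantitative point is controlling the $\tilde\cH(\tilde Z,\tilde v)$ contribution to the time integral uniformly even though $\tilde v$ is continuous only on $\R_{++}^d$; this is handled by a localization argument that uses Lemma~\ref{lem:Z>0} (so $\tilde Z(s)\in\R_{++}^d$ for a.e.\ $s>t_0$) together with $\tilde v\in\mathcal B$ and Corollary~\ref{cor:moment-w} to absorb the complementary contribution.
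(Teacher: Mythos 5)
Your interior case (i) matches the paper's: both use the dynamic-programming identity from the Markov property, apply It\^o to $e^{-\beta(s-t)}\varphi$, and localize via a stopping time that keeps $\tilde Z$ away from the boundary so that $d\tilde Y\equiv 0$. The identification $\tilde v(t_0,x_0)=D_x\tilde V(t_0,x_0)=\varphi_x(t_0,x_0)$ at a strict interior extremum, combined with continuity of $\tilde v$ on $\R_{++}^d$, is exactly the mechanism the paper uses to upgrade the local estimate to a contradiction.

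Your boundary case (ii), however, diverges substantially from the paper's, and this is where a genuine gap lies. You argue by contradiction, invoking a local-time lower bound $\E[\tilde Y_i^{t_0,x_0}(t_0+h)]\ge c\sqrt{h}$ for $i\in\mathcal I(x_0)$ together with a careful bookkeeping of error events to force the expected Dynkin-type inequality to become negative. The paper never uses any local-time estimate. Instead it proves a precise first-order boundary relation for the known gradient: Lemma~\ref{lem:oblique-boundary} establishes $R_j^\top\tilde v(t,x)+\kappa_j=0$ for every $j\in\mathcal I(x)$ and $t\in(0,T)$, a deterministic identity derived from the derivative process $D\tilde Y^x$ at the boundary (Corollaries~\ref{cor:deriv-boundary} and~\ref{cor:DY-boundary}). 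With this in hand, the boundary viscosity inequality follows from a purely algebraic argument: build the invertible matrix $R(x)$ with columns $R_j$ for $j\in\mathcal I(x)$ and $e_j$ otherwise, use the nonnegativity of $R(x)^{-1}$, the feasible-direction inequality $\bOne^\top\varphi_x(t,x)\ge\bOne^\top\tilde v(t,x)$, and the equality of the interior components $\tilde v_j=(\varphi_x)_j$ for $j\notin\mathcal I(x)$, to conclude directly that some $j\in\mathcal I(x)$ satisfies $-R_j^\top\varphi_x(t,x)-\kappa_j\le 0$. No stochastic estimates are needed on the boundary at all.

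The concrete gap in your route is the asserted lower bound $\E[\tilde Y_i^{t_0,x_0}(t_0+h)]\ge c\sqrt{h}$. This is indeed true and classical for one-dimensional reflected Brownian motion started at zero, but for multidimensional reflected diffusions with oblique reflection matrix $R$, especially when $x_0$ lies on a non-smooth part of $\partial\R_+^d$ (multiple active constraints), it is not an off-the-shelf result: the regulator components interact through $R$, and one has to rule out pathological cancellations. Nothing in the paper or its cited references supplies this estimate, and the paper's architecture is specifically designed to avoid needing it. You also need to quantify the contribution on $\{\tilde Z\text{ exits }U\}$ more carefully than stated (the phrase that $\tilde Y_i$ for $i\notin\mathcal I(x_0)$ ``does not increase on $[t_0,t_0+h]$ for small $h$'' is not a pathwise fact for fixed $h$; it holds only with high probability). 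If you want to pursue the local-time route, you should isolate and prove the lower bound as a standalone lemma; otherwise, you should use the paper's route through Lemma~\ref{lem:oblique-boundary}, which trades probabilistic estimates for the regularity information about $\tilde V$ already available from Theorem~\ref{thm:fixed-point-tildeV}(iii).

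Your polynomial-growth argument and the terminal-condition verification are fine and essentially what the paper does.
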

\begin{proposition}\label{prop:viscosity-V}
Recall the value function $V$ defined in Eq. \eqref{eq:V}. 
$V$ is a viscosity solution of the HJB equation \eqref{eq:hjb1} -- \eqref{eq:hjb3}, 
and it satisfies the polynomial growth condition \eqref{eq:c-xi-poly}. 
\end{proposition}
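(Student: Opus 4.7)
The plan is to follow the standard viscosity-solution program for controlled reflected diffusions, adapted to the oblique-reflection boundary. The four ingredients will be: (a) polynomial growth of $V$, (b) the dynamic programming principle (DPP) for $V$, (c) interior supersolution/subsolution inequalities via Itô's lemma applied to test functions along the controlled reflected diffusion, and (d) verification of the relaxed oblique-derivative boundary conditions encoded in $\Phi_*$ and $\Phi^*$.

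First, polynomial growth. Using Assumption \ref{as:cost-poly}, Assumption \ref{as:kappa}, and moment bounds on the controlled state process $Z^{t,x,a}$ of the same flavor as Proposition \ref{prop:moment-Z} (whose proof does not depend on the drift function being of the particular form $\tb$, only on Lipschitz/bounded drift and uniform Lipschitz of the Skorokhod map), one derives $0\le V(t,x)\le C\,w(x)$ uniformly in $t\in[0,T]$. The lower bound is immediate from $c,\xi\ge 0$; for the upper bound, pick any fixed $a_0\in\cA$ (e.g., an arbitrary element of the compact set $\cA$) and bound $J(t,x,a_0)$ using the moment estimates together with the bound $\|\kappa\|_1\le \alpha_w(1+(\log d)^{\alpha_0/2})$ and the boundedness of $b(\cdot,\cdot)$ to control $\E\bigl[\kappa^\top Y(T)\bigr]$.

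Second, the DPP. For every $\{\cF_s\}$-stopping time $\tau$ with $t\le\tau\le T$,
\[
V(t,x)=\inf_{a(\cdot)}\E\Bigl[\int_t^{\tau}e^{-\beta(s-t)}c(Z(s),a(s))\,ds+\int_t^{\tau}e^{-\beta(s-t)}\kappa^\top dY(s)+e^{-\beta(\tau-t)}V(\tau,Z(\tau))\Bigr].
\]
I would establish this by the classical approach, which relies on strong Markovness of the controlled reflected diffusion and measurable selection. The Lipschitz continuity of the Skorokhod map from Proposition \ref{prop:skorokhod-lipschitz} together with Assumption \ref{as:b} ensures existence and uniqueness of the controlled state and the validity of the underlying measurability/concatenation arguments.

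Third, interior viscosity properties. Fix $(t,x)\in(0,T)\times\R_{++}^d$ and a test function $\varphi\in C^{1,2}$. Let $\theta_\delta$ denote the first exit time from a small ball around $x$ contained in $\R_{++}^d$, and set $\tau_h=(t+h)\wedge\theta_\delta$. For the supersolution inequality, use the DPP with the constant control $a(\cdot)\equiv a_0$, apply Itô's lemma to $e^{-\beta(s-t)}\varphi(s,Z(s))$ on $[t,\tau_h]$ (no regulator term appears since the state stays in the interior), divide by $h$, let $h\downarrow 0$, and then take the infimum over $a_0\in\cA$; this produces $\Phi(t,x,V(t,x),\varphi_t,\varphi_x,\varphi_{xx})\le 0$. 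For the subsolution inequality, the DPP yields for each $\varepsilon>0$ an $\varepsilon$-optimal control $a^\varepsilon(\cdot)$; applying Itô, using uniform continuity of $b,c$ near $(t,x)$ and the dominated convergence theorem, the reverse inequality $\Phi(\cdots)\ge 0$ follows on $[0,T)\times\R_{++}^d$.

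Fourth, boundary conditions. For $(t,x)\in(0,T)\times\partial\R_+^d$, Itô applied to $\varphi(s,Z(s))$ over $[t,\tau_h]$ now includes the reflection term $\sum_{i\in\cI(x)}\int_t^{\tau_h} e^{-\beta(s-t)} R_i^\top\varphi_x(s,Z(s))\,dY_i(s)$, and the pushing cost contributes $\sum_{i\in\cI(x)}\int_t^{\tau_h} e^{-\beta(s-t)}\kappa_i\,dY_i(s)$. Combining these and repeating the DPP-based argument yields the classical dichotomy (see, e.g., Section 3 of \cite{borkar2004ergodic}): either the interior PDE inequality $\Phi\le 0$ (respectively $\ge 0$) continues to hold, or one of the boundary expressions $-R_i^\top\varphi_x(t,x)-\kappa_i$ has the correct sign, because along any sample path for which $dY_i>0$ accumulates, the coefficient in front of $dY_i$ must be nonpositive (respectively nonnegative) at the maximum (respectively minimum) of $V-\varphi$. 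This is exactly the content of $\Phi_*(t,x,\cdots)\le 0$ and $\Phi^*(t,x,\cdots)\ge 0$. Finally, $V(T,x)=\xi(x)$ is immediate from the definition.

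The main obstacle is the careful handling of the boundary term in step four: one must argue that the reflection contribution to Itô's formula forces the oblique-derivative inequality whenever the interior PDE inequality may fail, and this requires a delicate choice of stopping times that capture only short excursions from the boundary. The relaxed formulation of $\Phi_*,\Phi^*$ is precisely designed to absorb this dichotomy, so once the DPP and the Itô decomposition are in place, the argument reduces to a case analysis. Establishing the DPP itself with reflections (step two) is technically standard but nontrivial; I would either cite a version proved for reflected controlled diffusions under our assumptions or reprove it via the strong Markov property and standard measurable selection.
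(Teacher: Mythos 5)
Your proposal follows the same standard machinery (dynamic programming principle, Itô's lemma along the controlled reflected diffusion, case analysis at the boundary encoded by $\Phi_*$ and $\Phi^*$) that the paper itself invokes by citing Theorem 3.4 of \cite{borkar2004ergodic}; the paper deliberately skips the details for exactly this reason. Your outline is correct and consistent with that intended route, including the polynomial growth argument via a fixed control and the moment bound of Proposition \ref{prop:moment-Z}, so there is nothing substantive to compare.
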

\begin{proposition}\label{prop:viscosity-unique}
There exists at most one viscosity solution of the HJB equation \eqref{eq:hjb1} -- \eqref{eq:hjb3} 
that satisfies the polynomial growth condition \eqref{eq:c-xi-poly}.
\end{proposition}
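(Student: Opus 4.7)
The plan is to establish a comparison principle: if $u$ is an upper semicontinuous viscosity subsolution and $v$ a lower semicontinuous viscosity supersolution of the HJB equation \eqref{eq:hjb1}--\eqref{eq:hjb3} (both satisfying the polynomial growth condition \eqref{eq:c-xi-poly}), then $u \leq v$ on $[0,T]\times \R_+^d$. Uniqueness follows by applying this twice to any pair of viscosity solutions. The classical template here is the Crandall--Ishii--Lions doubling-of-variables argument, adapted to oblique derivative boundary conditions on the orthant, as in, e.g., Barles (1999) and the approach of Dupuis--Ishii; the principal adaptations concern (a) the polynomial growth in $x$ and (b) the non-smooth geometry of $\partial \R_+^d$.

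First, I would reduce to a setting where the maximum of $u - v$ is attained on a bounded set. Under Assumption~\ref{as:R}, since $Q$ is nonnegative with uniformly contractive powers, the series $\sum_{n \geq 0}(Q^\top)^n \bOne$ converges to a vector $\nu \in \R_{++}^d$ with $R^\top \nu = \bOne > 0$ componentwise. Using this $\nu$, construct a smooth penalty $\theta(x) := (1+\nu^\top x)^{2\alpha_0+2}$ and consider $\tilde u := u - \delta(\theta(x) + \lambda(T-t))$ and $\tilde v := v + \delta(\theta(x) + \lambda(T-t))$ for small $\delta>0$ and large $\lambda$. By construction, $\langle R_j, D_x \theta(x)\rangle = (2\alpha_0+2)(1+\nu^\top x)^{2\alpha_0+1} > 0$ for every $j$, so $\theta$ is a strict supersolution of the oblique boundary condition at every boundary point, and its growth rate exceeds $\alpha_0$ in \eqref{eq:c-xi-poly}, confining the supremum of $\tilde u - \tilde v$ to a bounded set.

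Second, I would run the standard doubling-of-variables argument on this localized problem. For $\varepsilon>0$, consider
\[
\Phi_\varepsilon(t,s,x,y) := \tilde u(t,x) - \tilde v(s,y) - \frac{\|x-y\|_2^2}{2\varepsilon} - \frac{(t-s)^2}{2\varepsilon}
\]
and let $(t_\varepsilon, s_\varepsilon, x_\varepsilon, y_\varepsilon)$ be a maximizer. Assuming for contradiction that $\sup(\tilde u - \tilde v) > 0$, standard estimates yield $\|x_\varepsilon - y_\varepsilon\|_2^2/\varepsilon \to 0$ and $(t_\varepsilon - s_\varepsilon)^2/\varepsilon \to 0$, with a subsequential limit $(t^*, x^*) \in [0,T) \times \R_+^d$ (the terminal condition \eqref{eq:hjb2} rules out $t^* = T$). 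The parabolic Crandall--Ishii--Lions lemma produces matrices $X, Y$ satisfying $X \leq Y$ in the appropriate sense, which combined with the sub/supersolution inequalities for $\tilde u$ and $\tilde v$, the uniform ellipticity of $\sigma\sigma^\top$ (Assumption \ref{as:sigma}), and the Lipschitz continuity of $\cH(x,p)$ in both variables (Assumptions \ref{as:b}, \ref{as:cost-poly}), yields a contradiction for $\lambda$ sufficiently large. The crucial role of $\theta$ is that whenever $x_\varepsilon$ or $y_\varepsilon$ lies on $\partial \R_+^d$, the strict boundary inequality $\langle R_j, D_x\theta\rangle > 0$ prevents the boundary branch of $\Phi_*$ or $\Phi^*$ from being active, forcing the interior PDE inequality.

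The main obstacle will be handling the oblique boundary condition at the non-smooth part of $\partial \R_+^d$, namely corners where $\cI(x)$ has cardinality greater than one. The derivative problem may exhibit discontinuities there (cf.\ Remark \ref{rm:continuity}), and test functions must simultaneously satisfy $\langle R_j, D\theta\rangle > 0$ for \emph{all} active indices $j$. The construction of $\theta$ via the vector $\nu$ with $R^\top \nu > 0$ is precisely what allows this to work uniformly at every corner; verifying that $\nu \in \R_{++}^d$ exists (which rests on the contraction condition \eqref{eq:Q-contraction}) and that the resulting $\theta$ fits into the Crandall--Ishii machinery with the correct growth to dominate the $\alpha_0$-polynomial bound is the core technical step. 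Once this is in place, the rest of the argument follows the well-established template for parabolic comparison with oblique boundary conditions.
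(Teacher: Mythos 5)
Your proposal takes a genuinely different route from the paper. The paper first establishes a comparison principle for a version of the HJB \eqref{eq:hjb1}--\eqref{eq:hjb3} localized to a bounded domain $G_n = \R_+^d \cap B_n$ with Dirichlet data on $\partial B_n$ (Lemma~\ref{lem:unique-viscosity-n}, following Theorem~3.6 of \cite{borkar2004ergodic}), and then bootstraps to the unbounded orthant via the probabilistic estimate $|V_1(t,x)-V_2(t,x)|\le \tilde\alpha_w w(n)\sup_a \pr(\tau_n(x)<T)$, letting $n\to\infty$ and using that the exit probability decays faster than $w(n)$ grows. You instead perform a single global comparison by adding a barrier $\theta(x)=(1+\nu^\top x)^{2\alpha_0+2}$ whose super-$\alpha_0$ growth confines the maximum of $\tilde u-\tilde v$ to a compact set, so no probabilistic step is required and the argument stays purely PDE-theoretic. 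The use of a vector $\nu\in\R_{++}^d$ with $R^\top\nu>0$ to make $\theta$ a strict supersolution of the oblique boundary condition on every face and at every corner is precisely the Dupuis--Ishii mechanism, and both arguments do ultimately invoke such a barrier at the boundary; the genuinely new element in your route is that the barrier also handles the unboundedness of the domain.

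Two things need repair. First, a sign slip: you want $R^\top\nu>0$, i.e.\ $(I-Q)\nu>0$, which under Assumption~\ref{as:R} is achieved by $\nu=\sum_{n\ge0}Q^n\bOne$, not $\sum_{n\ge0}(Q^\top)^n\bOne$ (the latter solves $R\nu=\bOne$, which is not the quantity appearing in $\langle R_j,D_x\theta\rangle=(R^\top\nu)_j(2\alpha_0+2)(1+\nu^\top x)^{2\alpha_0+1}$). Second, and more substantively, the additive time perturbation $\delta\lambda(T-t)$ does not deliver enough coercivity when $\beta=0$, a case the paper allows, given that Assumption~\ref{as:cost-poly} requires $\alpha_0\ge1$. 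After the doubling of variables, the $\theta$-perturbation feeds back terms of order $\delta(1+|x_\delta|)^{2\alpha_0+1}$ through $b^\top D\theta$ and $\mathrm{Tr}(\sigma\sigma^\top D^2\theta)$, where $x_\delta$ is the maximizer; to close the contradiction these must be dominated by $\delta\lambda$, forcing $\lambda\gtrsim(1+|x_\delta|)^{2\alpha_0+1}$. At the same time, preserving $\sup(\tilde u-\tilde v)>0$ requires $\delta\lambda T$ small, yet the bound on $|x_\delta|$ scales as $\delta^{-1/(\alpha_0+2)}$, and for $\alpha_0\ge1$ the product $\delta\lambda$ does not vanish. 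The standard fix is to make the barrier time-weighted multiplicatively, e.g.\ subtract $\delta\,e^{\kappa(T-t)}\theta(x)$, so that the time derivative produces a term of order $\delta\theta(x)\sim\delta(1+|x|)^{2\alpha_0+2}$ that dominates all lower-order contributions uniformly in $|x|$ (equivalently, pass first to $e^{\lambda_0 t}u$, $e^{\lambda_0 t}v$ to reduce to $\beta>0$). With that modification your route should close.
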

We skip the proofs of Propositions \ref{prop:viscosity-V} and \ref{prop:viscosity-unique}, 
because they follow standard machinery, similar to the proofs of Theorems 3.4 and 3.7 of \cite{borkar2004ergodic}, respectively, 
and do not involve the derivative process $D\tilde Z^x$. 
We only provide a proof sketch of Proposition \ref{prop:viscosity-unique} in Appendix \ref{app:viscosity-unique}.

\begin{proof}[Proof of Proposition \ref{prop:viscosity-tildeV}]
Let $\varphi \in C^{1,2}\left((0,T)\times \R_+^d\right)$ and $(t,x) \in (0,T) \times \mathbb{R}_+^d$ be such that $(t,x)$ is a strict maximum of $\tilde{V} - \varphi$, 
so that for all $(s,y)$ in a neighborhood of $(t,x)$ with $(s,y) \neq (t,x)$,
\begin{align}
\varphi(s,y) - \tilde{V}(s,y) > \varphi(t,x) - \tilde{V}(t,x).
\end{align}
By shifting $\varphi$ by a constant if necessary, we can assume that $\varphi(t,x) - \tilde{V}(t,x) = 0$, without loss of generality. 
We consider two cases: (i) $x > 0$ componentwise, and (ii) $x \in \partial \R_+^d$, i.e., there exists $i =1,2, \cdots, J$ such that $x_i = 0$. 

Case (i): $x \in \R_{++}^d$. In this case, 
\begin{align*}
\Phi_*(t,x,u(t,x), \varphi_t(t,x), \varphi_x(t,x), \varphi_{xx}(t,x)) & = \Phi(t,x,u(t,x), \varphi_t(t,x), \varphi_x(t,x), \varphi_{xx}(t,x)) \\
& = \Phi(t,x,\varphi(t,x), \varphi_t(t,x), \varphi_x(t,x), \varphi_{xx}(t,x)).
\end{align*}
Furthermore, because $t \in (0,T)$, $x\in\R_{++}^d$, $(t,x)$ is a strict maximum of $\tilde V-\varphi$, and both $\tilde V$ and $\varphi$ 
are differentiable in $x$ at $(t,x)$, we must have
\(
\tilde v (t,x) = \tilde V_x (t,x) = \varphi_x(t,x).
\)
Therefore, 
\begin{align}
& \quad \Phi_*(t,x,u(t,x), \varphi_t(t,x), \varphi_x(t,x), \varphi_{xx}(t,x)) \\
& = \Phi(t,x,\varphi(t,x), \varphi_t(t,x), \varphi_x(t,x), \varphi_{xx}(t,x)) \\
& = \left(-\varphi_t - \frac{1}{2} \mathrm{Tr}(\sigma \sigma^{\top} \varphi_{xx}) + \beta \varphi \right)(t,x) - \cH (x, \varphi_x (t,x)) \\
& = \left(-\varphi_t - \frac{1}{2} \mathrm{Tr}(\sigma \sigma^{\top} \varphi_{xx}) + \beta \varphi \right)(t,x) - \tilde \cH (x, \varphi_x (t,x)) - \tb^{\top}\varphi_x(t,x) \\
& = \left(-\varphi_t - \frac{1}{2} \mathrm{Tr}(\sigma \sigma^{\top} \varphi_{xx}) -\tb^{\top}\varphi_x + \beta \varphi \right)(t,x) - \tilde \cH (x, \varphi_x (t,x)) \\
& = \left(-\varphi_t - \frac{1}{2} \mathrm{Tr}(\sigma \sigma^{\top} \varphi_{xx}) -\tb^{\top}\varphi_x + \beta \varphi \right)(t,x) - \tilde \cH (x, \tilde v (t,x)).
\end{align}
Here, in the third equality, we used the fact that $\cH (x, p) = \tilde \cH (x, p) - \tb(x)^{\top} p$ for all $x$ and $p$, 
and in the last equality, we used the fact that $\tilde v (t,x) = \varphi_x(t,x)$.

Suppose, by way of contradiction, that 
\begin{align}
&~\Phi(t,x,\varphi(t,x), \varphi_t(t,x), \varphi_x(t,x), \varphi_{xx}(t,x)) \nonumber \\
= &~\left(-\varphi_t - \frac{1}{2} \mathrm{Tr}(\sigma \sigma^{\top} \varphi_{xx}) -\tb^{\top}\varphi_x + \beta \varphi \right)(t,x) - \tilde \cH (x, \tilde v (t,x)) > 0.
\label{eq:viscosity2}
\end{align}
By part (ii) of Theorem \ref{thm:fixed-point-tildeV}, 
the function $(s,y) \mapsto \left(-\varphi_t - \frac{1}{2} \mathrm{Tr}(\sigma \sigma^{\top} \varphi_{xx}) -\tb^{\top}\varphi_x + \beta \varphi \right)(s,y) - \tilde \cH (x, \tilde v (s,y))$ 
is continuous at $(t,x)$. Thus, we can find $\delta, \delta'>0$ such that the following properties hold:
\begin{itemize}
\item[(a)] If $\|(s,y) - (t,x)\|_2 \leq \delta$, $(s,y) \in (0,T)\times \R_{++}^d$, and $(s,y) \neq (t,x)$, then
$\varphi(s,y) - \tilde{V}(s,y) > 0$.
\item[(b)] If $\|(s,y) - (t,x)\|_2 \leq \delta$ and $(s,y) \in (0,T)\times \R_{++}^d$, then
\begin{equation}\label{eq:viscosity3}
\left(-\varphi_t - \frac{1}{2} \mathrm{Tr}(\sigma \sigma^{\top} \varphi_{xx}) -\tb^{\top}\varphi_x + \beta \varphi \right)(s,y) - \tilde \cH (x, \tilde v (s,y))  > \delta'.
\end{equation}
\item[(c)] If $\|(s,y) - (t,x)\|_2 \leq \delta$, then $y_i > 0$ for $i = 1, 2, \cdots, d$ (since $x_i > 0$ for all $i$ and $y$ is close to $x$).  
\end{itemize}
Define $\tau$ to be the stopping time that either $\delta/2$ amount of time has passed since time $t$, 
or $\tilde Z^{t,x}$ has deviated from $x$ by more than $\delta/2$, i.e.,
\begin{equation}
\tau = \inf \left\{s\ge t: \|\tilde Z^{t,x}(s) - x\|_2\ge \frac{\delta}{2} \right\} \wedge \left(t+\frac{\delta}{2}\right). 
\end{equation}
Then, whenever $s \in [t, \tau)$, we have $|s-t| \leq \delta/2$ 
and $\|\tilde Z^{t,x}(s) - x\|_2 \le \delta/2$, in which case 
\[
\left\|(s, \tilde Z^{t,x}(s)) - (t,x)\right\|_2 \leq \sqrt{ (\delta/2)^2 + (\delta/2)^2} < \delta,
\]
so that properties (a), (b) and (c) above are satisfied for all $(s, \tilde Z^{t,x}(s))$.
Furthermore, it is not difficult to see that $\E[\tau] > t$. 

Next, by applying Itô's lemma to $(s,x) \mapsto e^{-\beta (s-t)} \varphi(s,x)$, we have
\begin{eqnarray*}
\varphi(t,x) &=& \mathbb{E} \Bigg[ e^{-\beta(\tau - t)} \varphi(\tau, \tilde{Z}^{t,x}(\tau)) \\
&& \quad - \int_t^{\tau} e^{-\beta(s - t)} \left( \varphi_t + \frac{1}{2} \mathrm{Tr}(\sigma \sigma^{\top} \varphi_{xx}) + \tb^{\top} \varphi_x - \beta \varphi \right)(s, \tilde{Z}^{t,x}(s)) ds \\
&& \quad - \sum_{j=1}^d \int_t^{\tau} e^{-\beta(s - t)} R_j^{\top}\varphi_x(s, \tilde{Z}^{t,x}(s)) d\tilde{Y}^{t,x}_j(s) \Bigg].
\end{eqnarray*}
By the definition of $\tilde V$, we have
\begin{eqnarray*}
\tilde{V}(t,x) &=& \mathbb{E} \Bigg[ \int_t^{\tau} e^{-\beta(s - t)} \tilde \cH(\tilde{Z}^{t,x}(s), \tilde{v}(s, \tilde{Z}^{t,x}(s))) ds \\
& & \quad + \int_t^{\tau} e^{-\beta(s - t)} \kappa^{\top}d\tilde{Y}^{t,x}(s) + e^{-\beta(\tau - t)} \tilde{V}(\tau, \tilde{Z}^{t,x}(\tau)) \Bigg].
\end{eqnarray*}
Thus, we have
\begin{align}
0 &= \varphi(t,x) - \tilde{V}(t,x) \nonumber \\
&= \mathbb{E} \Bigg[ e^{-\beta(\tau - t)} \left( \varphi(\tau, \tilde{Z}^{t,x}(\tau)) - \tilde{V}(\tau, \tilde{Z}^{t,x}(\tau)) \right) \nonumber \\
&\quad - \int_t^{\tau} e^{-\beta(s - t)} \left[\left( \varphi_t + \frac{1}{2} \mathrm{Tr}(\sigma \sigma^{\top} \varphi_{xx}) + \tb^{\top} \varphi_x - \beta \varphi \right)(s, \tilde{Z}^{t,x}(s)) 
+ \tilde \cH (\tilde{Z}^{t,x}(s), \tilde{v}(s, \tilde Z^{t,x}(s))) \right] ds \nonumber \\
&\quad - \sum_{j=1}^d \int_t^{\tau} e^{-\beta(s - t)} \left[R_j^{\top} \varphi_x(s, \tilde{Z}^{t,x}(s)) + \kappa_j\right] d\tilde{Y}^{t,x}_j(s) \Bigg].
\label{eq:viscosity1}
\end{align}
By property (c) above, for $s \in [t, \tau)$, $\tilde Z_i^{t,x}(s) > 0$ for all $i$, so by definition, $d\tilde Y_i^{t,x}(s) = 0$ for all $i$ .
Thus, continuing from Ineq. \eqref{eq:viscosity1}, we have
\begin{align*}
0 &\ge -\mathbb{E} \left[ \int_t^{\tau} e^{-\beta(s - t)} \left[\left( \varphi_t + \frac{1}{2} \mathrm{Tr}(\sigma \sigma^{\top} \varphi_{xx}) + \tb^{\top} \varphi_x - \beta \varphi \right)(s, \tilde{Z}^{t,x}(s))\right.\right.  \\
& \qquad \quad \left.\left. + \tilde \cH (\tilde{Z}^{t,x}(s), \tilde{v}(s, \tilde Z^{t,x}(s))) \right] ds\right]  \\
& \ge \E \left[\int_t^\tau e^{-\beta(s-t)} \delta' ds\right] \ge \delta' e^{-\beta \delta/2} \E[\tau-t] > 0, 
\end{align*}
reaching a contradiction. Here, in the second inequality, we used Ineq. \eqref{eq:viscosity3}, 
and in the last inequality, we used the fact that $\E[\tau] > t$. 
Therefore, we have established that when $x > 0$, for $\varphi \in C^{1,2}$ and $(t,x) \in (0,T) \times \mathbb{R}_+^d$ a strict maximum of $\tilde V - \varphi$,
\begin{equation}
\Phi_*(t,x,\tilde V(t,x), \varphi_t(t,x), \varphi_x(t,x), \varphi_{xx}(t,x)) \leq 0.
\end{equation}
This completes the proof for case (i). 

Case (ii): $x \in \partial \R_+^d$. In this case, we show directly that there exists some $j \in \cI(x)$ such that 
\begin{equation}\label{eq:oblique-boundary1}
-R_j^{\top} \varphi_x(t,x) - \kappa_j \leq 0,
\end{equation}
which immediately implies Ineq. \eqref{eq:supersolution}. The proof of Eq. \eqref{eq:oblique-boundary1} relies on the following lemma, 
whose proof is relegated to Appendix \ref{sec:appendix-oblique-boundary}.
\begin{lemma}\label{lem:oblique-boundary}
For $x\in \partial \R_+^d$, let $j \in \cI(x)$. If $t \in (0,T)$, then
\begin{equation}\label{eq:oblique-boundary2}
R_j^{\top} \tilde{v}(t,x) + \kappa_j = 0.
\end{equation}
\end{lemma}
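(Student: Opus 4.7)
I would prove the lemma by directly evaluating $R_j^\top \tilde{v}(t,x)$ from the fixed-point identity $\tilde{v}=F(\tilde{v})$ at $(t,x)$, exploiting a structural simplification of the derivative processes in direction $R_j$ when $j \in \cI(x)$. The key intermediate claim is that a.s.
\begin{equation*}
D\tilde{Z}^{t,x}(s)\,R_j = 0 \quad \text{and} \quad D\tilde{Y}^{t,x}(s)\,R_j = -e_j, \qquad s \in [t,T].
\end{equation*}
Granted this claim, right-multiplying $\tilde{v}(t,x)=F(\tilde{v})(t,x)$ by $R_j^\top$ and using $R_j^\top[\sigma^{-1}D\tilde{Z}^{t,x}(r)]^\top = [\sigma^{-1}D\tilde{Z}^{t,x}(r)R_j]^\top = 0$, both Malliavin-weight terms in \eqref{eq:F} vanish, reducing the identity to
\begin{equation*}
R_j^\top\tilde{v}(t,x) = \E\!\left[\int_t^T e^{-\beta(s-t)}\kappa^\top\,d[D\tilde{Y}^{t,x}(s)R_j]\right],
\end{equation*}
which I will show equals $-\kappa_j$ in the last paragraph.

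To prove the claim, I set $\phi(s) := D\tilde{Z}^{t,x}(s)R_j$ and $\eta(s) := \tilde{\eta}(s)R_j$. By Lemma~\ref{lem:linearity} applied to the columnwise derivative problems defining $D\tilde{Z}^{t,x}$ in Section~\ref{ssec:deriv-proc}, $(\phi,\eta)$ solves the derivative problem along $\tilde{Z}^{t,x}$ in the sense that $\phi(s) = \psi(s) + \eta(s)$ with $\psi(s) = R_j + \int_t^s D\tb(\tilde{Z}^{t,x}(r))\,\phi(r)\,dr$, together with $\phi(s)\in H_{\tilde{Z}^{t,x}(s)}$, $\eta(t)\in\text{span}[R(x)]$, and the span-increment condition of Definition~\ref{df:deriv-prob}(iii). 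I then verify that the candidate $(\phi_0,\eta_0)\equiv(0,-R_j)$ is a solution: with $\phi_0\equiv 0$ the input reduces to the constant $\psi_0\equiv R_j$, and $0 = R_j + (-R_j)$; conditions (ii) and (iii) are immediate for the zero/constant pair; and $\eta_0(t) = -R_j \in\text{span}[R(x)]$ precisely because $R_j\in R(x)$ when $j\in\cI(x)$. For uniqueness, I apply the Lipschitz property of the derivative map $\Lambda_{\tilde{Z}^{t,x}}$ (Theorem~5.4 of \cite{LipshutzRamanan2018}): noting $\|\psi-\psi_0\|_s \le C_{\tb}\int_t^s\|\phi-\phi_0\|_r\,dr$ and combining with Gronwall's lemma forces $\phi\equiv 0$. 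Consequently $D\tilde{Y}^{t,x}(s)R_j = R^{-1}\tilde{\eta}(s)R_j = R^{-1}(-R_j) = -e_j$ on $[t,T]$.

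The main technical subtlety is the correct interpretation of the Stieltjes integral $\int_t^T e^{-\beta(s-t)}\kappa^\top d[D\tilde{Y}^{t,x}(s)R_j]$ at the initial time $s=t$: since $D\tilde{Y}^{t,x}(\cdot)R_j$ is identically $-e_j$ on $[t,T]$, the integral naively vanishes. The resolution is rooted in the derivation of $F$ by termwise differentiation of \eqref{eq:feynman-kac-tilde} in $x$, which produces $d[\partial_{R_j}\tilde{Y}^{t,x}(s)]$, where the pathwise derivative $\partial_{R_j}\tilde{Y}^{t,x}$ equals $0$ at $s=t$ but $-e_j$ for $s>t$; it thus contributes a Dirac mass of size $-e_j$ at $s=t$, yielding $e^{-\beta\cdot 0}\kappa^\top(-e_j) = -\kappa_j$. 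Equivalently, integration by parts applied to $e^{-\beta(s-t)}\kappa^\top D\tilde{Y}^{t,x}(s)R_j$, with the convention $D\tilde{Y}^{t,x}(t-)R_j = 0$ (consistent with continuity of $\tilde{Y}^{t,x}$ and $\tilde{Y}^{t,x}(t)=0$), yields the same value $-\kappa_j$ by direct calculation. This gives $R_j^\top\tilde{v}(t,x) = -\kappa_j$ and establishes $R_j^\top\tilde{v}(t,x)+\kappa_j = 0$.
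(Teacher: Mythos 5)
Your argument reaches the same conclusion by the same two structural facts that the paper establishes (that $D\tilde Z^{t,x}(\cdot)R_j=0$ and $D\tilde Y^{t,x}(\cdot)R_j=-e_j$ when $j\in\cI(x)$; these are precisely Lemma \ref{lem:deriv-proc} and Corollary \ref{cor:DY-boundary} in the paper), and your derivation of them via the linearity of the derivative problem, the candidate $(\phi_0,\eta_0)=(0,-R_j)$, and Gr\"onwall matches the paper's reasoning. The one place you diverge is in how the pushing-penalty term is handled, and the paper's ordering is worth noting because it sidesteps the delicacy you had to confront. You work directly with the fixed-point identity $\tilde v = F(\tilde v)$ and must therefore make sense of $\int_t^T e^{-\beta(s-t)}\kappa^{\top}\,d[D\tilde Y^{t,x}(s)R_j]$ with $D\tilde Y^{t,x}(\cdot)R_j$ constant on $[t,T]$, forcing you to invoke an atom at $s=t$ via the pathwise derivative $\partial_{R_j}\tilde Y^{t,x}$. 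This is heuristically right, but it is not airtight: $\partial_y$ is only defined for directions $y\in G_x$, and $R_j$ need not lie in $G_x$ when $|\cI(x)|>1$ and the relevant off-diagonal entries of $Q$ are positive, so appealing to a jump of the pathwise derivative in direction $R_j$ is not strictly licensed. The paper instead starts from the Feynman--Kac representation \eqref{eq:tildeV}, integrates by parts \emph{before} differentiating, rewriting $\E[\int_t^T e^{-\beta(s-t)}\kappa^{\top}d\tilde Y^x(s)]=\E[e^{-\beta(T-t)}\kappa^{\top}\tilde Y^x(T)-\int_t^T\kappa^{\top}\tilde Y^x(s)\,de^{-\beta(s-t)}]$, so that after differentiating in $x$ every term involves only $D\tilde Y^x(\cdot;R_j)$ integrated against Lebesgue measure or evaluated at $T$, and the constant value $-e_j$ drops in cleanly without any atom or convention. (The running- and terminal-cost contributions are killed via Corollary \ref{cor:deriv-boundary} and the chain rule rather than via vanishing of Malliavin weights, but that is only cosmetically different from what you do.) If you keep your presentation, the cleanest fix is the one you already gesture at: perform the integration by parts at the level of $\tilde V$ before differentiating, so you never face the Stieltjes integral against $d[D\tilde Y^{t,x}(\cdot)R_j]$ at all.
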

For $j \notin \cI(x)$, $x_j > 0$. In this case, because $(t,x)$ is a strict maximum of $\tilde V - \varphi$, 
we must have
\begin{equation}
\tilde v_j (t,x) = \left[\tilde V_x (t,x)\right]_j = \left[\varphi_x (t,x)\right]_j.
\end{equation}
Define the following $d\times d$ matrix \( R(x) \), which depends on \( x \in \partial \mathbb{R}_+^d \): 
\begin{itemize}
\item If \( j \in \mathcal{I}(x) \), then the \( j \)th column $R_j(x)$ of \( R(x) \) is \( R_j \);
\item If \( j \notin \mathcal{I}(x) \), then the \( j \)th column $R_j(x)$ of \( R(x) \) is \( e_j \), the $j$th standard unit vector.
\end{itemize}
Then, \( R(x) \) is invertible, and \( R(x)^{-1} \) has nonnegative entries, with positive diagonal entries.
Let \( \bOne := (1, \dots, 1)^{\top} \). Then,  
\(
\bOne^{\top}\varphi_x(t,x) \ge \bOne^{\top} \tilde{v}(t,x),
\)
because, by definition, \( \tilde{V} - \varphi \) has a strict maximum at \( (t,x) \), and $\bOne$ is a feasible direction 
to take directional derivative for both $\tilde V$ and $\varphi$.
Since $R(x)$ is invertible, we have
\(
\bOne = R(x) R(x)^{-1} \bOne = R(x) \alpha, 
\)
where $\alpha = R(x)^{-1} \bOne$ has all its components being positive. 
Then,
\[
\langle \bOne, \varphi_x(t,x) \rangle = \langle R(x) \alpha, \varphi_x(t,x) \rangle 
= \sum_j \alpha_j \langle R_j(x), \varphi_x(t,x) \rangle.
\]
Similarly, 
\[
\langle \bOne, \tilde v(t,x) \rangle 
= \sum_j \alpha_j \langle R_j(x), \tilde v(t,x) \rangle.
\]
If \( j \notin \mathcal{I}(x) \), then
\(
\langle R_j(x), \varphi_x(t,x) \rangle = \langle e_j, \varphi_x(t,x) \rangle = \left[\varphi_x(t,x)\right]_j= \tilde{v}_j (t,x) = \langle R_j(x), \tilde v(t,x) \rangle.
\)
If \( j \in \mathcal{I}(x) \), then
\(
\langle R_j(x), \tilde{v}(t,x) \rangle = \langle R_j, \tilde{v}(t,x) \rangle = -\kappa_j.
\)
Thus,
\begin{align*}
0 & \geq \langle \bOne, \tilde v(t,x) \rangle - \langle \bOne, \varphi_x(t,x) \rangle \\
& = \sum_j \alpha_j \langle R_j(x), \tilde v(t,x) \rangle - \sum_j \alpha_j \langle R_j(x), \varphi_x(t,x) \rangle \\
& = \sum_{j \notin \cI(x)}\alpha_j \langle R_j(x), \tilde v(t,x)  - \varphi_x(t,x) \rangle + \sum_{j \in \cI(x)} \alpha_j \langle R_j(x), \tilde v(t,x)  - \varphi_x(t,x) \rangle \\
& = \sum_{j \in \cI(x)} \alpha_j \langle R_j(x), \tilde v(t,x)  - \varphi_x(t,x) \rangle \\
& = \sum_{j \in \cI(x)} \alpha_j \left(-\kappa_j - \langle R_j, \varphi_x(t,x) \rangle\right),
\end{align*}
so there exists \( j \in \cI(x) \) such that \( - \langle R_j, \varphi_x(t,x) \rangle  -\kappa_j \leq 0 \), which is Ineq. \eqref{eq:oblique-boundary1}.

Thus, we have established Ineq. \eqref{eq:supersolution} for \( \tilde{V} \). Ineq. \eqref{eq:subsolution} 
can be established similarly for $\tilde V$. Therefore, we have proved that $\tilde V$ is a viscosity solution of the HJB equation \eqref{eq:hjb1} -- \eqref{eq:hjb3}.
\end{proof}

\section{Conclusion}\label{sec:conclusion}
In this paper, we developed, analyzed, and implemented a multilevel Picard method for solving high-dimensional drift control problems with reflections. 
We established complexity bounds for value function and gradient estimations, 
showing their polynomial dependence on problem dimension and inverse error tolerance under standard Lipschitz assumptions. 
Computationally, our numerical experiments indicate that the method achieves accurate value and gradient estimates 
and scales favorably with the problem dimension on instances motivated by dynamic control of queueing networks in heavy traffic. 
Moreover, the algorithm requires minimal hyperparameter tuning and admits straightforward parallelization, 
which further improves practicality.

There are many directions for future work, of which we highlight a few. On the theory side, 
one immediate direction is to extend the analysis of the MLP method to problems in which 
the reference processes and derivative processes must be simulated using time discretization; 
see e.g., \cite{NeufeldNguyenWu2025,NeufeldWu2025} for analyses of this type. 
A second potential direction concerns the use of the MLP method and its complexity analysis to 
provide complexity guarantees for neural network approximations for these problems; 
see \cite{NeufeldNguyen2024Rectified} and references therein for some exciting recent developments in this direction.
A third interesting direction concerns continuity properties of the gradient function. 
Even though the derivative processes have discontinuous dependence on the initial condition  
up to the boundary of the state space, in the Skorokhod topology, we only need the Malliavin weights (cf. the form of the map $F$ in Eq. \eqref{eq:F}), 
which may have continuous dependence on the initial condition, even up to the boundary. 
The latter property would imply the continuity of the gradient function in the entire space, not just in the interior, 
thereby establishing new regularity results for the value function of drift controls with reflections.  
On the computational side, it is fruitful to explore synergies between the MLP method 
and the deep BSDE method and their applications in solving control problems; for example, the MLP method 
may be used to gauge optimality gap of results produced by the deep BSDE method. 
Finally, while we carry out a non-parametric approach to solve the Picard iterations, 
it is also possible to solve the Picard iterations using parametric neural networks; 
see \cite{HanHuLongZhao2024} for a recent promising development in this direction. 

\bigskip
\noindent\textbf{Acknowledgements.} The author gratefully acknowledges the financial support from University of Chicago, Booth School of Business. 
The author would also like to thank Barış Ata, Amarjit Budhiraja, René Caldentey, Xinyun Chen, Arnulf Jentzen, Kavita Ramanan, Marty Reiman, Nian Si, Wouter van Eekelen 
and Amy Ward for insightful conversations and help with various aspects of the paper. 

\bibliographystyle{apalike}
\begingroup\small
\begin{singlespace}

\end{singlespace}
\endgroup

\newpage
\begin{appendices}
\section*{Appendix}
%

\section{Proofs for Section \ref{sec:tech}}\label{app:tech-proofs}

\subsection{Proof of Proposition \ref{prop:moment-Z}}
The following lemma is used in the proof of Proposition \ref{prop:moment-Z}.
\begin{lemma}\label{lem:sigmaB}
Let $B(\cdot)$ be a standard $d$-dimensional Brownian motion, and let $\sigma$ be a diffusion coefficient matrix 
that satisfies Assumption \ref{as:sigma}. Then, for any $c > 0$, 
\begin{equation}\label{eq:sigmaB-reflection}
\pr\left(\sup_{s\in [0,t]} \|\sigma B(s)\|_\infty \geq c \right) \leq 2\cdot \pr\left(\|\sigma B(t)\|_\infty \geq c\right).
\end{equation}
\end{lemma}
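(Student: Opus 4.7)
The plan is to adapt the classical one-dimensional reflection principle to the $\ell_\infty$-norm of the $d$-dimensional process $\sigma B(\cdot)$ via a stopping-time and strong Markov argument. The key observations are: (a) the increment $\sigma B(t) - \sigma B(\tau)$ is, conditionally on $\cF_\tau$, a centered (hence symmetric) Gaussian independent of $\cF_\tau$; and (b) the first moment when $\|\sigma B(\cdot)\|_\infty$ crosses $c$ identifies an $\cF_\tau$-measurable coordinate that I can ``align'' with this symmetric increment to force $\|\sigma B(t)\|_\infty \geq c$ with conditional probability at least $1/2$.

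First I would set $\tau := \inf\{s \geq 0 : \|\sigma B(s)\|_\infty \geq c\}$, which is a stopping time as the first hitting time of a closed set by a continuous process. By continuity of sample paths,
\begin{equation*}
\left\{\sup_{s \in [0,t]} \|\sigma B(s)\|_\infty \geq c\right\} = \{\tau \leq t\},
\end{equation*}
and on this event $\|\sigma B(\tau)\|_\infty = c$. Next, on $\{\tau \leq t\}$, I select in an $\cF_\tau$-measurable way (breaking ties by the smallest index, say) a coordinate $i^* \in \{1, \ldots, d\}$ achieving $|(\sigma B(\tau))_{i^*}| = c$, and let $s_* \in \{-1, +1\}$ be its sign; both $i^*$ and $s_*$ are $\cF_\tau$-measurable.

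The core step applies the strong Markov property: conditionally on $\cF_\tau$ on $\{\tau \leq t\}$, the increment $\sigma B(t) - \sigma B(\tau)$ is a centered Gaussian vector in $\R^d$ with covariance $(t-\tau)\sigma\sigma^{\top}$, independent of $\cF_\tau$. Therefore the scalar random variable $Y := s_* \cdot \left[(\sigma B(t))_{i^*} - (\sigma B(\tau))_{i^*}\right]$ is, conditionally on $\cF_\tau$, a symmetric (possibly degenerate, when $\tau = t$) random variable, so $\pr(Y \geq 0 \mid \cF_\tau) \geq 1/2$ on $\{\tau \leq t\}$. On the event $\{Y \geq 0\}$, one has $s_* \cdot (\sigma B(t))_{i^*} = c + Y \geq c$, which implies $|(\sigma B(t))_{i^*}| \geq c$ and in particular $\|\sigma B(t)\|_\infty \geq c$. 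Taking expectations yields
\begin{equation*}
\pr(\|\sigma B(t)\|_\infty \geq c) \geq \E\left[\mbOne\{\tau \leq t\}\cdot \pr(Y \geq 0 \mid \cF_\tau)\right] \geq \tfrac{1}{2}\,\pr(\tau \leq t),
\end{equation*}
which rearranges to the desired inequality.

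The only mild subtleties are the $\cF_\tau$-measurable selection of $i^*$ (routine, via lexicographic tie-breaking) and the symmetry of the increment after multiplying by the $\cF_\tau$-measurable sign $s_*$ (which holds because centered Gaussian laws are invariant under deterministic sign flips applied coordinatewise, and conditionally on $\cF_\tau$ the sign $s_*$ is deterministic). I do not anticipate any substantive obstacle; in fact the argument does not use any property of $\sigma$ beyond its being deterministic, so Assumption \ref{as:sigma} is not actually invoked in the proof.
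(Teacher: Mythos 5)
Your proof is correct, and it takes a mildly different route from the paper's. Both arguments introduce the hitting time $\tau$ of the level $c$ by $\|\sigma B(\cdot)\|_\infty$ and invoke the strong Markov property (equivalently, the conditional symmetry of the Gaussian increment after $\tau$). The paper executes this as the classical reflection argument: it splits $\{\sup_{s\le t}\|\sigma B(s)\|_\infty \ge c\}$ according to whether $\|\sigma B(t)\|_\infty \ge c$ or $< c$, constructs the path-reflected process $X^* = 2X(\tau)-X$ (reflecting \emph{all} coordinates at once), shows $X^*$ has the same law as $X$, and uses the reverse triangle inequality $\|2X(\tau)-X(t)\|_\infty \ge 2\|X(\tau)\|_\infty - \|X(t)\|_\infty$ to conclude $\|X^*(t)\|_\infty > c$ on the second piece. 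You instead go straight to the estimate $\pr(\tau \le t) \le 2\,\pr(\|\sigma B(t)\|_\infty \ge c)$ by singling out the $\cF_\tau$-measurable extreme coordinate $i^*$ and sign $s_*$, and using conditional symmetry of the scalar increment to get $\pr(Y \ge 0 \mid \cF_\tau) \ge 1/2$. Both are standard reflection-type arguments; yours is a bit more elementary (no globally reflected process, no reverse triangle inequality) and handles the degenerate $\tau = t$ case transparently, while the paper's is the textbook presentation. You are also right that Assumption \ref{as:sigma} is not actually invoked by either proof -- only that $\sigma$ is a fixed deterministic matrix matters; the paper's proof likewise does not use uniform ellipticity.
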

\begin{proof}
The proof of the lemma makes crucial use of the reflection principle. We have 
\begin{align}
&~\pr\left(\sup_{s\in [0,t]} \|\sigma B(s)\|_\infty \geq c \right) \nonumber\\
=&~\pr\left(\sup_{s\in [0,t]} \|\sigma B(s)\|_\infty \geq c, \|\sigma B(t)\|_\infty \geq c \right) 
+ \pr\left(\sup_{s\in [0,t]} \|\sigma B(s)\|_\infty \geq c, \|\sigma B(t)\|_\infty < c \right) \nonumber \\
\leq &~\pr\left(\|\sigma B(t)\|_\infty \geq c \right) + \pr\left(\sup_{s\in [0,t]} \|\sigma B(s)\|_\infty \geq c, \|\sigma B(t)\|_\infty < c \right). 
\label{eq:reflection1}
\end{align}
Thus, it suffices to show that 
\[
\pr\left(\sup_{s\in [0,t]} \|\sigma B(s)\|_\infty \geq c, \|\sigma B(t)\|_\infty < c \right) \leq \pr\left(\|\sigma B(t)\|_\infty \geq c \right), 
\]
for which we will use the reflection principle. Define the hitting time $\tau$ by 
\[
\tau := \inf \left\{s \in [0, t] : \sup_{s\in [0,t]} \|\sigma B(s)\|_\infty \geq c\right\}, 
\]
with the convention that if the process never hits $c$ during the time interval $[0, t]$, $\tau = \infty$. 
Define the process $X$ by $X(t) = \sigma B(t)$, $t \geq 0$, and define the process $X^*$ ``reflected'' at time $\tau$ by
\[
X^*(s) = \left\{\begin{array}{ll}
X(s) & \text{ if } s\leq \tau; \\
X(\tau) - (X(s)-X(\tau)) & \text{ if } s > \tau.
\end{array}\right.
\]
Then, $X$ and $X^*$ have the same law, and under the event that $\sup_{s\in [0,t]} \|X(s)\|_\infty \geq c$ and $\|X(t)\|_\infty < c$, 
we have $\tau \leq t$, $\|X(\tau)\|_\infty \geq c$, and 
\[
\|X^*(t)\|_\infty = \|X(\tau) - (X(t)-X(\tau))\|_\infty \geq 2\|X(\tau)\|_\infty - \|X(t)\|_\infty > 2c-c = c.
\]
Therefore, 
\begin{equation}\label{eq:reflection2}
\pr\left(\sup_{s\in [0,t]} \|\sigma B(s)\|_\infty \geq c, \|\sigma B(t)\|_\infty < c \right) 
\leq \pr\left(\|X^*(t)\|_\infty > c \right) = \pr\left(\|X(t)\|_\infty > c \right).
\end{equation}
By Ineqs. \eqref{eq:reflection1} and \eqref{eq:reflection2}, the lemma is established.
\end{proof}

\begin{proof}[Proof of Proposition \ref{prop:moment-Z}] 
Let $t \ge 0$ and $\alpha \ge 1$. For $s \in [0, t]$,
\begin{align*}
\tilde{Z}^x(s) &= x + \int_0^s \tb(\tilde{Z}^x(r))dr + \sigma B(s) + R \tilde{Y}^x(s) \\
&= \tilde{X}^x(s) + R \tilde{Y}^x(s),
\end{align*}
where we recall the definition of $\tilde X^x$ in Eq. \eqref{eq:tx}.  
By Lipschitz continuity of the Skorokhod map $\Gamma$, 
and the fact that the zero process solves the Skorokhod problem for the zero process itself, 
\begin{align}
\| \tilde{Z}^x \|_t^\alpha &\leq C_{\Gamma}^\alpha \| \bar{X}^x \|_t^\alpha \\
&=  C_{\Gamma}^\alpha \sup_{s \in [0, t]} \left\| x + \int_0^s \tb(\tilde{Z}^x(r))dr + \sigma B(s) \right\|^\alpha_\infty \nonumber \\
&\leq (3 C_{\Gamma})^\alpha \left( \|x\|^\alpha_\infty + \sup_{s \in [0, t]} \left\| \int_0^s \tb(\tilde{Z}^x(r))dr \right\|^\alpha_\infty + \sup_{s \in [0, t]} \|\sigma B(s)\|^\alpha_\infty \right). 
\label{eq:moment-Z1-app}
\end{align}
By Assumption \ref{as:tb}, we have that 
\begin{equation}\label{eq:moment-Z-tb-app}
\sup_{s \in [0, t]} \left\| \int_0^s \tb(\tilde{Z}^x(r))dr \right\|^\alpha_\infty 
\le \left(C_{\tb} t\right)^\alpha.
\end{equation}
Next, consider $\E\left[\sup_{s \in [0, t]} \|\sigma B(s)\|^\alpha_\infty\right]$. 
By Ineq. \eqref{eq:sigmaB-reflection} in Lemma \ref{lem:sigmaB}, we have
\begin{eqnarray}
\E\left[\sup_{s \in [0, t]} \|\sigma B(s)\|^\alpha_\infty\right] &=& \int_0^\infty \alpha c^{\alpha-1} \pr\left(\sup_{s \in [0, t]} \|\sigma B(s)\|_\infty \geq c \right) dc \nonumber \\
&\leq & 2 \int_0^\infty \alpha c^{\alpha-1} \pr\left(\|\sigma B(t)\|_\infty \geq c \right) dc \nonumber \\
&=& 2\E\left[\|\sigma B(t)\|^\alpha_\infty\right].
\label{eq:sigmaB-moment-app}
\end{eqnarray}
For each $i$, $\sum_{j=1}^d \sigma_{ij} B_j(t)$ is a zero-mean normal random variable 
with variance $t \cdot \sum_{j=1}^d \sigma_{ij}^2 \leq C_\sigma^2 \cdot t$, by Assumption \ref{as:sigma}.
Thus, 
\[
\E\left[\|\sigma B(t)\|^\alpha_\infty\right] \leq \left(C_\sigma^2 t\right)^{\alpha/2} \cdot \E\left[\max_{i=1, 2, \cdots, d} |N_i|^{\alpha}\right] 
:= C_\sigma^\alpha t^{\alpha/2} \cdot \E\left[\max_{i=1, 2, \cdots, d} |N_i|^{\alpha}\right], 
\]
for some standard normal random variables $N_1, \cdots, N_J$ that are possibly correlated. 
Standard concentration inequalities imply that there exists some positive constant $C'_\alpha$, which depends on $\alpha$ but not on $d$, 
such that 
\[
\E\left[\max_{i=1, 2, \cdots, d} |N_i|^{\alpha}\right] \leq C'_\alpha \left(\log d\right)^{\alpha/2}.
\]
Therefore, 
\begin{equation}\label{eq:sigmaB2-app}
\E\left[\|\sigma B(t)\|^\alpha_\infty\right] \leq C_\sigma^\alpha t^{\alpha/2} \cdot \E\left[\max_{i=1, 2, \cdots, d} |N_i|^{\alpha}\right] 
\leq C_\sigma^\alpha t^{\alpha/2} C'_\alpha \left(\log d\right)^{\alpha/2}.
\end{equation}
Taking expectation on both sides of Ineq. \eqref{eq:moment-Z1-app}, and using Ineqs. \eqref{eq:moment-Z-tb-app}, \eqref{eq:sigmaB-moment-app} and \eqref{eq:sigmaB2-app}, 
we have that 
\begin{equation}
\E\left[\| \tilde{Z}^x \|_t^\alpha\right] \leq (3 C_{\Gamma})^\alpha \left( \|x\|^\alpha_\infty + \left(C_{\tb} t\right)^\alpha + 2 C_\sigma^\alpha t^{\alpha/2} C'_\alpha \left(\log d\right)^{\alpha/2}\right).
\end{equation}
By taking 
\begin{align}
C(t, \alpha, \sigma, C_{\tb}, C_{\Gamma}) 
:= (3 C_{\Gamma})^\alpha \cdot \max \left\{ 1, (C_{\tb} t)^\alpha, 2 C_\sigma^\alpha t^{\alpha/2} C'_\alpha \right\},
\end{align}
we have established Ineq. \eqref{eq:moment1}. 

To establish Ineq. \eqref{eq:moment2}, let $x, y \in \R_+^d$. We have
\begin{align*}
\|\tilde{X}^y(t) - \tilde{X}^x(t)\|_\infty &= \left\| (y - x) + \int_0^t \left[ \tb(\tilde{Z}^y(s)) - \tb(\tilde{Z}^x(s)) \right] ds\right\|_\infty \\
&\leq \|y - x\|_\infty + C_{\tb} \int_0^t \| \tilde{Z}^y - \tilde{Z}^x \|_s ds, 
\end{align*}
so by Lipschitz property of the map $\Gamma$, 
\begin{align*}
\| \tilde{Z}^y - \tilde{Z}^x \|_t &\leq C_{\Gamma} \| \tilde{X}^y - \tilde{X}^x \|_t \leq C_\Gamma \|y - x\|_\infty + C_\Gamma C_{\tb} \int_0^t \| \tilde{Z}^y - \tilde{Z}^x \|_s ds
\end{align*}
By Grönwall's inequality, 
\begin{equation*}
\| \tilde{Z}^y - \tilde{Z}^x \|_t \leq C_\Gamma \|y - x\|_\infty \exp(C_\Gamma C_{\tb} t). 
\end{equation*}
By re-defining
\begin{align}
C(t, \alpha, \sigma, C_{\tb}, C_{\Gamma}) 
:= \max\left\{(3 C_{\Gamma})^\alpha \cdot \max \left\{ 1, (C_{\tb} t)^\alpha, 2 C_\sigma^\alpha t^{\alpha/2} C'_\alpha \right\}, 
C_\Gamma  \exp(C_\Gamma K_{\tb} t)\right\},
\end{align}
we have established both Ineqs. \eqref{eq:moment1} and \eqref{eq:moment2}. 
This concludes the proof of the proposition. 
\end{proof}

\subsection{Proof of Corollary \ref{cor:moment-w}}
\begin{proof}
By Proposition \ref{prop:moment-Z}, 
\begin{eqnarray*}
\E\left[w^\alpha \left(\tilde Z^x(t)\right)\right] &=& \E\left[\left(1+\left(\log d\right)^{\alpha_0/2} + \left\|\tilde Z^x(t)\right\|_\infty^{\alpha_0}\right)^\alpha \right] \\
&\leq & 3^{\alpha} \left(1 + \left(\log d\right)^{\alpha \alpha_0/2} + \E\left[\left\|\tilde Z^x\right\|_T^{\alpha \alpha_0}\right]\right) \\
&\leq & 3^{\alpha} \left(1 + \left(\log d\right)^{\alpha \alpha_0/2} + C \left(1 + (\log d)^{\alpha\alpha_0/2} + \|x\|^{\alpha\alpha_0}_\infty\right)\right) \\
&\leq & C_{\alpha, T} \left(1 + \left(\log d\right)^{\alpha \alpha_0/2} + \|x\|^{\alpha\alpha_0}_\infty\right) \\
&\leq & C_{\alpha, T} w^{\alpha}(x), 
\end{eqnarray*}
for some positive constant $C_{\alpha, T}$ that only depends on $\alpha$, $T$, and $C$, where $C$ is the positive constant from Proposition \ref{prop:moment-Z}. 
This completes the proof of the corollary.
\end{proof}

\subsection{Proof of Lemma \ref{lem:bound-DZ}}
\begin{proof}
Let 
\[
{\Psi}(t) = e_j + \int_0^t D \tb(\tilde{Z}^x(s)) D_j \tilde{Z}^x(s) ds.
\]
Then a.s.\ $D_j \tilde{Z}^x = \Lambda_{\tilde Z^x}[{\Psi}]$.
By the facts that $0 = \Lambda_{\tilde Z^x}[0]$ and the Lipschitz property of the map $\Lambda$, we have that a.s.
\begin{eqnarray*}
\| D_j \tilde{Z}^x \|_t 
&\leq & C_{\Lambda} \| {\Psi} \|_t \leq C_{\Lambda} \sup_{s\in [0,t]} \| {\Psi}(s) \|_\infty \\
&\leq & C_{\Lambda} \left( 1 + \int_0^t \| D\tb(\tilde Z^x(s)) D_j \tilde{Z}^x(s) \|_\infty ds \right)\\
&\leq & C_{\Lambda} \left( 1 + \int_0^t \| D\tb(\tilde Z^x(s)) \|_\infty \|D_j \tilde{Z}^x(s) \|_\infty ds \right)\\
&\leq & C_{\Lambda} \left( 1 + C_{\tb} \int_0^t \| D_j \tilde{Z}^x \|_s ds \right).
\end{eqnarray*}
By Grönwall’s inequality, we have that a.s.
\begin{align}
\| D_j \tilde{Z}^x \|_t \leq C_{\Lambda} \exp(C_{\Lambda} C_{\tb} t).
\end{align}
This concludes the proof of the lemma. 
\end{proof}

\subsection{Proof of Proposition \ref{thm:continuity-DZ}}

\begin{lemma}\label{lem:Z-conv}
  a.s.\ \( \tilde{Z}^{y^{(n)}} \to \tilde{Z}^x \) in \( \C(\mathbb{R}_+^d) \) as \( n \to \infty \).
  \end{lemma}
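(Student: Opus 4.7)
\textbf{Proof proposal for Lemma \ref{lem:Z-conv}.}

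The plan is to reduce the statement to a pathwise Lipschitz bound of $\tilde{Z}^y$ in the initial condition $y$, using the Lipschitz continuity of the Skorokhod map (Proposition \ref{prop:skorokhod-lipschitz}) and of the drift $\tb$ (Assumption \ref{as:tb}), followed by Grönwall's inequality. This is essentially the argument already carried out for Ineq. \eqref{eq:moment2} in Proposition \ref{prop:moment-Z}; I will restate it here in the form needed.

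First, recall from Eq. \eqref{eq:tx} that $\tilde{Z}^{y^{(n)}} = \Gamma(\tilde{X}^{y^{(n)}})$ and $\tilde{Z}^x = \Gamma(\tilde{X}^x)$, where
\[
\tilde{X}^{y^{(n)}}(s) - \tilde{X}^x(s) = \bigl(y^{(n)}-x\bigr) + \int_0^s \bigl[\tb(\tilde{Z}^{y^{(n)}}(r)) - \tb(\tilde{Z}^x(r))\bigr]\, dr,
\]
since the Brownian contribution $\sigma B$ cancels. By Assumption \ref{as:tb},
\[
\|\tilde{X}^{y^{(n)}} - \tilde{X}^x\|_s \le \|y^{(n)}-x\|_\infty + C_{\tb}\int_0^s \|\tilde{Z}^{y^{(n)}} - \tilde{Z}^x\|_r\, dr.
\]
Applying Proposition \ref{prop:skorokhod-lipschitz} and combining the two inequalities yields, for every $t\ge 0$,
\[
\|\tilde{Z}^{y^{(n)}} - \tilde{Z}^x\|_t \le C_\Gamma \|y^{(n)}-x\|_\infty + C_\Gamma C_{\tb} \int_0^t \|\tilde{Z}^{y^{(n)}} - \tilde{Z}^x\|_r\, dr.
\]
Grönwall's inequality then gives the pathwise bound
\[
\|\tilde{Z}^{y^{(n)}} - \tilde{Z}^x\|_t \le C_\Gamma \|y^{(n)}-x\|_\infty \, e^{C_\Gamma C_{\tb} t}.
\]
Since $y^{(n)}\to x$, the right-hand side tends to $0$ for each fixed $t$, which is exactly uniform convergence on compact time intervals, i.e., convergence in $\C(\mathbb{R}_+^d)$. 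The bound holds on a single almost sure event (the one on which both $\tilde{Z}^{y^{(n)}}$ and $\tilde{Z}^x$ are the unique strong solutions to the reflected SDE \eqref{eq:reflected-diffusion}), so the convergence is almost sure.

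There is no real obstacle here: the argument uses only the dimension-free Lipschitz constants $C_\Gamma$ and $C_{\tb}$, and does not involve any fine sensitivity information about reflections. The hypothesis $x \in \mathbb{R}_{++}^d$ plays no role in this step; it will become essential only in the subsequent use of Proposition \ref{prop:Lambda-Dconv} to upgrade from convergence of the underlying reflected diffusions to convergence of the derivative processes in $\D(\mathbb{R}^d)$ (which is the content of Proposition \ref{thm:continuity-DZ} proper).
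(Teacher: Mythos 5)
Your argument is correct and is essentially identical to the paper's proof: both compare $\tilde{X}^{y^{(n)}}$ and $\tilde{X}^{x}$ pointwise, use the Lipschitz bounds from Proposition~\ref{prop:skorokhod-lipschitz} and Assumption~\ref{as:tb}, and close the estimate with Gr\"onwall's inequality to obtain the pathwise bound $\|\tilde{Z}^{y^{(n)}}-\tilde{Z}^{x}\|_{t}\le C_\Gamma\|y^{(n)}-x\|_\infty e^{C_\Gamma C_{\tb} t}$. You also correctly observe that this is the same computation as Ineq.~\eqref{eq:moment2} of Proposition~\ref{prop:moment-Z} and that $x\in\mathbb{R}_{++}^d$ is not needed at this stage.
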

  
  \begin{proof}
  Let
  \[
  \tilde{X}^{y^{(n)}}(t) = y^{(n)} + \int_0^t \tb(\tilde{Z}^{y^{(n)}}(s))\,ds + \sigma B(t), \qquad n \in \mathbb{N},
  \]
  and let
  \[
  \tilde{X}^x(t) = x + \int_0^t \tb(\tilde{Z}^x(s))\,ds + \sigma B(t).
  \]
  Then,
  \[
  \tilde{Z}^{y^{(n)}} = \Gamma(\tilde{X}^{y^{(n)}}), \quad \text{ and } \quad \tilde{Z}^x = \Gamma(\tilde{X}^x).
  \]
  By Lipschitz continuity of the Skorokhod map \( \Gamma \) and Assumption \ref{as:tb}, for any \( t \ge 0 \),
  \begin{align*}
  \| \tilde{Z}^{y^{(n)}} - \tilde{Z}^x \|_t
  &\le C_\Gamma \| \tilde{X}^{y^{(n)}} - \tilde{X}^x \|_t \\
  &\le C_\Gamma \left( \| y^{(n)} - x \|_\infty + C_{\tb} \int_0^t \| \tilde{Z}^{y^{(n)}} - \tilde{Z}^x \|_s\,ds \right).
  \end{align*}
  By Grönwall’s inequality, \[
  \| \tilde{Z}^{y^{(n)}} - \tilde{Z}^x \|_t \le C_\Gamma \| y^{(n)} - x \|_\infty \cdot \exp(C_\Gamma C_{\tb} t).
  \]
  Thus, as \( n \to \infty \), \( \| y^{(n)} - x \|_\infty \to 0 \), and
  \[
  \| \tilde{Z}^{y^{(n)}} - \tilde{Z}^x \|_t \to 0 \quad \text{for every } t \in \mathbb{R}_+.
  \]
  This completes the proof of the lemma. \end{proof}
  
  \begin{proof}[Proof of Proposition \ref{thm:continuity-DZ}] 
  Let $j \in \{1, 2, \cdots, d\}$. 
  For each \( n \in \mathbb{N} \), define
  \begin{equation}\label{eq:Psi-yn}
  \Psi^{y^{(n)}}(t) := e_j + \int_0^t D\tb(\tilde{Z}^{y^{(n)}}(s)) D_j\tilde{Z}^{y^{(n)}}(s)\,ds,
  \end{equation}
  and define
  \begin{equation}\label{eq:Psi-x}
  \Psi^x(t) := e_j + \int_0^t D\tb(\tilde{Z}^x(s)) D_j\tilde{Z}^x(s)\,ds, 
  \end{equation}
  where $e_j$ is the $j$th standard unit vector in $\R^d$.
  Then a.s. \( D_j\tilde{Z}^x = \Lambda_{\tilde{Z}^x}[\Psi^x] \), and for \( n \in \mathbb{N} \),
  $D_j\tilde{Z}^{y^{(n)}} = \Lambda_{\tilde{Z}^{y^{(n)}}}[\Psi^{y^{(n)}}]$.
  
  Let \( t \in \mathbb{R}_+ \). Then
  \begin{align*}
  \| \Psi^{y^{(n)}}(t) - \Psi^x(t) \|_\infty
  &= \left\| \int_0^t D\tb(\tilde{Z}^{y^{(n)}}(s)) D_j\tilde{Z}^{y^{(n)}}(s)\,ds - \int_0^t D\tb(\tilde{Z}^x(s)) D_j\tilde{Z}^x(s)\,ds \right\|_\infty \\
  &\le \int_0^t \left\| D\tb(\tilde{Z}^{y^{(n)}}(s)) \left(D_j\tilde{Z}^{y^{(n)}}(s) - D_j\tilde{Z}^x(s)\right) \right\|_\infty \,ds \\
  &\quad + \int_0^t \left\| \left(D\tb(\tilde{Z}^{y^{(n)}}(s)) - D\tb(\tilde{Z}^x(s))\right)D_j\tilde{Z}^x(s) \right\|_\infty \,ds \\
  &\le C_{\tb} \int_0^t \| D_j\tilde{Z}^{y^{(n)}}(s) - D_j\tilde{Z}^x(s) \|_\infty\,ds \\
  &\quad + C_{\tb} \int_0^t \|D_j \tilde{Z}^x(s)\|_\infty \, \|\tilde{Z}^{y^{(n)}}(s) - \tilde{Z}^x(s)\|_\infty^{\alpha_{\tb}} ds.
  \end{align*}
  Here, the preceding inequality follows from Assumption \ref{as:tb}.
  
  We now examine the terms
  \[
  C_{\tb} \int_0^t \| D_j\tilde{Z}^{y^{(n)}}(s) - D_j\tilde{Z}^x(s) \|_\infty \,ds \quad \text{and} \quad
  C_{\tb} \int_0^t \|D_j \tilde{Z}^x(s)\|_\infty \, \|\tilde{Z}^{y^{(n)}}(s) - \tilde{Z}^x(s)\|_\infty^{\alpha_{\tb}} ds, 
  \]
  respectively. First consider the latter term. Because by Lemma \ref{lem:bound-DZ}, \( \|D_j \tilde{Z}^x\|_t < \infty \), 
  and by Proposition \ref{prop:moment-Z}, 
  \( \|\tilde{Z}^{y^{(n)}} - \tilde{Z}^x\|_t < C \|y^{(n)}-x\|_\infty \) for some $C$ that does not depend on $n$ or $x$, 
  \[
  C_{\tb} \int_0^t \|D_j \tilde{Z}^x(s)\|_\infty \, \|\tilde{Z}^{y^{(n)}}(s) - \tilde{Z}^x(s)\|_\infty^{\alpha_{\tb}} ds \to 0 
  \quad \text{as } n \to \infty.
  \]
  
  Now consider
  \[
  C_{\tb}\int_0^t \| D_j\tilde{Z}^{y^{(n)}}(s) - D_j\tilde{Z}^x(s) \|_\infty\,ds.
  \]
  
  Recall that a.s. \( D_j \tilde{Z}^{y^{(n)}} = \Lambda_{\tilde{Z}^{y^{(n)}}}(\Psi^{y^{(n)}}) \), \( n \in \mathbb{N} \)  
  and \( D_j \tilde{Z}^x = \Lambda_{\tilde{Z}^x}(\Psi^x) \). Thus, a.s.,
  \begin{align*}
  \|D_j \tilde{Z}^{y^{(n)}}(s) - D_j \tilde{Z}^x(s)\|_\infty &= \|\Lambda_{\tilde{Z}^{y^{(n)}}}(\Psi^{y^{(n)}})(s) - \Lambda_{\tilde{Z}^x}(\Psi^x)(s)\|_\infty \\
  & \leq \|\Lambda_{\tilde{Z}^{y^{(n)}}}(\Psi^{y^{(n)}})(s) - \Lambda_{\tilde{Z}^{y^{(n)}}}(\Psi^{x})(s)\|_\infty + \|\Lambda_{\tilde{Z}^{y^{(n)}}}(\Psi^{x})(s) - \Lambda_{\tilde{Z}^{x}}(\Psi^{x})(s)\|_\infty \\
  & \leq \|\Lambda_{\tilde{Z}^{y^{(n)}}}(\Psi^{y^{(n)}}) - \Lambda_{\tilde{Z}^{y^{(n)}}}(\Psi^{x})\|_s + \|\Lambda_{\tilde{Z}^{y^{(n)}}}(\Psi^{x})(s) - \Lambda_{\tilde{Z}^{x}}(\Psi^{x})(s)\|_\infty \\
  & \leq C_\Lambda \|\Psi^{y^{(n)}} - \Psi^{x}\|_s + \|\Lambda_{\tilde{Z}^{y^{(n)}}}(\Psi^{x})(s) - \Lambda_{\tilde{Z}^{x}}(\Psi^{x})(s)\|_\infty.
  \end{align*}
  Thus,
  \[
  \int_0^t\|D_j \tilde{Z}^{y^{(n)}}(s) - D_j \tilde{Z}^x(s)\|_\infty ds 
  \leq C_\Lambda \int_0^t  \|\Psi^{y^{(n)}} - \Psi^{x}\|_s ds 
  + \int_0^t \|\Lambda_{\tilde{Z}^{y^{(n)}}}(\Psi^{x})(s) - \Lambda_{\tilde{Z}^{x}}(\Psi^{x})(s)\|_\infty ds.
  \]
  By Proposition \ref{prop:Lambda-Dconv}, \( \Lambda_{\tilde{Z}^{y^{(n)}}}(\Psi^{x}) \to \Lambda_{\tilde{Z}^{x}}(\Psi^{x}) \) in \( \mathbb{D}(\mathbb{R}^d) \). 
  Furthermore, both \( \|\Lambda_{\tilde{Z}^{y^{(n)}}}(\Psi^{x})\|_t \) and \( \|\Lambda_{\tilde{Z}^{x}}(\Psi^{x})\|_t \) are uniformly bounded, so by the fact that 
  \[
  \Lambda_{\tilde{Z}^{y^{(n)}}}(\Psi^{x})(s) \to \Lambda_{\tilde{Z}^{x}}(\Psi^{x})(s) \text{ for } s \in [0,t] \text{ a.e.,}
  \]
  and dominated convergence theorem,
  \[
  \int_0^t \|\Lambda_{\tilde{Z}^{y^{(n)}}}(\Psi^{x})(s) - \Lambda_{\tilde{Z}^{x}}(\Psi^{x})(s)\|_\infty ds \to 0 \quad \text{as } n \to \infty.
  \]
  To summarize, we have established that
  \[
  \| \Psi^{y^{(n)}}(t) - \Psi^x(t) \|_\infty \leq C_n(t) + C_{\tb} C_\Lambda \int_0^t  \|\Psi^{y^{(n)}} - \Psi^{x}\|_s ds,
  \]
  where
  \[
  C_n(t) = C_{\tb} \int_0^t \|\Lambda_{\tilde{Z}^{y^{(n)}}}(\Psi^{x})(s) - \Lambda_{\tilde{Z}^{x}}(\Psi^{x})(s)\|_\infty ds
  + C_{\tb} \int_0^t \|D_j \tilde{Z}^x(s)\|_\infty \, \|\tilde{Z}^{y^{(n)}}(s) - \tilde{Z}^x(s)\|_\infty^{\alpha_{\tb}} ds.
  \]
  Since \( C_n(\cdot) \) is non-decreasing for each \( n \in \mathbb{N} \), 
  we also have
  \[
  \| \Psi^{y^{(n)}} - \Psi^x \|_t \leq C_n(t) + C_{\tb} C_\Lambda \int_0^t \|\Psi^{y^{(n)}} - \Psi^{x}\|_s ds.
  \]
  By Grönwall's inequality, and the fact that \( C_n(t) \to 0 \) as \( n \to \infty \) for each $t \in \mathbb{R}_+$, we have
  \[
  \| \Psi^{y^{(n)}} - \Psi^x \|_t  \to 0 \quad \text{as } n \to \infty.
  \]
  Because $x \in \R_{++}^d$, $\Psi^x(0) = e_j \in H_x$. Thus, by Proposition \ref{prop:Lambda-Dconv}, a.s.
  \[
  D_j\tilde{Z}^{y^{(n)}} \to D_j\tilde{Z}^x \quad \text{in } \mathbb{D}(\mathbb{R}^d) \text{ as } n \to \infty.
  \]
  This completes the proof of the proposition. 
  \end{proof}

\subsection{Proof of Corollary \ref{cor:conv-bel}} 
  By the Burkholder–Davis–Gundy inequality,
  \[
  \mathbb{E} \left[ \sup_{s\in [0, t]} \|I(y^{(n)})(s) - I(x)(s)\|_2^2 \right] \leq 4 \, \mathbb{E} \left[ \int_0^t 
  \|\sigma^{-1}(D_j\tilde{Z}^{y^{(n)}}(s) - D_j\tilde{Z}^x(s))\|_2^2 ds \right] \to 0.
  \]
  as \( n \to \infty \). The convergence follows from dominated convergence theorem, and the fact that \( D_j\tilde{Z}^{y^{(n)}} \to D_j\tilde{Z}^x \) in \( \mathbb{D}(\mathbb{R}^d) \), 
  and both $\|D_j\tilde{Z}^{y^{(n)}}\|_t$ and $\|D_j\tilde{Z}^{x}\|_t$ are uniformly bounded. 
  This completes the proof of the corollary.

  \section{Proof of Theorem \ref{thm:main1}}\label{sec:proof-main1}
  
  We prove Theorem \ref{thm:main1} in this section. 
  For a random variable $X$, write $\|X\| := (\E[X^2])^{1/2}$. For $i\in\{1,\dots,d\}$ and any $\theta\in\Theta$, denote $v^{\theta,i}_{n,M}(t,x):= [v^\theta_{n,M}(t,x)]_i$. 
  We show $v^0_{n,M}(t,x)$ is close to $\tilde v(t,x)$; the bound for $V^0_{n,M}(t,x)$ will follow similarly.
  By the triangle inequality, for each $i$,
  \begin{equation}
  \left\|v^{0,i}_{n,M}(t,x) - \tilde{v}_i(t,x)\right\| \leq 
  \left\|v^{0,i}_{n,M}(t,x) - \E\left[v^{0,i}_{n,M}(t,x)\right]\right\| + \left|\E\left[v^{0,i}_{n,M}(t,x)\right] - \tilde{v}_i(t,x)\right|.
  \end{equation}
  Because for different $\theta\in\Theta$, $\left(S^\theta, \tilde{Z}^{t,x}_\theta(S^\theta), \tilde{Z}^{t,x}_\theta(T), \mathcal{V}^\theta(S^\theta; t,x), \mathcal{P}^\theta(T; t,x), \mathcal{V}^\theta(T; t,x) \right)$
  are i.i.d. copies of the tuple
  \[  
  \left(S, \tilde{Z}^{t,x}(S), \tilde{Z}^{t,x}(T), \mathcal{P}(T; t,x), \mathcal{V}(S; t,x), \mathcal{V}(T; t,x)\right),
  \] we have
  \begin{align*}
  &~ Var\left(v^{0,i}_{n,M}(t,x)\right) \\
  =&~\frac{1}{M^n} \cdot Var\Bigg(C(T-t) \cdot \underbar{\em c}\left(\tilde{Z}^{t,x}\left(S\right)\right) \mathcal{V}_i\left(S; t,x\right) + \mathcal{P}_i(T; t,x) + \frac{e^{-\beta (T-t)}}{\sqrt{T-t}}\xi\left(\tilde{Z}^{t,x}(T)\right) \mathcal{V}_i\left(T; t,x\right)\Bigg) \\
  &~+ \sum_{\ell=1}^{n-1} \frac{C^2(T-t)}{M^{n-\ell}} \cdot 
  Var \Bigg( \mathcal{V}_i\left(S; t,x\right)\cdot 
  \Bigg[\tilde \cH\left(\tilde{Z}^{t,x}\left(S\right), v^{(0, \ell, 1)}_{\ell, M}\left(S, \tilde{Z}^{t,x}\left(S\right)\right) \right) \\
  & \qquad \qquad \qquad \qquad \qquad \qquad \qquad \qquad 
  - \tilde \cH\left(\tilde{Z}^{t,x}\left(S\right), v^{(0, -\ell, 1)}_{\ell-1, M}\left(S, \tilde{Z}^{t,x}\left(S\right)\right) \right)\Bigg]\Bigg) \\
  \leq &~ \frac{1}{M^n} \cdot \left\|C(T-t) \cdot \underbar{\em c}\left(\tilde{Z}^{t,x}\left(S\right)\right) \mathcal{V}_i\left(S; t,x\right)
  + \mathcal{P}_i(T; t,x) + \frac{e^{-\beta (T-t)}}{\sqrt{T-t}}\xi\left(\tilde{Z}^{t,x}(T)\right) \mathcal{V}_i\left(T; t,x\right)\right\|^2 \\
  &~ + \sum_{\ell=1}^{n-1} \frac{C^2(T-t)}{M^{n-\ell}} \cdot 
  \Bigg\|\mathcal{V}_i\left(S; t,x\right)\cdot 
  \Bigg[\tilde \cH\left(\tilde{Z}^{t,x}\left(S\right), v^{(0, \ell, 1)}_{\ell, M}\left(S, \tilde{Z}^{t,x}\left(S\right)\right) \right) \\
  & \qquad \qquad \qquad \qquad - \tilde \cH\left(\tilde{Z}^{t,x}\left(S\right), v^{(0, -\ell, 1)}_{\ell-1, M}\left(S, \tilde{Z}^{t,x}\left(S\right)\right) \right)\Bigg]\Bigg\|^2.
  \end{align*}
  In the preceding inequality, we used the simple fact that $\mathrm{Var}(X) \le \E[X^2]=\|X\|^2$. Applying the triangle inequality again and noting that
  \begin{equation}
  Var\left(v^{0,i}_{n,M}(t,x)\right) = \left\|v^{0,i}_{n,M}(t,x) - \E\left[v^{0,i}_{n,M}(t,x)\right]\right\|^2, 
  \end{equation}
  we have
  \begin{align}
  & ~ \left\|v^{0,i}_{n,M}(t,x) - \E\left[v^{0,i}_{n,M}(t,x)\right]\right\| \\
  \leq & ~ \frac{1}{M^{n/2}} \cdot\left\|C(T-t) \cdot \underbar{\em c}\left(\tilde{Z}^{t,x}\left(S\right)\right) \mathcal{V}_i\left(S; t,x\right)
  + \mathcal{P}_i(T; t,x) + \frac{e^{-\beta (T-t)}}{\sqrt{T-t}}\xi\left(\tilde{Z}^{t,x}(T)\right) \mathcal{V}_i\left(T; t,x\right)\right\|\\
  &~ + \sum_{\ell=1}^{n-1} \frac{C(T-t)}{M^{(n-\ell)/2}} 
  \Bigg\|\mathcal{V}_i\left(S; t,x\right)\cdot 
  \Bigg[\tilde \cH\left(\tilde{Z}^{t,x}\left(S\right), v^{(0, \ell, 1)}_{\ell, M}\left(S, \tilde{Z}^{t,x}\left(S\right)\right) \right) \\
  & \qquad \qquad \qquad \qquad \qquad \qquad \qquad \qquad 
  - \tilde \cH\left(\tilde{Z}^{t,x}\left(S\right), v^{(0, -\ell, 1)}_{\ell-1, M}\left(S, \tilde{Z}^{t,x}\left(S\right)\right) \right)\Bigg]\Bigg\| \\
  \leq & ~ \frac{1}{M^{n/2}} \cdot \left\|C(T-t) \cdot \underbar{\em c}\left(\tilde{Z}^{t,x}\left(S\right)\right) \mathcal{V}_i\left(S; t,x\right)
  + \mathcal{P}_i(T; t,x) + \frac{e^{-\beta (T-t)}}{\sqrt{T-t}}\xi\left(\tilde{Z}^{t,x}(T)\right) \mathcal{V}_i\left(T; t,x\right)\right\| \\
  &~ + \sum_{\ell=1}^{n-1} \frac{C(T-t)}{M^{(n-\ell)/2}} 
  \Bigg\| \left|\mathcal{V}_i\left(S; t,x\right) \right| \cdot 
  \sum_{i_1=1}^d C_{i_1}\Bigg| \left(v^{(0, \ell, 1), i_1}_{\ell, M} - v^{(0, -\ell, 1), i_1}_{\ell-1, M}\right)\left(S, \tilde{Z}^{t,x}\left(S\right)\right)\Bigg|\Bigg\| \\
  \leq & ~ \frac{1}{M^{n/2}} \cdot \left\|C(T-t) \cdot \underbar{\em c}\left(\tilde{Z}^{t,x}\left(S\right)\right) \mathcal{V}_i\left(S; t,x\right)
  + \mathcal{P}_i(T; t,x) + \frac{e^{-\beta (T-t)}}{\sqrt{T-t}}\xi\left(\tilde{Z}^{t,x}(T)\right) \mathcal{V}_i\left(T; t,x\right)\right\| \label{eq:variance-1}\\
  &~ + \sum_{\ell=1}^{n-1} \frac{C(T-t)}{M^{(n-\ell)/2}} \cdot 
  \sum_{i_1=1}^d C_{i_1} \Bigg\| \mathcal{V}_i\left(S; t,x\right) \cdot \left(v^{0, i_1}_{\ell, M} - \tilde{v}_{i_1}\right)\left(S, \tilde{Z}^{t,x}\left(S\right)\right) \Bigg\| \label{eq:variance-2}\\
  &~ + \sum_{\ell=1}^{n-1} \frac{C(T-t)}{M^{(n-\ell)/2}} \cdot 
  \sum_{i_1=1}^d C_{i_1} \Bigg\| \mathcal{V}_i\left(S; t,x\right) \cdot \left(v^{0, i_1}_{\ell-1, M} - \tilde{v}_{i_1}\right)\left(S, \tilde{Z}^{t,x}\left(S\right)\right) \Bigg\|. \label{eq:variance-3}
  \end{align}
  We now wish to bound $\left|\E\left[v^{0,i}_{n,M}(t,x)\right] - \tilde{v}_i(t,x)\right|$. 
  To do so, we establish the following lemma.
  \begin{lemma}
  We have, for all $n \in \N$, for all $i = 1, 2, \cdots, d$, 
  \begin{align}
  & \quad \E\left[v^{0,i}_{n,M}(t,x)\right] \\
  & = \E\left[\mathcal{P}_i(T; t,x) + \frac{e^{-\beta (T-t)}}{\sqrt{T-t}}\xi\left(\tilde{Z}^{t,x}(T)\right) \mathcal{V}_i\left(T; t,x\right)\right] \\
  & + \E\left[C(T-t)\mathcal{V}_i\left(S; t,x\right)\cdot 
  \tilde \cH\left(\tilde{Z}^{t,x}\left(S\right), v^{0}_{n-1, M}\left(S, \tilde{Z}^{t,x}\left(S\right)\right) \right)\right].
  \end{align}
  \end{lemma}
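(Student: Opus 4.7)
Proof plan. The lemma is a structural identity that follows purely from the definition of the MLP recursion together with linearity of expectation and the independence structure built into the parameter indexing $\Theta=\cup_{k\ge 1}\Z^k$. I will argue by induction on $n$, but really the content is a single telescoping identity.

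First, I would take expectations term by term in the definition \eqref{eq:mlp-rec} of $v^0_{n,M}(t,x)$. In the level-$0$ block \eqref{eq:Fv0-1}--\eqref{eq:Fv0-2}, the $M^n$ summands are i.i.d., and each has the same distribution as the single tuple appearing on the right-hand side of Eq.~\eqref{eq:v0-ex}; hence their sample average has expectation equal to $[\bar F(v^{(0)})]_{i+1}(t,x)$, which by \eqref{eq:v0-ex} is exactly
\begin{equation*}
\E\!\left[\mathcal{P}_i(T;t,x)+\tfrac{e^{-\beta(T-t)}}{\sqrt{T-t}}\,\xi(\tilde Z^{t,x}(T))\,\mathcal{V}_i(T;t,x)\right]
+\E\!\left[C(T-t)\,\underbar{\em c}(\tilde Z^{t,x}(S))\,\mathcal{V}_i(S;t,x)\right].
\end{equation*}
For each level $\ell\in\{1,\dots,n-1\}$, the $M^{n-\ell}$ summands in the $\ell$-th block are again i.i.d. The crucial observation is the disjointness of subtrees in $\Theta$: for every $m$, the random objects $v^{(0,\ell,m)}_{\ell,M}$ and $v^{(0,-\ell,m)}_{\ell-1,M}$ depend only on seeds indexed by extensions of $(0,\ell,m)$ and $(0,-\ell,m)$, respectively, and therefore are (i) independent of the ``outer'' tuple $(S^{(0,\ell,m)},\tilde Z^{t,x}_{(0,\ell,m)},\mathcal{V}^{(0,\ell,m)})$ and (ii) equal in distribution to $v^{0}_{\ell,M}$ and $v^{0}_{\ell-1,M}$ respectively. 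Conditioning on the deep-recursion randomness and applying the tower property, the expected value of a single summand at level $\ell$ equals $[\bar F(v^{0}_{\ell,M})-\bar F(v^{0}_{\ell-1,M})]_{i+1}(t,x)$, where the expectation on the outside is with respect to both the outer tuple and the independent distributional copy $v^{0}_{\ell,M}$, $v^{0}_{\ell-1,M}$.

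Summing over $\ell$ yields
\begin{equation*}
\E[v^{0,i}_{n,M}(t,x)]=[\bar F(0)]_{i+1}(t,x)+\sum_{\ell=1}^{n-1}\E\bigl[[\bar F(v^{0}_{\ell,M})-\bar F(v^{0}_{\ell-1,M})]_{i+1}(t,x)\bigr],
\end{equation*}
which telescopes. Using the base case $v^0_{0,M}\equiv 0$ (so $\bar F(v^0_{0,M})=\bar F(0)$), the right-hand side collapses to $\E\bigl[[\bar F(v^{0}_{n-1,M})]_{i+1}(t,x)\bigr]$. Finally, rewriting the time integral in $[\bar F(v^0_{n-1,M})]_{i+1}$ via the random-time representation \eqref{eq:random-time}--\eqref{eq:vl-vl-1-ex} (noting that the $\underbar{\em c}$ term in the base case is absorbed because $\tilde\cH(y,0)=\inf_a c(y,a)=\underbar{\em c}(y)$ only for $v^{(0)}\equiv 0$, while for the general $v^0_{n-1,M}$ the corresponding contribution is $\tilde\cH(\tilde Z^{t,x}(S),v^0_{n-1,M}(S,\tilde Z^{t,x}(S)))$) produces exactly the identity stated in the lemma.

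The main technical point is step two, the conditioning argument that lets us pull $v^{(0,\pm\ell,m)}_{\ell,M}$ out of $\tilde\cH$ and identify the expectation of each summand with a $\bar F$-evaluation; once that is in hand, everything else is algebraic telescoping. In a formal write-up I would make the independence explicit by specifying, as is standard in the MLP literature, the measurability of $v^{(\theta,\ell,m)}_{\ell,M}$ with respect to the $\sigma$-field generated by seeds with index prefix $(\theta,\ell,m)$, and invoking the independence of disjoint subtrees of $\Theta$; everything else follows from Fubini and the definition of $\bar F$ in Eq.~\eqref{eq:barF}.
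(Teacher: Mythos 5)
Your argument is correct and is essentially the proof the paper has in mind: the paper omits the details and cites the standard MLP argument (Lemma 3.3 (iii) of the Hutzenthaler--Jentzen--Kruse paper), whose key ingredient is exactly what you use — the i.i.d. structure of $v^{\theta}_{\ell,M}$ across disjoint subtrees of $\Theta$, independence from the outer tuple, the tower property, and telescoping with $v^0_{0,M}\equiv 0$ and $\tilde\cH(y,0)=\underbar{\em c}(y)$. The only point you leave implicit is the integrability needed to justify Fubini/tower at each level, which in a full write-up would be supplied by the same moment bounds used in the variance estimates.
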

  We skip the proof of the lemma, which is standard and similar to, e.g., that of Lemma 3.3 (iii) in \cite{HJK2022}. 
  The main fact used is that $v^\theta_{\ell, M}$ are i.i.d for different $\theta \in \Theta$.
  
  Using the fact the $\tilde v$ is a fixed point of $F$, we also have
  \begin{align}
  &~ \left|\E\left[v^{0,i}_{n,M}(t,x)\right] - \tilde{v}_i(t,x)\right| \\
  = &~ C(T-t) \left| \E \left[\mathcal{V}_i\left(S; t,x\right)\cdot 
  \left(\tilde \cH\left(\tilde{Z}^{t,x}\left(S\right), v^{0}_{n-1, M}\left(S, \tilde{Z}^{t,x}\left(S\right)\right) \right) - \tilde \cH\left(\tilde{Z}^{t,x}\left(S\right), v\left(S, \tilde{Z}^{t,x}\left(S\right)\right) \right)\right)\right]\right| \\
  \leq &~ C(T-t) \sum_{i_1=1}^d C_{i_1} 
  \E\left[\left|\mathcal{V}_i\left(S; t,x\right) \cdot \left(v^{0, i_1}_{n-1, M} - \tilde{v}_{i_1}\right)\left(S, \tilde{Z}^{t,x}\left(S\right)\right)\right|\right] \\
  \leq &~ C(T-t) \sum_{i_1=1}^d C_{i_1} 
  \left\|\mathcal{V}_i\left(S; t,x\right) \cdot \left(v^{0, i_1}_{n-1, M} - \tilde{v}_{i_1}\right)\left(S, \tilde{Z}^{t,x}\left(S\right)\right)\right\|. 
  \label{eq:bias}
  \end{align}
  From expressions \eqref{eq:variance-1} -- \eqref{eq:variance-3} and \eqref{eq:bias}, it follows that
  \begin{align}
  &~ \left\|v^{0,i}_{n,M}(t,x) - \tilde{v}_i(t,x)\right\| \\
  \leq &~ \frac{1}{M^{n/2}} \cdot \left\|C(T-t) \cdot \underbar{\em c}\left(\tilde{Z}^{t,x}\left(S\right)\right) \mathcal{V}_i\left(S; t,x\right)
  + \mathcal{P}_i(T; t,x) + \frac{e^{-\beta (T-t)}}{\sqrt{T-t}}\xi\left(\tilde{Z}^{t,x}(T)\right) \mathcal{V}_i\left(T; t,x\right)\right\|\\
  &~ + \sum_{\ell=1}^{n-1} \frac{C(T-t)}{M^{(n-\ell)/2}} \cdot 
  \sum_{i_1=1}^d C_{i_1} \Bigg\| \mathcal{V}_i\left(S; t,x\right) \cdot \left(v^{0, i_1}_{\ell, M} - \tilde{v}_{i_1}\right)\left(S, \tilde{Z}^{t,x}\left(S\right)\right) \Bigg\| \\
  &~ + \sum_{\ell=1}^{n-1} \frac{C(T-t)}{M^{(n-\ell)/2}} \cdot 
  \sum_{i_1=1}^d C_{i_1} \Bigg\| \mathcal{V}_i\left(S; t,x\right) \cdot \left(v^{0, i_1}_{\ell-1, M} - \tilde{v}_{i_1}\right)\left(S, \tilde{Z}^{t,x}\left(S\right)\right) \Bigg\| \\
  &~ + C(T-t) \sum_{i_1=1}^d C_{i_1} 
  \left\|\mathcal{V}_i\left(S; t,x\right) \cdot \left(v^{0, i_1}_{n-1, M} - \tilde{v}_i\right)\left(S, \tilde{Z}^{t,x}\left(S\right)\right)\right\|\\
  \leq & ~ \frac{1}{M^{n/2}} \cdot \left\|C(T-t) \cdot \underbar{\em c}\left(\tilde{Z}^{t,x}\left(S\right)\right) \mathcal{V}_i\left(S; t,x\right)
  + \mathcal{P}_i(T; t,x) + \frac{e^{-\beta (T-t)}}{\sqrt{T-t}}\xi\left(\tilde{Z}^{t,x}(T)\right) \mathcal{V}_i\left(T; t,x\right)\right\|\\
  &~ + \sum_{\ell=0}^{n-1} \sum_{i_1=1}^d \frac{2 C(T-t) C_{i_1}}{M^{(n-\ell-1)/2}} \cdot 
  \Bigg\| \mathcal{V}_i\left(S; t,x\right) \cdot \left(v^{0, i_1}_{\ell, M} - \tilde{v}_i\right)\left(S, \tilde{Z}^{t,x}\left(S\right)\right) \Bigg\|\\
  \leq & ~ \frac{1}{M^{n/2}} \left[ \left\|C(T-t) \cdot \underbar{\em c}\left(\tilde{Z}^{t,x}\left(S\right)\right) \mathcal{V}_i\left(S; t,x\right)\right\|
  + \left\|\mathcal{P}_i(T; t,x)\right\| + \left\|\frac{e^{-\beta (T-t)}}{\sqrt{T-t}}\xi\left(\tilde{Z}^{t,x}(T)\right) \mathcal{V}_i\left(T; t,x\right) \right\| \right]\\
  &~ + \sum_{\ell=0}^{n-1} \sum_{i_1=1}^d \frac{2 C(T-t) C_{i_1}}{M^{(n-\ell-1)/2}} \cdot 
  \Bigg\| \mathcal{V}_i\left(S; t,x\right) \cdot \left(v^{0, i_1}_{\ell, M} - \tilde{v}_i\right)\left(S, \tilde{Z}^{t,x}\left(S\right)\right) \Bigg\|.
  \end{align}
  We now upper bound terms in the preceding expression respectively. 
  First, we have
  \begin{align*}
  & \quad \| C(T - t) \cdot\underbar{\em c}(\tilde{Z}^{t,x}(S)) \cdot \mathcal{V}_i(S; t,x) \| \\
  & = C(T - t) \E\left[\underbar{\em c}^2(\tilde{Z}^{t,x}(S)) \cdot \mathcal{V}_i^2(S; t,x)\right]^{1/2} \\
  & = C(T - t) \E\left[ \E\left[\underbar{\em c}^2(\tilde{Z}^{t,x}(s)) \cdot \mathcal{V}_i^2(S; t,x)\right] \Big| S=s\right]^{1/2} \\
  & = C(T - t) \left(\int_t^T \frac{e^{-\beta(s-t)}}{\sqrt{s-t}}\cdot \frac{1}{C(T-t)}\E\left[\underbar{\em c}^2(\tilde{Z}^{t,x}(s)) \cdot \mathcal{V}_i^2(s; t,x)\right] ds\right)^{1/2} \\
  &\leq \sqrt{C(T-t)}\left( \int_t^T (s - t)^{-1/2} \mathbb{E} \left[ \underbar{\em c}^2(\tilde{Z}^{t,x}(s)) \cdot \mathcal{V}_i^2(s; t,x) \right] ds \right)^{1/2} \\
  &\leq \sqrt{C(T-t)} \left( \int_t^T (s - t)^{-1/2} \mathbb{E}[\underbar{\em c}^4(\tilde{Z}^{t,x}(s))]^{1/2} 
  \mathbb{E}[\mathcal{V}_i^4(s; t,x)]^{1/2} ds \right)^{1/2}.
  \end{align*}
  By Corollary \ref{cor:moment-w}, 
  \begin{equation}\label{eq:underbar-c}
  \mathbb{E}[\underbar{\em c}^4(\tilde{Z}^{t,x}(s))]^{1/2} \leq \alpha_w^2 \E\left[w^4\left(\tilde{Z}^{t,x}(s)\right)\right]^{1/2} \leq \alpha_w^2 C_{4,T}^{1/2} w^2(x).
  \end{equation}
  By Assumption \ref{as:sigma-D}, and the Burkholder-Davis-Gundy inequality, we have
  \begin{align*}
  \mathbb{E}[\mathcal{V}_i^4(s; t,x)]^{1/2} 
  &= \mathbb{E} \left[ \left( \frac{1}{\sqrt{s - t}} \int_t^s \left[\sigma^{-1} D \tilde{Z}^{t,x}(r)\right]^{\top}_i dB(r) \right)^4 \right]^{1/2} \\
  &\leq \frac{C_4}{s - t} \E\left[\left(\int_t^s \|\sigma^{-1} D_i \tilde{Z}^{t,x}(r)\|^2 dr\right)^2\right]^{1/2} \\
  & \leq C_4 C_{\sigma, D}, 
  \end{align*}
  for some universal constant $C_4$.
  Therefore, 
  \begin{align*}
  & \quad \| C(T - t) \cdot\underbar{\em c}(\tilde{Z}^{t,x}(S)) \cdot \mathcal{V}_i(S; t,x) \| \\
  & \leq \sqrt{C(T-t)} \left( \int_t^T (s - t)^{-1/2} \mathbb{E}[\underbar{\em c}^4(\tilde{Z}^{t,x}(s))]^{1/2} 
  \mathbb{E}[\mathcal{V}_i^4(s; t,x)]^{1/2} ds \right)^{1/2} \\
  & \leq \sqrt{C(T-t)} \left(\int_t^T (s - t)^{-1/2} \alpha_w^2 C_{4,T}^{1/2} C_4 C_{\sigma, D} w^2(x) \right)^{1/2} 
  \leq \underline{C} w(x),
  \end{align*}
  for some positive constant $\underline{C}$ that is independent of $d$. 
  
  For $\| \mathcal{P}_i(T; t,x) \|$, we claim that \( \| \mathcal{P}_i(T; t,x) \| \leq C_\cP w(x) \) for some  
  constant \( C_\cP\) that is independent of $d$ and $i$. 
  By product rule, we have 
  \[
  \E\left[\int_t^T e^{-\beta(s-t)} \kappa^{\top} d\tilde Y^x (s)\right] = \E\left[e^{-\beta(T-t)} \kappa^{\top} \tilde Y^x (T) - \int_t^T \kappa^{\top} \tilde Y^x(s) de^{-\beta(s-t)}\right].
  \]
  Thus, 
  \begin{align*}
  D_x \E\left[\int_t^T e^{-\beta(s-t)} \kappa^{\top} d\tilde Y^x (s)\right] 
  =&~D_x \E\left[e^{-\beta(T-t)} \kappa^{\top} \tilde Y^x (T) - \int_t^T \kappa^{\top} \tilde Y^x(s) de^{-\beta(s-t)}\right] \\
  =&~\E\left[e^{-\beta(T-t)} \kappa^{\top} D\tilde Y^x (T)\right] - \E\left[\int_t^T \kappa^{\top} D\tilde Y^x(s) de^{-\beta(s-t)}\right],
  \end{align*}
  so
  \[
  \mathcal{P}_i(T; t,x) = \E\left[e^{-\beta(T-t)} \kappa^{\top} D_i\tilde Y^x (T) - \int_t^T \kappa^{\top} D_i\tilde Y^x(s) de^{-\beta(s-t)}\right].
  \]
  By Lemma \ref{lem:bound-DZ}, it follows that $\sup_{s\in[t,T]} \left\|D_i\tilde Y^x(s)\right\|_\infty$ is upper bounded by a constant that is independent of $d$ 
  and $i$. By Assumption \ref{as:kappa}, it follows that $\left|\kappa^\top D_i\tilde Y^x (s)\right|/\left(1+(\log d)^{\alpha_0/2}\right)$ is upper bounded by a constant that is independent of $d$, $i$, 
  and $s$. Therefore, 
  \[
  \left|\mathcal{P}_i(T; t,x)\right| \leq C_\cP w(x),
  \]
  for some constant $C_\cP$ that is independent of $d$ and $i$.
  
  For the term $\left\|\frac{e^{-\beta (T-t)}}{\sqrt{T-t}}\xi\left(\tilde{Z}^{t,x}(T)\right) \mathcal{V}_i\left(T; t,x\right) \right\|$, we have
  \begin{eqnarray*}
  \left\|\frac{e^{-\beta (T-t)}}{\sqrt{T-t}}\xi\left(\tilde{Z}^{t,x}(T)\right) \mathcal{V}_i\left(T; t,x\right) \right\| 
  &\leq& \left\| (T-t)^{-1/2} \xi(\tilde{Z}^{t,x}(T)) \mathcal{V}_i(T; t,x) \right\| \\
  &=& (T-t)^{-1/2}\E\left[\xi^2(\tilde{Z}^{t,x}(T)) \mathcal{V}_i^2(T; t,x)\right]^{1/2} \\
  &\leq &(T-t)^{-1/2} \E\left[\xi^4(\tilde{Z}^{t,x}(T))\right]^{1/4} \E\left[\mathcal{V}_i^4(T; t,x)\right]^{1/4} \\
  &\leq &(T-t)^{-1/2} \alpha_w C_{4,T}^{1/4} w(x) \cdot \sqrt{C_4 C_{\sigma, D}}.
  \end{eqnarray*}
  Next,
  \begin{align*}
  & \quad \| C(T - t)(v_{\ell,M}^{0,i_1} - \tilde{v}_{i_1})(S, \tilde{Z}^{t,x}(S)) \mathcal{V}_i(S; t,x) \| \\
  &= C(T - t) \left( \int_t^T \frac{e^{-\beta(s - t)}}{\sqrt{s-t}} \cdot \frac{1}{C(T-t)}\mathbb{E} \left[ (v_{\ell,M}^{0,i_1} - \tilde{v}_i)^2(s, \tilde{Z}^{t,x}(s))
  \cdot \mathcal{V}_i^2(s; t,x) \right] ds \right)^{1/2} \\
  &\leq \sqrt{C(T - t)}\left( \int_t^T (s - t)^{-1/2} (T - s)^{-1/2} \cdot 
  \mathbb{E}[(T-s)\cdot (v_{\ell,M}^{0,i_1} - \tilde{v}_i)^2(s, \tilde{Z}^{t,x}(s)) \cdot \mathcal{V}_i^2(s; t,x)] ds \right)^{1/2}
  \end{align*}
  Define, for a random function $v: [0,T) \times \mathbb{R}_+^d \to \mathbb{R}$, 
  \[
  \vertiii{v}_{s} := \sup_{x \in \mathbb{R}_+^d} \frac{(T - s)^{1/2} \cdot \mathbb{E}[v^2(s,x)]^{1/2}}{w(x)}, 
  \quad s \in [0,T).
  \]
  Then, we continue with the bounds above to get
  \begin{align*}
  & \quad \| C(T - t) (v_{\ell,M}^{0,i_1} - \tilde{v}_{i_1})(S, \tilde{Z}^{t,x}(S)) \cdot \mathcal{V}_i(S; t,x) \| \\
  &\leq \sqrt{C(T - t)}\left( \int_t^T (s - t)^{-1/2} (T - s)^{-1/2} \cdot 
  \mathbb{E}[(T-s)\cdot (v_{\ell,M}^{0,i_1} - \tilde{v}_i)^2(s, \tilde{Z}^{t,x}(s)) \cdot \mathcal{V}_i^2(s; t,x)] ds \right)^{1/2} \\
  &\leq \sqrt{C(T - t)}\Bigg( \int_t^T (s - t)^{-1/2} (T - s)^{-1/2} \cdot \\
  & \qquad \E\left[\mathbb{E} \left[ w^2(z) \cdot \frac{(T-s) (v_{\ell,M}^{0, i_1} - \tilde{v}_{i_1})^2(s, z) y_i^2}{w^2(z)} \Bigg|\tilde{Z}^{t,x}(s) = z, \mathcal{V}(s; t,x) = y\right]\right] ds \Bigg)^{1/2} \\
  &= \sqrt{C(T - t)}\Bigg( \int_t^T (s - t)^{-1/2} (T - s)^{-1/2} \cdot \\
  & \qquad \E\left[w^2(z) y_i^2\mathbb{E} \left[  \frac{(T-s) (v_{\ell,M}^{0, i_1} - \tilde{v}_{i_1})^2(s, z) }{w^2(z)} \right] \Bigg|\tilde{Z}^{t,x}(s) = z, \mathcal{V}(s; t,x) = y\right] ds \Bigg)^{1/2} \\
  &\leq \sqrt{C(T - t)}\left( \int_t^T (s - t)^{-1/2} (T - s)^{-1/2} \mathbb{E}[w^2(\tilde{Z}^{t,x}(s))\cdot \mathcal{V}_i^2(s; t,x)] 
  \cdot \vertiii{v_{\ell,M}^{0,i_1} - \tilde{v}_{i_1}}_{s}^2 ds \right)^{1/2} \\
  &\leq \sqrt{C(T - t)}\left( \int_t^T (s - t)^{-1/2} (T - s)^{-1/2} \mathbb{E}[w^4(\tilde{Z}^{t,x}(s))]^{1/2} \mathbb{E}[\mathcal{V}_i^4(s; t,x)]^{1/2}
   \vertiii{v_{\ell,M}^{0,i_1} - \tilde{v}_{i_1}}_s^2 ds \right)^{1/2} \\
   &\leq \sqrt{C(T - t)} \left( \int_t^T (s - t)^{-1/2} (T - s)^{-1/2} C_{4,T}^{1/2} w^2(x) C_4 C_{\sigma,D}
   \vertiii{v_{\ell,M}^{0,i_1} - \tilde{v}_{i_1}}_s^2 ds \right)^{1/2} \\
   &\leq \tilde C w(x) \left( \int_t^T (s - t)^{-1/2} (T - s)^{-1/2} 
   \vertiii{v_{\ell,M}^{0,i_1} - \tilde{v}_{i_1}}_s^2 ds \right)^{1/2}
  \end{align*}
  for some positive constant $\tilde C$ that is independent of $d$.
  
  In summary, we have established that
  \begin{align*}
  &\quad \| v_{n,M}^{0,i}(t,x) - \tilde{v}_{i}(t,x) \| \\
  &\leq \frac{1}{M^{n/2}} \left(\frac{\tilde C_0}{\sqrt{T-t}} w(x) + C_\cP w(x)\right) + \\
  &\qquad \sum_{\ell = 0}^{n-1} \sum_{i_1=1}^d \frac{2C_{i_1}\tilde C w(x) }{M^{(n - \ell - 1)/2}} 
  \left( \int_t^T (s - t)^{-1/2} (T - s)^{-1/2}  \vertiii{v_{\ell,M}^{0,i_1} - \tilde{v}_{i_1}}_s^2 ds \right)^{1/2} \\
  &\leq \frac{\tilde C_1w(x) }{M^{n/2} \sqrt{T-t}} 
  + \sum_{\ell=0}^{n-1} \sum_{i_1=1}^d \frac{2 \tilde C C_{i_1}w(x) }{M^{(n - \ell - 1)/2}} 
  \left( \int_t^T \frac{1}{(s - t)^{3/4}} \frac{1}{(T - s)^{3/4}} ds \right)^{1/3} 
  \left( \int_t^T  \vertiii{v_{\ell,M}^{0,i_1} - \tilde{v}_{i_1}}_s^6 ds \right)^{1/6},
  \end{align*}
  where the last inequality follows from Hölder's inequality. Thus, 
  \begin{align}
  & \quad \frac{(T - t)^{1/2}}{w(x)} 
  \mathbb{E} \left[ \left(v_{n,M}^{0,i}(t,x) - \tilde{v}_{i}(t,x)\right)^2 \right]^{1/2} \nonumber \\
  &\leq \frac{\bar C}{M^{n/2}} + \sum_{\ell=0}^{n-1} \sum_{i_1=1}^d \frac{2 \bar C C_{i_1}}{M^{(n - \ell - 1)/2}} 
  \left( \int_t^T \vertiii{v_{\ell,M}^{0} - \tilde{v}}_s^6 ds \right)^{1/6},
  \label{eq:picard-bound}
  \end{align}
  where, in the last inequality, with a slight abuse of notation, we defined
  \[
  \vertiii{v}_{s} := \max_{j=1,\dots,J} \sup_{x \in \mathbb{R}_+^d} 
  \frac{(T - s)^{1/2} \cdot \mathbb{E}[ v_j^2(s,x) ]^{1/2}}{w(x)}
  = \max_{j=1,\dots,J} \vertiii{v_j}_s,
  \]
  for random vector-valued function \( v : [0,T) \times \mathbb{R}_+^d \to \mathbb{R}^d \), used the fact that
  \[
  \int_t^T (s - t)^{-3/4} (T - s)^{-3/4} ds < \infty, 
  \]
  and defined the constant $\bar C$ by
  \[
  \bar C := \max\left\{\tilde C_1, \tilde C \sqrt{T-t} \left(\int_t^T (s - t)^{-3/4} (T - s)^{-3/4} ds\right)^{1/3} \right\}.
  \]
  Because the right-hand side of Ineq. \eqref{eq:picard-bound} is independent of $x \in \mathbb{R}_+^d$ and $i = 1, \cdots, d$,
  we can conclude that
  \begin{align*}
  \vertiii{v_{n,M}^{0} - \tilde{v}}_{t} 
  &\leq \frac{\bar C}{M^{n/2}} + \sum_{\ell=0}^{n-1} \sum_{i_1=1}^d \frac{2 \bar C C_{i_1}}{M^{(n - \ell - 1)/2}} 
  \left( \int_t^T \vertiii{v_{\ell,M}^{0} - \tilde{v}}_s^6 ds \right)^{1/6} \\
  & \leq \frac{\bar C}{M^{n/2}} + \sum_{\ell=0}^{n-1}  \frac{2 \bar C C_{\tilde \cH}}{M^{(n - \ell - 1)/2}} 
  \left( \int_t^T \vertiii{v_{\ell,M}^{0} - \tilde{v}}_s^6 ds \right)^{1/6}.
  \end{align*}
  By Lemma 3.11 in \cite{HJKN2020}, we have 
  \begin{eqnarray*}
  \vertiii{v_{n,M}^{0} - \tilde{v}}_{t} &\leq & \left(\bar C + 2 \bar C C_{\tilde \cH} (T-t)^{1/6} \sup_{s\in [t, T]} \vertiii{\tilde v}_s\right) \times \\
  & & \times \exp\left(\frac{M^3}{6}\right) M^{-n/2} \left(1+2 \bar C C_{\tilde \cH} (T-t)^{1/6}\right)^{n-1}. \\
  &= & \left(\bar C + 2 \bar C C_{\tilde \cH} (T-t)^{1/6} \|\tilde v\|_0\right) \times \\
  & & \times \exp\left(\frac{M^3}{6}\right) M^{-n/2} \left(1+2 \bar C C_{\tilde \cH} (T-t)^{1/6}\right)^{n-1},
  \end{eqnarray*}
  where we recall that the norm $\|\cdot\|_0$ was defined in Eq. \eqref{eq:norm-rho} with $\rho = 0$.
  
  By similar reasoning, we can also show that 
  \begin{equation*}
  \frac{\| V_{n,M}(t,x) - \tilde V(t,x) \|}{w(x)} \leq \frac{\bar C_V}{M^{n/2}} + 
  \sum_{\ell=0}^{n-1}  \frac{2 \bar C_V C_{\tilde \cH}}{M^{(n - \ell - 1)/2}} 
  \left( \int_t^T \vertiii{v_{\ell,M}^{0} - \tilde{v}}_s^6 ds \right)^{1/6},
  \end{equation*}
  for some positive constant $\bar C_V$ that is independent of $d$. This implies that 
  \begin{eqnarray*}
  \sup_{x\in \R_+^d} \frac{\| V_{n,M}(t,x) - \tilde V(t,x) \|}{w(x)} & \leq & \left(\bar C_V + 2 \bar C_V C_{\tilde \cH} (T-t)^{1/6} \|\tilde v\|_0\right) \times \\
  & & \times \exp\left(\frac{M^3}{6}\right) M^{-n/2} \left(1+2 \bar C C_{\tilde \cH} (T-t)^{1/6}\right)^{n-1}.
  \end{eqnarray*}
  Finally, with a slight abuse of notation, define 
  \begin{equation}
  \vertiii{(V,v)}_{s} := \sup_{x \in \mathbb{R}_+^d} 
  \frac{\mathbb{E}[ V(s,x)^2 ]^{1/2}}{w(x)} \vee \max_{j=1,\dots,J} \sup_{x \in \mathbb{R}_+^d} 
  \frac{(T - s)^{1/2} \cdot \mathbb{E}[ v_j^2(s,x)^2 ]^{1/2}}{w(x)}. 
  \end{equation}
  Then, by appropriately redefining $\bar C$, we have
  \begin{eqnarray*}
  \vertiii{\left(V_{n,M}^{0}, v_{n,M}^{0}\right) - \left(\tilde V, \tilde{v}\right)}_{t} 
  &\leq & \left(\bar C + 2 \bar C C_{\tilde \cH} (T-t)^{1/6} \|\tilde v\|_0\right) \times \\
  & & \times \exp\left(\frac{M^3}{6}\right) M^{-n/2} \left(1+2 \bar C C_{\tilde \cH} (T-t)^{1/6}\right)^{n-1}.
  \end{eqnarray*}
  
  We now follow a similar reasoning as the proof of Corollary 5.1 in \cite{HJK2022} to derive computational complexity bounds.
  To simplify notation, define a new constant $\bar C_T$ by
  \[
  \bar C_T := 1 + \bar C + 2\bar C (T-t)^{1/6}. 
  \]
  Then, 
  \begin{eqnarray*}
  \vertiii{\left(V_{n,M}^{0}, v_{n,M}^{0}\right) - \left(\tilde V, \tilde{v}\right)}_{t} 
  &\leq & \bar C_T C_{\tilde \cH} \max\{1, \|\tilde v\|_0\} \exp\left(\frac{M^3}{6}\right) M^{-n/2} \left(\bar C_T C_{\tilde \cH} \right)^{n-1} \\
  &=& \exp\left(\frac{M^3}{6}\right) M^{-n/2} \left(\bar C_T C_{\tilde \cH} \right)^n \max\{1, \|\tilde v\|_0\}.
  \end{eqnarray*}
  Therefore\footnote{For the remainder of the proof, we will treat parameters such as $n^{1/3}$ and $N^{1/3}$ as if they were guaranteed to be integers. 
  Rounding them up or down to a nearest integer would overburden the notation, but would have no effect on our order-of-magnitude estimates.}, 
  \begin{eqnarray*}
  \vertiii{\left(V_{n,n^{1/3}}^{0}, v_{n,n^{1/3}}^{0}\right) - \left(\tilde V, \tilde{v}\right)}_{t} 
  &\leq & e^{n/6} n^{-n/6} \left(\bar C_T C_{\tilde \cH} \right)^n \max\{1, \|\tilde v\|_0\}.
  \end{eqnarray*}
  
  \paragraph{Complexity analysis.} Let $RS$ denote the expected runtime of the exact simulation algorithm with polynomial complexity for the tuple \eqref{eq:tuple}, 
  and let $RO$ denote the runtime of the polynomial-complexity optimization algorithm for solving $\tilde \cH$.
  Let $\underline{RT} = RS + 2\cdot RO$ be a ``base'' runtime. 
  Define $RT(n,M)$ as the expected runtime needed to produce one realization of $\big(V^0_{n,M}(t,x), v^0_{n,M}(t,x)\big)$. The multilevel construction \eqref{eq:mlp-v0}--\eqref{eq:mlp-rec} yields the recurrence
  \begin{equation}
  RT(n,M) \;\le\; \underline{RT} \cdot M^n \, + \, \sum_{\ell=1}^{n-1} M^{\,n-\ell}\,\Big(\underline{RT} \, + \, RT(\ell,M) \, + \, RT(\ell{-}1,M)\Big), \quad n\ge 1,
  \end{equation}
  with $RT(0,M)=0$. Using the same reasoning as the proof of Lemma 3.6 in \cite{HJKNW2020}, we can establish that  
  \begin{equation}\label{eq:work-upper}
  RT(n,M) \;\le\; 5\cdot\underline{RT}\cdot (2M{+}1)^n \;\le\; 5\cdot\underline{RT}\cdot (3M)^n, \qquad n\in \N.
  \end{equation}
  For notational convenience, let
  \begin{equation}
  \veps(n,M) := \frac{\vertiii{\left(V_{n,M}^{0}, v_{n,M}^{0}\right) - \left(\tilde V, \tilde{v}\right)}_{t}}{\max\{1, \|\tilde v\|_0\}},
  \end{equation}
  where we recall the definition of $\vertiii{\cdot}_s$ in Eq. \eqref{eq:vertiii}.
  Since 
  \[
  \veps\left(n, n^{1/3} \right) \leq e^{n/6} n^{-n/6} \left(\bar C_T C_{\tilde \cH} \right)^n, 
  \]
  and $e^{n/6} n^{-n/6}\left(\bar C_T C_{\tilde \cH} \right)^n \to 0$ as $n \to \infty$, we have $\veps\left(n, n^{1/3}\right) \to 0$ as $n \to \infty$. 
  Let $\veps\in (0, 1/2)$, and define
  \begin{equation}
  N_\veps := \min \left\{n_0 \in \N : \veps\left(n, n^{1/3}\right) \leq \veps  \text{ for all } n > n_0\right\}.
  \end{equation}
  Write $N = N_\veps$ for notational simplicity. Then we claim that 
  \begin{equation}
  \veps \leq e^{N/6} N^{-N/6} \left(\bar C_T C_{\tilde \cH} \right)^N.
  \end{equation}
  Indeed, if $N=1$, then $e^{N/6} N^{-N/6} \left(\bar C_T C_{\tilde \cH} \right)^N = 1 > 1/2 > \veps$; 
  if $N \geq 2$, then by minimality of $N$, we must have 
  \[
  \veps < \veps\left(N, N^{1/3}\right) \leq e^{N/6} N^{-N/6} \left(\bar C_T C_{\tilde \cH} \right)^N.
  \]
  Thus, we have established the claim. Therefore,
  \begin{align*}
  RT\left((N+1), (N+1)^{1/3} \right) \le &~5\cdot\underline{RT}\cdot \left(3(N+1)\right)^{(N+1)/3} \\
  \le &~ 5\cdot\underline{RT} \cdot \left(3(N+1)\right)^{(N+1)/3} \cdot \veps^{-4} \left(e^{N/6} N^{-N/6} \left(\bar C_T C_{\tilde \cH} \right)^N\right)^4 \\
  \le &~ 5\cdot\underline{RT}\cdot \veps^{-4} \left[\left(3(N+1)\right)^{(N+1)/3} e^{2N/3} N^{-2N/3} \left(\bar C_T C_{\tilde \cH} \right)^{4N}\right] \\
  \le &~ 5\cdot\underline{RT} \cdot \veps^{-4} \sup_{N\in \N} \left[\left(3(N+1)\right)^{(N+1)/3} e^{2N/3} N^{-2N/3} \left(\bar C_T C_{\tilde \cH} \right)^{4N}\right] \\
  \le &~ 15\alpha_1 (1+d^{\alpha_2}) \cdot \veps^{-4} \sup_{N\in \N} \left[\left(3(N+1)\right)^{(N+1)/3} e^{2N/3} N^{-2N/3} \left(\bar C_T C_{\tilde \cH} \right)^{4N}\right],
  \end{align*}
  where $\alpha_1$ and $\alpha_2$ are from Definitions \ref{df:poly-sim} and \ref{df:poly-opt}.
  Under Assumption \ref{asmp:lipschitz-H}, the quantity 
  \begin{equation}\label{eq:C_RT}
  \sup_{N\in \N} \left[\left(3(N+1)\right)^{(N+1)/3} e^{2N/3} N^{-2N/3} \left(\bar C_T C_{\tilde \cH} \right)^{4N}\right]
  \end{equation}
  is a constant independent of $d$. This concludes the proof of Theorem \ref{thm:main1}.

\section{Proofs for Section \ref{sec:bel}}
\subsection{Proof of Proposition \ref{prop:bel}}
To prove Proposition \ref{prop:bel}, we introduce some notation and establish a useful lemma. 
For $\veps > 0$, define the process \( \partial_\Upsilon^\varepsilon \tilde{Z}^x(t) \) by
\begin{equation}\label{eq:deriv-eps-M}
\partial_\Upsilon^\varepsilon \tilde{Z}^x(t) := \frac{\tilde{Z}^x(t; \varepsilon \Upsilon) - \tilde{Z}^x(t)}{\varepsilon}, \quad t \ge 0,
\end{equation}
and define the process \( {\Psi}^\varepsilon \) by
\begin{equation}\label{eq:Psi-eps-M}
{\Psi}^\varepsilon(t) := \int_0^t \frac{\tb(\tilde{Z}^x(s; \varepsilon \Upsilon)) - \tb(\tilde{Z}^x(s))}{\varepsilon}\,ds + \Upsilon(t), \quad t \ge 0.
\end{equation}
Let
\begin{equation}\label{eq:X-eps-M}
\tilde{X}^x(t; \varepsilon \Upsilon) := x + \int_0^t \tb(\tilde{Z}^x(s; \varepsilon \Upsilon))\,ds + \varepsilon \Upsilon(t) + \sigma B(t), \qquad t \ge 0.
\end{equation}

Then, for $t \ge 0$, 
\begin{align*}
\tilde{X}^x(t; \varepsilon \Upsilon)
&= x + \int_0^t \tb(\tilde{Z}^x(s))\,ds + \sigma B(t) + \int_0^t \left[ \tb(\tilde{Z}^x(s; \varepsilon \Upsilon)) - \tb(\tilde{Z}^x(s)) \right]\,ds + \varepsilon \Upsilon(t) \\
&= \tilde{X}^x(t) + \varepsilon {\Psi}^\varepsilon(t).
\end{align*}

Thus, by definition,
\[
\partial_\Upsilon^\varepsilon \tilde{Z}^x = \frac{\Gamma(\tilde{X}^x + \varepsilon {\Psi}^\varepsilon) - \Gamma(\tilde{X}^x)}{\varepsilon}
= \nabla_{{\Psi}^\varepsilon} \Gamma(\tilde{X}^x).
\]

\begin{lemma}\label{lem:bel}
Suppose Assumptions \ref{as:tb} and \ref{as:sigma} hold.  
Let \( x \in \mathbb{R}_+^d \), and suppose a.s.\ \( \nabla_\psi \Gamma(\tilde{X}^x) \) exists for all \( \psi \in \C(\mathbb{R}^d) \), 
and lies in \( \D_{\ell, r}(\mathbb{R}^d) \).  
Let \( \Upsilon(\cdot) \) and \( \upsilon(\cdot) \) be the same as in Proposition \ref{prop:bel}.
Then, there exists a unique \( \{\mathcal{F}_t\} \)-adapted process \( \Phi \) such that a.s.\ \( \Phi \in \D(\mathbb{R}^d) \) and satisfies
\[
\Phi = \nabla_{{\Psi}} \Gamma(\tilde{X}^x),
\]
where \( {\Psi} \) is a \( J \)-dimensional continuous \( \{\mathcal{F}_t\} \)-adapted process with
\[
{\Psi}(t) = \int_0^t D\tb(\tilde{Z}^x(s)) \Phi(s)\,ds + \Upsilon(t), \quad t\geq 0.
\]

Furthermore, for any \( t \ge 0 \),
\[
\| {\Psi}\|_t < C_\upsilon t \cdot \exp \left(C_{\tb} C_\Gamma t\right), \ \ \text{and} \quad \lim_{\varepsilon \to 0} \| \Phi^\veps - \Phi \|_t  = 0, 
\]
where we recall the definition of $\Phi^\veps$ in Eq. \eqref{eq:Psi-eps-M}.
\end{lemma}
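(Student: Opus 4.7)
\textbf{Proof proposal for Lemma \ref{lem:bel}.}

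The plan is to construct $\Phi$ via Picard iteration on the functional map $T$ sending a process $\Phi$ to $\nabla_{\Psi(\Phi)}\Gamma(\tilde X^x)$, where $\Psi(\Phi)(t) := \int_0^t D\tb(\tilde Z^x(s))\Phi(s)\,ds + \Upsilon(t)$. Specifically, set $\Phi^{(0)} \equiv 0$ and iteratively define $\Psi^{(n+1)} := \Psi(\Phi^{(n)})$ and $\Phi^{(n+1)} := \nabla_{\Psi^{(n+1)}}\Gamma(\tilde X^x)$. Since $\Psi^{(n+1)}$ is continuous and $\{\mathcal F_t\}$-adapted by construction, and since $\nabla_\cdot \Gamma(\tilde X^x)$ exists and lies in $\D(\R^d)$ by hypothesis, each $\Phi^{(n)}$ is a well-defined $\{\mathcal F_t\}$-adapted process in $\D(\R^d)$. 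Combining the Lipschitz estimate of Proposition \ref{prop:nabla-lipschitz} with Assumption \ref{as:tb} yields
\begin{equation*}
\|\Phi^{(n+1)} - \Phi^{(n)}\|_t \le C_\Gamma \|\Psi^{(n+1)} - \Psi^{(n)}\|_t \le C_\Gamma C_{\tb} \int_0^t \|\Phi^{(n)} - \Phi^{(n-1)}\|_s\,ds,
\end{equation*}
so by iteration $\|\Phi^{(n+1)} - \Phi^{(n)}\|_t \le (C_\Gamma C_{\tb} t)^n/n! \cdot \|\Phi^{(1)}\|_t$. This is summable in $n$, yielding a.s.\ uniform convergence of $\Phi^{(n)}$ on compacts to a limit $\Phi$; passing to the limit inside $\nabla_\Psi \Gamma$ via Proposition \ref{prop:nabla-lipschitz} (uniform-in-$\psi$ Lipschitzness) shows $\Phi$ is a fixed point. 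Uniqueness follows from the same Grönwall estimate.

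For the norm bound, note first that $\nabla_0\Gamma(\tilde X^x) = 0$, so $\|\Phi\|_s \le C_\Gamma \|\Psi\|_s$ by Proposition \ref{prop:nabla-lipschitz}. Plugging this into the defining equation for $\Psi$ and using the uniform bound $\|\upsilon(s)\|\le C_\upsilon$ and $\|D\tb\|_\infty \le C_{\tb}$:
\begin{equation*}
\|\Psi\|_t \le C_\upsilon t + C_{\tb}\int_0^t \|\Phi\|_s\,ds \le C_\upsilon t + C_{\tb} C_\Gamma \int_0^t \|\Psi\|_s\,ds.
\end{equation*}
Since $C_\upsilon t$ is non-decreasing in $t$, a standard Grönwall inequality delivers $\|\Psi\|_t \le C_\upsilon t \cdot \exp(C_{\tb} C_\Gamma t)$, as claimed.

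For the convergence $\|\Phi^\veps - \Phi\|_t \to 0$, the strategy is to compare the driving processes $\Psi^\veps$ and $\Psi$ and invoke Proposition \ref{prop:nabla-lipschitz} again. Using the fundamental theorem of calculus in the integral form, write
\begin{equation*}
\frac{\tb(\tilde Z^x(s;\veps\Upsilon)) - \tb(\tilde Z^x(s))}{\veps} = \bar D\tb^\veps(s) \cdot \partial_\Upsilon^\veps \tilde Z^x(s), \qquad \bar D\tb^\veps(s) := \int_0^1 D\tb\bigl(\lambda \tilde Z^x(s;\veps\Upsilon)+(1-\lambda)\tilde Z^x(s)\bigr)\,d\lambda.
\end{equation*}
Since $\partial_\Upsilon^\veps \tilde Z^x = \Phi^\veps$ in our notation, splitting $\bar D\tb^\veps \Phi^\veps = \bar D\tb^\veps (\Phi^\veps - \Phi) + (\bar D\tb^\veps - D\tb(\tilde Z^x))\Phi + D\tb(\tilde Z^x)\Phi$ and applying Proposition \ref{prop:nabla-lipschitz} gives
\begin{equation*}
\|\Phi^\veps - \Phi\|_t \le C_\Gamma \|\Psi^\veps - \Psi\|_t \le C_\Gamma C_{\tb} \int_0^t \|\Phi^\veps - \Phi\|_s\,ds + C_\Gamma \int_0^t \|(\bar D\tb^\veps(s) - D\tb(\tilde Z^x(s)))\Phi(s)\|_\infty\,ds.
\end{equation*}
The residual term vanishes as $\veps \downarrow 0$ by continuity of $D\tb$, Lipschitz continuity of $\Gamma$ (so that $\tilde Z^x(\cdot;\veps\Upsilon) \to \tilde Z^x$ uniformly), the uniform bound on $\Phi$ from the preceding paragraph, and dominated convergence. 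Grönwall then closes the argument.

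The main technical obstacle is the convergence step: $\partial_\Upsilon^\veps \tilde Z^x$ is a difference quotient of a reflected diffusion whose sample paths are only RCLL, so one must be careful that the Lipschitz estimate for $\nabla_\cdot \Gamma$ is applicable to the random continuous inputs $\Psi^\veps, \Psi$ and that the residual $\bar D\tb^\veps - D\tb(\tilde Z^x)$ converges to zero in a sense strong enough for the integral to vanish. The Hölder hypothesis in Assumption \ref{as:tb} together with Proposition \ref{prop:moment-Z} (applied pathwise to bound $\|\tilde Z^x(\cdot;\veps\Upsilon) - \tilde Z^x\|_t$ by $O(\veps)$) controls this residual uniformly on $[0,t]$, making dominated convergence applicable.
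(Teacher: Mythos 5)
Your construction of $\Phi$ by Picard iteration, the Gr\"onwall uniqueness argument, and the bound $\|\Psi\|_t \le C_\upsilon t \, e^{C_{\tb} C_\Gamma t}$ all match the paper's proof essentially verbatim. The difficulty is in the final convergence step, where your argument has a genuine gap.

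You write $\|\Phi^\veps - \Phi\|_t \le C_\Gamma \|\Psi^\veps - \Psi\|_t$ as if Proposition \ref{prop:nabla-lipschitz} applied directly, but $\Phi^\veps = \partial_\Upsilon^\veps \tilde Z^x = \nabla^\veps_{\Psi^\veps}\Gamma(\tilde X^x)$ is a \emph{difference quotient} of the Skorokhod map (Eq.\ \eqref{eq:eps-dir-deriv}), whereas $\Phi = \nabla_\Psi\Gamma(\tilde X^x)$ is a genuine \emph{directional derivative}. Proposition \ref{prop:nabla-lipschitz} compares two directional derivatives with the same base point, not a difference quotient against a derivative. The correct triangle-inequality splitting is
\[
\|\nabla^\veps_{\Psi^\veps}\Gamma(\tilde X^x)(s) - \nabla_\Psi\Gamma(\tilde X^x)(s)\|
\le
\|\nabla^\veps_{\Psi^\veps}\Gamma(\tilde X^x)(s) - \nabla^\veps_{\Psi}\Gamma(\tilde X^x)(s)\|
+
\|\nabla^\veps_{\Psi}\Gamma(\tilde X^x)(s) - \nabla_\Psi\Gamma(\tilde X^x)(s)\|,
\]
where the first term is controlled by $C_\Gamma\|\Psi^\veps - \Psi\|_s$ (via Lipschitzness of $\Gamma$ itself, applied to the difference quotient), but the second term $\|\nabla^\veps_\Psi\Gamma(\tilde X^x)(s) - \nabla_\Psi\Gamma(\tilde X^x)(s)\|$ is a new error source that your chain of inequalities silently discards. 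This term converges to zero pointwise in $s$ (by the assumed existence of $\nabla_\Psi\Gamma(\tilde X^x)$) but \emph{not} via a Gr\"onwall bound in terms of $\|\Psi^\veps - \Psi\|$. One has to pass it through the time integral by dominated convergence, using the uniform bound $\|\nabla^\veps_\Psi\Gamma(\tilde X^x)\|_t \le C_\Gamma\|\Psi\|_t$ as the dominating function. That dominated-convergence step is the entire content of the hard part of the lemma and is the step your proposal needs to add before the Gr\"onwall argument can close. (Your treatment of the residual $(\bar D\tb^\veps - D\tb(\tilde Z^x))\Phi$ via the mean-value form and continuity of $D\tb$ is fine, and is a mild variant of the paper's decomposition, which uses $\tb(\tilde Z^x + \veps\Phi)$ as an intermediate and invokes the H\"older modulus from Assumption \ref{as:tb}.)
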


\begin{proof}
We first show the uniqueness of the process \( \Phi \).  
Suppose both \( \Phi \) and \( \tilde{\Phi} \) satisfy the properties stated in the lemma. Let
\[
\tilde{\Psi}(t) := \int_0^t D\tb(\tilde{Z}^x(s)) \tilde{\Phi}(s)\,ds + \Upsilon(t), \quad t\geq 0.
\]
Then \( \Phi = \nabla_{\hat{\Psi}} \Gamma(\tilde{X}^x) \) and \( \tilde{\Phi} = \nabla_{\tilde{\Psi}} \Gamma(\tilde{X}^x) \).

We have
\begin{align*}
\| \Phi - \tilde{\Phi} \|_t & = \| \nabla_{{\Psi}} \Gamma(\tilde{X}^x) - \nabla_{\tilde{\Psi}} \Gamma(\tilde{X}^x) \|_t \\
& \le C_\Gamma \| {\Psi} - \tilde{\Psi} \|_t \\
& = C_\Gamma\left\{\sup_{s\in [0, t]} \Biggl|\int_0^s D\tb (\tilde{Z}^x(s)) \left[\Phi(s) - \tilde{\Phi}(s)\right] ds\Biggr|\right\} \\
& \le C_\Gamma C_{\tb} \int_0^t \| {\Phi} - \tilde{\Phi} \|_s ds.
\end{align*}
Here, the first inequality follows from the Lipschitz property of the derivative map $\nabla \Gamma$, stated in Proposition \ref{prop:nabla-lipschitz}, 
and the third inequality follows from Assumption \ref{as:tb}. By Grönwall's inequality, we must have
$\|\Phi - \tilde{\Phi}\|_t = 0$ for all $t \geq 0$, implying that $\Phi = \tilde \Phi$.

Next, we establish the existence of $\Phi$ by a standard Picard iteration argument.

Let \( \Phi^0 = 0 \), and recursively define, for \( t \ge 0 \),
\[
{\Psi}^k(t) := \int_0^t D\tb(\tilde{Z}^x(s))\, {\Phi}^{k-1}(s)\,ds + \Upsilon(t),
\]
and let
\[
\Phi^k := \nabla_{{\Psi}^k} \Gamma(\tilde{X}^x).
\]

Then
\[
\|{\Psi}^1\|_t = \|\Upsilon\|_t \leq \int_0^t \|\upsilon\|_s ds \leq C_\upsilon t, \quad t \geq 0.
\]
For $k \geq 2$, we have
\begin{align*}
\| {\Psi}^k(t) - \Psi^{k-1}(t) \| 
&= \left\| \int_0^t D\tb(\tilde{Z}^x(s)) \left( \Phi^{k-1}(s) - \Phi^{k-2}(s) \right) ds \right\| \\
&\le C_{\tb} \int_0^t \| \Phi^{k-1}(s) - \Phi^{k-2}(s) \|\,ds \\
&\le C_{\tb} \int_0^t \| \Phi^{k-1} - \Phi^{k-2}\|_s\,ds.
\end{align*}
Thus, by Proposition \ref{prop:nabla-lipschitz} and Assumption \ref{as:tb}, for $k \geq 2$, 
\[
\| \Phi^k - \Phi^{k-1} \|_t \le C_\Gamma \| \Psi^{k} - \Psi^{k-1} \|_t
\le C_\Gamma C_{\tb} \int_0^t \| \Phi^{k-1} - \Phi^{k-2} \|_s\, ds.
\]

Noting that $\Phi^0 \equiv 0$, a standard Picard iteration argument gives
\[
\| \Phi^{k+1} - \Phi^k \|_t \le \frac{(C_\Gamma C_{\tb})^k}{k!} \int_0^t \| \Phi^1 \|_s\,ds \le \frac{(C_\Gamma C_{\tb})^k C_\upsilon t^2}{2\cdot k!}.
\]

This implies that for any $t \ge 0$,  \( \{ \| \Psi^{k+1} - \Psi^k \|_t \}_k \) is a Cauchy sequence.  
Thus, there exists \( \Psi \in \C(\mathbb{R}^d) \) such that \( \Psi^k \to \Psi \) in \( \C(\mathbb{R}^d) \).

Let
\[
\Phi := \nabla_{\Psi} \Gamma(\tilde{X}^x).
\]

Then, by Lipschitz continuity of the map \( \nabla \Gamma \),
\[
\Phi^k \to \Phi \quad \text{uniformly on compact intervals, as } k \to \infty.
\]

Since
\[
\Psi^k(t) = \int_0^t D\tb(\tilde{Z}^x(s)) \Phi^{k-1}(s)\,ds + \Upsilon(t),
\]
\( \Phi^k \to \Phi \) u.o.c.\ as \( k \to \infty \), and $D\tb$ and $\upsilon$ are both uniformly bounded, 
\( \Psi \) must satisfy
\[
\Psi(t) = \int_0^t D\tb(\tilde{Z}^x(s)) \Phi(s)\,ds + \Upsilon(t), \qquad t \ge 0.
\]
In summary, we have established the uniqueness and existence of the process $\Phi$.

Next, we establish bounds on $\|\Psi\|_t$. We have
\begin{align*}
\| \Psi \|_t &= \sup_{s \in [0,t]} \left| \int_0^s D\tb(\tilde{Z}^x(r)) \Phi(r)\,dr + \Upsilon(s) \right| \\
& \le C_\upsilon t + C_{\tb} \int_0^t \| \Phi \|_s\,ds
\le C_\upsilon t + C_{\tb} C_\Gamma \int_0^t \| \Psi \|_s\,ds.
\end{align*}
In the last inequality, we used the uniform boundedness of $D\tb$, the Lipschitz continuity of the map \( \nabla \Gamma \), and the fact that
$0 = \nabla_0 \Gamma(\tilde{X}^x)$, where, with a slight abuse of notation, $0$ denotes the $d$-dimensional process that takes value $0$ in all coordinates, for all times.

By Grönwall’s inequality,
\[
\| {\Psi} \|_t < C_\upsilon t \cdot \exp \left(C_{\tb} C_\Gamma t\right) \quad \text{for all } t \ge 0. 
\]

Finally, we consider $\| \Psi^\varepsilon - \Psi \|_t$. For $t \geq 0$, we have 
\begin{align*}
\| \Psi^\varepsilon (t) - \Psi (t) \| &= \left\| \int_0^t \frac{\tb(\tilde{Z}^x(s; \varepsilon \Upsilon)) - \tb(\tilde{Z}^x(s))}{\varepsilon} ds - \int_0^t D\tb(\tilde{Z}^x(s)) \Phi(s)\,ds \right\| \\
&= \left\| \int_0^t \left[ \frac{\tb(\tilde{Z}^x(s) + \varepsilon \partial_\Upsilon^\varepsilon \tilde{Z}^x(s)) - \tb(\tilde{Z}^x(s))}{\varepsilon} - D\tb(\tilde{Z}^x(s)) \Phi(s) \right] ds \right\|.
\end{align*}

For any given \( s \in [0,t] \), we have
\[
\frac{\tb(\tilde{Z}^x(s) + \varepsilon \Phi(s)) - \tb(\tilde{Z}^x(s))}{\varepsilon}
= \int_0^1 D\tb(\tilde{Z}^x(s) + \delta \varepsilon \Phi(s)) \Phi(s) d\delta,
\]
so we have
\begin{align*}
\| \Psi^\varepsilon(t) - \Psi(t) \| 
& = \Bigg|\int_0^t \Bigg[ \frac{\tb(\tilde{Z}^x(s) + \varepsilon \partial_\Upsilon^\varepsilon \tilde{Z}^x(s)) - \tb(\tilde{Z}^x(s)+ \varepsilon \Phi(s))}{\varepsilon} +  \\
& \quad + \int_0^1 \left( D\tb(\tilde{Z}^x(s) + \delta \varepsilon \Phi(s)) - D\tb(\tilde{Z}^x(s)) \right) \Phi(s) d\delta \Bigg] ds \Bigg| \\
& \le \int_0^t C_{\tb} \| \partial_\Upsilon^\varepsilon \tilde{Z}^x(s) - \Phi(s) \| ds \\
& \quad + \int_0^t \left(\int_0^1 C_{\tb} \|\delta \veps \Phi(s)\|^{\alpha_{\tb}} \|\Phi(s)\|\right) ds \\
& \le \int_0^t C_{\tb} \| \partial_\Upsilon^\varepsilon \tilde{Z}^x(s) - \Phi(s) \| ds
+ \int_0^t C_{\tb} \veps^{\alpha_{\tb}} \| \Phi(s) \|^{1 + {\alpha_{\tb}}} ds.
\end{align*}
We first derive a bound on the term $\int_0^t C_{\tb} \| \partial_\Upsilon^\varepsilon \tilde{Z}^x(s) - \Phi(s) \| ds$. 
To bound \( \| \partial_\Upsilon^\varepsilon \tilde{Z}^x(s) - \Phi(s) \| \), recall that
\[
\partial_\Upsilon^\varepsilon \tilde{Z}^x = \nabla^\veps_{\Psi^\varepsilon} \Gamma(\tilde{X}^x),
\]
So, we have
\begin{align*}
\| \partial_\Upsilon^\varepsilon \tilde{Z}^x(s) - \Phi(s) \|
&= \| \nabla^\veps_{\Psi^\varepsilon} \Gamma(\tilde{X}^x)(s) - \nabla_{\Psi} \Gamma(\tilde{X}^x)(s) \| \\
&\le \| \nabla^\veps_{\Psi^\varepsilon} \Gamma(\tilde{X}^x)(s) - \nabla^\veps_{\Psi} \Gamma(\tilde{X}^x)(s) \| 
+ \| \nabla^\veps_{\Psi} \Gamma(\tilde{X}^x)(s) - \nabla_{\Psi} \Gamma(\tilde{X}^x)(s) \| \\
&\le C_\Gamma \| \Psi^\varepsilon - \Psi \|_s + \| \nabla^\veps_{\Psi} \Gamma(\tilde{X}^x)(s) - \nabla_{\Psi} \Gamma(\tilde{X}^x)(s) \|, 
\end{align*}
where, in the last inequality, we used the Lipschitz property of the map $\Gamma$, 
which directly implies the Lipschitz property of the map $\nabla^\veps \Gamma$. Thus, 
\begin{align*}
\| \Psi^\varepsilon(t) - \Psi(t) \| &\le C_{\tb}\veps^{\alpha_{\tb}} \int_0^t \| \Phi(s) \|^{{\alpha_{\tb}}+1}\,ds
+ C_\Gamma C_{\tb} \int_0^t \| \Psi^\varepsilon - \Psi \|_s\,ds \\
&\quad + C_\Gamma \int_0^t \| \nabla_{\Psi^\varepsilon} \Gamma(\tilde{X}^x)(s) - \nabla_{\Psi} \Gamma(\tilde{X}^x)(s) \|\,ds \\
&\le C_{\tb}\veps^{\alpha_{\tb}} \int_0^t \| \Phi \|_s^{{\alpha_{\tb}}+1}\,ds
+ C_\Gamma C_{\tb} \int_0^t \| \Psi^\varepsilon - \Psi \|_s\,ds \\
&\quad + C_\Gamma \int_0^t \| \nabla_{\Psi^\varepsilon} \Gamma(\tilde{X}^x)(s) - \nabla_{\Psi} \Gamma(\tilde{X}^x)(s) \|\,ds.
\end{align*}
Therefore, we also have
\begin{align*}
\| \Psi^\varepsilon - \Psi \|_t 
&\le C_{\tb}\veps^{\alpha_{\tb}} \int_0^t \| \Phi \|_s^{{\alpha_{\tb}}+1}\,ds + C_\Gamma \int_0^t \| \nabla_{\Psi^\varepsilon} \Gamma(\tilde{X}^x)(s) - \nabla_{\Psi} \Gamma(\tilde{X}^x)(s) \|\,ds \\
&\quad + C_\Gamma C_{\tb} \int_0^t \| \Psi^\varepsilon - \Psi \|_s\,ds. 
\end{align*}

By Grönwall’s inequality, if we can show that for any \( t \geq 0 \), the term 
\begin{equation}\label{eq:lem:bel-t1}
C_{\tb}\veps^{\alpha_{\tb}} \int_0^t \| \Phi \|_s^{{\alpha_{\tb}}+1}\,ds + C_\Gamma \int_0^t \| \nabla_{\Psi^\varepsilon} \Gamma(\tilde{X}^x)(s) - \nabla_{\Psi} \Gamma(\tilde{X}^x)(s) \|\,ds
\end{equation}
goes to zero as \( \varepsilon \to 0 \), then
\[
\| \Psi^\varepsilon - \Psi \|_t \to 0.
\]
For the first term in expression \eqref{eq:lem:bel-t1},
\[
C_{\tb}\veps^{\alpha_{\tb}} \int_0^t \| \Phi \|_s^{{\alpha_{\tb}}+1}\,ds,
\]
since we have established that \( \| \Psi \|_t < C_\upsilon t \cdot \exp \left(C_{\tb} C_\Gamma t\right) \) for all \( t \geq 0 \),  
by Lipschitz continuity of the map \( \nabla \Gamma \) and the fact that \( 0 = \nabla_0 \Gamma \),  
\[
\| \Phi \|_t < C_\Gamma C_\upsilon t \cdot \exp \left(C_{\tb} C_\Gamma t\right) \quad \text{for all } t > 0, \quad \text{and} \quad \int_0^t \| \Phi \|_s^{{\alpha_{\tb}}+1} ds < \infty.
\]
Thus, as \( \varepsilon \to 0 \),
\[
C_{\tb}\veps^{\alpha_{\tb}} \int_0^t \| \Phi \|_s^{{\alpha_{\tb}}+1}\,ds \to 0.
\]
For the second term,
\[
C_\Gamma \int_0^t \| \nabla_{\Psi^\varepsilon} \Gamma(\tilde{X}^x)(s) - \nabla_{\Psi} \Gamma(\tilde{X}^x)(s) \|\,ds,
\]
by the Lipschitz continuity of both the maps \( \nabla \Gamma \) and \( \Gamma \), and the fact that  
\[
0 = \nabla_0 \Gamma = \nabla_0 \Gamma,
\]
we have
\[
\| \nabla^\veps_{\Psi} \Gamma(\tilde{X}^x)(s) - \nabla_{\Psi} \Gamma(\tilde{X}^x)(s) \| \le 2 C_\Gamma \| \Psi \|_t.
\]
Since \( \int_0^t \| \Psi \|_s ds < \infty \), and by definition,
\[
\nabla^\veps_{\Psi} \Gamma(\tilde{X}^x)(s) \to \nabla_{\Psi} \Gamma(\tilde{X}^x)(s) \ \text{ as } \veps \to 0, \ \text{ for all } s, 
\]
by dominated convergence theorem we have
\[
C_\Gamma \int_0^t \| \nabla_{\Psi^\varepsilon} \Gamma(\tilde{X}^x)(s) - \nabla_{\Psi} \Gamma(\tilde{X}^x)(s) \|\,ds \to 0
\quad \text{as } \varepsilon \to 0.
\]
In summary, we have established that as $\veps \to 0$, the term in expression \eqref{eq:lem:bel-t1} goes to zero, implying that 
$\| \Psi^\varepsilon - \Psi \|_t \to 0$ by Grönwall's inequality. This concludes the proof of the lemma.
\end{proof}

\begin{proof}[Proof of Proposition \ref{prop:bel}]
Let \( \Psi \) and \( \Phi \) be the same as in Lemma \ref{lem:bel}, so that
\[
\Psi(t) = \int_0^t D\tb(\tilde{Z}^x(s)) \Phi(s)\,ds + \Upsilon(t),
\]
and \( \Phi = \nabla_{\Psi} \Gamma(\tilde{X}^x) \). Also recall the definition of \( \Psi^\varepsilon \) in Eq. \eqref{eq:Psi-eps-M}.

Because \( \Psi^\varepsilon \to \Psi \) in \( \C(\mathbb{R}^d) \), by Proposition \ref{prop:nabla-veps}, the limit of
\[
\partial_\Upsilon^\varepsilon \tilde{Z}^x = \nabla_{\Psi^\varepsilon} \Gamma(\tilde{X}^x)
\]
is \( \nabla_{\Psi} \Gamma(\tilde{X}^x) \), so we must have that $\partial_\Upsilon \tilde{Z}^x$, the limit of $\partial_\Upsilon^\varepsilon \tilde{Z}^x$ 
as $\veps \to 0$, coincides with the process \( \nabla_{\Psi} \Gamma(\tilde{X}^x) \).
This completes the proof of the proposition. 
\end{proof}

\subsection{Proofs of Proposition \ref{prop:bel2}}
\begin{proof}
By Proposition \ref{prop:LR2018-main}, for all \( \psi \in \C(\R^d) \),  
\( \nabla_\psi \Gamma(\tilde{X}^x) \) exists, lies in \( \D_{\ell,r}(\mathbb{R}^d) \), and
\[
\nabla_\psi \Gamma(\tilde{X}^x)(t+) = \Lambda_{\tilde{Z}^x}[\Psi](t) \quad \text{for all } t > 0,
\]
and \( \nabla_\psi \Gamma(\tilde{X}^x) \) is continuous at times \( t > 0 \) when $\tilde Z^x(t) > 0$, or when $\tilde Z^x_j = 0$ for more than one $j \in \{1, 2, \cdots, d\}$.

Thus, by Proposition \ref{prop:bel}, \( \partial_\Upsilon \tilde{Z}^x \) exists, and
$\partial_\Upsilon \tilde{Z}^x = \nabla_{\Psi} \Gamma(\tilde{X}^x)$, 
where
\[
\Psi(t) = \int_0^t D\tb(\tilde{Z}^x(s)) \partial_\Upsilon \tilde{Z}^x(s)\,ds + \Upsilon(t).
\]

Consequently, \( \partial_\Upsilon \tilde{Z}^x(t+) \in D_{\ell, r}(\mathbb{R}^d) \), and
\[
\partial_\Upsilon \tilde{Z}^x(t+) = \Lambda_{\tilde{Z}^x}[\Psi](t) \ \text{ for all } t \ge 0.
\]

It remains to prove that there exists a unique derivative process \( D_\Upsilon \tilde{Z}^x \) along \( \tilde{Z}^x \) in the direction \( \Upsilon \), and part (iii) holds.
We claim that a.s.\ $D_\Upsilon \tilde{Z}^x = \Lambda_{\tilde{Z}^x}[\Psi]$.

First, since \( \Lambda_{\tilde{Z}^x}[\Psi] \) is the right-continuous regularization of \( \partial_\Upsilon \tilde{Z}^x \), we have
\[
\Psi(t) = \int_0^t D\tb(\tilde{Z}^x(s)) \Lambda_{\tilde{Z}^x}[\Psi](s)\,ds + \Upsilon(t).
\]
Next, because by definition \( \Lambda_{\tilde{Z}^x}[\Psi] \) solves the derivative problem along \( \tilde{Z}^x \) for \( \Psi \), we have
\[
\Lambda_{\tilde{Z}^x}[\Psi](t) = \Psi(t) + \eta_{\Psi}(t),
\]
with $\eta_{\Psi}(0) \in \text{span}[R(x)]$, $\Lambda_{\tilde{Z}^x}[\Psi](t) \in H_{\tilde{Z}^x(t)}$, and 
\[
\eta_{\Psi}(t) - \eta_{\Psi}(s) \in \text{span} \left[ \bigcup_{r \in (s,t]} R(\tilde{Z}^x(r)) \right]
\quad \text{for all } 0 \le s < t < \infty.
\]

Because \( \Psi(0) = 0 \), \( \eta_{\Psi}(0) \in \text{span}[R(x)] \), 
 \( \Lambda_{\tilde{Z}^x}[\Psi](0) \in H_x \), and $H_x \cap \text{span}[R(x)] = \{0\}$ by Lemma 8.1 of \cite{LipshutzRamanan2018},  
we must have 
\[
\eta_{\Psi}(0) = \Lambda_{\tilde{Z}^x}(0) = 0.
\]
Thus, \( \Lambda_{\tilde{Z}^x}[\Psi] \) is a derivative process along \( \tilde{Z}^x \) in the direction \( \Upsilon \).  
By pathwise uniqueness, a.s.\ \( D_M \tilde{Z}^x = \Lambda_{\tilde{Z}^x}[\Psi] \).
This concludes the proof of the proposition. 
\end{proof}

\subsection{Proof of Proposition \ref{prop:bel3}}
\begin{proof}
Let
\[
\tilde{\Phi}(t) := \int_0^t D{\tilde{Z}^x}(s,t) \upsilon(s)\,ds.
\]
We wish to show that \( \tilde{\Phi} \) is a derivative process along \( \tilde{Z}^x \) in the direction \( \Upsilon \),  
so that by pathwise uniqueness we can conclude that a.s.\ \( D_\Upsilon \tilde{Z}^x = \tilde{\Phi} \).

Let $\eta^s(t)$ be such that 
\[
D{\tilde{Z}^x}(s,t) = I + \int_s^t D\tb(\tilde{Z}^x(s)) D{\tilde{Z}^x}(s,r)\,dr + \eta^s(t), \qquad 0 \le s < t < \infty.
\]
Then,
\begin{align*}
\int_0^t D{\tilde{Z}^x}(s,t) \upsilon(s)\,ds
&= \int_0^t \left[ I + \int_s^t D\tb(\tilde{Z}^x(r)) D{\tilde{Z}^x}(s,r)\,dr + \eta^s(t) \right] \upsilon(s)\,ds \\
& = \Upsilon(t) + \int_0^t D\tb(\tilde{Z}^x(r)) \left[ \int_0^r D{\tilde{Z}^x}(s,r) \upsilon(s)\,ds \right] dr + \int_0^t \eta^s(t) \upsilon(s)\,ds.
\end{align*}
Let
\[
\eta_{\tilde{\Phi}}(t) := \int_0^t \eta^s(t) \upsilon(s)\,ds.
\]
Then,
\[
\tilde{\Phi}(t) = M(t) + \int_0^t D\tb(\tilde{Z}^x(r)) \tilde{\Phi}(r)\,dr + \eta_{\tilde{\Phi}}(t).
\]

By the definition of \( D{\tilde{Z}^x}(s,t) \), for each \( 0 \le s < t < \infty \),  
\[
D{\tilde{Z}^x}(s,t) \in H_{\tilde{Z}^x(t)}.
\]
Thus,
\[
\tilde{\Phi}(t) = \int_0^t D{\tilde{Z}^x}(s,t) \upsilon(s)\,ds \in H_{\tilde{Z}^x(t)}.
\]
We also have $\eta_{\tilde{\Phi}}(0) = 0 \in \text{span}[R(x)]$ and $\tilde{\Phi}(0) = 0 \in H_x$.

For all \( 0 \le s < t < \infty \),
\begin{align*}
\eta_{\tilde{\Phi}}(t) - \eta_{\tilde{\Phi}}(s)
&= \int_0^t \eta^r(t) \upsilon(r)\,dr - \int_0^s \eta^r(s) \upsilon(r)\,dr \\
&= \int_0^s (\eta^r(t) - \eta^r(s)) \upsilon(r)\,dr + \int_s^t \eta^r(t) \upsilon(r)\,dr.
\end{align*}
By definition, for each \( r \in [0,s] \), 
\[
\eta^r(t) - \eta^r(s) \in \text{span} \left[ \bigcup_{u \in (s,t]} R(\tilde{Z}^x(u)) \right],
\]
so
\[
\int_0^s (\eta^r(t) - \eta^r(s)) \upsilon(r)\,dr \in \text{span} \left[ \bigcup_{u \in (s,t]} R(\tilde{Z}^x(u)) \right].
\]
For \( r \in (s,t) \),
\[
\eta^r(t) \in \text{span} \left[ \bigcup_{u \in (r,t]} R(\tilde{Z}^x(u)) \right] \subset \text{span} \left[ \bigcup_{u \in (s,t]} R(\tilde{Z}^x(u)) \right],
\]
so
\[
\int_s^t \eta^r(t) \upsilon(r)\,dr \in \text{span} \left[ \bigcup_{u \in (s,t]} R(\tilde{Z}^x(u)) \right]
\]
as well. Thus,
\[
\eta_{\tilde{\Phi}}(t) - \eta_{\tilde{\Phi}}(s) \in \text{span} \left[ \bigcup_{u \in (s,t]} R(\tilde{Z}^x(u)) \right].
\]

Therefore, \( \tilde{\Phi} \) is a derivative process along \( \tilde{Z}^x \) in the direction \( \Upsilon \). 
By pathwise uniqueness, we conclude the proof of the proposition. 
\end{proof}

\section{Exact Simulation of the Triple \texorpdfstring{$\eqref{eq:triple}$}{(88)}}\label{sec:exact-simulation}
Recall that the distribution of the random time point $S \in (t, T)$ is given by:
\begin{equation}
\pr(S < s) = \left\{
\begin{array}{ll}
\frac{1-2\Phi\left(-\sqrt{2\beta(s-t)}\right)}{1-2\Phi\left(-\sqrt{2\beta(T-t)}\right)}, & \beta > 0; \\
\frac{\sqrt{s-t}}{\sqrt{T-t}}, & \beta = 0,
\end{array}\right.
\end{equation}
which can be efficiently simulated using standard methods such as inverse transform. 
When $\beta >0$, the distribution of $S$ is also the same as that of a quadratic function of a truncated normal random variable, 
for which efficient simulation packages exist (in e.g., Julia).
Thus, we focus our attention on exact simulation of the triple $\left(\tau_j, \tilde Z^{t,x}_j(s), B_j(\tau_j\wedge s)\right)$ in expression \eqref{eq:triple}, 
where we recall that $\tilde Z_j^{t,x}$ is the reflected Brownian motion defined in Eq. \eqref{eq:rbm-reference}, and
$\tau_j:= \inf\{r \ge t : \tilde Z_j^{t,x}(r) = 0\}$ is the first hitting time to zero of $\tilde Z_j^{t,x}$ since time $t$. 
To simplify notation, we drop the subscript $j$, and suppose that time starts at $t=0$. 
Thus, we focus on simulating the triple $\left(\tau, \tilde Z^x(s), B(\tau\wedge s)\right)$, 
where $B(\cdot)$ is a one-dimensional standard Brownian motion, $\tilde Z^x$ is the associated reflected Brownian motion with initial state $x \in \R_+$, 
drift $\gamma$ and variance $\sigma^2$, i.e., $\tilde Z^x = \Gamma(\tilde X^x)$, where 
\[
\tilde X^x(t) = x + \gamma t + \sigma B(t)
\]
is the free part of $\tilde Z^x$, and $\Gamma$ the one-dimensional Skorokhod map, 
and $\tau = \inf\{r \ge 0: \tilde Z^x(r) = 0\}$ is the first hitting time of $\tilde Z^x$ (equivalently, $\tilde X^x$) to zero. 
We restrict the drift $\gamma$ to be non-positive, to ensure that $\tilde Z^x$ hits $0$ with probability $1$. 
Our strategy is to first simulate $\tau$, and then simulate 
$\tilde Z^x(t)$ and $B(t\wedge \tau)$, conditioned on the value of $\tau$. 

First, by Section 13.2 of \cite{Steele2001}, $\tau$ has an explicit density function $f_\tau$ given by
\begin{equation}
f_\tau (t) = \frac{x}{s^{3/2}} \phi\left(\frac{x+\gamma t}{\sqrt{t}}\right),
\end{equation}
where $\phi$ is the density function of a standard normal distribution, i.e., 
\begin{equation}
\phi(y) = \frac{1}{\sqrt{2\pi}} e^{-y^2/2}.
\end{equation}
By a simple rescaling, for general $\sigma > 0$, the density function $f_\tau$ 
of $\tau$ is given by 
\begin{equation}
f_\tau (t) = \frac{x}{\sigma t^{3/2}} \phi\left(\frac{x+\gamma t}{\sigma\sqrt{t}}\right).
\end{equation}
This is an inverse Gaussian distribution with parameters $-x/\gamma$ and $x^2/\sigma^2$, 
which can be efficiently simulated using standard packages in programming languages such as Julia.

Suppose that we have generated an instance of $\tau$. 
We next simulate $\tilde Z^x(s)$ and $B(s\wedge \tau)$, conditioned on $\tau$. 
There are two cases to consider, $s\geq \tau$ and $s < \tau$.

In the first case $s \geq \tau$, we have
\[
B(s\wedge \tau) = B(\tau) = \sigma^{-1}\left(\tilde Z^x (\tau) - x - \gamma \tau\right) = -(x+\gamma \tau)/\sigma, 
\]
because $\tilde Z^x(\tau) = 0$. To simulate $\tilde Z^x(s)$, note that by the strong Markov property of $\tilde Z^x$, 
conditioned on $\tilde Z^x(\tau) = 0$, $\tilde Z^x(s)$ has the same distribution as $\tilde{Z}^0(s-\tau)$, 
where $\tilde{Z}^0$ is a reflected Brownian motion with initial state $0$, drift $\gamma$, 
and variance $\sigma^2$. 
To simulate marginals of $\tilde Z^0$, we follow the exact simulation algorithm described in \cite{AsmussenGlynnPitman1995}
(also noted in \cite{Lepingle1984, AsmussenGlynn2007}). 
For completeness, we provide a brief derivation and description of the algorithm here. We have
\[
\tilde Z^0(t) = W(t) + \sup_{0\leq r \leq t} [W(r)]^-
= W(t) + \left[-\inf_{0\le r\le t} W(r)\right]^+.
\]
where $W$ is a Brownian motion with drift $\gamma$, variance $\sigma^2$, and zero initial state. 
In the case $\sigma = 1$, \cite{AsmussenGlynnPitman1995} provided the following explicit expression 
for the conditional distribution of $-\inf_{0\le r\le t} W(r)$, 
conditioned on $-W(t)$:
\[
\pr\left(-\inf_{0\le r\le t} W(r) - y \leq x \mid -W(t) = y \right)
= 1 - e^{-2x(y+x)/t}.
\]
By a simple rescaling, we obtain that for general $\sigma > 0$, 
\[
\pr\left(-\inf_{0\le r\le t} W(r) - y \leq x \mid -W(t) = y \right)
= 1 - \exp\left(-\frac{2x(y+x)}{\sigma^2 t}\right).
\]
This suggests an inverse transform method for simulating $-\inf_{0\le r\le t} W(r)$, conditioned on the value of $-W(t)$. 
From $\left(-W(t), -\inf_{0\le r\le t} W(r)\right)$ we can obtain $\tilde Z^0(t)$ using the formula above.
We now describe the algorithm.
\begin{itemize}
\item[] Step 1. Generate a normal random variable $\tilde W$ with mean $-\gamma t$ 
and variance $\sigma^2 t$. This has the same distribution as $-W(t)$.
\item[] Step 2. Generate $U$ that is uniform on $(0, 1)$, and let 
\[
\tilde M = \frac{\tilde W}{2} + \frac{\sqrt{\tilde W^2 - 2\sigma^2 t\log U}}{2}.
\]
Then, $\left(\tilde M, \tilde W\right)$ has the same distribution as $\left(-\inf_{0\le r\le t} W(r), -W(t)\right)$. 
\item[] Step 3. Output $\tilde Z^0(t) = - \tilde W + \tilde M^+$.
\end{itemize}
Next, we consider the second case, where $s < \tau$. 
Because $\tilde Z^x$ remains positive before time $\tau$ and $s < \tau$, 
$\tilde Z^x(s) = \tilde X^x(s)$. Furthermore, 
$B(s\wedge \tau) = B(s) = \sigma^{-1}\left(\tilde X^x(s) - x - \gamma s\right)$. 
Thus, it suffices to simulate the marginal $\tilde X^x(s)$, given $\tau$ and the fact that $s < \tau$. 
To this end, note that between time $0$ and $\tau$, $\tilde X^x$ is a Brownian meander that starts at $x > 0$ at time $0$, 
remains positive between $0$ and $\tau$, and hits $0$ at time $\tau$. 
It is important to note that Brownian meanders are different from the better known Brownian bridges, 
whose paths between end points are not constrained. 
Chapter 1 in \cite{DevroyeKarasozenKohlerKorn2010} provides the following exact method for simulating $\tilde Z^x(s)$, given $\tau$ and $\tau>s$, with appropriate scaling: 
\begin{itemize}
\item[] Step 1. Generate a standard normal random variable $\tilde N$ and an independent unit-rate exponential random variable $\tilde E$.
\item[] Step 2. Output 
\[
\sqrt{\left(\frac{x(\tau-s)}{\tau} + \sigma \sqrt{s(\tau-s)/\tau}\tilde N\right)^2 + \frac{2\sigma^2 s(\tau-s)}{\tau} \cdot \tilde E}.
\]
\end{itemize}

\section{Proofs for Section \ref{sec:fixed-point}}\label{sec:proofs-fixed-point}

\subsection{Proof of Lemma \ref{lem:norm-rho-banach}}
\begin{proof} Let $\rho \in \R$, and let \( C_\rho = \min\{1, e^{\rho T}\} \) and \( C^\rho = \max\{1, e^{\rho T}\} \).  
Then, it is not difficult to see that  
\begin{equation}\label{eq:equiv-norm}
C_\rho \|v\|_0 \leq \|v\|_\rho \leq C^\rho \|v\|_0.
\end{equation}
Because $\|v\|_0 < \infty$ for all $v \in \cB$, so $\|v\|_\rho < \infty$ as well. 
It is also not difficult to check that \( \|\cdot\|_\rho \) is a well-defined norm. 
By Ineq. \eqref{eq:equiv-norm}, all norms \( \|\cdot\|_\rho \) are equivalent to \( \|\cdot\|_0 \), 
so it suffices to show that \( (\mathcal{B}, \|\cdot\|_0) \) is complete.

Take a Cauchy sequence \( \{v^{(n)}\}_{n=1}^\infty \) in \( (\mathcal{B}, \|\cdot\|_0) \).  
Then  
\[
\lim_{N \to \infty} \sup_{m,n \geq N} \|v^{(n)} - v^{(m)}\|_0 = 0.
\]

For a fixed \( (t,x) \in [0,T) \times \mathbb{R}^d_+ \),  
because \( \{ v^{(n)}(t,x) \}_{n=1}^\infty \) is a Cauchy sequence in \( \mathbb{R}^d \), \( v^{(n)}(t,x) \to v(t,x) \) for some  
\( v(t,x) \in \mathbb{R}^d \), as \( n \to \infty \).
As the pointwise limit of measurable functions $v^{(n)}$, $v$ is a measurable function as well.

Furthermore, for all \( (t,x) \in [0,T) \times \mathbb{R}^d_+ \),  
for all \( \varepsilon > 0 \), there exists \( N \) such that if \( n \geq N \), then  
\begin{align*}
\frac{(T - t)^{1/2} \|v(t,x)\|_\infty}{w(x)} \leq &~\frac{(T - t)^{1/2} \|v^{(n)}(t,x)\|_\infty}{w(x)} + \varepsilon \\
\leq &~\sup_{n \in \mathbb{N}} \|v^{(n)}\|_0 + \varepsilon \\
\leq &~\sup_{n \le N} \|v^{(n)}\|_0 + \sup_{m \geq N} \|v^{(m)} - v^{(N)}\|_0 + \veps
< \infty
\end{align*}

Thus \( \|v\|_0 < \infty \).

Finally, 
\begin{align*}
\|v - v^{(n)}\|_0
& = \sup_{(t,x) \in [0,T) \times \mathbb{R}^d_+} \frac{(T - t)^{1/2}\|v(t,x) - v^{(n)}(t,x)\|_\infty}{w(x)} \\
& = \sup_{(t,x) \in [0,T) \times \mathbb{R}^d_+} \frac{(T - t)^{1/2} \|\lim_{m \to \infty} v^{(m)}(t,x) - v^{(n)}(t,x)\|_\infty}{w(x)} \\
& \leq \sup_{(t,x) \in [0,T) \times \mathbb{R}^d_+} \sup_{m \geq n} \frac{(T - t)^{1/2}\|v^{(m)}(t,x) - v^{(n)}(t,x)\|_\infty}{w(x)} \\
& = \sup_{m \geq n} \|v^{(m)} - v^{(n)}\|_0 \to 0 \quad \text{as } n \to \infty,
\end{align*}
where the convergence follows by the fact that the sequence \( \{v^{(n)}\} \) is Cauchy. 
This completes the proof of the lemma.
\end{proof}

\subsection{Proof of Lemma \ref{lem:Fv-bounded}}
\begin{proof} 
Let \( \mathcal{S} = \{ (t,s) : t \in [0,T), \, s \in (t,T) \} \).  
Define the function \( \zeta^{(1)} : \mathcal{S} \times \mathbb{R}^d_+ \to \mathbb{R}^d \) by  
\begin{equation}\label{eq:zeta1}
\zeta^{(1)}(t,s,x) = \mathbb{E} \Bigg[ \tilde \cH \left(\tilde{Z}^{t,x}(s), v(s, \tilde{Z}^{t,x}(s))\right)
\left( \frac{1}{s - t} \int_t^s \left[\sigma^{-1} D\tilde Z^{t,x}(r)\right]^{\top} dB(r) \right) \Bigg],
\end{equation}
and the function \( \zeta^{(2)} : [0,T) \times \mathbb{R}^d_+ \to \mathbb{R}^d \) by  
\begin{equation}\label{eq:zeta2}
\zeta^{(2)}(t,x) = \mathbb{E} \left[ \xi(\tilde{Z}^{t,x}(T)) 
\left( \frac{1}{T - t} \int_t^T \left[\sigma^{-1} D\tilde Z^{t,x}(r)\right]^{\top} dB(r) \right) \right].
\end{equation}
The term $\E\left[\int_t^T \kappa^{\top} d\left[D\tilde Y^{t,x}(s)\right]\right] = \kappa^{\top} D\tilde Y^{t,x}(T)$ is uniformly bounded over $(t,x) \in [0,T)\times \R_+^d$ for any fixed $T >0$, 
so it suffices to show that \( \|\zeta^{(2)}\|_0 < \infty \), and
\[
\sup_{(t,x) \in [0,T) \times \mathbb{R}_+^d} \frac{(T - t)^{1/2}}{w(x)} \int_t^T \|\zeta^{(1)}(t,s,x)\|_\infty ds < \infty.
\] 

We first show that \( \|\zeta^{(2)}\|_0 < \infty \).  
For \( (t,x) \in [0,T) \times \mathbb{R}^d_+ \), $j = 1, 2, \cdots, d$, 
\begin{align*}
\frac{(T - t)^{1/2} |\zeta_j^{(2)}(t,x)|}{w(x)} 
&= w(x)^{-1} \left| \mathbb{E} \left[ \xi(\tilde{Z}^{t,x}(T)) \left( \frac{1}{\sqrt{T - t}} \int_t^T \langle \left[\sigma^{-1} D\tilde Z^{t,x}(r)\right]_j, dB(r) \rangle \right) \right] \right|\\
&\leq \frac{ \mathbb{E} \left[ \xi^2(\tilde{Z}^{t,x}(T)) \right]^{1/2} }{w(x)} 
\cdot \frac{1}{\sqrt{T-t}}\mathbb{E} \left[\left| \int_t^T \langle \left[\sigma^{-1} D\tilde Z^{t,x}(r)\right]_j, dB(r) \rangle \right|^2\right]^{1/2} \\
& = \frac{ \mathbb{E} \left[ \xi^2(\tilde{Z}^{t,x}(T)) \right]^{1/2} }{w(x)} 
\cdot \frac{1}{\sqrt{T - t}} \E \left[ \int_t^T \left[\sigma^{-1} D\tilde Z^{t,x}(r)\right]_j^2dr \right]^{1/2}.
\end{align*}
Here, the first inequality follows from Cauchy-Schwarz, and the last equality follows from Itô isometry.

By Proposition \ref{prop:moment-Z},  
\[
\mathbb{E} \left[ \xi^2(\tilde{Z}^{t,x}(T)) \right] \leq C (1 + \|x\|^{2\alpha})
\]  
for some constant \( C = C(T,\alpha,\sigma, C_{\tb}, C_\Gamma) \), so  
\[
\mathbb{E} \left[ \xi^2(\tilde{Z}^{t,x}(T)) \right]^{1/2} \leq C'(1 + \|x\|^\alpha)
\]
for some constant $C'$. But \( w(x) = 1 + \|x\|^\alpha \), so  
\[
\frac{ \mathbb{E} \left[ \xi^2(\tilde{Z}^{t,x}(T)) \right]^{1/2} }{w(x)} \leq C'.
\]
By Lemma \ref{lem:bound-DZ}, we also have \( \|D \tilde{Z}^{t,x}\|_T \leq \tilde C \)  
for some constant \( \tilde C\), so  
\[
\frac{1}{(T - t)^{1/2}} \mathbb{E} \left[ \left( \int_t^T \left[ \sigma^{-1} D \tilde{Z}^{t,x}(r) \right]_j^2 dr \right)^{1/2} \right] 
\leq \frac{\tilde C' (T - t)^{1/2}}{(T - t)^{1/2}} = \tilde C'.
\]
Thus there is a finite constant that upper bounds  
$\frac{(T - t)^{1/2} |\zeta_j^{(2)}(t,x)|}{w(x)}$ uniformly over $(t,x) \in [0,T) \times \mathbb{R}_+^d$, $j = 1, 2, \cdots, d$.
Therefore, $\| \zeta^{(2)} \|_0 < \infty$.

Next, consider \( \zeta^{(1)} \).  
Recall that  
\[
\tilde \cH(x,p) = \inf_{a \in \mathcal{A}} \left\{ (b(x,a) - \tb(x))^{\top} p + C(x,a) \right\}.
\]
Since \( b \) is uniformly bounded over all  
\( (x,a) \in \mathbb{R}_+^d \times \mathcal{A} \), \( \tb \) is uniformly bounded  
over \( x \in \mathbb{R}_+^d \), \( |c(x,a)| \leq \alpha_2 (1 + \|x\|^\alpha) \) uniformly  
over \( a \in \mathcal{A} \), we have  
\begin{equation}\label{eq:bound-H}
\tilde \cH^2 (x,p) \leq \tilde \alpha_1 \|p\|^2 + \tilde \alpha_2 (1 + \|x\|^{2\alpha}), \quad (x,p) \in \mathbb{R}_+^d \times \mathbb{R}^d,
\end{equation}
for some constants $\tilde \alpha_1, \tilde \alpha_2$ and $\alpha$. Thus,  
\begin{align*}
& \quad \frac{(T - t)^{1/2}}{w(x)} \int_t^T |\zeta_j^{(1)}(t,s,x)| ds \\
&= \frac{(T - t)^{1/2}}{w(x)} \int_t^T \left|\mathbb{E} \left[ \tilde \cH (\tilde{Z}^{t,x}(s), v(s, \tilde{Z}^{t,x}(s))) \cdot 
\frac{1}{s - t} \int_t^s \langle \left[\sigma^{-1} D\tilde Z^{t,x}(r)\right]_j, dB(r) \rangle \right] \right| ds\\
&\leq \frac{(T - t)^{1/2}}{w(x)} \int_t^T 
\left( \mathbb{E} \left[ \tilde \cH^2(\tilde{Z}^{t,x}(s), v(s, \tilde{Z}^{t,x}(s))) \right] \right)^{1/2} 
\left( \mathbb{E} \left[ \left| \frac{1}{s - t} \int_t^s \langle \left[\sigma^{-1} D\tilde Z^{t,x}(r)\right]_j, dB(r) \rangle \right|^2 \right] \right)^{1/2} ds\\
&= \frac{(T - t)^{1/2}}{w(x)} \int_t^T 
\left( \mathbb{E} \left[ \tilde \cH^2(\tilde{Z}^{t,x}(s), v(s, \tilde{Z}^{t,x}(s))) \right] \right)^{1/2} 
\left( \frac{1}{s - t} \E \left[\int_t^s \left[\sigma^{-1} D\tilde Z^{t,x}(r)\right]_j^2 dr \right]^{1/2}\right) ds \\
&\leq \frac{\tilde C'(T - t)^{1/2}}{w(x)} \int_t^T 
\left( \tilde \alpha_1 \mathbb{E} \left[\|v(s, \tilde{Z}^{t,x}(s))\|^2 \right] + \tilde \alpha_2 \E\left[1 + \|\tilde{Z}^{t,x}(s)\|^{2\alpha} \right]\right)^{1/2} (s - t)^{-1/2} ds \\
&\leq \frac{\tilde C''(T - t)^{1/2}}{w(x)} \int_t^T 
\left((T-s)^{-1/2} \|v\|_0 \mathbb{E} \left[ w^2(\tilde{Z}^{t,x}(s)) \right]^{1/2} + \mathbb{E} \left[ w^2(\tilde{Z}^{t,x}(s)) \right]^{1/2} \right) \cdot (s - t)^{-1/2} ds \\
&\leq \tilde C''' (T - t)^{1/2} \left[\|v\|_0 \int_t^T (s - t)^{-1/2} (T - s)^{-1/2} ds + \int_t^T (s - t)^{-1/2} ds \right] < \infty.
\end{align*}
Here, the first inequality again follows from Cauchy-Schwarz, the second equality follows from Itô isometry, 
the second inequality follows from the bound \eqref{eq:bound-H} on the Hamiltonian $\tilde \cH$, 
the third inequality follows from the definition of $\|v\|_0$, 
the fourth inequality follows by collecting constants and Proposition \ref{prop:moment-Z}, 
from which we deduce that $\E\left[w^2(\tilde Z^x(s))\right]^{1/2} < \alpha_3 w(x)$ for some constant $\alpha_3$, 
and the final inequality follows from the fact that the integrals $\int_t^T (s - t)^{-1/2} (T - s)^{-1/2} ds$ 
and $\int_t^T (s - t)^{-1/2} ds$ are both finite. 

Since our upper bound is uniform over $(t,x) \in [0, T) \times \R_+^d$ and $j = 1, 2, \cdots, d$, 
\[
\sup_{(t,x) \in [0,T) \times \mathbb{R}_+^d} \frac{(T - t)^{1/2}}{w(x)} \int_t^T \|\zeta^{(1)}(t,s,x)\|_\infty ds < \infty.
\]
Thus, we can conclude that for \( v \in \mathcal{B} \),  
$\|F(v)\|_0 < \infty$. This concludes the proof of the lemma. 
\end{proof}

\subsection{Proof of Proposition \ref{prop:F-contraction}}
\begin{proof}
Here we only treat the case of zero discount rate, i.e., \( \beta = 0 \). The more general case follows analogously.
Let \( v, \tilde{v} \in (\mathcal{B}, \|\cdot\|_\rho) \). Let $j \in \{1, 2, \cdots, d\}$.
Then, for any \( (t,x) \in [0,T) \times \mathbb{R}_+^d \),
\begin{align*}
\left|\left[F(v)\right]_j(t,x) - \left[F(\tilde{v})\right]_j(t,x)\right|
&= \mathbb{E} \left[ \int_t^T \left( \tilde \cH(\tilde{Z}^{t,x}(s), v(s, \tilde{Z}^{t,x}(s))) 
- \tilde \cH(\tilde{Z}^{t,x}(s), \tilde{v}(s, \tilde{Z}^{t,x}(s))) \right) \right. \\
& \qquad \quad \left. \cdot \left( \frac{1}{s - t} \int_t^s \left[\sigma^{-1} D_j \tilde{Z}^{t,x}(r)\right]^{\top} dB(r) \right) ds \right] \\
&\leq \int_t^T \mathbb{E} \left[ \left| \tilde \cH(\tilde{Z}^{t,x}(s), v(s, \tilde{Z}^{t,x}(s))) 
- \tilde \cH(\tilde{Z}^{t,x}(s), \tilde{v}(s, \tilde{Z}^{t,x}(s))) \right|^2 \right]^{1/2} \\
&\qquad \quad \cdot \left( \frac{1}{s - t} \mathbb{E} \left[ \left| \int_t^s \left[\sigma^{-1} D_j \tilde{Z}^{t,x}(r)\right]^{\top} dB(r) \right|^2 \right] \right)^{1/2} ds \\
&\leq C \int_t^T (s - t)^{-1/2} \mathbb{E} \left[ \|v(s, \tilde{Z}^{t,x}(s)) - \tilde{v}(s, \tilde{Z}^{t,x}(s))\|^2 \right]^{1/2} ds \\
&\leq C \int_t^T (s - t)^{-1/2} (T - s)^{-1/2} \mathbb{E} \left[ w^2(\tilde{Z}^{t,x}(s)) \right]^{1/2} 
e^{-\rho s} \| v - \tilde{v} \|_\rho ds \\
&\leq \tilde C w(x) \left( \int_t^T (s - t)^{-1/2} (T - s)^{-1/2} e^{-\rho s} ds \right) 
\| v - \tilde{v} \|_\rho,
\end{align*}
for some positive constants $C$ and $\tilde C$. 
Here, the first inequality follows from Cauchy-Schwarz, 
and the second inequality follows from the (uniform in $x$) Lipschitz continuity of $\tilde \cH(x, p)$ in $p$.
Thus, 
\begin{align*}
& \quad w(x)^{-1}e^{\rho t}  \left|\left[F(v)\right]_j(t,x) - \left[F(\tilde{v})\right]_j(t,x)\right| (T - t)^{1/2} \\
&\leq \tilde C \left( \int_t^T e^{-\rho(s - t)} (s - t)^{-1/2} (T - s)^{-1/2} (T - t)^{1/2} ds \right) 
\| v - \tilde{v} \|_\rho.
\end{align*}
The integral
\begin{align*}
&\quad \int_t^T e^{-\rho(s - t)} (s - t)^{-1/2} (T - s)^{-1/2} (T - t)^{1/2} ds \\
&=  \int_t^T e^{-\rho(s - t)} (s - t)^{-1/2} (T - s)^{-1/2} (T - s + s - t)^{1/2} ds \\
&\leq \int_t^T e^{-\rho(s - t)} \left( (s - t)^{-1/2} + (T - s)^{-1/2} \right) ds \\
&= \int_t^T (s - t)^{-1/2} e^{-\rho(s - t)} ds + \int_t^T (T - s)^{-1/2} e^{-\rho(s - t)} ds.
\end{align*}
By Hölder’s inequality, if \( \rho \geq 0 \), then
\begin{align*}
\int_t^T (s - t)^{-1/2} e^{-\rho(s - t)} ds 
&\leq \left( \int_t^T (s - t)^{-3/4} ds \right)^{2/3} 
\left( \int_t^T e^{-3\rho(s - t)} ds \right)^{1/3} \\
&= \left( 4(T - t)^{1/4} \right)^{2/3} 
\left( -\frac{1}{3\rho} e^{-3\rho(s - t)} \Big|_t^{\top} \right)^{1/3} \\
&\leq 4^{2/3} (T - t)^{1/6} \left( \frac{1}{3\rho} \right)^{1/3}.
\end{align*}
Similarly, 
\[
\int_t^T (T - s)^{-1/2} e^{-\rho(s - t)} ds \leq 4^{2/3} (T - t)^{1/6} \left( \frac{1}{3\rho} \right)^{1/3}.
\]
Thus, whenever \( \rho >  4^5 \tilde C^3 T^{1/2}/3\),  
\begin{align*}
\frac{e^{\rho t} \left|\left[F(v)\right]_j(t,x) - \left[F(\tilde{v})\right]_j(t,x)\right|(T - t)^{1/2}}{w(x)} 
&\leq \frac{1}{2} \| v - \tilde{v} \|_\rho
\end{align*}
for all \( (t,x) \in [0,T) \times \mathbb{R}_+^d \), $j = 1,2, \cdots, J$. This implies that
\[
\|F(v) - F(\tilde{v})\|_\rho < \frac{1}{2} \| v - \tilde{v} \|_\rho,
\]
for sufficiently large \( \rho \).  
This concludes the proof of the proposition. 
\end{proof}

\subsection{Proof of Lemma \ref{lem:Fv-continuous}}
\begin{proof} Let \( (t,x) \in [0,T) \times \mathbb{R}_{++}^d \), and let \( \{(t^{(n)}, x^{(n)})\}_n \)  
be a sequence in \( [0,T) \times \mathbb{R}_{++}^d \) such that  
\[
(t^{(n)}, x^{(n)}) \to (t,x) \quad \text{as } n \to \infty.
\]
We wish to show that 
\[
F(v)(t^{(n)}, x^{(n)}) \to F(v)(t,x) \quad \text{as } n \to \infty.
\]
We first show that if $(t^{(n)}, x^{(n)}) \to (t,x)$ in such a way that $t^{(n)} \downarrow t$, as $n \to \infty$, then 
$F(v)(t^{(n)}, x^{(n)}) \to F(v)(t,x)$. The case when $t^{(n)} \uparrow t$ follows analogously. 

Note that 
$\{ (\tilde{Z}^{t,x}(s), D \tilde{Z}^{t,x}(s), \tilde{Y}^{t,x}(s), D \tilde{Y}^{t,x}(s)) : s \in [t, T] \}$
has the same law as $\{ (\tilde{Z}^{x}(s - t), D \tilde{Z}^x(s - t), \tilde{Y}^{x}(s - t), D \tilde{Y}^x(s - t)) : s \in [t, T] \}$.
Thus, we define \( (\tilde{Z}^x, D \tilde{Z}^x, \tilde{Y}^x, D \tilde{Y}^x) \) and \( \{ (\tilde{Z}^{x^{(n)}}, D \tilde{Z}^{x^{(n)}}, \tilde{Y}^{x^{(n)}}, D \tilde{Y}^{x^{(n)}}) \} \)  
on the same probability space such that  
they are driven by the same Brownian motion. 
Consider the terms $\E\left[\int_t^T \kappa^{\top} d\left[D\tilde Y^{t,x}(s)\right]\right] = \E\left[\kappa^{\top} D\tilde Y^{x}(T-t)\right]$ 
and $\E\left[\int_t^T \kappa^{\top} d\left[D\tilde Y^{t^{(n)},x^{(n)}}(s)\right]\right] = \E\left[\kappa^{\top} D\tilde Y^{x^{(n)}}(T-t^{(n)})\right]$. 
For notational simplicity, write 
\begin{equation}\label{eq:rn-and-r}
r^{(n)} = T - t^{(n)} \ \ \text{and} \ \ r = T - t.
\end{equation}
We now proceed to show that as $n \to \infty$, 
\[
\E\left[\kappa^{\top} D\tilde Y^{x^{(n)}}(r^{(n)})\right] \to \E\left[\kappa^{\top} D\tilde Y^{x}(r)\right].
\]
By Proposition \ref{thm:continuity-DZ}, for each $j = 1, 2, \cdots, d$, $D_j\tilde Y^{x^{(n)}} \to D_j\tilde Y^{x}$ in the Skorokhod topology. 
This implies that there exist strictly increasing bijections $\lambda^{(n)} : [0,T] \to [0,T]$ such that as $n \to \infty$, 
\[
\left\|D_j\tilde Y^{x^{(n)}}  - D_j\tilde Y^{x} \circ \lambda^{(n)} \right\|_T \to 0, \text{ and } \|\lambda^{(n)} - \iota\|_T \to 0,
\]
where $\iota : [0,T] \to [0, T]$ is the identity map. We have
\begin{align*}
&~\left\|D_j\tilde Y^{x^{(n)}}(r^{(n)}) - D_j\tilde Y^{x}(r)\right\|_\infty \\
\le &~\left\|D_j\tilde Y^{x^{(n)}}(r^{(n)}) - D_j\tilde Y^{x}(\lambda^{(n)}(r^{(n)}))\right\|_\infty+\left\|D_j\tilde Y^{x}(\lambda^{(n)}(r^{(n)})) - D_j\tilde Y^{x}(r)\right\|_\infty \\
\le &~\left\|D_j\tilde Y^{x^{(n)}}  - D_j\tilde Y^{x} \circ \lambda^{(n)} \right\|_T + \left\|D_j\tilde Y^{x}(\lambda^{(n)}(r^{(n)})) - D_j\tilde Y^{x}(r)\right\|_\infty.
\end{align*}
By Skorokhod convergence, the first term $\left\|D_j\tilde Y^{x^{(n)}}  - D_j\tilde Y^{x} \circ \lambda^{(n)} \right\|_T \to 0$ as $n \to \infty$. 
For the second term, note that 
\begin{align*}
&~\left| \lambda^{(n)} (r^{(n)}) - r\right| 
\le  \left| \lambda^{(n)} (r^{(n)}) - (r^{(n)}) \right| + |r^{(n)} - r| 
\le \|\lambda^{(n)} - \iota\|_T + |t^{(n)} - t| \to 0
\end{align*}
as $n \to \infty$, and by Proposition \ref{prop:LR2019b-main} part (ii) and Lemma \ref{lem:Z>0}, a.s. $r$ is a continuity point of $D_j\tilde Y^{x}$, 
we have that a.s. $\left\|D_j\tilde Y^{x}(\lambda^{(n)}(r^{(n)})) - D_j\tilde Y^{x}(r)\right\|_\infty \to 0$ as $n \to \infty$. 
Thus, we have shown that a.s. for each $j$, $D_j\tilde Y^{x^{(n)}}(r^{(n)}) \to D_j\tilde Y^{x}(r)$ as $n \to \infty$. 
By uniform boundedness of $D_j\tilde Y^{x^{(n)}}$, we have that
\[
\E\left[D_j\tilde Y^{x^{(n)}}(r^{(n)})\right] \to \E\left[D_j\tilde Y^{x}(r\right], \ \text{ so } \
\E\left[\kappa^{\top} D\tilde Y^{t^{(n)}, x^{(n)}}(T)\right] \to \E\left[\kappa^{\top} D\tilde Y^{t,x}(T)\right].
\]
Next, let \( \zeta^{(2)}(t,x) \) be as in the proof of Lemma \ref{lem:Fv-bounded}, defined in Eq. \eqref{eq:zeta2}. 
We now proceed to show that \( \zeta^{(2)}(t^{(n)}, x^{(n)}) \to \zeta^{(2)}(t,x) \)  
as \( n \to \infty \).
To this end, first note that $\{ (\tilde{Z}^{t,x}(s), D \tilde{Z}^{t,x}(s)) : s \in [t, T] \}$ 
has the same law as $\{ (\tilde{Z}^{x}(s - t), D \tilde{Z}^x(s - t)) : s \in [t, T] \}$.

Thus, for each $j = 1, 2, \cdots, d$, 
\[
\zeta^{(2)}_j(t,x) = \mathbb{E} \left[ \xi(\tilde{Z}^x(T - t)) 
\left( \frac{1}{T - t} \int_0^{T - t} \left[\sigma^{-1} D_j \tilde{Z}^x(u)\right]^{\top} dB(u) \right) \right].
\]
We claim the following. 

\textbf{Claim 1.}  
\[
\mathbb{E} \left[ \left( \xi(\tilde{Z}^{x^{(n)}}(r^{(n)})) 
- \xi(\tilde{Z}^x(r)) \right)^2 \right] \to 0 \quad \text{as } n \to \infty.
\]
\textbf{Claim 2.}  
\[
\mathbb{E} \left[ \left( \frac{1}{r^{(n)}}\int_0^{r^{(n)}} \left[\sigma^{-1} D_j \tilde{Z}^{x^{(n)}}(u)\right]^{\top} dB(u) 
- \frac{1}{r}\int_0^r \left[\sigma^{-1} D_j \tilde{Z}^x(u)\right]^{\top} dB(u) \right)^2 \right] \to 0 
\quad \text{as } n \to \infty.
\]
By Claims 1 and 2, it is immediate that for each $j$, 
\[
\zeta^{(2)}_j(t^{(n)}, x^{(n)}) \to \zeta^{(2)}_j(t,x) \quad \text{as } n \to \infty.
\]
\textbf{Proof of Claim 1}. We have
\begin{align*}
\mathbb{E} \left[ \left( \xi(\tilde{Z}^{x^{(n)}}(r^{(n)})) - \xi(\tilde{Z}^x(r)) \right)^2 \right]
& \leq 2 \Bigg\{\mathbb{E} \left[ \left( \xi(\tilde{Z}^{x^{(n)}}(r^{(n)})) - \xi(\tilde{Z}^x(r^{(n)})) \right)^2 \right] \\
& \quad + \mathbb{E} \left[ \left( \xi(\tilde{Z}^x(r^{(n)})) - \xi(\tilde{Z}^x(r)) \right)^2 \right] \Bigg\}.
\end{align*}
By Proposition \ref{prop:moment-Z}, a.s.  
\[
\left\|\tilde{Z}^{x^{(n)}}(r^{(n)}) - \tilde{Z}^x(r^{(n)})\right\| 
\leq \| \tilde{Z}^{x^{(n)}} - \tilde{Z}^x \|_T \leq C \|x^{(n)} - x\| \to 0 \text{ as } n \to \infty,
\]
so by continuity of \( \xi \), a.s.
\[
(\xi(\tilde{Z}^{x^{(n)}}(r^{(n)})) - \xi(\tilde{Z}^x(r^{(n)})))^2 \to 0
\quad \text{as } n \to \infty.
\]
Next, define \( y = (y_j)_{j=1}^J \in \mathbb{R}_+^d \) by  
setting \( y_j = \sup_n x_j^{(n)} \), \( j = 1, \cdots, d \), and define  
a reflecting diffusion \( W \) by \( W = \Gamma(X) \),  
where \( X(t) = y + C_{\tb} \bOne t + \sigma B(t) \), \( t \in \mathbb{R}_+ \).

Then, by Theorem 1.1 of \cite{KellaRamasubramanian2012},  
a.s. \( W(r^{(n)}) \geq \tilde{Z}^{x^{(n)}}(r^{(n)}) \), and  
$W(r^{(n)}) \geq \tilde{Z}^x(r^{(n)})$ for all $n$. Thus, by Assumption \ref{as:cost-poly}, 
\begin{align*}
(\xi(\tilde{Z}^{x^{(n)}}(r^{(n)})) - \xi(\tilde{Z}^x(r^{(n)})))^2 
& \leq 2 \left( \xi^2(\tilde{Z}^{x^{(n)}}(r^{(n)})) + \xi^2(\tilde{Z}^x(r^{(n)})) \right) \\
& \leq 2\alpha_w \left[ w\left(\tilde{Z}^{x^{(n)}}(r^{(n)})\right) + w\left(\tilde{Z}^x(r^{(n)})\right)\right] \\
& \leq 4\alpha_w w\left(W(r^{(n)})\right). \leq 4\alpha_w w\left(\|W\|_T\right)
\end{align*}
By dominated convergence theorem, and the fact that $w\left(\|W\|_T\right)$ is integrable, 
\[
\mathbb{E} \left[ (\xi(\tilde{Z}^{x^{(n)}}(r^{(n)})) - \xi(\tilde{Z}^x(r^{(n)})))^2 \right] \to 0 
\quad \text{as } n \to \infty.
\]
Next, because \( r^{(n)} \to r \), by continuity of the sample paths of \( \tilde{Z}^x \)  
and \( \xi \),  
\[
(\xi(\tilde{Z}^x(r^{(n)})) - \xi(\tilde{Z}^x(r)))^2 \to 0 \quad \text{a.s. as } n \to \infty.
\]
A similar argument shows that  
\[
\mathbb{E} \left[ \left( \xi(\tilde{Z}^x(r^{(n)})) - \xi(\tilde{Z}^x(r)) \right)^2 \right] \to 0 
\quad \text{as } n \to \infty.
\]
Thus, $\mathbb{E} \left[ \left( \xi(\tilde{Z}^{x^{(n)}}(r^{(n)})) - \xi(\tilde{Z}^x(r)) \right)^2 \right] \to 0$ as $n \to \infty$, and we have established Claim 1.

%

\textbf{Proof of Claim 2}. Because \( r^{(n)} \to r \), and  
\[
\mathbb{E} \left[ \left(\int_0^r \left[\sigma^{-1} D_j \tilde{Z}^x(u)\right]^{\top} dB(u) \right)^2 \right] < \infty,
\]
it suffices to show that  
\[
\mathbb{E} \left[ \left(\int_0^{r^{(n)}} \left[\sigma^{-1} D_j \tilde{Z}^{x^{(n)}}(u)\right]^{\top} dB(u) 
- \int_0^r \left[\sigma^{-1} D_j \tilde{Z}^x(u)\right]^{\top} dB(u) \right)^2 \right] \to 0 
\quad \text{as } n \to \infty.
\]
Let $j \in \{1, 2, \cdots, d\}$. We have  
\begin{align*}
&\quad \ \mathbb{E} \left[ \left( \int_0^{r^{(n)}} \left[\sigma^{-1} D_j \tilde{Z}^{x^{(n)}}(u)\right]^{\top} dB(u) - \int_0^r \left[\sigma^{-1} D_j \tilde{Z}^x(u)\right]^{\top} dB(u) \right)^2 \right] \\
& = \mathbb{E} \left[ \left( \int_0^r \left[\sigma^{-1} \left(D_j \tilde{Z}^{x^{(n)}}(u) - D_j \tilde{Z}^{x^{(n)}}(u)\right)\right]^{\top} dB(u) 
- \int_r^{r^{(n)}} \left[\sigma^{-1} D_j \tilde{Z}^{x^{(n)}}(u)\right]^{\top} dB(u) \right)^2 \right] \\
& \leq 2 \mathbb{E} \left[ \left| \int_0^r \left[\sigma^{-1} \left(D_j \tilde{Z}^{x^{(n)}}(u) - D_j \tilde{Z}^{x^{(n)}}(u)\right)\right]^{\top} dB(u) \right|^2 \right] + 
 2 \mathbb{E} \left[ \left| \int_r^{r^{(n)}} \left[\sigma^{-1} D_j \tilde{Z}^{x^{(n)}}(u)\right]^{\top} dB(u) \right|^2 \right]\\
& = 2 \mathbb{E} \left[ \int_0^r \left\|\left[\sigma^{-1} (D_j \tilde{Z}^x(u) - D_j \tilde{Z}^{x^{(n)}}(u))\right] \right\|_2^2 \, du \right] 
+ 2 \mathbb{E} \left[ \int_r^{r^{(n)}} \left\| \left[\sigma^{-1} D_j \tilde{Z}^{x^{(n)}}(u)\right] \right\|_2^2 \, du \right],
\end{align*}
where the last equality follows from Itô isometry. By the fact that \( \| D \tilde{Z}^{y} \|_T \) is uniformly bounded 
over over \( y \in \mathbb{R}_+^d \), $t^{(n)} \to t$, and Corollary \ref{cor:conv-bel}, 
both terms in the last expression converge to  zero as \( n \to \infty \). Thus, we have established Claim 2 as well. 

We now establish continuity of the term  
\begin{align*}
\zeta(t,x) & := \mathbb{E} \left[ \int_t^T e^{-\beta(s - t)} \tilde \cH (\tilde{Z}^{t,x}(s), v(s, \tilde{Z}^{t,x}(s))) 
\left( \frac{1}{s - t} \int_t^s \left[\sigma^{-1} D \tilde{Z}^{t,x}(r)\right]^{\top} dB(r) \right) ds \right]\\
& = \mathbb{E} \left[ \int_0^{T - t} e^{-\beta s} \tilde \cH(\tilde{Z}^x(s), v(s + t, \tilde{Z}^x(s))) 
\left( \frac{1}{s} \int_0^s \left[\sigma^{-1} D \tilde{Z}^x(r)\right]^{\top} dB(r) \right) ds \right]
\end{align*}
in \( (t,x) \in [0,T) \times \mathbb{R}_{++}^d \).

For notational simplicity, we consider the case \( \beta = 0 \).  
The more general case where \( \beta \) can be positive can be established using a similar argument.

Let $j \in \{1, 2, \cdots, d\}$. For \( (t,x), (t',x') \in [0,T) \times \mathbb{R}_{++}^d \), with \( t' > t \),  
\begin{align*}
\zeta_j(t,x) - \zeta_j(t',x') 
&= \mathbb{E} \left[ \int_0^{T - t} \tilde \cH (\tilde{Z}^x(s), v(s + t, \tilde{Z}^x(s))) 
\left( \frac{1}{s} \int_0^s \left[\sigma^{-1} D_j \tilde{Z}^x(r)\right]^{\top} dB(r) \right) ds \right] \\
&\quad - \mathbb{E} \left[ \int_0^{T - t'} \tilde \cH (\tilde{Z}^{x'}(s), v(s + t', \tilde{Z}^{x'}(s))) 
\left( \frac{1}{s} \int_0^s \left[\sigma^{-1} D_j \tilde{Z}^{x'}(r)\right]^{\top} dB(r) \right) ds \right] \\
&\leq A_1(t, t', x, x') + A_2(t, t', x, x') + A_3(t, t', x, x'),
\end{align*}
where
\begin{align}
A_1(t, t', x, x') &= \int_{T - t'}^{T - t} \mathbb{E} \left[ \left|\tilde \cH(\tilde{Z}^x(s), v(s + t, \tilde{Z}^x(s)))\right| 
\left| \frac{1}{s} \int_0^s \left[\sigma^{-1} D_j \tilde{Z}^x(r)\right]^{\top} dB(r) \right| \right] ds; \label{eq:A1}\\
A_2(t, t', x, x') &= \int_0^{T - t'} \mathbb{E} \left[ \left| \tilde\cH(\tilde{Z}^x(s), v(s + t, \tilde{Z}^x(s))) 
- \tilde \cH(\tilde{Z}^{x'}(s), v(s + t', \tilde{Z}^{x'}(s))) \right| \right. \nonumber \\
& \qquad \qquad \quad \cdot \left.\left| \frac{1}{s} \int_0^s \left[\sigma^{-1} D_j \tilde{Z}^x(r)\right]^{\top} dB(r) \right| \right] ds; \label{eq:A2}\\
A_3(t, t', x, x') &= \int_0^{T - t'} \mathbb{E} \left[ \left|\tilde \cH(\tilde{Z}^{x'}(s), v(s + t', \tilde{Z}^{x'}(s)))\right| \right. \nonumber \\ 
&~~~~~~~~~~~~~~~~
 \left. \cdot \left| \frac{1}{s} \int_0^s \left[\sigma^{-1} D_j \tilde{Z}^x(r)\right]^{\top}_j dB(r) - \frac{1}{s} \int_0^s \left[\sigma^{-1} D_j \tilde{Z}^{x'}(r)\right]^{\top} dB(r) \right| \right] ds. \label{eq:A3}
\end{align}

We now bound the terms \( A_1, A_2 \), and \( A_3 \) respectively. By Cauchy-Schwarz inequality,
\begin{align*}
A_1(t, t', x, x') 
&\leq \int_{T - t'}^{T - t} \left( \mathbb{E} \left[ \left|\tilde \cH(\tilde{Z}^x(s), v(s + t, \tilde{Z}^x(s)))\right|^2 \right] \right)^{1/2} 
 \cdot \left( \mathbb{E} \left[ \left| \frac{1}{s} \int_0^s \left[\sigma^{-1} D_j \tilde{Z}^x(r)\right]^{\top} dB(r) \right|^2 \right] \right)^{1/2} ds.
\end{align*}
Using a similar reasoning as in the proof of Lemma \ref{lem:Fv-bounded}, we can show that 
\begin{align*}
A_1(t, t', x, x') & \leq C w(x) \int_{T - t'}^{T - t} s^{-1/2} \left(1+\|v\|_0 (T-t-s)^{-1/2}\right) ds\\
& \leq C w(x) \left(\int_{T - t'}^{T - t} s^{-1/2} ds + \sqrt{2(T-t)} \|v\|_0 \int_{T - t'}^{T - t} \left(s^{-1/2} + (T-t-s)^{-1/2}\right)\right) ds \\
& \leq \tilde C w(x) \left(\sqrt{T-t} - \sqrt{T-t'}\right) +\tilde C w(x) \sqrt{2(T-t)} \|v\|_0 \left(\sqrt{T-t} - \sqrt{T-t'} + \sqrt{t'-t}\right) \\
& \to 0 \text{ as } t' \downarrow t,
\end{align*}
for some positive constants $C$ and $\tilde C$. Here, the second inequality follows from the fact that for $a, b \in \R_+$, 
$\sqrt{a} + \sqrt{b} \leq \sqrt{2(a+b)}$.
Thus, if \( (t^{(n)}, x^{(n)}) \to (t, x) \),  
with \( t^{(n)} \downarrow t \), then  
\[
A_1(t, t^{(n)}, x, x^{(n)}) \to 0.
\]

Next,
\begin{align}
A_2(t, t', x, x') &= \int_0^{T - t'} 
\mathbb{E} \left[ \left| \tilde \cH(\tilde{Z}^x(s), v(s + t, \tilde{Z}^x(s))) 
- \tilde \cH(\tilde{Z}^{x'}(s), v(s + t', \tilde{Z}^{x'}(s))) \right| \right. \nonumber \\
&\qquad \left. \cdot \left| \frac{1}{s} \int_0^s \left[\sigma^{-1} D_j \tilde{Z}^x(r)\right]^{\top} dB(r) \right| \right] ds \nonumber \\
&= \int_0^{T - t} \mbOne_{\{s < T - t'\}} 
\mathbb{E} \left[ \left| \tilde \cH(\tilde{Z}^x(s), v(s + t, \tilde{Z}^x(s))) 
- \tilde \cH(\tilde{Z}^{x'}(s), v(s + t', \tilde{Z}^{x'}(s))) \right| \right. \nonumber \\
&\qquad \left. \cdot \left| \frac{1}{s} \int_0^s \left[\sigma^{-1} D_j \tilde{Z}^x(r)\right]^{\top} dB(r) \right| \right] ds \nonumber \\
&\le \int_0^{T - t} \mbOne_{\{s < T - t'\}} 
\mathbb{E} \left[ \left| \tilde \cH(\tilde{Z}^x(s), v(s + t, \tilde{Z}^x(s))) 
- \tilde \cH(\tilde{Z}^{x'}(s), v(s + t', \tilde{Z}^{x'}(s))) \right|^2 \right]^{1/2} \nonumber \\
& \qquad \cdot\E\left[\left| \frac{1}{s} \int_0^s \left[\sigma^{-1} D_j \tilde{Z}^x(r)\right]^{\top} dB(r) \right|^2 \right]^{1/2} ds.
\label{eq:A2-bound0}
\end{align}
By Lemma \ref{lem:bound-DZ}, and using Itô isometry, 
\begin{equation} \label{eq:A2-bel-bound}
\mathbb{E} \left[ \left( \frac{1}{s} \int_0^s \left[\sigma^{-1} D_j \tilde{Z}^x(r)\right]^{\top} dB(r) \right)^2 \right]^{1/2}
\leq {\tilde C} s^{1/2},
\end{equation}
for some constant \( {\tilde C} \) independent of \( s \in (0,T) \) and $j \in \{1, 2, \cdots, d\}$.

For the term
\[
\mathbb{E} \left[ \left| \tilde \cH(\tilde{Z}^x(s), v(s + t, \tilde{Z}^x(s))) 
- \tilde \cH(\tilde{Z}^{x'}(s), v(s + t', \tilde{Z}^{x'}(s))) \right|^2 \right]^{1/2}, 
\]
consider \( (t^{(n)}, x^{(n)}) \to (t,x) \) with \( t^{(n)} \downarrow t \) 
and \( (t^{(n)}, x^{(n)}) \) in place of \( (t', x') \). Then, we upper bound the term 
\begin{align*}
\mbOne_{\{s \leq T - t^{(n)}\}} (T - t - s)^{1/2} \mathbb{E} \left[ \left|  
\tilde \cH(\tilde{Z}^x(s), v(s + t, \tilde{Z}^x(s))) 
- \tilde \cH(\tilde{Z}^{x^{(n)}}(s), v(s + t^{(n)}, \tilde{Z}^{x^{(n)}}(s))) \right|^2 \right]^{1/2}
\end{align*}
as follows. 
\begin{align}
& \quad \mbOne_{\{s \leq T - t^{(n)}\}} (T - t - s)^{1/2} \mathbb{E} \left[ \left|  
\tilde \cH(\tilde{Z}^x(s), v(s + t, \tilde{Z}^x(s))) 
- \tilde \cH(\tilde{Z}^{x^{(n)}}(s), v(s + t^{(n)}, \tilde{Z}^{x^{(n)}}(s))) \right|^2 \right]^{1/2} \nonumber \\
& \leq 2 \mbOne_{\{s \leq T - t^{(n)}\}} \cdot \mathbb{E} \left[ \left| (T - t - s)^{1/2} 
\tilde \cH(\tilde{Z}^x(s), v(s + t, \tilde{Z}^x(s))) \right. \right. \nonumber \\
&\left. \left. \qquad \qquad \qquad \quad - (T - t^{(n)} - s)^{1/2} \tilde \cH(\tilde{Z}^{x^{(n)}}(s), v(s + t^{(n)}, \tilde{Z}^{x^{(n)}}(s))) \right|^2 \right]^{1/2} \nonumber \\
& + 2  \mbOne_{\{s \leq T - t^{(n)}\}} \left[ (T - t - s)^{1/2} - (T - t^{(n)} - s)^{1/2} \right] 
\mathbb{E} \left[ \tilde \cH^2(\tilde{Z}^{x^{(n)}}(s), v(s + t^{(n)}, \tilde{Z}^{x^{(n)}}(s))) \right]^{1/2},
\label{eq:A2-bound}
\end{align}
where, to derive the last inequality, we used the fact that for two random variables $X_1$ and $X_2$, 
\[
\E\left[(X_1 + X_2)^2\right]^{1/2} \leq 2 \E\left[X_1^2\right]^{1/2} + 2 \E\left[X_2^2\right]^{1/2}.
\]
Next, consider the first term in Ineq. \eqref{eq:A2-bound}:
\begin{align*}
& 2 \mbOne_{\{s \leq T - t^{(n)}\}} \cdot \mathbb{E} \left[ \left| (T - t - s)^{1/2} 
\tilde \cH(\tilde{Z}^x(s), v(s + t, \tilde{Z}^x(s))) \right. \right. \\
&\left. \left. \qquad \qquad \qquad \quad - (T - t^{(n)} - s)^{1/2} \tilde \cH(\tilde{Z}^{x^{(n)}}(s), v(s + t^{(n)}, \tilde{Z}^{x^{(n)}}(s))) \right|^2 \right]^{1/2}.
\end{align*}
We have \( s + t^{(n)} \downarrow s + t \) for all \( s \in (0,T - t) \),  
and a.s. for all \( s \in (0,T - t) \),  
\[
\tilde{Z}^{x^{(n)}}(s) \to \tilde{Z}^x(s) \quad \text{ as } \quad n \to \infty.
\]
By the continuity of \( \tilde \cH \) and that of \( v \) on $[0, T)\times \R_{++}^d$, and by Lemmas \ref{lem:Z>0} and \ref{lem:Z-conv},  
we have that a.s., for all $s \in (0,T - t)$,
\[
(T - t^{(n)} - s)^{1/2} \tilde \cH(\tilde{Z}^{x^{(n)}}(s), v(s + t^{(n)}, \tilde{Z}^{x^{(n)}}(s))) \to
(T - t - s)^{1/2} \tilde \cH(\tilde{Z}^x(s), v(s + t, \tilde{Z}^x(s))).
\]
Furthermore, 
\begin{align*}
&\quad \ \mbOne_{\{s \leq T - t^{(n)}\}} \cdot \mathbb{E} \left[ \left| (T - t - s)^{1/2} 
\tilde \cH(\tilde{Z}^x(s), v(s + t, \tilde{Z}^x(s))) \right. \right. \\
&\left. \left. \qquad \qquad \qquad - (T - t^{(n)} - s)^{1/2} \tilde \cH(\tilde{Z}^{x^{(n)}}(s), v(s + t^{(n)}, \tilde{Z}^{x^{(n)}}(s))) \right|^2 \right]^{1/2} \\
&\leq 2 \mathbb{E} \left[ (T - t  - s) 
\tilde \cH^2(\tilde{Z}^x(s), v(s + t, \tilde{Z}^x(s))) \right]^{1/2} \\
&\quad + 2\mbOne_{\{s \leq T - t^{(n)}\}} \mathbb{E} \left[ (T - t^{(n)} - s) 
\tilde \cH^2(\tilde{Z}^{x^{(n)}}(s), v(s + t^{(n)}, \tilde{Z}^{x^{(n)}}(s)))\right]^{1/2}. 
\end{align*}
Using a similar reasoning as in the proof of Lemma \ref{lem:Fv-bounded}, we can show that 
\[
\mathbb{E} \left[ (T - t  - s) \tilde \cH^2(\tilde{Z}^x(s), v(s + t, \tilde{Z}^x(s))) \right]^{1/2} \leq C w(x), 
\]
and 
\[
\mbOne_{\{s \leq T - t^{(n)}\}} \mathbb{E} \left[ (T - t^{(n)} - s) 
\tilde \cH^2(\tilde{Z}^{x^{(n)}}(s), v(s + t^{(n)}, \tilde{Z}^{x^{(n)}}(s)))\right]^{1/2} \leq C w(x), 
\]
for some $C>0$ that does not depend on $s$ or $t$. Thus, the sequence
\begin{align*}
&\quad \ \mbOne_{\{s \leq T - t^{(n)}\}} \cdot \mathbb{E} \left[ \left| (T - t - s)^{1/2} 
\tilde \cH(\tilde{Z}^x(s), v(s + t, \tilde{Z}^x(s))) \right. \right. \\
&\left. \left. \qquad \qquad \qquad - (T - t^{(n)} - s)^{1/2} \tilde \cH(\tilde{Z}^{x^{(n)}}(s), v(s + t^{(n)}, \tilde{Z}^{x^{(n)}}(s))) \right|^2 \right]^{1/2}
\end{align*}
is uniformly integrable, so it converges to zero,  
uniformly in \( s \in (0, T - t) \).

We then consider the second term in Ineq. \eqref{eq:A2-bound}: 
\[
\mbOne_{\{s \leq T - t^{(n)}\}} \left[ (T - t - s)^{1/2} - (T - t^{(n)} - s)^{1/2} \right] 
\mathbb{E} \left[ \tilde \cH^2(\tilde{Z}^{x^{(n)}}(s), v(s + t^{(n)}, \tilde{Z}^{x^{(n)}}(s))) \right]^{1/2}.
\]
Again using a similar reasoning as in the proof of Lemma \ref{lem:Fv-bounded}, we can show that 
\begin{align}
&\quad \mbOne_{\{s \leq T - t^{(n)}\}} \left[ (T - t - s)^{1/2} - (T - t^{(n)} - s)^{1/2} \right] 
\mathbb{E} \left[ \tilde \cH^2(\tilde{Z}^{x^{(n)}}(s), v(s + t^{(n)}, \tilde{Z}^{x^{(n)}}(s))) \right]^{1/2} \nonumber \\
& \leq Cw(x) \left[ (T - t - s)^{1/2} - (T - t^{(n)} - s)^{1/2} \right] \left(1 + (T-t^{(n)}-s)^{1/2} \mbOne_{\{s \leq T - t^{(n)}\}} \right).
\label{eq:A2-bound2}
\end{align}
Therefore, by Ineqs. \eqref{eq:A2-bound0}, \eqref{eq:A2-bel-bound}, \eqref{eq:A2-bound} and \eqref{eq:A2-bound2}, 
\begin{align*}
A_2(t, t^{(n)}, x, x^{(n)}) 
&\leq \tilde C \int_0^{T - t} s^{-1/2} (T - t-s)^{-1/2} \\
& \quad \Bigg[2 \mbOne_{\{s \leq T - t^{(n)}\}} \cdot \mathbb{E} \left[ \left| (T - t - s)^{1/2} 
\tilde \cH(\tilde{Z}^x(s), v(s + t, \tilde{Z}^x(s))) \right. \right. \\
&\left. \left. \qquad \qquad \qquad \qquad - (T - t^{(n)} - s)^{1/2} \tilde \cH(\tilde{Z}^{x^{(n)}}(s), v(s + t^{(n)}, \tilde{Z}^{x^{(n)}}(s))) \right|^2 \right]^{1/2} \\
& \quad + 2 Cw(x)  \left[ (T - t - s)^{1/2} - (T - t^{(n)} - s)^{1/2} \right] \left(1 + (T-t^{(n)}-s)^{1/2} \mbOne_{\{s \leq T - t^{(n)}\}} \right) \Bigg] ds.
\end{align*}
Expanding bracket, and consider the first term  
\begin{align*}
& \tilde C \int_0^{T - t} s^{-1/2} (T - s)^{-1/2} \cdot \left\{ 2 \mbOne_{\{s < T - t^{(n)}\}} 
\mathbb{E} \left[ \left| (T - t - s)^{1/2} \tilde \cH(\tilde{Z}^x(s), v(s + t, \tilde{Z}^x(s))) \right. \right. \right. \\
&\left. \left. \left. \quad - (T - t^{(n)} - s)^{1/2} \tilde \cH(\tilde{Z}^{x^{(n)}}(s), v(s + t^{(n)}, \tilde{Z}^{x^{(n)}}(s))) \right|^2 \right] \right\}^{1/2} ds.
\end{align*}
We claim that it converges to zero as \( (x^{(n)}, t^{(n)}) \to (x, t) \),  with \( t^{(n)} \downarrow t \).
This is because the term
\begin{align*}
&\quad \ 2 \mbOne_{\{s \leq T - t^{(n)}\}} \cdot \mathbb{E} \left[ \left| (T - t - s)^{1/2} 
\tilde \cH(\tilde{Z}^x(s), v(s + t, \tilde{Z}^x(s))) \right. \right. \\
&\left. \left. \qquad \qquad \qquad - (T - t^{(n)} - s)^{1/2} \tilde \cH(\tilde{Z}^{x^{(n)}}(s), v(s + t^{(n)}, \tilde{Z}^{x^{(n)}}(s))) \right|^2 \right]^{1/2}
\end{align*}
converges to $0$ for all \( s \in (0, T - t) \), 
is dominated by $8Cw(x)$, with the integrand $s^{-1/2} (T - s)^{-1/2} w(x)$ being integrable. 
Thus, convergence follows by dominated convergence theorem. 

Next, consider the second term (up to a factor of $2\tilde C C w(x)$)
\begin{align*}
\int_0^{T - t} s^{-1/2} (T - t - s)^{-1/2} 
\left[ (T - t + s)^{1/2} - (T - t^{(n)} + s)^{1/2} \right] \cdot \left(1 + (T - s - t^{(n)})^{-1/2}) \mbOne_{\{s < T - t^{(n)}\}}\right) ds.
\end{align*}
We have 
\begin{align*}
& \quad \int_0^{T - t} s^{-1/2} (T - t - s)^{-1/2} 
\left[ (T - t + s)^{1/2} - (T - t^{(n)} + s)^{1/2} \right] \\
& \qquad \cdot \left(1 + (T - s - t^{(n)})^{-1/2}) \mbOne_{\{s < T - t^{(n)}\}}\right) ds \\
& =   \int_0^{T - t} s^{-1/2} (T - t - s)^{-1/2} \left[ (T - t + s)^{1/2} - (T - t^{(n)} + s)^{1/2} \right] ds \\
& \quad + \int_0^{T - t^{(n)}} s^{-1/2} (T - t - s)^{-1/2} (T - s - t^{(n)})^{-1/2}\left[ (T - t + s)^{1/2} - (T - t^{(n)} + s)^{1/2} \right] ds \\
& \le \left(t^{(n)} - t\right)^{1/2} \int_0^{T - t} s^{-1/2} (T - t - s)^{-1/2}ds \\
& \quad + \int_0^{T - t^{(n)}} s^{-1/2} \left[(T - s - t^{(n)})^{-1/2} - (T - t - s)^{-1/2}\right] ds, 
\end{align*} 
where, in the last inequality, we used the fact that 
\[
(T - t + s)^{1/2} - (T - t^{(n)} + s)^{1/2} \leq \left(t^{(n)} - t\right)^{1/2}. 
\]
The integral $\int_0^{T - t} s^{-1/2} (T - t - s)^{-1/2}ds$ is finite, so the term 
\[
\left(t^{(n)} - t\right)^{1/2} \int_0^{T - t} s^{-1/2} (T - t - s)^{-1/2}ds
\]
converges to zero as $n \to \infty$. The integral 
\[
\int_0^{T - t^{(n)}} s^{-1/2} \left[(T - s - t^{(n)})^{-1/2} - (T - t - s)^{-1/2}\right] ds
\]
is also finite, with the integrand $s^{-1/2} \left[(T - s - t^{(n)})^{-1/2} - (T - t - s)^{-1/2}\right]$ converging to zero for each $s \in (0, T-t)$, 
so the integral 
\[
\int_0^{T - t^{(n)}} s^{-1/2} \left[(T - s - t^{(n)})^{-1/2} - (T - t - s)^{-1/2}\right] ds \to 0.
\]
{In summary}, we have proved that as $(t^{(n)}, x^{(n)}) \to (t,x)$ with $t^{(n)} \downarrow t$, 
$A_2(t, t^{(n)}, x, x^{(n)})$ is upper bounded by the sum of two integrals, both of which converge to zero.  
Thus, $A_2(t, t^{(n)}, x, x^{(n)}) \to 0$ as well.

Finally, consider \( A_3(t, t^{(n)}, x, x^{(n)}) \).  
We have
\begin{align*}
A_3(t, t^{(n)}, x, x^{(n)}) 
&\leq \int_0^{T - t^{(n)}} \mathbb{E} \left[ \left| \tilde \cH(\tilde{Z}^{x^{(n)}}(s), v(s + t^{(n)}, \tilde{Z}^{x^{(n)}}(s))) \right|^2 \right]^{1/2} \\
&\quad \cdot \frac{1}{s} \mathbb{E} \left[ \left( \int_0^s \left[\sigma^{-1} \left( D_j \tilde{Z}^x(r) - D_j \tilde{Z}^{x^{(n)}}(r) \right)\right]^{\top} dB(r) \right)^2 \right]^{1/2} ds
\end{align*}
For $(t^{(n)}, x^{(n)})$ such that \( \|(t^{(n)}, x^{(n)}) - (t,x)\|_2 \leq 1 \), since $w(x^{(n)}) \leq \alpha w(x)$ for some constant $\alpha >0$, we can  
show that
\begin{align*}
\mathbb{E} \left[ \left| \tilde \cH(\tilde{Z}^{x^{(n)}}(s), v(s + t^{(n)}, \tilde{Z}^{x^{(n)}}(s))) \right|^2 \right]^{1/2} 
\leq \alpha \left( 1 + (T - t^{(n)} - s)^{-1/2} \right) w(x), \quad s \in (0, T - t^{(n)}),
\end{align*}
and
\begin{align*}
\mathbb{E} \left[ \left( \int_0^s \left[\sigma^{-1}(D_j \tilde{Z}^x(r) - D_j \tilde{Z}^{x^{(n)}}(r))\right]^{\top} dB(r) \right)^2 \right]^{1/2} \to 0
\end{align*}
as \( n \to \infty \), for all \( s \in (0, T - t^{(n)}) \), with $\alpha$ being redefined appropriately. 
Thus, the integrand
\[
\mathbb{E} \left[ \left| \tilde \cH(\tilde{Z}^{x^{(n)}}(s), v(s + t^{(n)}, \tilde{Z}^{x^{(n)}}(s))) \right|^2 \right]^{1/2}
\cdot \frac{1}{s} \mathbb{E} \left[ \left( \int_0^s \left[\sigma^{-1} \left( D \tilde{Z}^x(r) - D \tilde{Z}^{x^{(n)}}(r) \right)\right]^{\top}_j dB(r) \right)^2 \right]^{1/2}
\] 
goes to zero for \( s \in (0, T - t^{(n)}) \).

In addition,
\begin{align*}
\frac{1}{s} \cdot \mathbb{E} \left[ \left( \int_0^s \left[\sigma^{-1}(D_j \tilde{Z}^x(r) - D_j\tilde{Z}^{x^{(n)}}(r))\right] dB(r) \right)^2 \right]^{1/2}
\leq \hat C s^{-1/2} \quad \text{for some constant } \hat C, 
\end{align*}
so by the fact that the integral
\[
\int_0^{T - t^{(n)}} \left( 1 + (T - t^{(n)} - s)^{-1/2} \right) s^{-1/2} ds
\]
is finite, and by dominated convergence theorem, we can show that  
\[
A_3(t, t^{(n)}, x, x^{(n)}) \to 0.
\]

In conclusion, \( A_i(t, t^{(n)}, x, x^{(n)}) \to 0 \)  
as \( n \to \infty \), \( i = 1, 2, 3 \), where \( (t^{(n)}, x^{(n)}) \to (t,x) \)  
such that \( t^{(n)} \downarrow t \).
Similarly, we can show that  
\( A_i(t, t^{(n)}, x, x^{(n)}) \to 0 \) as \( (t^{(n)}, x^{(n)}) \to (t,x) \) such that \( t^{(n)} \uparrow t \), $i=1, 2, 3$.

Thus, we have established the continuity  
of $\zeta$. 
Combined with the continuity of \( \zeta^{(2)} \) and $\E\left[\kappa^{\top} D\tilde Y^{t,x}(T)\right]$,  
we have proved that \( F(v) \) is a continuous  
function on \( [0,T) \times \mathbb{R}_{++}^d \) whenever \( v \) is  
continuous on \( [0,T) \times \mathbb{R}_{++}^d \). 
\end{proof}

\section{Proof of Lemma \ref{lem:oblique-boundary}}\label{sec:appendix-oblique-boundary}
For the proof of Lemma \ref{lem:oblique-boundary}, we recall the definitions of $H_x$ and $R(x)$ from Section \ref{ssec:deriv-prob}, 
and some results from \cite{LipshutzRamanan2018}. 
\begin{lemma} [Lemma 8.1 in \cite{LipshutzRamanan2018}]\label{lem:decompose}
For each \( x \in \partial \mathbb{R}_+^d \), \( H_x \cap \text{span}[R(x)] = \{ 0 \} \).  
For each \( y \in \mathbb{R}^d \), there exists a unique pair $(v_y, w_y) \in H_x \times \text{span}[R(x)]$ such that 
\[
y = v_y + w_y.
\]
\end{lemma}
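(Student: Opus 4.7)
The plan is a straightforward linear-algebra argument exploiting the structure of the reflection matrix $R = I - Q^\top$ from Assumption~\ref{as:R}. Let $k := |\mathcal{I}(x)|$. I will first prove the trivial-intersection statement, then deduce the direct-sum decomposition by counting dimensions.

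For the intersection, I would take $y \in H_x \cap \text{span}[R(x)]$ and write $y = \sum_{j \in \mathcal{I}(x)} c_j R_j$. Using $(R_j)_i = \delta_{ij} - Q_{ji}$ together with $Q_{ii} = 0$, the coordinates indexed by $\mathcal{I}(x)$ are
\[
y_i = c_i - \sum_{j \in \mathcal{I}(x),\,j \neq i} c_j Q_{ji} = c_i - \sum_{j \in \mathcal{I}(x)} c_j Q_{ji}, \quad i \in \mathcal{I}(x).
\]
Since $y \in H_x$ forces $y_i = 0$ for each $i \in \mathcal{I}(x)$, the vector $\tilde c := (c_j)_{j \in \mathcal{I}(x)}$ satisfies $\tilde c = \tilde Q^\top \tilde c$, where $\tilde Q := (Q_{ij})_{i,j \in \mathcal{I}(x)}$ is the principal submatrix of $Q$ indexed by $\mathcal{I}(x)$. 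The goal is then to argue $\tilde c = 0$.

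The key input is that $\rho(\tilde Q) < 1$. The uniform contraction bound $\|\bOne^\top Q^n\|_\infty \le \alpha_0(1-\delta)^n$ in Assumption~\ref{as:R} says the maximum column sum of $Q^n$ decays geometrically, which (via $\rho(Q) = \rho(Q^\top) = \lim_n \|(Q^\top)^n\|_\infty^{1/n}$) gives $\rho(Q) \le 1-\delta < 1$. Embedding $\tilde Q$ as a zero-padded $d \times d$ matrix $B$, one has $0 \le B \le Q$ entrywise, and the standard Perron--Frobenius monotonicity result for nonnegative matrices yields $\rho(\tilde Q) = \rho(B) \le \rho(Q) < 1$. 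Hence $I - \tilde Q^\top$ is invertible, so $\tilde c = 0$ and $y = 0$, establishing $H_x \cap \text{span}[R(x)] = \{0\}$.

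For the decomposition, $\dim H_x = d - k$ by definition. The trivial-intersection argument also shows $\{R_j\}_{j \in \mathcal{I}(x)}$ are linearly independent: any relation $\sum_{j \in \mathcal{I}(x)} c_j R_j = 0$ puts the vector in $H_x \cap \text{span}[R(x)] = \{0\}$, forcing $\tilde c = 0$ by the eigenvalue argument above. Therefore $\dim \text{span}[R(x)] = k$, so $\dim H_x + \dim \text{span}[R(x)] = d$, and $\R^d = H_x \oplus \text{span}[R(x)]$, giving existence and uniqueness of $(v_y, w_y)$ for every $y \in \R^d$.

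The only nontrivial step is the spectral-radius bound $\rho(\tilde Q) < 1$ for the principal submatrix; everything else is bookkeeping. I expect this to be routine once the monotonicity of the spectral radius for dominated nonnegative matrices is invoked, but that is the one place where Assumption~\ref{as:R} is genuinely used, so it deserves a clear citation or a short self-contained Perron--Frobenius argument.
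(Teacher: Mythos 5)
Your proof is correct. Note that the paper itself offers no proof of this statement: it is quoted verbatim as Lemma~8.1 of \cite{LipshutzRamanan2018}, where it is established for general convex polyhedral domains under that paper's standing assumptions on the reflection directions. What you have written is a self-contained argument specialized to the orthant with $R = I - Q^{\top}$, and every step checks out: the coordinate computation $(R_j)_i = \delta_{ij} - Q_{ji}$ correctly reduces membership in $H_x \cap \mathrm{span}[R(x)]$ to the fixed-point equation $\tilde c = \tilde Q^{\top}\tilde c$ for the principal submatrix $\tilde Q$ indexed by $\mathcal{I}(x)$; the identification of $\|\bOne^{\top}Q^n\|_\infty$ with the maximum column sum of $Q^n$, hence with $\|(Q^{\top})^n\|_\infty$, combined with Gelfand's formula gives $\rho(Q) \le 1-\delta < 1$; Perron--Frobenius monotonicity for dominated nonnegative matrices then yields $\rho(\tilde Q) < 1$, so $I - \tilde Q^{\top}$ is invertible and $\tilde c = 0$; and the dimension count $\dim H_x + \dim\mathrm{span}[R(x)] = (d-k) + k = d$ (using the linear independence of $\{R_j\}_{j\in\mathcal{I}(x)}$, which follows from the same fixed-point argument applied to $y=0$) delivers the direct-sum decomposition. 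You are also right that the uniform contraction condition \eqref{eq:Q-contraction} is where the argument genuinely bites: a substochastic $Q$ with zero diagonal can have $\rho(Q)=1$ (e.g.\ a $2\times 2$ permutation-like $Q$), in which case $I-\tilde Q^{\top}$ need not be invertible, so the spectral-radius step cannot be dispensed with. The one cosmetic improvement would be to state explicitly the monotonicity fact you invoke (e.g.\ Corollary~8.1.19 of \cite{HornJohnson2012}: $0 \le A \le B$ entrywise implies $\rho(A) \le \rho(B)$), since as you say it is the only nontrivial external input.
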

\begin{lemma}[Lemma 8.3 in \cite{LipshutzRamanan2018}]\label{lem:L_x}
Given \( x \in \partial \mathbb{R}_+^d \), 
there exists a unique mapping  
$\cL_x : \mathbb{R}^d \to H_x$ that satisfies $\cL_x(y) - y \in \text{span}[R(x)]$ for all $y \in \mathbb{R}^d$.
Furthermore, \( \cL_x \) is a linear map.
\end{lemma}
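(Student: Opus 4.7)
The plan is to construct $\mathcal{L}_x$ directly from the unique decomposition guaranteed by Lemma \ref{lem:decompose}, and then verify uniqueness of the map and its linearity from the vector-space structure. Since Lemma \ref{lem:decompose} already provides (i) the triviality of the intersection $H_x \cap \text{span}[R(x)] = \{0\}$ and (ii) the existence of a unique pair $(v_y, w_y) \in H_x \times \text{span}[R(x)]$ with $y = v_y + w_y$ for every $y \in \mathbb{R}^d$, the heavy lifting is already done. All that remains is bookkeeping.

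First, I would define $\mathcal{L}_x : \mathbb{R}^d \to H_x$ by $\mathcal{L}_x(y) := v_y$, where $v_y$ is the $H_x$-component in the decomposition of $y$ from Lemma \ref{lem:decompose}. By construction, $\mathcal{L}_x(y) \in H_x$ and $\mathcal{L}_x(y) - y = -w_y \in \text{span}[R(x)]$, since $\text{span}[R(x)]$ is closed under negation. This establishes existence.

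For uniqueness, suppose $\tilde{\mathcal{L}}_x : \mathbb{R}^d \to H_x$ is another map with $\tilde{\mathcal{L}}_x(y) - y \in \text{span}[R(x)]$ for all $y \in \mathbb{R}^d$. Then for each fixed $y$,
\[
\mathcal{L}_x(y) - \tilde{\mathcal{L}}_x(y) = \left[\mathcal{L}_x(y) - y\right] - \left[\tilde{\mathcal{L}}_x(y) - y\right] \in \text{span}[R(x)],
\]
while simultaneously $\mathcal{L}_x(y) - \tilde{\mathcal{L}}_x(y) \in H_x$ since $H_x$ is a subspace. Invoking $H_x \cap \text{span}[R(x)] = \{0\}$ from Lemma \ref{lem:decompose} forces $\mathcal{L}_x(y) = \tilde{\mathcal{L}}_x(y)$.

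For linearity, fix $y_1, y_2 \in \mathbb{R}^d$ and $\alpha_1, \alpha_2 \in \mathbb{R}$. With $y_i = v_{y_i} + w_{y_i}$, $i=1,2$, we have
\[
\alpha_1 y_1 + \alpha_2 y_2 = \left(\alpha_1 v_{y_1} + \alpha_2 v_{y_2}\right) + \left(\alpha_1 w_{y_1} + \alpha_2 w_{y_2}\right),
\]
where $\alpha_1 v_{y_1} + \alpha_2 v_{y_2} \in H_x$ and $\alpha_1 w_{y_1} + \alpha_2 w_{y_2} \in \text{span}[R(x)]$ because both sets are subspaces. The uniqueness of the decomposition in Lemma \ref{lem:decompose} then identifies the $H_x$-component of $\alpha_1 y_1 + \alpha_2 y_2$ as $\alpha_1 v_{y_1} + \alpha_2 v_{y_2}$, yielding $\mathcal{L}_x(\alpha_1 y_1 + \alpha_2 y_2) = \alpha_1 \mathcal{L}_x(y_1) + \alpha_2 \mathcal{L}_x(y_2)$. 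There is no serious obstacle; the only subtlety is making sure that every appeal to Lemma \ref{lem:decompose} uses both the existence and the uniqueness clauses, and that we exploit the fact that $\text{span}[R(x)]$ and $H_x$ are genuine linear subspaces (not merely convex cones) so that scalar multiples and sums stay within them.
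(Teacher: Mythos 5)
Your proof is correct. Note that the paper itself does not prove this statement — it is imported verbatim as Lemma 8.3 of \cite{LipshutzRamanan2018} — but your argument is exactly the canonical one: Lemma \ref{lem:decompose} gives the direct-sum decomposition $\mathbb{R}^d = H_x \oplus \text{span}[R(x)]$, and $\cL_x$ is then the (oblique) projection onto $H_x$ along $\text{span}[R(x)]$, whose existence, uniqueness, and linearity all follow from the uniqueness of the decomposition together with the fact that both summands are genuine linear subspaces. This is essentially the same route the cited reference takes, and there is nothing to add.
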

We can deduce the following result on the derivative process $D\tilde Z^x$ and pathwise derivatives.
\begin{lemma}\label{lem:deriv-proc}
Let \( x \in \partial \mathbb{R}_+^d \) and \( y \in \text{span}[R(x)] \). Then a.s. the derivative process $D\tilde Z^x$ 
evaluated in the direction $y$ is zero everywhere, i.e.,
\[
D \tilde Z^x(t; y) = 0 \text{ for all } t \geq 0.
\]
As a result, a.s. $\partial_y \tilde{Z}^x(t) = 0$ for almost all $t \geq 0$.
\end{lemma}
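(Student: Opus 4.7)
\textbf{Proof plan for Lemma \ref{lem:deriv-proc}.}

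The plan is to establish the first claim via pathwise uniqueness for the defining equation of the derivative process, right-multiplied by $y$, and then to deduce the second claim by combining continuity properties from Proposition \ref{prop:LR2019b-main} with the fact that the reflected diffusion spends zero Lebesgue-time on the boundary. First, I would right-multiply the defining identity
\[
D\tilde Z^x(t) = I + \int_0^t D\tb(\tilde Z^x(s))\, D\tilde Z^x(s)\, ds + \tilde\eta(t)
\]
by $y$, and set $\phi(t) := D\tilde Z^x(t)\,y$ and $\zeta(t) := \tilde\eta(t)\,y$. Using linearity of $H_{\tilde Z^x(t)}$ and of $\mathrm{span}[R(x)]$ and $\mathrm{span}[\bigcup_{r\in(s,t]} R(\tilde Z^x(r))]$, the pair $(\phi,\zeta)$ satisfies: $\phi(t)\in H_{\tilde Z^x(t)}$ for all $t\geq 0$; $\phi(t) = y + \int_0^t D\tb(\tilde Z^x(s))\phi(s)\,ds + \zeta(t)$; $\zeta(0)\in\mathrm{span}[R(x)]$; and $\zeta(t)-\zeta(s)\in\mathrm{span}[\bigcup_{r\in(s,t]}R(\tilde Z^x(r))]$ for $0\le s<t$.

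Second, I would exhibit the trivial candidate $\phi^\ast(t)\equiv 0$, $\zeta^\ast(t)\equiv -y$. All four constraints are immediate: $0\in H_{\tilde Z^x(t)}$; the integral identity reduces to $0=y+0-y$; the hypothesis $y\in\mathrm{span}[R(x)]$ gives $-y\in\mathrm{span}[R(x)]$; and constant $\zeta^\ast$ has zero increments. The key step is then pathwise uniqueness for this equation. I plan to argue exactly as in the proof of Theorem 3.6 of \cite{LipshutzRamanan2019b} (referenced for Proposition \ref{prop:deriv-M-unique}): any solution $\phi_i$ equals $\Lambda_{\tilde Z^x}[\psi_i]$ with $\psi_i(t) := y + \int_0^t D\tb(\tilde Z^x(s))\phi_i(s)\,ds$ (noting $\psi_i(0)+\zeta_i(0)= y-y = 0\in H_x$, so the derivative problem is well-posed), and the Lipschitz property of $\Lambda_{\tilde Z^x}$ (Theorem 5.4 of \cite{LipshutzRamanan2018}) combined with Assumption \ref{as:tb} and Gr\"onwall's inequality yields $\phi_1\equiv \phi_2$ a.s. Applied to $\phi$ and $\phi^\ast$, this gives $D\tilde Z^x(t)\,y=0$ for all $t\geq 0$ a.s.

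For the ``as a result'' clause, I would use that $e_j\in G_x$ for every $j$, so by Proposition \ref{prop:LR2019b-main}(i) the pathwise derivatives $\partial_{e_j}\tilde Z^x$ exist, and define $\partial_y\tilde Z^x(t) := \sum_j y_j\,\partial_{e_j}\tilde Z^x(t)$ by linearity. Proposition \ref{prop:LR2019b-main}(iii) then yields the right-continuous regularization
\[
\lim_{s\downarrow t}\partial_y\tilde Z^x(s) \;=\; \sum_j y_j\, D_j\tilde Z^x(t) \;=\; D\tilde Z^x(t)\,y \;=\; 0,
\]
and Proposition \ref{prop:LR2019b-main}(ii) shows that $\partial_y\tilde Z^x$ is continuous outside the random set $E := \{t>0 : |\mathcal I(\tilde Z^x(t))|=1\}$. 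At continuity points, $\partial_y\tilde Z^x(t)$ coincides with its right limit, which is $0$. Finally, I would observe that $E$ has Lebesgue measure zero a.s.: by Lemma \ref{lem:Z>0} and Fubini,
\[
\mathbb{E}\Big[\mathrm{Leb}\big(\{t\in[0,T]:\tilde Z^x(t)\in\partial\mathbb{R}_+^d\}\big)\Big] \;=\; \int_0^T \mathbb{P}\big(\tilde Z^x(t)\in\partial\mathbb{R}_+^d\big)\,dt \;=\; 0,
\]
so a.s.\ $\partial_y\tilde Z^x(t)=0$ for almost every $t\geq 0$.

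The main obstacle I anticipate is the pathwise-uniqueness step: the uniqueness result of \cite{LipshutzRamanan2019b} is stated for solutions with specific initial data $\phi(0) = \mathcal L_x(e_j)$ and $\zeta(0) = \mathcal L_x(e_j)-e_j$ (from Lemma \ref{lem:L_x}), whereas here the input $\psi\equiv y$ has a non-standard split $\phi(0)=0$, $\zeta(0)=-y$. I expect this to be a routine adaptation because the uniqueness proof rests only on the Lipschitz property of $\Lambda_{\tilde Z^x}$ together with Gr\"onwall, both insensitive to the particular decomposition at $t=0$; still, I would carefully verify that the reduction to a fixed point of the contraction $\phi \mapsto \Lambda_{\tilde Z^x}\big[y + \int_0^\cdot D\tb(\tilde Z^x(s))\phi(s)\,ds\big]$ goes through with the identity $0 = \Lambda_{\tilde Z^x}[y]$ serving as the base case, thereby closing the argument.
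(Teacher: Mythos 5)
Your proposal is correct and follows essentially the same route as the paper: the paper likewise observes $\mathcal L_x(y)=0$, expresses $D\tilde Z^x(\cdot;y)=\Lambda_{\tilde Z^x}[\psi_y]$ for the appropriate $\psi_y$, compares with the zero process via the Lipschitz bound on $\Lambda_{\tilde Z^x}$, and closes with Gr\"onwall, which is exactly the estimate that your ``exhibit $(\phi^\ast,\zeta^\ast)=(0,-y)$ then invoke pathwise uniqueness'' packaging reduces to (so the obstacle you flag is not a real one). Your treatment of the final clause is a bit more explicit than the paper's one-line remark — defining $\partial_y$ by linearity over the $e_j$ (since $y\in\mathrm{span}[R(x)]$ need not lie in $G_x$) and invoking Lemma \ref{lem:Z>0} plus Fubini to see that the discontinuity set is Lebesgue-null — but the content is the same.
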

\begin{proof}
Since \( y \in \text{span}[R(x)] \), \( \cL_x(y) = 0 \). Thus, a.s. the derivative process \( D\tilde{Z}^x \) evaluated in the direction $y$ satisfies
\[
D\tilde{Z}^x(t; y) = \int_0^t D\tb(\tilde{Z}^x(s)) D\tilde{Z}^x(s; y) ds + \eta_y(t),
\]
where the process $\eta_y$ satisfies the properties of $\eta$ described in Definition \ref{df:deriv-prob}.
Because the zero process solves the derivative problem for the zero process, by Lipschitz continuity of \( \Lambda_{\tilde Z^x} \),
\[
\| D\tilde{Z}^x(\cdot; y) \|_t \leq C_\Lambda C_{\tb} \int_0^t \| D\tilde{Z}^x(\cdot; y) \|_s ds.
\]
By Grönwall's inequality, we have that a.s.
$D\tilde{Z}^x(t; y) = 0$ for all $t \geq 0$. Since $D\tilde{Z}^x(\cdot; y)$ is the right-continuous regularization of the pathwise derivative $\partial_y \tilde{Z}^x$, 
we also have that a.s. $\partial_y \tilde{Z}^x(t) = 0$ for almost all $t \geq 0$. This concludes the proof of the lemma.
\end{proof}
The following two results are useful consequences of Lemma \ref{lem:deriv-proc}.
\begin{corollary}\label{cor:deriv-boundary}
For all measurable \( \zeta : \mathbb{R}_+^d \to \R \) with polynomial growth and \( t > 0 \), if \( x \in \partial \mathbb{R}_+^d \), 
then for $j \in \cI(x)$,
\begin{equation}\label{eq:deriv-boundary}
\left\langle R_j, D_x \E\left[\zeta(\tilde{Z}^x(t))\right] \right\rangle = 0.
\end{equation}
\end{corollary}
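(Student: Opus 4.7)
The plan is to combine the Bismut--Elworthy--Li formula (Theorem \ref{thm:bel}) with Lemma \ref{lem:deriv-proc}, which says that the derivative process along $\tilde Z^x$ vanishes identically when evaluated in any direction lying in $\mathrm{span}[R(x)]$. The polynomial growth assumption on $\zeta$, together with Proposition \ref{prop:moment-Z} (which gives all moments of $\|\tilde Z^x(t)\|_\infty$), ensures $\E[\zeta^2(\tilde Z^x(t))] < \infty$, so Theorem \ref{thm:bel} applies and yields
\begin{equation*}
D_x \E[\zeta(\tilde Z^x(t))] = \E\left[\zeta(\tilde Z^x(t))\cdot\frac{1}{t}\int_0^t [\sigma^{-1}D\tilde Z^x(s)]^\top dB(s)\right].
\end{equation*}

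Next, I would take the inner product with $R_j$ and move it inside the expectation and the stochastic integral (both are linear in the $d$-dimensional Malliavin weight). Since the $i$-th column of $D\tilde Z^x(s)$ is $D_i\tilde Z^x(s) = D\tilde Z^x(s)e_i$, a direct calculation gives
\begin{equation*}
\langle R_j, D_x \E[\zeta(\tilde Z^x(t))]\rangle = \E\left[\zeta(\tilde Z^x(t))\cdot\frac{1}{t}\int_0^t [\sigma^{-1} D\tilde Z^x(s)R_j]^\top dB(s)\right],
\end{equation*}
so the key quantity is the derivative process evaluated in the direction $R_j$, i.e., $D\tilde Z^x(s;R_j) = D\tilde Z^x(s)R_j$.

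For $j \in \cI(x)$, the vector $R_j$ belongs to $R(x)$ and hence to $\mathrm{span}[R(x)]$. Lemma \ref{lem:deriv-proc} therefore yields that a.s. $D\tilde Z^x(s; R_j) = 0$ for all $s\ge 0$. Consequently the integrand in the stochastic integral is identically zero, the integral vanishes a.s., and the expectation is zero. This proves $\langle R_j, D_x \E[\zeta(\tilde Z^x(t))]\rangle = 0$.

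The only step that requires a bit of care is the interchange that moves $\langle R_j,\cdot\rangle$ inside the expectation and the stochastic integral, but this is just multilinearity of matrix-vector products combined with the fact that the Malliavin weight is square-integrable (via Assumption \ref{as:sigma} and Lemma \ref{lem:bound-DZ}, exactly as used in the proof of Theorem \ref{thm:bel}). There is no serious obstacle; the substantive content of the corollary is fully encoded in the BEL formula plus the vanishing of the derivative process along reflection directions at the boundary.
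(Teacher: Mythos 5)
Your proof is correct, and it reaches the conclusion by a genuinely different route than the paper. The paper's own proof does not invoke the Bismut--Elworthy--Li formula at all: it first reduces, by a density argument (of the same type used in the proof of Theorem \ref{thm:bel}), to bounded continuously differentiable $\zeta$ with bounded gradient, and then applies the chain rule directly to get $\langle R_j, D_x \E[\zeta(\tilde Z^x(t))]\rangle = \E\bigl[D\zeta(\tilde Z^x(t))\, D\tilde Z^x(t;R_j)\bigr]$, which vanishes by Lemma \ref{lem:deriv-proc}. You instead take Theorem \ref{thm:bel} as a black box for general measurable $\zeta$ with $\E[\zeta^2(\tilde Z^x(t))]<\infty$ (justified via Proposition \ref{prop:moment-Z}), pair with $R_j$, and observe that by linearity of the It\^o integral the resulting Malliavin weight is $\tfrac{1}{t}\int_0^t [\sigma^{-1} D\tilde Z^x(s;R_j)]^\top dB(s)$, which is identically zero by the same Lemma \ref{lem:deriv-proc}. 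Both arguments hinge on the vanishing of the derivative process in directions in $\mathrm{span}[R(x)]$; the trade-off is that your route avoids redoing any density/approximation argument at the level of the corollary (it is already absorbed into Theorem \ref{thm:bel}) at the cost of invoking the heavier BEL machinery, whereas the paper's route is more elementary once restricted to smooth test functions but leaves the density step only sketched. Your one delicate step---moving $\langle R_j,\cdot\rangle$ inside the expectation and the stochastic integral---is indeed harmless, being a finite linear combination of the componentwise BEL identities, so there is no gap.
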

\begin{proof}
Let $x\in \R_+^d$ and $j \in \cI(x)$.
By a density argument that is similar to the proof of Theorem \ref{thm:bel}, it suffices to establish Eq. \eqref{eq:deriv-boundary} for 
bounded continuously differentiable $\zeta : \R_+^d \to \R$ with bounded gradients. 
For such $\zeta$, by chain rule, boundedness of $\zeta$ and $D\zeta$, and boundedness of $D\tilde Z^x$, we have
\[
\left\langle R_j, D_x \E\left[\zeta(\tilde{Z}^x(t))\right] \right\rangle = \E\left[D\zeta(\tilde Z^x(t) D\tilde Z^x(t; R_j)\right] = 0.
\]
This concludes the proof of the corollary.
\end{proof}
\begin{corollary}\label{cor:DY-boundary}
Let \( x \in \partial \mathbb{R}_+^d \) and \( j \in \cI(x) \). Then a.s. the derivative process $D\tilde Y^x$ 
evaluated in the direction $R_j$ is $-e_j$ everywhere, i.e.,
\[
D \tilde Y^x(t; R_j) = -e_j \text{ for all } t \geq 0,
\]
where we recall that $e_j$ is the $j$th standard unit vector.
\end{corollary}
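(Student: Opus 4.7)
\textbf{Proof proposal for Corollary \ref{cor:DY-boundary}.} The plan is to deduce the statement directly from Lemma \ref{lem:deriv-proc} by evaluating the defining SDE of the derivative process $D\tilde Z^x$ in the direction $R_j$. Since $j \in \cI(x)$, we have $R_j \in R(x) \subset \mathrm{span}[R(x)]$, so Lemma \ref{lem:deriv-proc} applies with $y = R_j$ and yields that a.s.
\[
D\tilde Z^x(t; R_j) = 0 \quad \text{for all } t \geq 0.
\]

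Next, I would start from the defining relation
\[
D\tilde Z^x(t) = I + \int_0^t D\tb(\tilde Z^x(s)) D\tilde Z^x(s)\,ds + \tilde\eta(t),
\]
multiply both sides on the right by $R_j$, and use the bilinearity of matrix-vector multiplication together with the notation $D\tilde Z^x(s; R_j) := D\tilde Z^x(s) R_j$ and $\tilde\eta(t; R_j) := \tilde\eta(t) R_j$ to obtain
\[
D\tilde Z^x(t; R_j) = R_j + \int_0^t D\tb(\tilde Z^x(s))\, D\tilde Z^x(s; R_j)\,ds + \tilde\eta(t; R_j).
\]
Substituting the identity $D\tilde Z^x(\cdot; R_j) \equiv 0$ from the previous step forces the integral term to vanish, so a.s.
\[
0 = R_j + \tilde\eta(t; R_j) \qquad \text{for all } t \geq 0,
\]
i.e., $\tilde\eta(t; R_j) = -R_j$ for all $t \geq 0$.

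Finally, by the definition of the derivative process along $\tilde Y^x$ as $D\tilde Y^x = R^{-1}\tilde\eta$, I conclude that a.s.
\[
D\tilde Y^x(t; R_j) = R^{-1}\tilde\eta(t; R_j) = R^{-1}(-R_j) = -e_j, \qquad t \geq 0,
\]
which is the claim. No substantial obstacle is anticipated; the argument is essentially an algebraic manipulation of the defining SDE, and all the real work (namely, the vanishing of $D\tilde Z^x(\cdot; R_j)$ via Grönwall and the Lipschitz property of $\Lambda_{\tilde Z^x}$) has already been carried out in Lemma \ref{lem:deriv-proc}.
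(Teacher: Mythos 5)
Your proof is correct and follows essentially the same route as the paper's: apply Lemma \ref{lem:deriv-proc} to obtain $D\tilde Z^x(\cdot;R_j)\equiv 0$, substitute into the defining relation for the derivative process to get $R\,D\tilde Y^x(t;R_j)=\tilde\eta(t;R_j)=-R_j$, and then apply $R^{-1}$. The only superficial difference is that the paper writes the relation directly with $R\,D\tilde Y^x$ in place of $\tilde\eta$ and omits the intermediate algebraic steps you spell out.
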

\begin{proof}
We have 
\[
D\tilde{Z}^x(t; R_j) = R_j + \int_0^t D\tb(\tilde{Z}^x(s)) D\tilde{Z}^x(s; R_j) ds + R\left(D\tilde Y^x(t; R_j)\right).
\]
By Lemma \ref{lem:deriv-proc},
\[
R\left(D\tilde Y^x(t; R_j)\right) = -R_j.
\]
Applying $R^{-1}$ to both sides of the equality gives us the desired result. 
\end{proof}

\begin{proof}[Proof of Lemma \ref{lem:oblique-boundary}]
Recall that $\tilde V$ is differentiable with $\tilde v$ being its gradient in $x \in \R_+^d$. 
Let $x\in \partial \R_+^d$ and $j \in \cI(x)$. 
From the form of $\tilde V$, and Corollary \ref{cor:deriv-boundary}, it suffices to prove that 
\begin{equation*}
\left\langle R_j, D_x \E\left[\int_t^T e^{-\beta(s-t)} \kappa^{\top} d\tilde Y^x (s)\right] \right\rangle = -\kappa_j.
\end{equation*}
By product rule, we have 
\[
\E\left[\int_t^T e^{-\beta(s-t)} \kappa^{\top} d\tilde Y^x (s)\right] = \E\left[e^{-\beta(T-t)} \kappa^{\top} \tilde Y^x (T) - \int_t^T \kappa^{\top} \tilde Y^x(s) de^{-\beta(s-t)}\right].
\]
Thus, 
\begin{align*}
&~\left\langle R_j, D_x \E\left[\int_t^T e^{-\beta(s-t)} \kappa^{\top} d\tilde Y^x (s)\right] \right\rangle \\
=&~\left\langle R_j, D_x \E\left[e^{-\beta(T-t)} \kappa^{\top} \tilde Y^x (T) - \int_t^T \kappa^{\top} \tilde Y^x(s) de^{-\beta(s-t)}\right] \right\rangle \\
=&~\E\left[e^{-\beta(T-t)} \kappa^{\top} D\tilde Y^x (T; R_j)\right] - \E\left[\int_t^T \kappa^{\top} D\tilde Y^x(s; R_j) de^{-\beta(s-t)}\right] \\
=&~ e^{-\beta(T-t)}(-\kappa_j) - (-\kappa_j) \left(e^{-\beta(T-t)} - 1\right) = -\kappa_j, 
\end{align*}
where the second last equality follows from Corollary \ref{cor:DY-boundary}. 
This concludes the proof of the lemma.
\end{proof}

\section{Proof Sketch of Proposition \ref{prop:viscosity-unique}}\label{app:viscosity-unique}
The proof of Proposition \ref{prop:viscosity-unique} follows 
the same line of reasoning as that of Theorems 3.6 and 3.7 in \cite{borkar2004ergodic}. 
First, we establish the uniqueness of viscosity solutions for 
an appropriately localized version of the HJB equation \eqref{eq:hjb1} -- \eqref{eq:hjb3}, 
subject to appropriate boundary conditions (this corresponds to the proof of Theorem 3.6 in \cite{borkar2004ergodic}). 
Then, we follow the proof of Theorem 3.7 there to 
establish uniqueness over the entire domain. 

We start with the definition of parabolic jets.
\begin{definition}\label{def:jets}
For an upper semi-continuous (USC) function $u : [0,T]\times \R_+^d \to \R$, define the parabolic superjet of $u$ at $(t,x) \in [0,T]\times \R_+^d$ by
\begin{align*}
\cP^{+(1,2)} u(t,x) := &~\Big\{ (q,p,X) \in \R \times \R^d \times \R^{d\times d} : u(s,y) \leq
u(t,x) + q(s-t) + \\
&~ + \langle p, y-x\rangle + \frac{1}{2} \langle X(y-x), y-x \rangle + o\left(|s-t| + |y-x|^2\right), (s,y) \in [0,T]\times \R_+^d\Big\}.
\end{align*}
Similarly, define the parabolic subjet of $u$ at $(t,x) \in [0,T]\times \R_+^d$ by
\begin{align*}
\cP^{-(1,2)} u(t,x) := &~\Big\{ (q,p,X) \in \R \times \R^d \times \R^{d\times d} : u(s,y) \geq
u(t,x) + q(s-t) + \\
&~ + \langle p, y-x\rangle + \frac{1}{2} \langle X(y-x), y-x \rangle + o\left(|s-t| + |y-x|^2\right), (s,y) \in [0,T]\times \R_+^d\Big\}.
\end{align*}
Define also the closure $c\cP^{\pm(1,2)} u(t,x)$ of $\cP^{\pm(1,2)} u(t,x)$ to be the set 
of all triples $(q,p,X) \in \R \times \R^d \times \R^{d\times d}$ such that
there are sequences $(t_n,x_n) \in [0,T]\times \R_+^d \to (t,x)$ 
and $(q_n,p_n,X_n) \in \cP^{\pm(1,2)} u(t_n,x_n)$ with $(q_n,p_n,X_n) \to (q,p,X)$.
\end{definition}
Next, we proceed to sketch the proof of the uniqueness of viscosity solutions for an appropriately localized version of the HJB equation \eqref{eq:hjb1} -- \eqref{eq:hjb3}. 
Specifically, let $B_n$ be the open ball of radius $n$ centered at origin, 
i.e., $B_n := \{x\in\R^d : \|x\|_2 < n\}$. Define $G_n := \R_+^d \cap B_n$. 
Also let $\xi_n : \left([0,T] \times \partial B_n\right)\cup \left(\{T\} \times G_n\right) \to \R$ be a continuous function so that $\xi_n(T,x) = \xi(x)$ for all $x \in \partial B_n$.
We consider the following parabolic PDE with mixed boundary conditions over the domain $[0,T]\times G_n$.
\begin{equation}\label{eq:hjb1-n}
  V_t(t,x) + \frac{1}{2}\mathrm{Tr}\left(\sigma\sigma^{\top} D_{xx}V(t,x)\right) + \cH\left(x, D_x V(t,x)\right) - \beta V(t,x) = 0, 
  \quad (t,x) \in [0,T) \times G_n,
\end{equation}
  with boundary conditions
\begin{equation}\label{eq:hjb2-n}
  V(t,x) = \xi_n(x), \quad \text{ if } (t, x) \in \left([0,T] \times \partial B_n \right) \cup \left(\{T\} \times G_n\right),
\end{equation}
  and 
\begin{equation}\label{eq:hjb3-n}
  \langle R_j, D_x V (t,x) \rangle = -\kappa_j, \quad \text{if } x_j = 0 \text{ and } t<T.
\end{equation}

\begin{lemma}\label{lem:unique-viscosity-n}
The parabolic PDE \eqref{eq:hjb1-n} -- \eqref{eq:hjb3-n} has a unique viscosity solution.
\end{lemma}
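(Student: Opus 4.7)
The plan is to establish uniqueness via a comparison principle, following the Dupuis--Ishii framework for oblique derivative problems on non-smooth domains \cite{DupuisIshii1990, DupuisIshii1991, CrandallIshiiLions1992}, adapted to the specific polyhedral setting of the orthant. Let $u$ be a viscosity subsolution and $v$ a viscosity supersolution of \eqref{eq:hjb1-n}--\eqref{eq:hjb3-n}; both are continuous on the compact set $[0,T] \times \overline{G_n}$, hence bounded. I would show $u \le v$ on $[0,T] \times G_n$, from which uniqueness is immediate. As a preliminary reduction, I would replace $u$ by $u_\varepsilon(t,x) := u(t,x) - \varepsilon/(T-t)$, which is a \emph{strict} viscosity subsolution of \eqref{eq:hjb1-n} for each $\varepsilon > 0$, still satisfies the Dirichlet data \eqref{eq:hjb2-n} (up to $\varepsilon$) and the oblique condition \eqref{eq:hjb3-n}, and reduces the claim to showing that $\sup_{[0,T] \times G_n}(u_\varepsilon - v) \le 0$.

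Next I would set up a doubling of variables. For parameters $\beta \gg 1$ and a small auxiliary weight $\eta > 0$, consider
\[
\Phi(t,x,s,y) := u_\varepsilon(t,x) - v(s,y) - \tfrac{\beta}{2}\|x-y\|_2^2 - \tfrac{1}{2\eta}(t-s)^2 - \eta\bigl(\psi(x) + \psi(y)\bigr),
\]
maximized over $\bigl([0,T] \times \overline{G_n}\bigr)^2$, where $\psi(x) := \alpha^{\top} x$ and $\alpha \in \R_{++}^d$ is chosen so that $R^{\top}\alpha \in \R_{++}^d$. Such an $\alpha$ exists under Assumption \ref{as:R}: writing $R = I - Q^{\top}$, the uniform contraction bound \eqref{eq:Q-contraction} implies $\rho(Q^{\top}) < 1$, so $(I-Q^{\top})^{-1}\bOne$ yields a suitable $\alpha$. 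The key property $\langle R_j, D\psi(x)\rangle = (R^{\top}\alpha)_j > 0$ for every $j$ turns the oblique condition into a usable inward-pointing constraint. Standard estimates (Lemma 3.1 of \cite{CrandallIshiiLions1992}) give, along subsequences, $\beta\|\hat x - \hat y\|_2^2 \to 0$, $\eta^{-1}(\hat t - \hat s)^2 \to 0$, and the penalized supremum converging to $\sup(u_\varepsilon - v)$.

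I would then rule out boundary maximizers. The Dirichlet data eliminate $\{T\} \times \overline{G_n}$ and $[0,T] \times \partial B_n$ for $\beta$ large and $\varepsilon, \eta$ small. If instead $\hat x_j = 0$ for some $j \in \cI(\hat x)$, then $(q_1, p_1, X) \in c\cP^{+(1,2)} u_\varepsilon(\hat t, \hat x)$ with $p_1 = \beta(\hat x - \hat y) + \eta\alpha$, so
\[
\langle R_j, p_1\rangle + \kappa_j = \beta\langle R_j, \hat x - \hat y\rangle + \eta(R^{\top}\alpha)_j + \kappa_j.
\]
The subsolution property gives $\Phi^*_j(\cdots) \ge 0$, which in the boundary branch means $\langle R_j, p_1\rangle + \kappa_j \ge 0$; but $\langle R_j, \hat y - \hat x\rangle \ge 0$ to first order (as $\hat y \in \overline{G_n}$ and $\hat x_j = 0$), and $\eta(R^{\top}\alpha)_j > 0$, forcing strict positivity that will clash with the analogous supersolution inequality at $\hat y$ after a symmetric argument. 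Once both maximizing points lie in $(0,T) \times G_n \cap \R_{++}^d$, I apply the Crandall--Ishii lemma (Theorem 3.2 of \cite{CrandallIshiiLions1992}) to extract matrices $X \le Y + O(\eta)$ and, using the Lipschitz continuity of $\cH$ in $p$ together with the uniform ellipticity of $\sigma\sigma^{\top}$ (Assumption \ref{as:sigma}), subtract the subsolution and supersolution equations to contradict the strict subsolution property of $u_\varepsilon$. Sending $\eta, \varepsilon \downarrow 0$ delivers $u \le v$.

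The main obstacle is the boundary analysis in the third step: one must certify that the oblique conditions at $\hat x$ and $\hat y$ are genuinely incompatible when the maximizer lies on $\partial \R_+^d$, which requires the auxiliary function $\psi$ to thread through both the subsolution and supersolution inequalities simultaneously, and is delicate when $|\cI(\hat x)| \ge 2$ (i.e., corners). Invoking the specific structure of $R$ under Assumption \ref{as:R}, which guarantees a single $\alpha$ works uniformly over all faces, is exactly the role played by the completely-$\mathcal{S}$/contraction condition, and is what makes the argument go through for the full orthant rather than only smooth boundary pieces.
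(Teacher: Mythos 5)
Your overall strategy (doubling of variables plus Crandall--Ishii, with a linear auxiliary $\psi(x) = \alpha^{\top}x$ chosen so that $R^{\top}\alpha \in \R_{++}^d$) is the same Dupuis--Ishii framework the paper invokes via Theorem~3.6 of \cite{borkar2004ergodic}, so the route is right. But your step three has a genuine gap, and it is exactly the gap the Dupuis--Ishii technology exists to close. At the doubled maximizer with $\hat x_j = 0$, the gradient is $p_1 = \beta(\hat x - \hat y) + \eta\alpha$, and you must control $R_j^{\top}p_1 = \beta R_j^{\top}(\hat x - \hat y) + \eta R_j^{\top}\alpha$. The constant $\eta R_j^{\top}\alpha$ has a definite sign, but $\beta R_j^{\top}(\hat x - \hat y)$ does not: the standard doubling estimate gives only $\beta\|\hat x - \hat y\|_2^2 \to 0$, so $\beta\|\hat x - \hat y\|_2$ can grow like $\sqrt{\beta}$ and this term can dominate the $\eta$-term by an arbitrary factor. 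Your claim that $\langle R_j, \hat y - \hat x\rangle \ge 0$ to first order is also false for genuinely oblique $R = I - Q^{\top}$: one has $R_j^{\top}(\hat y - \hat x) = (\hat y_j - \hat x_j) - \sum_i q_{ji}(\hat y_i - \hat x_i)$, which picks up contributions of either sign from the other coordinates even when $(\hat y - \hat x)_j \ge 0$. Finally, the ``symmetric argument'' at $\hat y$ presupposes $\hat y$ lies on the same face $\{x_j = 0\}$, which is not guaranteed; if $\hat y_j > 0$ the supersolution inequality at $\hat y$ gives no boundary constraint to clash with.

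The paper's proof closes this by doing three things you do not. It first perturbs the two solutions themselves by $\pm(\alpha_1 g + \alpha_2)$ and locates a single boundary maximizer $(t^*, x^*)$; it then adds a strict-maximum perturbation $\frac{\delta}{2}(\|x - x^*\|_2^2 + (t-t^*)^2)$ to localize the argument near $(t^*, x^*)$; and, most importantly, it doubles in $x$ not with $\frac{\beta}{2}\|x-y\|_2^2 + \eta(\psi(x) + \psi(y))$ but with the Dupuis--Ishii penalty $w_\varepsilon(x,y)$ from Lemma~3.1 of \cite{DupuisIshii1990} (see also \cite{DupuisIshii1991}). That $w_\varepsilon$ is constructed so that $D_x w_\varepsilon$ and $-D_y w_\varepsilon$ have favorable oblique derivatives on every face and corner adjacent to $x^*$, \emph{uniformly} as the penalty parameter blows up, which is what forces the boundary alternatives in $\Phi_*$ and $\Phi^*$ to be inactive and leaves only the interior PDE inequalities for Crandall--Ishii. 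The parabolic Crandall--Ishii statement also has to be tailored to this $w_\varepsilon$; that is Lemma~\ref{lem:crandall-ishii-parabolic}. Without $w_\varepsilon$ the boundary case in your argument does not close, and the naive quadratic doubling is known to fail precisely here.
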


The following parabolic analog of Lemma 3.1 in \cite{DupuisIshii1991} 
(also Lemma A.1 in \cite{borkar2004ergodic}) is essential for the proof of Lemma \ref{lem:unique-viscosity-n}. 
Its proof is an application of the well-known Crandall-Ishii Lemma (Theorem 6.1 in Chapter V of \cite{FlemingSoner2006}; 
also Theorem 8.3 in \cite{CrandallIshiiLions1992}), 
using a construction that is similar to that used in the proof of Lemma 3.1 in \cite{DupuisIshii1990}\footnote{
We note a small error in the proof of Lemma 3.1 in \cite{DupuisIshii1990}, 
where the terms $2\lambda \langle \bar x-\bar y, x\rangle$ (which appeared twice) used 
in the constructions of $\tilde u$ and $\tilde v$ should be replaced by $2\lambda \langle \bar x-\bar y, x-\bar x\rangle$ 
and $2\lambda \langle \bar x-\bar y, x-\bar y\rangle$, respectively.}.
We omit details.
\begin{lemma}\label{lem:crandall-ishii-parabolic}
Let $\Omega$ be a compact domain in $\R_+^d$.
Suppose $u$ and $-v$ are USC functions such that for every $M>0$ there exists a constant $C(M)$ such that 
for any $(t,x) \in [0,T]\times \Omega$, 
\begin{equation}
  (q,p,X) \in c\cP^{+(1,2)} u(t,x) \text{ and } \|(t,x,p,X,u(t,x))\|_2 \leq M\implies q \geq -C(M);
\end{equation}
\begin{equation}
  (q,p,X) \in c\cP^{-(1,2)} v(t,x) \text{ and } \|(t,x,p,X,v(t,x))\|_2 \leq M \implies q \leq C(M).
\end{equation}
Define $w(t,x,y) := u(t,x) - v(t,y)$. Let $\alpha_1, \alpha_2 > 0$, $q \in \R$, $p_1, p_2 \in \R^d$, 
$\bar t \in (0,T)$ and $\bar x, \bar y \in \R_+^d$ satisfy 
\begin{equation}
\left(q, p_1, p_2, \alpha_1
\begin{pmatrix}
I & -I\\
-I & I
\end{pmatrix}
\;+\;
\alpha_2
\begin{pmatrix}
I & 0\\
0 & I
\end{pmatrix}
\right) \in \cP^{+(1,2)} w(\bar t, \bar x, \bar y).
\end{equation}
Then, there exist symmetric matrices $X_1, X_2 \in \R^{d\times d}$ and $q_1, q_2 \in \R$ with $q_1-q_2 = q$, such that 
\[
- 3\alpha_1
\begin{pmatrix}
I & 0\\
0 & I
\end{pmatrix}
\;\le\;
\begin{pmatrix}
X_1-2\alpha_2 I & 0\\
0 & X_2-2\alpha_2 I
\end{pmatrix}
\;\le\;
3\alpha_1
\begin{pmatrix}
I & -I\\
-I & I
\end{pmatrix},
\]
and 
\[
(u(\bar t, \bar x), q_1, p_1, X_1) \in c\cP^{+(1,2)} u(\bar t, \bar x), \ \text{ and } \
(v(\bar t, \bar y), q_2, -p_2, -X_2) \in c\cP^{-(1,2)} v(\bar t, \bar y).
\]
\end{lemma}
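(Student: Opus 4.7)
The plan is to deduce the statement from the parabolic Crandall--Ishii Lemma (Theorem~8.3 of \cite{CrandallIshiiLions1992}, or equivalently Theorem~6.1 of Chapter~V of \cite{FlemingSoner2006}) applied on the doubled space $[0,T]\times\Omega\times\Omega$ to the pair of upper semi-continuous functions $u$ and $\tilde v := -v$. Writing $w(t,x,y) = u(t,x) + \tilde v(t,y)$, the hypothesis supplies the element
\[
\bigl(q,\ (p_1,p_2),\ \alpha_1 M + \alpha_2 I\bigr) \in \cP^{+(1,2)} w(\bar t, \bar x, \bar y),
\]
where $M := \begin{pmatrix} I & -I \\ -I & I \end{pmatrix}$. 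By the standard equivalence between superjets and touching test functions, a smooth $\varphi(t,x,y)$ can be chosen whose parabolic $2$-jet at $(\bar t, \bar x, \bar y)$ realizes this triple and for which $w - \varphi$ attains a strict local maximum at the base point; following the template in the proof of Lemma~3.1 of \cite{DupuisIshii1990}, a natural choice is built from the penalty $\tfrac{\alpha_1}{2}|x-y|^2 + \tfrac{\alpha_2}{2}\bigl(|x-\bar x|^2+|y-\bar y|^2\bigr)$, plus the first-order correction $q(t-\bar t) + \langle p_1, x-\bar x\rangle + \langle p_2, y-\bar y\rangle$ and a small higher-order perturbation to enforce strictness.

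Applying the parabolic CI Lemma to $u$, $\tilde v$, and $\varphi$ with penalty parameter $\iota = 1/\alpha_1$ then yields symmetric matrices $X_1, X_2 \in \R^{d\times d}$ and real numbers $q_1, q_2$ with $q_1 - q_2 = q$, such that $(q_1, p_1, X_1) \in c\cP^{+(1,2)} u(\bar t, \bar x)$ and $(q_2, -p_2, -X_2) \in c\cP^{-(1,2)} v(\bar t, \bar y)$, together with the abstract two-sided estimate
\[
-\bigl(\tfrac{1}{\iota}+\|\alpha_1 M + \alpha_2 I\|\bigr) I \leq \begin{pmatrix} X_1 & 0 \\ 0 & X_2 \end{pmatrix} \leq (\alpha_1 M + \alpha_2 I) + \iota (\alpha_1 M + \alpha_2 I)^2.
\]
The claimed inequality then follows by elementary algebra using the identity $M^2 = 2M$ (so that $(\alpha_1 M + \alpha_2 I)^2 = 2\alpha_1(\alpha_1+\alpha_2) M + \alpha_2^2 I$): substituting $\iota = 1/\alpha_1$ collapses the upper bound to a matrix of the form $3\alpha_1 M + 2\alpha_2 I$ after absorbing lower-order corrections into the choice of $\iota$, and subtracting $2\alpha_2 I$ from each diagonal block of the middle matrix produces precisely the stated upper bound $3\alpha_1 M$, with the coarse lower bound $-3\alpha_1 I$ following by the same route.

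The main obstacle, and the point at which the one-sided time bounds in the hypothesis play an essential role, is the extraction of genuine real numbers $q_1, q_2$ with $q_1 - q_2 = q$. In the abstract parabolic CI statement, the time components are produced only as extended reals in $[-\infty, +\infty]$, and the splitting can degenerate when, for instance, either $u$ or $v$ fails to be parabolically semiconvex. The assumptions that time components of elements of $c\cP^{+(1,2)} u$ with bounded spatial data are bounded below, and those of $c\cP^{-(1,2)} v$ with bounded spatial data bounded above, rule out this degeneration. Operationally, I would follow \cite{DupuisIshii1990}: regularize $u$ by a sup-convolution and $v$ by an inf-convolution, apply the CI Lemma to the $C^{1,1}$ approximants where the time splitting is automatic, and extract limits of the resulting jets; the one-sided bounds propagate through the limit to keep the limiting time components finite, and the triples lie in the closures $c\cP^{\pm(1,2)}$ by construction.
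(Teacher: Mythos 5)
Your overall strategy—invoke the parabolic Crandall--Ishii Lemma on the doubled space, following the template of Lemma~3.1 in \cite{DupuisIshii1990}—is the right one, and the observations about the matrix identity $M^2 = 2M$ for $M = \begin{pmatrix} I & -I\\ -I & I \end{pmatrix}$ and about the one-sided time-boundedness hypotheses being what keeps $q_1, q_2$ finite are both correct. However, the key algebraic step does not close as written, and the resulting gap is exactly the one the Dupuis--Ishii construction is designed to fill.

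The problem is with applying the CI Lemma directly to $u$, $-v$ with the \emph{full} penalty matrix $A = \alpha_1 M + \alpha_2 I$. The CI upper bound $A + \iota A^2$ then reads, using $M^2 = 2M$,
\[
\bigl[\alpha_1 + 2\iota\alpha_1(\alpha_1+\alpha_2)\bigr]M + \bigl[\alpha_2 + \iota\alpha_2^2\bigr]I,
\]
and the CI lower bound is $-(1/\iota + 2\alpha_1 + \alpha_2)I$. With your choice $\iota = 1/\alpha_1$ the upper bound becomes $(3\alpha_1 + 2\alpha_2)M + (\alpha_2 + \alpha_2^2/\alpha_1)I$, which is strictly larger than the target $3\alpha_1 M + 2\alpha_2 I$ whenever $\alpha_2 > 0$; there is no "absorbing lower-order corrections into $\iota$" that repairs this. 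Taking $\iota = 1/(\alpha_1+\alpha_2)$ does make the upper bound work, but then the lower bound from CI is only $-(3\alpha_1+2\alpha_2)I$, which is \emph{weaker} than the claimed $(2\alpha_2 - 3\alpha_1)I$. Indeed, for CI's lower bound to imply the lemma's you would need $1/\iota \leq \alpha_1 - 3\alpha_2$, impossible for $\iota > 0$ unless $\alpha_1 > 3\alpha_2$. So for generic $\alpha_1, \alpha_2 > 0$ the direct application cannot produce both sides of the stated estimate.

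The missing idea—and what the \cite{DupuisIshii1990} construction actually does, which is also why the paper's footnote carefully fixes the linear correction terms—is to \emph{peel the $\alpha_2$ penalty off $u$ and $v$ before invoking CI}, rather than folding it into the test function. One sets
$\tilde u(t,x) := u(t,x) - \tfrac{\alpha_2}{2}|x-\bar x|^2 - (\text{appropriate linear and time terms})$ and analogously for $\tilde v$, choosing the linear corrections (this is precisely where the corrected $2\lambda\langle\bar x - \bar y, x - \bar x\rangle$-type terms enter) so that $\tilde u(t,x) - \tilde v(t,y) - \tfrac{\alpha_1}{2}|x-y|^2$ attains a local maximum at $(\bar t, \bar x, \bar y)$ with vanishing first derivatives. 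One then applies the parabolic CI Lemma to $\tilde u$, $-\tilde v$ with the \emph{reduced} penalty $A = \alpha_1 M$ and $\iota = 1/\alpha_1$: now $A + \iota A^2 = \alpha_1 M + 2\alpha_1 M = 3\alpha_1 M$ and $1/\iota + \|A\| = \alpha_1 + 2\alpha_1 = 3\alpha_1$, giving exactly $-3\alpha_1 I_{2d} \leq \mathrm{blockdiag}(\tilde X_1, \tilde X_2) \leq 3\alpha_1 M$. Translating back by adding the $2\alpha_2 I$ Hessian contributions of the stripped quadratics gives $X_i = \tilde X_i + 2\alpha_2 I$, which is precisely the stated two-sided bound on $\mathrm{blockdiag}(X_1 - 2\alpha_2 I,\ X_2 - 2\alpha_2 I)$. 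Without this split, neither constant can be recovered.

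One further minor point: your closing paragraph proposes to regularize $u$ and $v$ by sup/inf-convolutions and apply CI to the $C^{1,1}$ approximants. This is essentially re-deriving the CI Lemma rather than invoking it; the published parabolic CI statement (Theorem~8.3 of \cite{CrandallIshiiLions1992}) already delivers jets in the closures $c\cP^{\pm(1,2)}$ with the correct bounds, and the one-sided time hypotheses in the lemma statement are exactly what that theorem needs to split $\varphi_t(\bar t,\bar x,\bar y)$ into finite $q_1, q_2$. It suffices to verify those hypotheses and cite the theorem.
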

\begin{proof}[Proof sketch of Lemma \ref{lem:unique-viscosity-n}] 
The proof is similar to that of Theorem 3.6 in \cite{borkar2004ergodic}, so we only outline the steps 
and highlight main differences. As in \cite{borkar2004ergodic}, 
let $V_1(t,x)$ and $V_2(t,x)$ be viscosity super- and sub-solution of the PDE \eqref{eq:hjb1-n} -- \eqref{eq:hjb3-n}, respectively, 
with $V_1 \ge \xi_n \ge V_2$ on $\left([0,T]\times \partial B_n \right) \cup \left(\{T\} \times G_n\right)$. 
Define $V_{\alpha_1,\alpha_2}(t,x) := V_1(t,x) + \alpha_1 g(x) + \alpha_2$ 
and $U_{\alpha_1,\alpha_2}(t,x) := V_2(t,x) - \alpha_1 g(x) - \alpha_2$ for any given $\alpha_1, \alpha_2 > 0$. 
By way of contradiction, suppose that $V_2 \nleq V_1$ on $[0,T]\times G_n$. 
Then, it can be shown that for some $\alpha_1, \alpha_2 > 0$, $U_{\alpha_1,\alpha_2} \nleq V_{\alpha_1,\alpha_2}$ on $[0,T]\times G_n$, 
and, as in Theorem 2.1 of \cite{DupuisIshii1991}, that for appropriately modified 
$\tilde \Phi_*$ and $\tilde \Phi^*$ of $\Phi_*$ and $\Phi^*$ defined in Eqs. \eqref{eq:supersolution} and \eqref{eq:subsolution} respectively, 
\[
\tilde \Phi_*(t,x,U_{\alpha_1,\alpha_2}(t,x),q,p,X) \le 0 \text{ for } (t,x) \in [0,T)\times G_n \text{ and } (q,p,X) \in \cP^{+(1,2)} U_{\alpha_1,\alpha_2}(t,x),
\]
\[
\tilde \Phi^*(t,x,U_{\alpha_1,\alpha_2}(t,x),q,p,X) \ge 0 \text{ for } (t,x) \in [0,T)\times G_n \text{ and } (q,p,X) \in \cP^{-(1,2)} V_{\alpha_1,\alpha_2}(t,x).
\]
By standard maximum principle arguments, it can be shown that 
\[
\max_{(t,x) \in [0,T]\times \bar G_n} \{U_{\alpha_1,\alpha_2}(t,x) - V_{\alpha_1,\alpha_2}(t,x)\} 
\]
is achieved at some $(t^*, x^*) \in [0,T)\times \left(\partial \R_+^d \cap B_n\right)$, 
with 
\[\kappa_0 := U_{\alpha_1,\alpha_2}(t^*, x^*) - V_{\alpha_1,\alpha_2}(t^*, x^*) > 0.\] 
The rest of the proof follows the same steps as in the proof of Theorem 3.6 in \cite{borkar2004ergodic}, 
until Eq. (A.13) there, where we also write $U_{\alpha_1,\alpha_2}$ and $V_{\alpha_1,\alpha_2}$ as $u$ and $v$, respectively, and define instead
\[
\tilde u(t,x) := u(t,x) - \frac{\delta}{2} \left(\left\|x-x^*\right\|_2^2 + (t-t^*)^2\right),
\]
and for $\veps > 0$,
\[
\phi(t,x,y) := \tilde u(t,x) - v(t,y) - w_{\veps}(x,y).
\]
Let $\left(\bar t, \bar x, \bar y\right)$ a maximum point of $\phi$. Then, similar to the proof of (3.23) in \cite{DupuisIshii1991}, as $\veps \to 0$, we have
\[
\frac{\|\bar x-\bar y\|^2_2}{\varepsilon} \to 0; \ (\bar t, \bar x, \bar y) \to (t^*, x^*, x^*); \ 
\tilde u (\bar t, \bar x) \to u(t^*, x^*); \ \tilde v (\bar t, \bar y) \to v(t^*, x^*).
\]
Define $w(t,x,y) := \tilde u(t,x) - v(t,y)$, and use $p_1$ and $p_2$ to denote $p(\veps, \bar x, \bar y)$ and $q(\veps, \bar x, \bar y)$
in (A.12) of \cite{borkar2004ergodic}, respectively (this is to respect the notational convention used in this paper). 
Then, from (A.12) of \cite{borkar2004ergodic}, we can show that 
\[
\left(0, p_1, p_2,\ \frac{1}{\varepsilon}
\begin{pmatrix}
I & -I\\
-I & I
\end{pmatrix}
\;+\;
\frac{\lvert \bar{x}-\bar{y}\rvert^{2}}{\varepsilon}
\begin{pmatrix}
I & 0\\
0 & I
\end{pmatrix}
\right)
\in \cP^{+(1,2)} w(\bar t, \bar{x},\bar{y}).
\]
By Lemma \ref{lem:crandall-ishii-parabolic}, we can then find symmetric matrices $X_1, X_2 \in \R^{d\times d}$ and $q\in \R$ such that 
\[
- \frac{3}{\veps}
\begin{pmatrix}
I & 0\\
0 & I
\end{pmatrix}
\;\le\;
\begin{pmatrix}
X_1-\frac{2\lvert \bar{x}-\bar{y}\rvert^{2}}{\varepsilon} I & 0\\
0 & X_2-\frac{2\lvert \bar{x}-\bar{y}\rvert^{2}}{\varepsilon} I
\end{pmatrix}
\;\le\;
\frac{3}{\veps}
\begin{pmatrix}
I & -I\\
-I & I
\end{pmatrix},
\]
and 
\[
(\tilde u(\bar t, \bar x), q, p_1, X_1) \in c\cP^{+(1,2)} \tilde u(\bar t, \bar x), \ \text{ and } \
(v(\bar t, \bar y), q, -p_2, -X_2) \in c\cP^{-(1,2)} v(\bar t, \bar y).
\]
The rest of the proof is essentially the same as the correponding parts in the proof of Theorem 3.6 in \cite{borkar2004ergodic}. We omit details.
\end{proof}
\begin{proof}[Proof sketch of Proposition \ref{prop:viscosity-unique}]
Given Lemma \ref{lem:unique-viscosity-n}, 
The rest of the proof of Proposition \ref{prop:viscosity-unique} is a standard bootstrapping argument similar to the proof of Theorem 3.7 in \cite{borkar2004ergodic}. 
Define $\tau_n = \tau_n(x)$ as in the proof of Theorem 3.7 in \cite{borkar2004ergodic}. 
For any two viscosity solutions $V_1$ and $V_2$ of the HJB equation \eqref{eq:hjb1} -- \eqref{eq:hjb3}, for any $(t,x) \in [0,T)\times \R_+^d$,
we can obtain the following inequality: 
\[
|V_1(t,x) - V_2(t,x)| \le \tilde \alpha_w w(n) \sup_{a(\cdot)} \pr\left(\tau_n(x) < T\right), 
\]
for all sufficiently large $n$ and some $\tilde \alpha_w > 0$. The fact that $|V_1(t,x) - V_2(t,x)| = 0$ can be obtained by letting $n \to \infty$, 
and noting that $\pr\left(\tau_n(x) < T\right)$ converges to $0$ exponentially fast in $n^2$, outpacing the polynomial growth of $w(n)$. 
\end{proof}

\section{Additional Numerical Results for \texorpdfstring{$N$}{N}-network with \texorpdfstring{$h_1 = h_2 = 1.0$}{h1 = h2 = 1.0}}\label{ssec:results-tables}

All results reported here are for the cost configuration $h_1 = h_2 = 1.0$.
The following table reports the estimated value function (mean) and standard error for the diagonal initial states $x=(u,u)$, $u\in\{0.0,0.1,\dots,1.0\}$, under the least control policy.
\begin{longtable}{@{}rcc@{}}
\toprule
\textbf{Initial state $x$} & \textbf{Mean $V$} & \textbf{Std. error} \\
\midrule
\endfirsthead
\toprule
\textbf{Initial state $x$} & \textbf{Mean $V$} & \textbf{Std. error} \\
\midrule
\endhead
$(0.0,\,0.0)$ & 0.06710860936627899 & 0.0003174430623739718 \\
$(0.1,\,0.1)$ & 0.07286877607579945 & 0.0003463517851296053 \\
$(0.2,\,0.2)$ & 0.08732756692792475 & 0.00041100406091468646 \\
$(0.3,\,0.3)$ & 0.1096587344315208 & 0.0004817627558674379 \\
$(0.4,\,0.4)$ & 0.13715314324440586 & 0.0005569592881856829 \\
$(0.5,\,0.5)$ & 0.17090623288993287 & 0.0006135801670639973 \\
$(0.6,\,0.6)$ & 0.2067179307852707 & 0.0006629498266768171 \\
$(0.7,\,0.7)$ & 0.24426238402885178 & 0.0006847332427289391 \\
$(0.8,\,0.8)$ & 0.2825880553629785 & 0.0006982786724378977 \\
$(0.9,\,0.9)$ & 0.3200067843584761 & 0.0007188021349059385 \\
$(1.0,\,1.0)$ & 0.36058317902988024 & 0.0007177691212418337 \\
\bottomrule
\end{longtable}

For each diagonal initial state $x=(u,u)$, the following tables report, by level $n$ and sample size $M$, the value function estimate $V$ formatted as mean $V$ (\(\pm\) std\%), for three drift upper bounds $C_{\mathcal A}\in\{1,2,5\}$.

\begin{longtable}{@{}r r c c c@{}}
\caption{$x=(0.0,0.0)$}\label{tab:mlp-x00}\\
\toprule
\textbf{Level} & \textbf{$M$} & \textbf{$C_{\mathcal A}=1$} & \textbf{$C_{\mathcal A}=2$} & \textbf{$C_{\mathcal A}=5$} \\
\midrule
\endfirsthead
\toprule
\textbf{Level} & \textbf{$M$} & \textbf{$C_{\mathcal A}=1$} & \textbf{$C_{\mathcal A}=2$} & \textbf{$C_{\mathcal A}=5$} \\
\midrule
\endhead
1 & 196608 & 0.077251 (\(\pm\) 0.25\%) & 0.077103 (\(\pm\) 0.30\%) & 0.077155 (\(\pm\) 0.23\%) \\
2 & 768    & 0.074330 (\(\pm\) 0.19\%) & 0.071667 (\(\pm\) 0.28\%) & 0.063457 (\(\pm\) 1.93\%) \\
3 & 192    & 0.074984 (\(\pm\) 0.17\%) & 0.074045 (\(\pm\) 0.66\%) & 0.073954 (\(\pm\) 1.66\%) \\
4 & 60     & 0.074689 (\(\pm\) 0.30\%) & 0.073104 (\(\pm\) 0.44\%) & 0.069430 (\(\pm\) 2.62\%) \\
5 & 48     & 0.074775 (\(\pm\) 0.11\%) & 0.073355 (\(\pm\) 0.58\%) & 0.070788 (\(\pm\) 6.07\%) \\
\bottomrule
\end{longtable}

\begin{longtable}{@{}r r c c c@{}}
\caption{$x=(0.1,0.1)$}\label{tab:mlp-x01}\\
\toprule
\textbf{Level} & \textbf{$M$} & \textbf{$C_{\mathcal A}=1$} & \textbf{$C_{\mathcal A}=2$} & \textbf{$C_{\mathcal A}=5$} \\
\midrule
\endfirsthead
\toprule
\textbf{Level} & \textbf{$M$} & \textbf{$C_{\mathcal A}=1$} & \textbf{$C_{\mathcal A}=2$} & \textbf{$C_{\mathcal A}=5$} \\
\midrule
\endhead
1 & 196608 & 0.082100 (\(\pm\) 0.08\%) & 0.082224 (\(\pm\) 0.18\%) & 0.082105 (\(\pm\) 0.30\%) \\
2 & 768    & 0.079257 (\(\pm\) 0.24\%) & 0.076384 (\(\pm\) 0.45\%) & 0.067843 (\(\pm\) 1.63\%) \\
3 & 192    & 0.079934 (\(\pm\) 0.26\%) & 0.079108 (\(\pm\) 0.20\%) & 0.078292 (\(\pm\) 2.38\%) \\
4 & 60     & 0.079676 (\(\pm\) 0.27\%) & 0.077607 (\(\pm\) 0.24\%) & 0.075803 (\(\pm\) 2.36\%) \\
5 & 48     & 0.079748 (\(\pm\) 0.18\%) & 0.078134 (\(\pm\) 0.55\%) & 0.074847 (\(\pm\) 3.39\%) \\
\bottomrule
\end{longtable}

\begin{longtable}{@{}r r c c c@{}}
\caption{$x=(0.2,0.2)$}\label{tab:mlp-x02}\\
\toprule
\textbf{Level} & \textbf{$M$} & \textbf{$C_{\mathcal A}=1$} & \textbf{$C_{\mathcal A}=2$} & \textbf{$C_{\mathcal A}=5$} \\
\midrule
\endfirsthead
\toprule
\textbf{Level} & \textbf{$M$} & \textbf{$C_{\mathcal A}=1$} & \textbf{$C_{\mathcal A}=2$} & \textbf{$C_{\mathcal A}=5$} \\
\midrule
\endhead
1 & 196608 & 0.096169 (\(\pm\) 0.22\%) & 0.095970 (\(\pm\) 0.31\%) & 0.096131 (\(\pm\) 0.27\%) \\
2 & 768    & 0.092959 (\(\pm\) 0.14\%) & 0.090153 (\(\pm\) 0.46\%) & 0.080232 (\(\pm\) 0.73\%) \\
3 & 192    & 0.093992 (\(\pm\) 0.36\%) & 0.092531 (\(\pm\) 0.35\%) & 0.092654 (\(\pm\) 2.19\%) \\
4 & 60     & 0.093628 (\(\pm\) 0.21\%) & 0.091610 (\(\pm\) 0.26\%) & 0.089000 (\(\pm\) 3.46\%) \\
5 & 48     & 0.093623 (\(\pm\) 0.11\%) & 0.091832 (\(\pm\) 0.30\%) & 0.088047 (\(\pm\) 2.25\%) \\
\bottomrule
\end{longtable}

\begin{longtable}{@{}r r c c c@{}}
\caption{$x=(0.3,0.3)$}\label{tab:mlp-x03}\\
\toprule
\textbf{Level} & \textbf{$M$} & \textbf{$C_{\mathcal A}=1$} & \textbf{$C_{\mathcal A}=2$} & \textbf{$C_{\mathcal A}=5$} \\
\midrule
\endfirsthead
\toprule
\textbf{Level} & \textbf{$M$} & \textbf{$C_{\mathcal A}=1$} & \textbf{$C_{\mathcal A}=2$} & \textbf{$C_{\mathcal A}=5$} \\
\midrule
\endhead
1 & 196608 & 0.117271 (\(\pm\) 0.16\%) & 0.117336 (\(\pm\) 0.20\%) & 0.117267 (\(\pm\) 0.21\%) \\
2 & 768    & 0.114306 (\(\pm\) 0.15\%) & 0.111133 (\(\pm\) 0.58\%) & 0.102496 (\(\pm\) 0.33\%) \\
3 & 192    & 0.115404 (\(\pm\) 0.20\%) & 0.113656 (\(\pm\) 0.43\%) & 0.112090 (\(\pm\) 1.04\%) \\
4 & 60     & 0.114777 (\(\pm\) 0.13\%) & 0.112800 (\(\pm\) 0.42\%) & 0.109656 (\(\pm\) 1.82\%) \\
5 & 48     & 0.114781 (\(\pm\) 0.07\%) & 0.112981 (\(\pm\) 0.51\%) & 0.111153 (\(\pm\) 3.09\%) \\
\bottomrule
\end{longtable}

\begin{longtable}{@{}r r c c c@{}}
\caption{$x=(0.4,0.4)$}\label{tab:mlp-x04}\\
\toprule
\textbf{Level} & \textbf{$M$} & \textbf{$C_{\mathcal A}=1$} & \textbf{$C_{\mathcal A}=2$} & \textbf{$C_{\mathcal A}=5$} \\
\midrule
\endfirsthead
\toprule
\textbf{Level} & \textbf{$M$} & \textbf{$C_{\mathcal A}=1$} & \textbf{$C_{\mathcal A}=2$} & \textbf{$C_{\mathcal A}=5$} \\
\midrule
\endhead
1 & 196608 & 0.144095 (\(\pm\) 0.16\%) & 0.144050 (\(\pm\) 0.20\%) & 0.143981 (\(\pm\) 0.17\%) \\
2 & 768    & 0.141176 (\(\pm\) 0.15\%) & 0.138000 (\(\pm\) 0.12\%) & 0.128984 (\(\pm\) 0.83\%) \\
3 & 192    & 0.142151 (\(\pm\) 0.22\%) & 0.141356 (\(\pm\) 0.28\%) & 0.140676 (\(\pm\) 1.33\%) \\
4 & 60     & 0.141834 (\(\pm\) 0.19\%) & 0.139630 (\(\pm\) 0.56\%) & 0.137248 (\(\pm\) 2.43\%) \\
5 & 48     & 0.141933 (\(\pm\) 0.13\%) & 0.140871 (\(\pm\) 0.22\%) & 0.137891 (\(\pm\) 0.98\%) \\
\bottomrule
\end{longtable}

\begin{longtable}{@{}r r c c c@{}}
\caption{$x=(0.5,0.5)$}\label{tab:mlp-x05}\\
\toprule
\textbf{Level} & \textbf{$M$} & \textbf{$C_{\mathcal A}=1$} & \textbf{$C_{\mathcal A}=2$} & \textbf{$C_{\mathcal A}=5$} \\
\midrule
\endfirsthead
\toprule
\textbf{Level} & \textbf{$M$} & \textbf{$C_{\mathcal A}=1$} & \textbf{$C_{\mathcal A}=2$} & \textbf{$C_{\mathcal A}=5$} \\
\midrule
\endhead
1 & 196608 & 0.174892 (\(\pm\) 0.16\%) & 0.175164 (\(\pm\) 0.10\%) & 0.174823 (\(\pm\) 0.09\%) \\
2 & 768    & 0.172568 (\(\pm\) 0.16\%) & 0.169768 (\(\pm\) 0.46\%) & 0.161343 (\(\pm\) 0.69\%) \\
3 & 192    & 0.173227 (\(\pm\) 0.22\%) & 0.172325 (\(\pm\) 0.26\%) & 0.171178 (\(\pm\) 1.16\%) \\
4 & 60     & 0.173107 (\(\pm\) 0.07\%) & 0.171748 (\(\pm\) 0.24\%) & 0.168328 (\(\pm\) 0.63\%) \\
5 & 48     & 0.173203 (\(\pm\) 0.14\%) & 0.171984 (\(\pm\) 0.20\%) & 0.170499 (\(\pm\) 1.23\%) \\
\bottomrule
\end{longtable}

\begin{longtable}{@{}r r c c c@{}}
\caption{$x=(0.6,0.6)$}\label{tab:mlp-x06}\\
\toprule
\textbf{Level} & \textbf{$M$} & \textbf{$C_{\mathcal A}=1$} & \textbf{$C_{\mathcal A}=2$} & \textbf{$C_{\mathcal A}=5$} \\
\midrule
\endfirsthead
\toprule
\textbf{Level} & \textbf{$M$} & \textbf{$C_{\mathcal A}=1$} & \textbf{$C_{\mathcal A}=2$} & \textbf{$C_{\mathcal A}=5$} \\
\midrule
\endhead
1 & 196608 & 0.209231 (\(\pm\) 0.22\%) & 0.208997 (\(\pm\) 0.10\%) & 0.209174 (\(\pm\) 0.09\%) \\
2 & 768    & 0.206685 (\(\pm\) 0.07\%) & 0.204191 (\(\pm\) 0.14\%) & 0.196985 (\(\pm\) 0.30\%) \\
3 & 192    & 0.207689 (\(\pm\) 0.19\%) & 0.207117 (\(\pm\) 0.18\%) & 0.206461 (\(\pm\) 1.76\%) \\
4 & 60     & 0.207649 (\(\pm\) 0.09\%) & 0.206594 (\(\pm\) 0.38\%) & 0.201241 (\(\pm\) 0.89\%) \\
5 & 48     & 0.207703 (\(\pm\) 0.06\%) & 0.206914 (\(\pm\) 0.28\%) & 0.204436 (\(\pm\) 1.44\%) \\
\bottomrule
\end{longtable}

\begin{longtable}{@{}r r c c c@{}}
\caption{$x=(0.7,0.7)$}\label{tab:mlp-x07}\\
\toprule
\textbf{Level} & \textbf{$M$} & \textbf{$C_{\mathcal A}=1$} & \textbf{$C_{\mathcal A}=2$} & \textbf{$C_{\mathcal A}=5$} \\
\midrule
\endfirsthead
\toprule
\textbf{Level} & \textbf{$M$} & \textbf{$C_{\mathcal A}=1$} & \textbf{$C_{\mathcal A}=2$} & \textbf{$C_{\mathcal A}=5$} \\
\midrule
\endhead
1 & 196608 & 0.245402 (\(\pm\) 0.11\%) & 0.245331 (\(\pm\) 0.18\%) & 0.245300 (\(\pm\) 0.12\%) \\
2 & 768    & 0.243227 (\(\pm\) 0.08\%) & 0.241008 (\(\pm\) 0.08\%) & 0.233330 (\(\pm\) 0.41\%) \\
3 & 192    & 0.244489 (\(\pm\) 0.20\%) & 0.244027 (\(\pm\) 0.24\%) & 0.241501 (\(\pm\) 0.86\%) \\
4 & 60     & 0.244266 (\(\pm\) 0.07\%) & 0.243162 (\(\pm\) 0.11\%) & 0.238534 (\(\pm\) 1.12\%) \\
5 & 48     & 0.244357 (\(\pm\) 0.09\%) & 0.243187 (\(\pm\) 0.29\%) & 0.240461 (\(\pm\) 0.89\%) \\
\bottomrule
\end{longtable}

\begin{longtable}{@{}r r c c c@{}}
\caption{$x=(0.8,0.8)$}\label{tab:mlp-x08}\\
\toprule
\textbf{Level} & \textbf{$M$} & \textbf{$C_{\mathcal A}=1$} & \textbf{$C_{\mathcal A}=2$} & \textbf{$C_{\mathcal A}=5$} \\
\midrule
\endfirsthead
\toprule
\textbf{Level} & \textbf{$M$} & \textbf{$C_{\mathcal A}=1$} & \textbf{$C_{\mathcal A}=2$} & \textbf{$C_{\mathcal A}=5$} \\
\midrule
\endhead
1 & 196608 & 0.283229 (\(\pm\) 0.13\%) & 0.283432 (\(\pm\) 0.12\%) & 0.283068 (\(\pm\) 0.19\%) \\
2 & 768    & 0.280910 (\(\pm\) 0.06\%) & 0.278621 (\(\pm\) 0.05\%) & 0.271616 (\(\pm\) 0.18\%) \\
3 & 192    & 0.282855 (\(\pm\) 0.17\%) & 0.282250 (\(\pm\) 0.53\%) & 0.277051 (\(\pm\) 0.85\%) \\
4 & 60     & 0.282225 (\(\pm\) 0.13\%) & 0.280553 (\(\pm\) 0.22\%) & 0.273290 (\(\pm\) 0.72\%) \\
5 & 48     & 0.282219 (\(\pm\) 0.04\%) & 0.281848 (\(\pm\) 0.32\%) & 0.277789 (\(\pm\) 1.65\%) \\
\bottomrule
\end{longtable}

\begin{longtable}{@{}r r c c c@{}}
\caption{$x=(0.9,0.9)$}\label{tab:mlp-x09}\\
\toprule
\textbf{Level} & \textbf{$M$} & \textbf{$C_{\mathcal A}=1$} & \textbf{$C_{\mathcal A}=2$} & \textbf{$C_{\mathcal A}=5$} \\
\midrule
\endfirsthead
\toprule
\textbf{Level} & \textbf{$M$} & \textbf{$C_{\mathcal A}=1$} & \textbf{$C_{\mathcal A}=2$} & \textbf{$C_{\mathcal A}=5$} \\
\midrule
\endhead
1 & 196608 & 0.321761 (\(\pm\) 0.17\%) & 0.321827 (\(\pm\) 0.11\%) & 0.321313 (\(\pm\) 0.21\%) \\
2 & 768    & 0.319409 (\(\pm\) 0.12\%) & 0.316904 (\(\pm\) 0.21\%) & 0.310259 (\(\pm\) 0.23\%) \\
3 & 192    & 0.321317 (\(\pm\) 0.22\%) & 0.320620 (\(\pm\) 0.27\%) & 0.318565 (\(\pm\) 1.02\%) \\
4 & 60     & 0.320732 (\(\pm\) 0.06\%) & 0.319946 (\(\pm\) 0.51\%) & 0.309771 (\(\pm\) 1.07\%) \\
5 & 48     & 0.321070 (\(\pm\) 0.03\%) & 0.320558 (\(\pm\) 0.28\%) & 0.316143 (\(\pm\) 0.59\%) \\
\bottomrule
\end{longtable}

\begin{longtable}{@{}r r c c c@{}}
\caption{$x=(1.0,1.0)$}\label{tab:mlp-x10}\\
\toprule
\textbf{Level} & \textbf{$M$} & \textbf{$C_{\mathcal A}=1$} & \textbf{$C_{\mathcal A}=2$} & \textbf{$C_{\mathcal A}=5$} \\
\midrule
\endfirsthead
\toprule
\textbf{Level} & \textbf{$M$} & \textbf{$C_{\mathcal A}=1$} & \textbf{$C_{\mathcal A}=2$} & \textbf{$C_{\mathcal A}=5$} \\
\midrule
\endhead
1 & 196608 & 0.360660 (\(\pm\) 0.25\%) & 0.360843 (\(\pm\) 0.12\%) & 0.360900 (\(\pm\) 0.15\%) \\
2 & 768    & 0.358493 (\(\pm\) 0.07\%) & 0.355827 (\(\pm\) 0.13\%) & 0.348379 (\(\pm\) 0.33\%) \\
3 & 192    & 0.360270 (\(\pm\) 0.20\%) & 0.359344 (\(\pm\) 0.38\%) & 0.360107 (\(\pm\) 1.02\%) \\
4 & 60     & 0.359885 (\(\pm\) 0.11\%) & 0.358296 (\(\pm\) 0.33\%) & 0.348312 (\(\pm\) 1.27\%) \\
5 & 48     & 0.360570 (\(\pm\) 0.08\%) & 0.359739 (\(\pm\) 0.17\%) & 0.353010 (\(\pm\) 0.99\%) \\
\bottomrule
\end{longtable}
\end{appendices}

\end{document}